\newtheorem*{thm*}{Theorem}
\newtheorem{thm}{Theorem}[section]
\newtheorem{cor}[thm]{Corollary}
\newtheorem{lem}[thm]{Lemma}
\newtheorem{prop}[thm]{Proposition}
\theoremstyle{definition}
\newtheorem{defn}[thm]{Definition}
\theoremstyle{remark}
\newtheorem{rem}[thm]{Remark}
\newtheorem{ex}[thm]{Example}
\numberwithin{equation}{section}
\newcommand{\eps}{\varepsilon}
\newcommand{\im}{\mathop{\textrm{Im}}\nolimits}
\newcommand{\ol}{\overline}
\def\a{\alpha}
\def\A{\mathcal{A}}
\def\AA{\mathbb{A}}
\def\g{\gamma}
\def\d{\delta}
\def\CC{\mathcal{C}}
\def\MM{\mathcal{M}}
\def\S{\Sigma}
\def\SS{\mathbb{S}}
\def\l{\lambda}
\def\W{\mathcal{W}}
\def\oW{\overline{W}}
\def\Z{\mathbb{Z}}
\def\PP{\mathbb{P}}
\def\R{\mathbb{R}}
\def\C{\mathbb{C}}
\def\F{\mathbb{F}}
\def\cF{\mathcal{F}}
\def\E{\mathcal{E}}
\def\M{\mathcal{M}}
\def\N{\mathbb{N}}
\def\D{\mathcal{D}}
\def\cR{\mathcal{R}}
\def\G{\mathcal{G}}
\def\BSut{\mathbf{BSut}}
\def\Vect{\mathbf{Vect}}
\def\spinc{\text{Spin}^c}
\def\G{\mathcal{G}}
\def\bP{\mathbb{P}}
\def\Cob{\mathbf{Cob}}
\def\Man{\mathbf{Man}}
\def\Id{\text{Id}}
\def\Diff{\text{Diff}}
\def\CC{\mathcal{C}}
\def\ub{\underline{b}}
\def\crit{\text{Crit}}
\def\ux{\underline{x}}
\def\MCG{\text{MCG}}
\def\End{\text{End}}
\def\ug{\underline{g}}
\def\uh{\underline{h}}
\def\gnf{$\text{GNF}^*$}
\def\HF{\mathit{HF}}
\def\JAlg{\mathbf{J \text{-} Alg}}
\begin{document}

\title{Defining and classifying TQFTs via surgery}%
\author{Andr\'as Juh\'asz}%
\address{Mathematical Institute, University of Oxford, Andrew Wiles Building,
Radcliffe Observatory Quarter, Woodstock Road, Oxford, OX2 6GG, UK}%
\email{juhasza@maths.ox.ac.uk}%

\subjclass[2010]{57R56; 57R65; 57M27}%
\keywords{Cobordism, TQFT, Surgery}

%\date{\today}%
%\dedicatory{}%
%\commby{}%
% ----------------------------------------------------------------
\begin{abstract}
We give a presentation of the $n$-dimensional oriented cobordism category $\Cob_n$
with generators corresponding to diffeomorphisms and surgeries along framed spheres,
and a complete set of relations.
%We describe a framework for defining and classifying TQFTs via surgery.
Hence, given a functor $F$ from the category of smooth oriented manifolds and diffeomorphisms
to an arbitrary category $C$, and morphisms induced by surgeries along framed spheres,
we obtain a necessary and sufficient set of relations these have to satisfy to extend to
a functor from $\Cob_n$ to $C$.
If $C$ is symmetric and monoidal, then we also characterize when the extension is a TQFT.

This framework is well-suited to defining natural cobordism maps in Heegaard Floer homology.
It also allows us to give a short proof of the classical correspondence between (1+1)-dimensional
TQFTs and commutative Frobenius algebras.
Finally, we use it to classify (2+1)-dimensional TQFTs in terms of J-algebras,
a new algebraic structure that consists of a split graded involutive
nearly Frobenius algebra endowed with a certain mapping class group representation.
This solves a long-standing open problem. As a corollary, we obtain a structure theorem for
(2+1)-dimensional TQFTs that assign a vector space of the same dimension to every connected surface.
We also note that there are $2^{2^\omega}$ nonequivalent lax monoidal TQFTs over~$\C$ that do not extend to
(1+1+1)-dimensional ones.
\end{abstract}

\maketitle
% ----------------------------------------------------------------
\section{Introduction}
Let $\Man_n$ be the category whose objects are closed oriented $n$-manifolds and whose morphisms are
orientation preserving diffeomorphisms, and
let $\Cob_n$ be the category of closed oriented $n$-manifolds and equivalence classes of oriented cobordisms.
Furthermore, $\Cob_n'$ is the subcategory of~$\Cob_n$ that does not contain the empty $n$-manifold,
and such that each component of every cobordism has a non-empty incoming and outgoing end.
We denote by $\Cob_n^0$ the full subcategory of~$\Cob_n'$ consisting of connected objects (and hence
connected cobordisms).
Finally, $\BSut'$ is the category of balanced sutured manifolds and special
cobordisms that are trivial along the boundary; see \cite[Definition~5.1]{cobordism}.
We denote by~$\Vect$ the category of finite-dimensional vector spaces over some field~$\F$.

In physics, topological quantum field theories (in short, TQFTs) were introduced by Witten~\cite{Witten}.
Inspired by Segal's axioms proposed for conformal field theories~\cite{conformal}, they were first axiomatized by Atiyah~\cite{Atiyah}.
In the more recent terminology of Blanchet and Turaev~\cite{Blanchet},
an $(n+1)$-dimensional TQFT is a symmetric monoidal functor from the category~$\Cob_n$
to~$\Vect$; see Definition~\ref{def:tqft}.
More generally, the target category could be any symmetric monoidal category.
For the necessary category theoretical background, we refer the reader to the books
of Mac Lane~\cite{MacLane} and Kock~\cite{Kock}.
Throughout this paper, \emph{all manifolds are smooth and oriented and all diffeomorphisms are orientation preserving},
unless otherwise stated, though the methods easily generalize to unoriented manifolds.

It is a classical result that (1+1)-dimensional TQFTs correspond to commutative Frobenius algebras.
This statement dates back to the birth of the subject, but completely rigorous proofs are more recent;
see the book of Kock~\cite{Kock} that also discusses the history of this problem. Fully extended $(n+1)$-dimensional TQFTs constitute a
constrained subclass of $(n+1)$-dimensional TQFTs,
that assign invariants to all oriented manifolds with corners up to dimension $n+1$.
These were completely classified by Lurie~\cite{Lurie} via proving the ``cobordism hypothesis''
conjectured by Baez and Dolan. Based on the work of Reshetikhin and Turaev~\cite{TuTQFT},
Bartlett, Douglas, Schommer-Pries, and Vicary~\cite{BDSV, BDSV2} classified $3$-dimensional oriented
TQFTs extended down to $1$-manifolds, which are called (1+1+1)-dimensional or 1-2-3 TQFTs,
in terms of anomaly free modular tensor categories. This is a restricted subclass of all
lax monoidal (2+1)-dimensional TQFTs according to the following observation that we will
prove at the end of Section~\ref{sec:applications}.
(Recall that a TQFT~$F \colon \Cob_2 \to \Vect$ is lax monoidal if the comparison morphisms
$\Phi_{A,B} \colon F(A) \otimes F(B) \to F(A \sqcup B)$ are not necessarily invertible
for surfaces~$A$ and~$B$.)

\begin{prop} \label{prop:nonextending}
Over $\C$, there exist $2^{2^\omega}$ pairwise non-equivalent (2+1)-di\-men\-sional oriented lax monoidal TQFTs
that do not extend to (1+1+1)-dimensional TQFTs.
\end{prop}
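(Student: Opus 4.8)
The plan is to produce a huge family of lax monoidal $(2+1)$-dimensional TQFTs by deforming the trivial TQFT (the one sending every surface to $\F=\C$ and every cobordism to $\mathrm{id}$) using a character of some large abelian group built from the cobordism category, and then to argue that such deformations cannot extend down to $1$-manifolds. First I would recall that a lax monoidal functor $F\colon\Cob_2\to\Vect$ is determined by its values on connected surfaces and connected cobordisms together with the comparison maps $\Phi_{A,B}$; in particular, if $F$ sends every connected surface to $\C$, then for a connected cobordism $W$ from $\Sigma_{g_0}\sqcup\cdots$ to $\Sigma_{h_0}\sqcup\cdots$ the induced map is just multiplication by a scalar $\lambda(W)\in\C^\times$ (or possibly $0$, but we want invertibility of the connected-genus-preserving pieces, so we restrict to nonzero scalars). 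Composition of cobordisms forces $\lambda$ to be multiplicative, and disjoint union together with laxness imposes compatibility with the $\Phi$'s; so the data amounts to a monoid homomorphism from a suitable commutative monoid of connected cobordisms (modulo the relations in our presentation of $\Cob_2$) into $(\C^\times,\cdot)$.

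The key step is to identify a free abelian group of rank $2^\omega$ sitting inside this monoid of connected cobordisms up to the imposed relations. Concretely, for each $g\ge 1$ consider the connected cobordism $T_g$ from $\Sigma_g$ to $\Sigma_g$ obtained by, say, performing a $0$-surgery followed by a $3$-surgery (i.e.\ removing and regluing a ball, or taking connect sum with a nontrivial mapping torus); more robustly, take the mapping tori of the countably many non-conjugate "independent" mapping classes — e.g.\ powers of Dehn twists along a fixed nonseparating curve on $\Sigma_g$ for all $g\ge 2$, or better, an infinite collection of Dehn twists along disjoint curves realizing a $\Z^\omega$ inside the direct limit of mapping class groups. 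These generate a subgroup isomorphic to $\Z^{(\omega)}$, hence $\mathrm{Hom}(\Z^{(\omega)},\C^\times)\cong(\C^\times)^\omega$ has cardinality $2^\omega=\mathfrak{c}$; but one really wants $2^{2^\omega}$ classes, so instead I would use an abelian group of rank $2^\omega$ — for instance the group $\Z^{(\mathbb R)}$ of finitely supported integer-valued functions on $\R$ — realized inside the monoid of connected cobordisms by indexing commuting elementary cobordisms (disjoint Dehn twists on surfaces of growing genus) by real numbers. Then $\mathrm{Hom}(\Z^{(\mathbb R)},\C^\times)$ has cardinality $2^{2^\omega}$, and each character yields a distinct lax monoidal TQFT.

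For non-extendability: a $(1+1+1)$-dimensional TQFT assigns to the circle a finite-dimensional vector space $V$, to the pair of pants a multiplication making $V$ a commutative Frobenius algebra, and to a genus-$g$ surface the dimension-$\mathrm{tr}(H^g)$-type invariant where $H$ is the handle operator; crucially all surface invariants and all mapping-class-group actions are then determined by the finite-dimensional datum $V$, so the representation of $\varinjlim\mathrm{MCG}(\Sigma_g)$ one gets must factor through something of bounded size, in particular the Dehn twist along a nonseparating curve acts on the $1$-dimensional $F(\Sigma_g)$ by a \emph{root of unity} of order dividing $\dim V$ (this is the standard finiteness: the mapping class group action on a $1$-2-3 theory is projectively finite on each piece, and on a $1$-dimensional invariant space it is genuinely of finite order). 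But our characters send that Dehn twist to an arbitrary element of $\C^\times$, and for all but countably many of the $2^{2^\omega}$ characters at least one generator goes to a non-root-of-unity; those TQFTs cannot extend. The main obstacle I anticipate is the bookkeeping in the second step: exhibiting an honestly \emph{free} abelian group of rank $2^\omega$ among connected cobordisms modulo the full relation set of the presented category $\Cob_2$ — one must check that no relation in the presentation collapses the chosen commuting Dehn-twist cobordisms, which is where the explicit surgery presentation from the body of the paper (and the injectivity of the monoid map it induces on this sub-datum) does the real work; the extendability obstruction, by contrast, is essentially the classical finiteness statement for $1$-2-3 TQFTs and should be quotable.
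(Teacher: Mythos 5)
There is a genuine gap, and it sits exactly where you located the ``real work.'' First, a cardinality obstruction: the morphism sets of $\Cob_2$ are countable (there are countably many diffeomorphism types of compact $3$-manifolds and the relevant mapping class groups are countable), so no free abelian group of rank $2^\omega$ --- your $\Z^{(\mathbb{R})}$ of Dehn-twist cobordisms ``indexed by real numbers'' --- can embed in the monoid of connected cobordisms; there are only countably many isotopy classes of curves on countably many surfaces, so this route caps out at $2^\omega$ characters, not $2^{2^\omega}$. Second, and more damaging, the proposed generators are not even of infinite order in the relevant quotient: since $c_{d'}\circ c_d=c_{d'\circ d}$ (Definition~\ref{def:diffcob} and relation~(1) of Definition~\ref{def:relations}), any functor with $F(\Sigma_g)\cong\C$ restricts on mapping cylinders to a homomorphism $\MCG(\Sigma_g)\to\C^\times$; $\MCG(\Sigma_g)$ is perfect for $g\ge 3$ and has finite abelianization for $g\le 2$, so Dehn-twist cylinders can only be sent to roots of unity (indeed to $1$ for $g\ge 3$). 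In other words, the relations \emph{do} collapse your commuting Dehn-twist cobordisms --- this is precisely the rigidity phenomenon behind Proposition~\ref{prop:onedim} and Proposition~\ref{prop:n} --- so there is essentially no family of deformations of the trivial theory of the kind you propose. Finally, even granting a character on some subgroup, you never explain how to extend it to a consistent scalar assignment on all surgery cobordisms and verify relations~(1)--(5) and diagram~\eqref{eqn:monoidal}; that verification is the substantive step, and laxness must enter in an essential way.

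The paper takes a different and workable route: the free parameters live on closed $3$-manifolds, not on mapping classes. By Funar's theorem, any connected-sum-multiplicative invariant $f$ (freely prescribed on the countably many prime $3$-manifolds, with $f(S^3)=1$ and $f(-M)=\overline{f(M)}$) canonically induces a lax monoidal $(2+1)$-dimensional TQFT; non-extendability is then obtained not from a root-of-unity constraint but from Funar's pair $N,N'$ with identical Reshetikhin--Turaev invariants for every modular tensor category, combined with the Bartlett et al.\ classification of $(1+1+1)$-dimensional theories as RT theories (alternatively, from the countability of modular tensor categories). Your finiteness heuristic for Dehn twists in $1$-$2$-$3$ theories is plausible but would still need to be tied to that classification, and in any case the collapse in your second step leaves nothing to apply it to. If you want to salvage your scheme, transplant the parameters onto prime closed $3$-manifolds as multiplicative invariants; note also that your own observation is apt here --- countably many independent complex parameters yield only $2^\omega$ invariants --- so the precise cardinal being claimed deserves care in any write-up.
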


Our first main result is a presentation of the $n$-dimensional oriented cobordism category
in terms of generators corresponding to diffeomorphisms and surgeries along framed spheres,
and a complete set of relations. We state the necessary definitions first.

\begin{defn} \label{def:sphere}
Let $M$ be an oriented $n$-manifold. For $k \in \{0,\dots,n\}$,
a \emph{framed $k$-sphere} in~$M$ is an orientation reversing embedding
$\SS \colon S^k \times D^{n-k} \hookrightarrow M$.
Then we can perform surgery on~$M$ along~$\SS$
by removing the interior of the image of~$\SS$
and gluing in~$D^{k+1} \times S^{n-k-1}$ via $\SS|_{S^k \times S^{n-k-1}}$;
after smoothing the corners we obtain the surgered manifold $M(\SS)$.
We consider two additional types of framed spheres, namely $\SS = 0$ and $\SS = \emptyset$.
For $\SS = 0$, which we think of as the
framed attaching sphere of a 0-handle, we let $M(0) = M \sqcup S^n$.
For $\SS = \emptyset$, we let $M(\emptyset) = M$. We write
\[
W(\SS) = (M \times I) \cup_\SS (D^{k+1} \times D^{n-k})
\]
for the \emph{trace of the surgery},
where $W(0) = (M \times I) \sqcup D^{n+1}$ and $W(\emptyset) = M \times I$.
Then $W(\SS)$ is a cobordism from $M$ to $M(\SS)$.
We denote by $a(\SS) = \SS(S^k \times \{0\}) \subset M$ the \emph{attaching sphere}
and by $b(\SS) = \{0\} \times S^{n-k-1} \subset M(\SS)$ the \emph{belt sphere} of the handle
attached along~$\SS$.

%However, our main motivation for this project was to define natural cobordism maps in
%the various flavors of Heegaard Floer homology,
%which is a package of $(3+1)$-dimensional theories. This was outlined by Ozsv\'ath and Szab\'o~\cite{OSz10}, and
%natural $3$-manifold invariants were obtained by Dylan Thurston and the author~\cite{naturality}.
%In the latter paper, we construct the Ozsv\'ath-Szab\'o $3$-manifold invariants in a way that they become
%functorial under the diffeomorphism action. The work of Ozsv\'ath and Szab\'o~\cite{OSz10}
%outlines cobordism maps by assigning homomorphisms to surgeries, and then checking invariance
%under Kirby moves. Note that a Kirby calculus approach to constructing numerical invariants
%of $3$-manifolds was suggested by Reshetikin and Turaev in the introduction of~\cite{RT}.
%Instead of Kirby moves, we follows a slightly different path.

If $\SS \colon S^k \times D^{n-k} \hookrightarrow M$
is a framed $k$-sphere for $k < n$, then let~$\ol{\SS}$ be the framed sphere defined by
\[
\ol{\SS}(\underline{x},\underline{y}) = \SS \left(r_{k+1}(\underline{x}),r_{n-k}(\underline{y}) \right),
\]
where $\underline{x} \in S^k \subset \R^{k+1}$, $\underline{y} \in D^{n-k} \subset \R^{n-k}$,
and
\[
r_m(x_1,x_2,\dots,x_m) = (-x_1,x_2,\dots,x_m).
\]
\end{defn}

\begin{defn}
Let $\G_n$ be the directed graph obtained from
the category $\Man_n$ by adding an edge $e_{M,\SS}$ from $M$ to $M(\SS)$ for every pair $(M,\SS)$, where
$M$ is an oriented $n$-manifold and $\SS$ is a framed sphere inside~$M$. For clarity, we will sometimes
write~$e_d$ for the edge from~$M$ to~$N$ corresponding to a diffeomorphism $d \colon M \to N$.
Then $\Man_n$ is a subgraph of~$\G_n$. We denote by $\cF(\G_n)$ the free category generated by~$\G_n$.

Let $\G_n'$ be the subgraph of $\G_n$ obtained by removing the empty $n$-manifold,
and edges $e_{M,\SS}$ such that $\SS = 0$ or a framed  $n$-sphere.
Furthermore, $\G_n^0$ is the full subgraph of $\G_n'$ spanned by connected objects.
%by removing $e_{M,\SS}$ whenever $\SS$ is a separating $(n-1)$-sphere.
Finally, the vertices of $\G^s$ are balanced sutured manifolds, and the edges are diffeomorphisms
and surgeries along framed 0-, 1-, and 2-spheres in the interior of a balanced sutured manifold.
\end{defn}

\begin{defn} \label{def:relations}
We now define a set of relations~$\cR$ in $\cF(\G_n)$; these can be though of as 2-cells attached to~$\G_n$.
If $w$ and $w'$ are words consisting of composing arrows, then we write $w \sim w'$ if $w(w')^{-1} \in \cR$.
\begin{enumerate}
\item \label{it:isot}
Firstly, $e_{d \circ d'} \sim e_d \circ e_{d'}$ for diffeomorphisms~$d$ and~$d'$ that compose.
We have $e_{M,\emptyset} \sim \Id_M$, and if $d \in \Diff_0(M)$, then $e_d \sim \Id_M$.
\item \label{it:d-F}
Given an orientation preserving diffeomorphism $d \colon M \to M'$ between $n$-manifolds and a framed sphere $\SS \subset M$,
let $\SS' = d \circ \SS$, and let $d^\SS \colon M(\SS) \to M'(\SS')$ be the induced diffeomorphism. Then the following
diagram is commutative:
\[
\xymatrix{
  M \ar[r]^-{e_{M,\SS}} \ar[d]^-{d} & M(\SS) \ar[d]^{d^\SS} \\
  M'  \ar[r]^-{e_{M',\SS'}} & M'(\SS').}
\]
\item \label{it:commut}
If $M$ is an oriented $n$-manifold and $\SS$ and $\SS'$ are \emph{disjoint} framed spheres in~$M$,
then $M(\SS)(\SS') = M(\SS')(\SS)$; we denote this manifold by $M(\SS,\SS')$.
Then the following diagram is commutative:
\[
\xymatrixcolsep{3pc}\xymatrix{
  M \ar[r]^-{e_{M,\SS}} \ar[d]^-{e_{M,\SS'}} & M(\SS) \ar[d]^{e_{M(\SS),\SS'}} \\
  M(\SS')  \ar[r]^-{e_{M(\SS'),\SS}} & M(\SS,\SS').}
\]
\item \label{it:birth}
If $\SS' \subset M(\SS)$ and the attaching sphere $a(\SS')$ intersects
the belt sphere $b(\SS)$ once transversely,
then there is a diffeomorphism $\varphi \colon M \to M(\SS)(\SS')$ (which is defined
in Definition~\ref{def:phi}; it is the identity on $M \cap M(\SS)(\SS')$
and is unique up to isotopy), for which
\[
e_{M(\SS),\SS'} \circ e_{M,\SS} \sim \varphi.
\]
\item \label{it:0-sphere}
Finally, $e_{M,\SS} \sim e_{M,\ol{\SS}}$.
\end{enumerate}
We can define a set of relations $\cR^s$ in $\cF(\G^s)$ analogously.
\end{defn}

Having defined the relations~$\cR$, we can take the quotient category $\cF(\G_n)/\cR$.
This is a symmetric monoidal category when equipped with the disjoint union operation.

\begin{defn}
Let $c \colon \G_n \to \Cob_n$ be the graph morphism that is the identity on the vertices,
assigns the cylindrical cobordism $c_d$ to a diffeomorphism $d$ as in Definition~\ref{def:diffcob},
and assigns the elementary cobordism $W(\SS)$
to the edge $e_{M,\SS}$. This extends to a
symmetric strict monoidal functor
$c \colon \cF(\G_n) \to \Cob_n$.
Similarly, we can define a symmetric monoidal functor $c^s \colon \cF(\G^s) \to \BSut'$.
\end{defn}

\begin{rem}
Note that this is not an embedding as, for example, $c_d = c_{d'}$ if
and only if $d$ and $d'$ are pseudo-isotopic diffeomorphisms; see \cite[Theorem~1.9]{Milnor}.
\end{rem}

In our first main result, we give a presentation of $\Cob_n$, where the generators
are diffeomorphisms and surgery morphisms, and the relations
are given in Definition~\ref{def:relations}.

\begin{thm} \label{thm:presentation}
The functor $c \colon \cF(\G_n) \to \Cob_n$ descends to a functor
\[
\cF(\G_n)/\cR \to \Cob_n
\]
that is an isomorphism of symmetric monoidal categories.

By slight abuse of notation, we will also denote the functor $\cF(\G_n)/\cR \to \Cob_n$ by~$c$.
Then $c$ restricted to $F(\G_n')/\cR$ is an isomorphism onto $\Cob_n'$
and $c$ restricted to $F(\G_n^0)/\cR$ is an isomorphism onto $\Cob_n^0$.
Finally, $c^s \colon \cF(\G^s) \to \BSut'$ descends to a functor $\cF(\G^s)/\cR^s \to \BSut'$
that is an isomorphism of symmetric monoidal categories.
\end{thm}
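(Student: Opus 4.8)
The plan is to prove that the functor $c \colon \cF(\G_n) \to \Cob_n$ descends to an isomorphism $\cF(\G_n)/\cR \to \Cob_n$ in three stages: showing $c$ is well-defined on the quotient (i.e. $c$ respects the relations $\cR$), showing it is surjective (full and essentially surjective), and showing it is faithful. Essential surjectivity on objects is immediate since $c$ is the identity on vertices. Fullness follows from Morse theory: every cobordism $W$ from $M$ to $N$ admits a self-indexing Morse function, so it decomposes as a composition of elementary cobordisms, each of which is either a diffeomorphism cylinder $c_d$ or a surgery trace $W(\SS)$; hence every morphism in $\Cob_n$ is in the image of a word in $\G_n$. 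Checking that $c$ respects each of the five relations in Definition~\ref{def:relations} is a matter of recognizing standard diffeomorphisms of cobordisms: relation~\eqref{it:isot} is the fact that composing diffeomorphism cylinders gives the cylinder of the composite and that isotopic diffeomorphisms give equivalent cobordisms; \eqref{it:d-F} and \eqref{it:commut} are obvious from the local nature of handle attachment (disjoint handles can be attached in either order); \eqref{it:0-sphere} reflects that a framed sphere and its reflection $\ol{\SS}$ yield the same trace up to equivalence; and \eqref{it:birth} is the cancellation lemma for a complementary pair of handles, giving a product cobordism whose associated diffeomorphism is $\varphi$.

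The heart of the argument — and the main obstacle — is faithfulness: given two words $w, w'$ in $\cF(\G_n)$ with $c(w) = c(w')$ in $\Cob_n$, we must show $w \sim w'$ using only the relations $\cR$. The strategy is to fix a cobordism $W$ from $M$ to $N$ representing the common image, choose any two factorizations of $W$ into elementary pieces corresponding to $w$ and $w'$, and connect them by a sequence of ``moves'' each realized by a relation in $\cR$. Concretely, each factorization comes from a self-indexing Morse function (with gradient-like vector field) on $W$, together with identifications of the intermediate level sets. One then invokes Cerf theory: any two such Morse functions are connected by a generic path in the space of smooth functions, along which the only singularities are births/deaths of cancelling critical point pairs and crossings of critical values. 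A birth/death is handled by relation~\eqref{it:birth} (together with \eqref{it:isot} to absorb the resulting diffeomorphism), a crossing of critical values of handles attached along disjoint spheres is handled by relation~\eqref{it:commut}, handle slides amount to an isotopy of the attaching sphere and are handled by~\eqref{it:d-F} and \eqref{it:isot}, and the ambiguity in the choice of identifications of level sets is absorbed by~\eqref{it:isot} and~\eqref{it:d-F}. Relation~\eqref{it:0-sphere} is needed to account for the two possible orientations of the normal framing when reading off a surgery from a descending manifold. Assembling these into a complete argument requires care: one must track the full data of a ``Morse factorization'' (Morse function, metric, parametrizations of all level sets) as a point in an appropriate configuration space, verify that generic paths decompose into the listed elementary moves, and check in each case that the corresponding relation in $\cR$ exactly captures the change in the associated word. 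This is the step I expect to occupy the bulk of the proof.

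Once faithfulness is established, the restriction statements follow by inspection. Removing the empty manifold and the edges $e_{M,\SS}$ with $\SS = 0$ or a framed $n$-sphere from $\G_n$ precisely removes the generators whose traces have a component with empty incoming end (a $0$-handle) or empty outgoing end (an $n$-handle); since $\cF(\G_n')/\cR$ embeds into $\cF(\G_n)/\cR$ and the Morse-theoretic fullness argument, applied to a cobordism in $\Cob_n'$, produces a self-indexing Morse function with no index-$0$ or index-$n$ critical points (as each component has nonempty incoming and outgoing boundary, such critical points can be cancelled or are unnecessary), we get that $c$ restricts to an isomorphism onto $\Cob_n'$. The connected case $\Cob_n^0$ follows similarly, using that a self-indexing Morse function on a connected cobordism between connected manifolds can be arranged (again via Cerf moves) so that all intermediate level sets are connected, keeping the entire word inside $\G_n^0$. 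Finally, the sutured statement $c^s \colon \cF(\G^s)/\cR^s \to \BSut'$ is proved by the identical scheme: by the analogue of the Morse-theoretic decomposition for special cobordisms trivial along the boundary (cf.~\cite[Definition~5.1]{cobordism}), every such cobordism factors into diffeomorphisms and surgeries along framed $0$-, $1$-, and $2$-spheres in the interior, and the same Cerf-theoretic analysis, now carried out relative to the boundary, shows that the relations $\cR^s$ are complete. The only additional point is to ensure all handle manipulations take place in the interior and respect the sutured structure, which is automatic since special cobordisms are products near the boundary.
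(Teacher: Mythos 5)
Your overall strategy (well-definedness, surjectivity via Morse decompositions, faithfulness via Cerf theory connecting two factorizations and matching each elementary move with a relation in $\cR$, then the restricted statements) is the same as the paper's. However, two of the specific mechanisms you propose for the faithfulness step do not work as stated, and they are exactly the points where the paper has to do real work.

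First, by insisting on self-indexing Morse functions you make handle slides a genuine move of your ``Morse factorizations,'' and your claim that a slide ``amounts to an isotopy of the attaching sphere and is handled by~\eqref{it:d-F} and~\eqref{it:isot}'' is false within a fixed level set: when an attaching sphere slides over another handle, its isotopy class in $M_i$ changes; it is isotopic to the original sphere only in the surgered manifold $M_i(\SS)$, i.e.\ only after the other handle has already been attached. To express the slide in terms of the given relations you must first separate the two critical points into different levels (equivalently, use relation~\eqref{it:commut} to reorder the two disjoint surgeries) and only then apply \eqref{it:d-F} and \eqref{it:isot} to the isotopy in the higher level set. This is precisely why the paper works with proper, \emph{non}-self-indexing Morse data $(f,\ub,v)$ whose moves are critical point switches, births/deaths, gradient isotopies, adding/removing regular values and left-right equivalences --- handle slides never appear as an independent move, and the paper's remark after Theorem~\ref{thm:TQFT} spells out that slide invariance is a consequence of \eqref{it:d-F} and \eqref{it:commut}, not of \eqref{it:d-F} and \eqref{it:isot} alone.

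Second, the ``ambiguity in the choice of identifications of level sets'' cannot simply be ``absorbed by \eqref{it:isot} and \eqref{it:d-F}.'' Two trivializations of a product piece (or two gradient flows inducing the same decomposition) give identification diffeomorphisms that are a priori only \emph{pseudo-isotopic}, while relation~\eqref{it:isot} identifies only isotopic diffeomorphisms; this is the issue the paper flags in the remark following Theorem~\ref{thm:TQFT}. The paper's solution is the notion of a \emph{parameterized} Cerf decomposition --- each elementary piece is identified with the model trace $W(\SS_i)$ by a diffeomorphism extending $d_i$, and the gradient-like vector field is carried as part of the data --- together with Lemma~\ref{lem:lift} (every parameterized decomposition lifts to a Morse datum), Lemma~\ref{lem:unique} (uniqueness of the Morse datum inducing a given parameterized decomposition, up to a diffeomorphism fixing the level sets), and Lemma~\ref{lem:cancelation}, which uses Cerf's connectivity of elementary death paths to show that the diffeomorphism produced by a cancellation is isotopic to $d_{j+1}\circ d_j^{S_{j+1}}\circ\varphi$ with $\varphi$ the specific map of Section~\ref{sec:phi} appearing in relation~\eqref{it:birth}. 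Without this machinery your birth/death step only identifies the resulting cylinder up to pseudo-isotopy of its monodromy, which is not enough to conclude $w\sim w'$ in $\cF(\G_n)/\cR$. You do flag that this is where the bulk of the work lies, but the specific tools you name would not close these two gaps; the rest of your outline (well-definedness, surjectivity, and the arguments for $\Cob_n'$, $\Cob_n^0$ and $\BSut'$ via avoiding index $0$ and $n$ handles and keeping level sets connected) matches the paper.
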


Gay, Wehrheim, and Woodward~\cite{GWW, WW} introduced the notion of Cerf decomposition to construct
TQFTs by assigning maps to elementary cobordisms. They showed that any two decompositions of a cobordism into
elementary pieces can be related by a short list of moves. An elementary cobordism is one that admits a
Morse function with at most one interior critical point. Every cobordism can be decomposed into
elementary cobordisms, and two decompositions can be related by critical point cancelations or creations,
critical point reversals, and gluing or splitting cylinders.
This relies on the classification of singularities appearing in generic 1-parameter families
of smooth functions based on Thom transversality, and is summarised in the work of Cerf~\cite[pp.~23--24]{Cerf}.

However, Cerf decompositions do not keep track of the framed attaching spheres of the handles in the elementary
cobordisms, which feature in the definition of cobordism maps in Heegaard Floer homology.
Furthermore, the moves are defined on the level of the cobordisms and refer to Morse functions,
unlike our relations in Definition~\ref{def:relations} for surgeries.
Note that the natural definition of Heegaard Floer homology requires taking into account the
embedding of the Heegaard surface into the 3-manifold, hence one has to be particularly careful with various identifications
when defining the cobordism maps; see Section~\ref{sec:HF}.

A \emph{parameterized Cerf decomposition}~$\CC$ of $W$ consists of a decomposition
\[
W = W_0 \cup_{M_1} W_1 \cup_{M_2} \dots \cup_{M_m} W_m
\]
into elementary cobordisms~$W_i$ from $M_i$ to $M_{i+1}$,
together with framed spheres~$\SS_i \subset M_i$ and diffeomorphisms~$d_i \colon M_i(\SS_i) \to M_{i+1}$
that extend to the traces of the surgeries for $i \in \{0,\dots,m\}$;
see Definition~\ref{def:param-Cerf} for more detail.

The surjectivity of~$c$ onto the morphisms of $\Cob_n$ means that every cobordism~$W$ from $M$ to~$M'$ has
a parameterized Cerf decomposition.
Indeed, as we can replace any path of diffeomorphisms with their composition, we can find a path
\[
M = M_0 \xrightarrow{e_{M_0,\SS_0}}  M_0(\SS_0) \xrightarrow{d_0} M_1
\xrightarrow{e_{M_1,\SS_1}} M_1(\SS_1) \xrightarrow{d_1} \dots \xrightarrow{d_m} M_m = M'
\]
in $\G_n$ such that
\[
W = c\left(\prod_{i = 0}^m (d_i \circ e_{M_i,\SS_i})\right).
\]
This is precisely a parameterized Cerf decomposition of~$W$.

A straightforward but very useful consequence of Theorem~\ref{thm:presentation}
is a simple and easily applicable framework in all dimensions
for constructing all functors (e.g., TQFTs) from the oriented cobordism category~$\Cob_n$ to an arbitrary
target category~$C$ via surgery. This framework is
well-suited to the study of Heegaard Floer homology; see Section~\ref{sec:HF} for more detail.
We give a set of necessary and sufficient conditions
for surgery morphisms to give rise to cobordism morphisms independent of the surgery description of the cobordism.
The big advantage of considering surgeries as opposed to handle attachments is that, for an $(n+1)$-dimensional TQFT,
it suffices to work with $n$-manifolds and surgeries on these, without having to consider the
$(n+1)$-dimensional cobordisms themselves. To illustrate the power of this approach, we will classify (2+1)-dimensional TQFTs in terms
of a new algebraic structure called J-algebras. According to Segal~\cite{Segal}, the classification problem
for TQFTs is one that has been around since the inception of the subject, and so has been the aim to construct TQFTs via surgery.

\begin{thm} \label{thm:TQFT}
Let $C$ be a category.
Suppose that we are given a functor
\[
F \colon \Man_n \to C,
\]
and for every oriented $n$-manifold~$M$ and framed sphere~$\SS \subset M$,
a morphism $F_{M,\SS} \colon F(M) \to F(M(\SS))$ that satisfy relations~\eqref{it:isot}--\eqref{it:0-sphere}
(these are spelled out explicitly in Section~\ref{sec:construction}).
%\begin{enumerate}
%\item \label{it:isot} %\label{it:id}
%We have $F_{M,\emptyset} = \Id_{F(M)}$, and if $d \in \Diff_0(M)$, then $F(d) = \Id_{F(M)}$.
%\item \label{it:d-F} Given an orientation preserving diffeomorphism $d \colon M \to M'$ between $n$-manifolds and a framed sphere $\SS \subset M$,
%let $\SS' = d(\SS)$, and let $d^\SS \colon M(\SS) \to M'(\SS')$ be the induced diffeomorphism. Then the following
%diagram is commutative:
%\[
%\xymatrix{
%  F(M) \ar[r]^-{F_{M,\SS}} \ar[d]^-{F(d)} & F(M(\SS)) \ar[d]^{F(d^\SS)} \\
%  F(M')  \ar[r]^-{F_{M',\SS'}} & F(M'(\SS')).}
%\]
%\item \label{it:commut} If $M$ is an oriented $n$-manifold and $\SS$ and $\SS'$ are \emph{disjoint} framed spheres in $M$,
%then $M(\SS)(\SS') = M(\SS')(\SS)$, we denote this manifold by $M(\SS,\SS')$.
%Then the following diagram is commutative:
%\[
%\xymatrix{
%  F(M) \ar[r]^-{F_{M,\SS}} \ar[d]^-{F_{M,\SS'}} & F(M(\SS)) \ar[d]^{F_{M(\SS),\SS'}} \\
%  F(M(\SS'))  \ar[r]^-{F_{M(\SS'),\SS}} & F(M(\SS,\SS')).}
%\]
%\item \label{it:birth} If $\SS' \subset M(\SS)$ intersects the belt sphere of the handle attached along~$\SS$ once transversely,
%then there is a diffeomorphism $\varphi \colon M \to M(\SS)(\SS')$ (which is defined in Section~\ref{sec:phi} below; it is the identity on $M \cap %M(\SS)(\SS')$
%and is unique up to isotopy), for which
%\[
%F_{M(\SS),\SS'} \circ F_{M,\SS} = F(\varphi).
%\]
%\item \label{it:0-sphere} $F_{M,\SS} = F_{M,\ol{\SS}}$.
%\end{enumerate}
For a parameterized Cerf decomposition~$\CC$ of an oriented cobordism~$W$, let
\begin{equation} \label{eqn:comparison}
F(W,\CC) = \prod_{i=0}^{m} \left( F(d_i) \circ F_{M_i,\SS_i} \right) \colon F(M) \to F(M').
\end{equation}
Then $F(W,\CC)$ is independent of the choice of~$\CC$; we denote it by~$F(W)$.
Furthermore, $F \colon \Cob_n \to C$ is a functor that satisfies $F(d) = F(c_d)$
(see Definition~\ref{def:diffcob}) and $F(W(\SS)) = F_{M,\SS}$.

In the opposite direction, every functor $F \colon \Cob_n \to C$ arises in this way.
More precisely, if we let $F_{M,\SS} = F(W(\SS))$ and $F(d) = F(c_d)$,
then these morphisms satisfy relations~\eqref{it:isot}--\eqref{it:0-sphere},
and for any oriented cobordism~$W$, the morphism
$F(W)$ is given by equation~\eqref{eqn:comparison}.

Now suppose that $(C,\otimes,I_C)$ is a symmetric monoidal category.
Then the functor~$F$ is a TQFT if and only if $F \colon \Man_n \to C$ is symmetric and monoidal;
furthermore, given $n$-manifolds~$M$ and~$N$, and a framed sphere~$\SS$ in~$M$, the diagram
\begin{equation} \label{eqn:monoidal}
\xymatrixcolsep{3pc}\xymatrix{
  F(M) \otimes F(N) \ar[r]^-{\Phi_{M,N}} \ar[d]_-{F_{M,\SS} \otimes \text{Id}_{F(N)}} & F(M \sqcup N) \ar[d]^{F_{M \sqcup N, \SS}} \\
  F(M(\SS)) \otimes F(N) \ar[r]^-{\Phi_{M(\SS),N}} & F(M(\SS) \sqcup N).
  }
\end{equation}
is commutative, where $\Phi_{A,B} \colon F(A) \otimes F(B) \to F(A \sqcup B)$ are the comparison morphisms for~$F$.

An analogous result holds for $\Cob_n'$, and we can avoid $\SS = 0$ and framed $n$-spheres.
In the case of $\Cob_n^0$ for $n \ge 2$, we need to avoid $\SS = 0$ and $n$-spheres,
together with separating $(n-1)$-spheres. Finally, for~$\BSut'$, we have a similar result,
and we can avoid $\SS = 0$ and framed $3$-spheres.
\end{thm}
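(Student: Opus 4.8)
The plan is to deduce Theorem~\ref{thm:TQFT} almost entirely from the presentation Theorem~\ref{thm:presentation}, treating the latter as a black box. The core observation is that the data $(F\colon\Man_n\to C, \{F_{M,\SS}\})$ is precisely a graph morphism $\widetilde F\colon\G_n\to C$ (on vertices it is $F$ on objects, on the $\Man_n$-edges it is $F$ on diffeomorphisms, on the surgery edges it is $F_{M,\SS}$), which by the universal property of the free category extends uniquely to a functor $\cF(\G_n)\to C$. Relations~\eqref{it:isot}--\eqref{it:0-sphere} holding in $C$ say exactly that this functor kills $\cR$, hence factors through $\cF(\G_n)/\cR$. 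Composing with the inverse of the isomorphism $c\colon\cF(\G_n)/\cR\to\Cob_n$ gives a functor $F\colon\Cob_n\to C$. The formula~\eqref{eqn:comparison} for $F(W,\CC)$ is then just the image under this functor of the word $\prod_i(d_i\circ e_{M_i,\SS_i})$ representing $W$ in $\cF(\G_n)/\cR$, so independence of $\CC$ is immediate from well-definedness in the quotient category, and $F(d)=F(c_d)$, $F(W(\SS))=F_{M,\SS}$ hold by construction. The converse direction (every functor $\Cob_n\to C$ arises this way) is the same argument run backwards: given $F\colon\Cob_n\to C$, precompose with $c$ to land in $\cF(\G_n)/\cR$, restrict along $\G_n\hookrightarrow\cF(\G_n)\to\cF(\G_n)/\cR$ to recover the graph morphism, and note the relations are automatically satisfied since they hold in the quotient; that $F(W)$ is then given by~\eqref{eqn:comparison} follows because $c^{-1}(W)$ is represented by a parameterized Cerf decomposition.

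Next I would handle the monoidal/TQFT refinement. Here the input is that $F\colon\Man_n\to C$ is symmetric monoidal, i.e.\ comes with comparison isomorphisms $\phi_{M,N}\colon F(M)\otimes F(N)\to F(M\sqcup N)$ compatible with associativity, unit, and the symmetry. I must show the extended $F\colon\Cob_n\to C$ is a symmetric monoidal functor, equivalently (since $\cF(\G_n)/\cR\cong\Cob_n$ as symmetric monoidal categories, by the last sentence before Theorem~\ref{thm:presentation} and Theorem~\ref{thm:presentation} itself) that $F\colon\cF(\G_n)/\cR\to C$ is symmetric monoidal. A symmetric monoidal functor out of a symmetric monoidal category generated (as such) by a graph with relations is determined by: its values on objects and generating morphisms, the comparison morphisms $\phi$ on generating objects, naturality of $\phi$ with respect to the generating morphisms, and the coherence (associativity, unit, symmetry) hexagon/square diagrams. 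Naturality of $\phi$ with respect to diffeomorphism-edges is exactly monoidality of $F$ on $\Man_n$; naturality with respect to a surgery edge $e_{M,\SS}$ tensored with an identity $\Id_N$ is precisely the commutativity of diagram~\eqref{eqn:monoidal}. Since every generating morphism of $\cF(\G_n)$, after tensoring with an identity, is a diffeomorphism-edge or a surgery-edge (up to the symmetry isomorphisms, which are themselves built from generators via $\Man_n$), checking naturality on these suffices. The coherence diagrams for $\phi$ are inherited directly from the symmetric monoidal structure on $\Man_n\subset\cF(\G_n)$, since the symmetry and associators in $\Cob_n$ are images of those in $\Man_n$. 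Conversely, if $F\colon\Cob_n\to C$ is a TQFT, restricting to $\Man_n$ yields a symmetric monoidal functor and the square~\eqref{eqn:monoidal} is an instance of naturality of the (restricted) comparison morphisms, so it commutes.

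Finally, the variants for $\Cob_n'$, $\Cob_n^0$, and $\BSut'$ are obtained by the same mechanism applied to the corresponding statements in Theorem~\ref{thm:presentation} about $\cF(\G_n')/\cR$, $\cF(\G_n^0)/\cR$, and $\cF(\G^s)/\cR^s$. The only new content is the claim about which framed spheres can be avoided: for $\G_n'$ the edges with $\SS=0$ or $\SS$ a framed $n$-sphere are simply not present, so $F_{M,\SS}$ need not be specified for those; for $\G_n^0$ one must additionally check that separating $(n-1)$-spheres are excluded, which is the content of the definition of $\G_n^0$ as the full subgraph on connected objects (surgery on a separating $(n-1)$-sphere disconnects), so again no data is needed there; and for $\BSut'$, $\G^s$ only has edges for surgeries along framed $0$-, $1$-, and $2$-spheres in the interior, with $\SS=0$ and framed $3$-spheres excluded. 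I expect the main obstacle to be the bookkeeping in the monoidal refinement: verifying that naturality of $\phi$ on the generating morphisms of $\cF(\G_n)$ (together with the coherence axioms pulled back from $\Man_n$) genuinely suffices to make $F$ a symmetric monoidal functor on the quotient, i.e.\ that one does not need to impose further relations coming from the interaction of $\cR$ with the monoidal structure. This reduces to the general principle that a strong symmetric monoidal functor out of a symmetric monoidal category presented by generators and relations is determined by compatible data on generators; I would either cite this from Mac Lane~\cite{MacLane}/Kock~\cite{Kock} or verify it directly by a string-diagram/coherence argument, using that $\sqcup$ on $\cF(\G_n)/\cR$ is strict and the isomorphism of Theorem~\ref{thm:presentation} is strict monoidal so that all the coherence subtleties live on the $C$-side via $\phi$.
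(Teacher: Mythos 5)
Your proposal is correct and follows essentially the same route as the paper: everything is deduced from Theorem~\ref{thm:presentation} by factoring the graph morphism through $\cF(\G_n)/\cR$ and composing with $c^{-1}$, with the monoidal refinement reduced to naturality of the comparison morphisms on generators tensored with identities (diffeomorphism edges handled by monoidality of $F|_{\Man_n}$, surgery edges by diagram~\eqref{eqn:monoidal}). The only difference is presentational: where you would invoke or verify a general principle about symmetric monoidal functors out of presented categories, the paper just carries out the direct verification, splitting $V\sqcup W$ as $(\Id\sqcup W)\circ(V\sqcup\Id)$, using the symmetry to reduce to one-sided squares, and then running a parameterized Cerf decomposition of $V$ — which is exactly the unwinding of your "verify it directly" fallback.
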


\begin{rem}
To illustrate why working with Cerf decompositions without the parameterization is insufficient
to define the cobordism morphism~$F(W)$, consider the simplest possible case when $W$ itself
is diffeomorphic to~$M \times I$. Then this is a Cerf decomposition with a single component.
Given a diffeomorphism $D \colon M \times I \to W$, let $d_t = D|_{M \times \{t\}}$; then
it is natural to define $F(W)$ as $F(d_1 \circ d_0^{-1})$. However, $D$ is not unique,
and for different choices we only know that the corresponding $d_1 \circ d_0^{-1}$ are
pseudo-isotopic, not necessarily isotopic, and hence a priori might induce different homomorphisms
via~$F$. To avoid this issue, we identify each component~$W_i$ of the Cerf decomposition with a concrete
handle cobordism~$W(\SS_i)$, and once we know this induces a TQFT, we obtain as a corollary
that pseudo-isotopic diffeomorphisms induce the same morphism.

When~$W$ is cylindrical, one might have to pass through a sequence of moves between Cerf decompositions to get from
one parametrization of~$W$ as a product to another. For example, by Kwasik and Schultz~\cite[Corollary]{pseudo},
if $M$ is the connected sum of two metacyclic prism 3-manifolds, then it admits
an automorphism~$d$ that is pseudo-isotopic but not isotopic to the identity.
Hence there is a diffeomorphism $D \colon M \times I \to M \times I$ such that
$D(x,0) = x$ and $D(x,1) = (d(x),1)$ for every $x \in M$. So, if $W$ is the identity
cobordism from $M$ to $M$, then $W = W_0 = M \times I$ with $\SS_0  = \emptyset$
and $d_0 = d$ is a different parameterized Cerf decomposition than for $d_0 = \Id_M$.
The first decomposition arises from the Morse function $f(x,t) = p_I \circ D^{-1}(x,t)$
on $M \times I$, where $p_I \colon M \times I \to I$ is the projection, while the second one
from the Morse function $p_I$. Then $f$ and $p_I$ cannot be connected with a family
of Morse functions with no critical points, as otherwise $d$ and $\Id_M$ would be isotopic.
\end{rem}

It might come as a surprise that handleslide invariance does not feature among the relations in Definition~\ref{def:relations}.
This is because the proof of Theorem~\ref{thm:presentation}
relies on proper and not self-indexing Morse functions, and a handleslide
can be replaced by moving one of the corresponding critical points to a higher level, isotoping
its framed attaching sphere, then moving it back to the same level. So handleslide invariance follows
from relations~\eqref{it:d-F} and~\eqref{it:commut}.

Segal~\cite[p.~34]{Segal} raised a related question on describing TQFTs via surgery
in terms of categories associated to products of spheres (along which the surgered disks are glued),
but this was never completed due to technical difficulties.
For a related result on 2-framed (2+1)-dimensional TQFTs, see the work of Sawin~\cite{Sawin},
where he outlines a Kirby calculus approach. Note that a Kirby calculus approach to constructing numerical invariants
of $3$-manifolds was suggested by Reshetikhin and Turaev in the introduction of~\cite{RT}.

\subsection{Applications to the classification of TQFTs}
Theorem~\ref{thm:TQFT} provides a powerful method for classifying TQFTs.
As our first application, we give a short, five pages long proof of
the classical theorem that the category of (1+1)-dimensional oriented TQFTs is
equivalent to the category of commutative Frobenius algebras. This also serves as a warmup for the
(2+1)-dimensional case: We obtain a complete classification of
(2+1)-dimensional oriented TQFTs with target category $\Vect$.
Specializing to this target allows us to carry out certain computations and
simplifications that are not possible in general symmetric monoidal categories.
%For the closest result, see the preprint of Kontsevich~\cite{Kontsevich}.
As to be expected, the corresponding algebraic structure is more complicated than
in the (1+1)-dimensional case, but surprisingly only moderately, and can probably be simplified further,
which is the subject of future research.
For the definition of \emph{split graded involutive nearly Frobenius algebras} (or split \gnf-algebras in short),
see Definitions~\ref{def:grad-frob} and~\ref{def:splitting},
and for \emph{mapping class group representations} on these, see Definition~\ref{def:repr}.
A \emph{J-algebra} is a split \gnf-algebras endowed with a mapping class group representation.
These form a symmetric monoidal category that we denote by $\JAlg$.
Similar structures, called weight homogeneous tensor representations
were defined by Funar~\cite[p.~411]{Funar},
which correspond to certain lax monoidal (2+1)-dimensional TQFTs.
Our second main result is the following, which answers~\cite[Problem~8.1]{Ohtsuki}.

\begin{thm} \label{thm:2+1}
There is an equivalence between the
symmetric monoidal category of (2+1)-dimensional TQFTs and $\JAlg$.
\end{thm}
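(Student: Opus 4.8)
The plan is to use Theorem~\ref{thm:TQFT} in the case $n = 2$ (more precisely, for $\Cob_2^0$, since a TQFT is determined by its restriction to connected surfaces and connected cobordisms via the monoidal structure) to translate the datum of a $(2+1)$-dimensional TQFT $F \colon \Cob_2 \to \Vect$ into: (i) a symmetric monoidal functor $F \colon \Man_2 \to \Vect$, and (ii) surgery morphisms $F_{M,\SS}$ along framed $0$-, $1$-, and $2$-spheres, subject to relations~\eqref{it:isot}--\eqref{it:0-sphere} and the monoidal compatibility diagram~\eqref{eqn:monoidal}. First I would unpack datum (i): since every closed connected oriented surface is $\Sigma_g$ for a unique $g \ge 0$, a monoidal functor on $\Man_2$ assigns a vector space $A_g = F(\Sigma_g)$ to each genus together with an action of $\MCG(\Sigma_g)$ on $A_g$ (from relation~\eqref{it:isot}, which kills isotopically trivial diffeomorphisms), and the monoidal structure forces $A_g$ to be built from $A_1$ via the pair-of-pants maps; one sets $A = A_1$, the underlying vector space of the J-algebra. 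Next I would unpack datum (ii): a framed $0$-sphere in a connected surface either connects two components (disallowed in $\Cob_2^0$) or, after surgery, increases genus by $1$; a framed $1$-sphere either reduces genus by $1$ (non-separating curve) or is a separating curve, which in $\Cob_2^0$ we are allowed to avoid; a framed $2$-sphere we also avoid. So the essential surgery data are a "genus-raising" map and a "genus-lowering" map between the $A_g$'s, which, read through the pair-of-pants identifications, become the multiplication $m \colon A \otimes A \to A$, comultiplication $\Delta$, unit, and counit of the nearly Frobenius structure, while relation~\eqref{it:0-sphere} (the $\SS \sim \ol\SS$ relation) produces the involution, and the grading comes from tracking genus.

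The core of the argument is then a dictionary: the relations~\eqref{it:isot}--\eqref{it:0-sphere} together with~\eqref{eqn:monoidal} must be shown to be equivalent to the axioms of a split graded involutive nearly Frobenius algebra equipped with a compatible mapping class group representation (Definitions~\ref{def:grad-frob}, \ref{def:splitting}, \ref{def:repr}). I would organize this as two lemmas. \emph{Lemma A (TQFT $\Rightarrow$ J-algebra):} given $F$, define $m, \Delta$, unit, counit, involution, and the $\MCG$-representations as above, and verify each J-algebra axiom by exhibiting the corresponding identity among cobordisms/surgeries in $\Cob_2^0$ and applying $F$; associativity, (co)unitality, the Frobenius-type relations, and equivariance all correspond to standard surface identities, while the "nearly" Frobenius defect and the splitting axiom encode precisely which of relations~\eqref{it:birth} and~\eqref{it:commut} survive when one is not allowed to use separating curves. \emph{Lemma B (J-algebra $\Rightarrow$ TQFT):} given a J-algebra, define $F$ on objects by the genus-to-$A_g$ assignment, on diffeomorphisms by the $\MCG$-representation, and on surgery edges by $m, \Delta$, unit, counit (suitably tensored with identities and conjugated by pair-of-pants maps); then check relations~\eqref{it:isot}--\eqref{it:0-sphere} and~\eqref{eqn:monoidal} hold, which by Theorem~\ref{thm:TQFT} yields a genuine TQFT. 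Finally I would check that these two constructions are mutually inverse and functorial in the appropriate sense, i.e., that a natural transformation of TQFTs corresponds exactly to a morphism of J-algebras, upgrading the bijection on objects to an equivalence of the symmetric monoidal categories $\Cob_2\text{-TQFT}$ and $\JAlg$; the symmetric monoidal structure on both sides is disjoint union versus tensor product of J-algebras, and compatibility is formal once Lemmas A and B are in place.

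The main obstacle I expect is Lemma A's verification that relations~\eqref{it:commut} and~\eqref{it:birth}, which a priori involve surgeries along \emph{all} framed $1$-spheres including separating ones, are fully captured by the finitely many J-algebra axioms after we restrict to $\Cob_2^0$ and discard separating curves. Concretely: a separating curve on $\Sigma_g$ splits it as $\Sigma_{g_1} \# \Sigma_{g_2}$, and surgery along it is not an allowed generator, yet relation~\eqref{it:commut} relating it to a disjoint non-separating curve still imposes constraints on the allowed data; one must show these constraints are consequences of the splitting axiom plus the $\MCG$-equivariance, rather than genuinely new relations. This is where the bookkeeping is delicate, because the mapping class group $\MCG(\Sigma_g)$ mixes separating and non-separating curves (they are not all in one orbit for $g \ge 2$), so equivariance alone does not immediately reduce everything to the genus-$1$ case — one needs the explicit generators of $\MCG(\Sigma_g)$ (Dehn twists, or a Humphries-type generating set) and the interplay of relation~\eqref{it:d-F} with the pair-of-pants decompositions to propagate the genus-$1$ structure coherently to all genera. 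I anticipate this step occupies the bulk of the (2+1)-dimensional section and is exactly the point where the extra complexity over the (1+1)-dimensional case (where $\MCG(\Sigma_g)$ plays no role and there are no higher-genus subtleties) enters.
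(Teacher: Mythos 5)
Your high-level strategy — use Theorem~\ref{thm:TQFT} to replace a TQFT by a functor on $\Man_2$ plus surgery maps satisfying relations~\eqref{it:isot}--\eqref{it:0-sphere} and diagram~\eqref{eqn:monoidal}, then build a dictionary with the algebraic axioms in both directions and check the two constructions are inverse functors — is indeed the skeleton of the paper's proof (Propositions~\ref{prop:AF} and~\ref{prop:FA}). But two of your structural choices break the argument. First, the reduction to $\Cob_2^0$ with separating $1$-spheres, $2$-spheres and $0$-handles discarded is not harmless: a TQFT on $\Cob_2$ is \emph{not} determined by its restriction to $\Cob_2^0$, since that subcategory excludes the empty surface and hence all caps ($D^3$ as a cobordism $S^2 \to \emptyset$ or $\emptyset \to S^2$) and closed $3$-manifolds; moreover the surgeries you propose to avoid are exactly the ones that produce the J-algebra operations. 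In the paper, $\mu_{i,j}$ is the map of the $0$-surgery $\PP_{i,j}$ joining two \emph{components}, $\delta_{i,j}$ is the map of the surgery along the \emph{separating} curve $s_i$, and $\eps,\tau$ come from $\SS=0$ and $D^3$. Working in $\Cob_2^0$ you retain only $\omega$ (genus-raising $0$-surgery inside a component), $\alpha$ (non-separating $1$-surgery) and the $\MCG$-actions, which is far less than a J-algebra, and your worry about separating curves "still imposing constraints" dissolves in the paper's setup because those surgeries are part of the generating data, not relations to be propagated.

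Second, and more seriously, the identification of the underlying object is wrong: you set $A = A_1$ and claim "the monoidal structure forces $A_g$ to be built from $A_1$ via the pair-of-pants maps." Monoidality only relates $F$ on \emph{disjoint unions}; $\Sigma_g$ is connected, and a non-extended $(2+1)$-dimensional TQFT assigns no gluing data along circles, so $F(\Sigma_g)$ is in no way generated from $F(\Sigma_1)$ — this is precisely the difference between $(2+1)$- and $(1+1+1)$-dimensional theories that Proposition~\ref{prop:nonextending} exploits (think of Quinn-type or multiplicative-invariant theories where the $F(\Sigma_g)$ are unrelated to any pair-of-pants composition). The correct underlying object, as in Definition~\ref{def:J-alg}, is the genus-graded space $A = \bigoplus_{g} A_g$ with $A_g = F(\Sigma_g)$ an independent graded piece carrying $\rho_g$, the involution being $\rho_g(\iota_g)$ rather than a consequence of relation~\eqref{it:0-sphere} (that relation instead yields the anti-automorphism compatibilities of $*$ with $\mu$ and $\delta$). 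With $A = A_1$ your "dictionary" cannot even state the splitting $(\a,\omega)$ or condition~\eqref{it:frobtwist}, so Lemma~A and Lemma~B as you formulate them would not reproduce the J-algebra axioms, and the proposed equivalence does not go through without reorganizing the construction along the lines above.
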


%As explained in the proof of Proposition~\ref{prop:nonextending}, there are $2^{2^\omega}$ different
%(2+1)-dimensional lax monoidal TQFTs over~$\C$,
%which potentially makes Theorem~\ref{thm:2+1} a rich source of mapping class group representations.
%Except for countably many, (2+1)-dimensional TQFTs do not come from the Reshetikhin-Turaev construction,
%hence do not extend to a (1+1+1)-dimensional TQFT by the work of Bartlett et al.~\cite{BDSV2}.

Let $\S_g$ denote a closed oriented surface of genus~$g$.
We use Theorem~\ref{thm:2+1} to show that, given a (2+1)-dimensional TQFT~$F$ over~$\C$
such that $\dim F(\S_g) < 2g$ for infinitely many~$g \in \N$,
the action of the mapping class group of~$\S_k$ on~$F(\S_k)$ is trivial for every~$k \in \N$.
This implies the following structure theorem, which we will prove in Proposition~\ref{prop:n}.

\begin{cor} \label{cor:n}
Suppose that $F$ is an oriented (2+1)-dimensional TQFT over~$\C$
such that $\dim F(\S) = n$ for every connected oriented surface~$\S$ for some constant~$n$.
Then $F$ is naturally isomorphic to the TQFT~$(F_1)^{\oplus n}$ given by
$F_1(\S) = \C$ for any surface~$\S$ and $F_1(W) = \Id_\C$ for any cobordism~$W$ (where we identify $\C^{\otimes k}$ with $\C$),
and we take the direct sum of TQFTs as defined by Durhuus and Jonsson~\cite{DJ}.
\end{cor}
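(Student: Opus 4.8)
The plan is to apply Theorem~\ref{thm:2+1}, which reduces a (2+1)-dimensional TQFT~$F$ to a J-algebra, and then to analyze the constraint $\dim F(\S_g) = n$ for all~$g$. First I would invoke the auxiliary fact mentioned in the paragraph preceding the corollary: under the weaker hypothesis $\dim F(\S_g) < 2g$ for infinitely many~$g$, the mapping class group action on each $F(\S_k)$ is trivial. Since $\dim F(\S_g) = n$ is a fixed constant, it is certainly less than $2g$ for all large~$g$, so this hypothesis is satisfied and the MCG representation in the J-algebra structure of~$F$ is trivial. This strips the J-algebra down to its underlying split graded involutive nearly Frobenius algebra~$A$, with no further representation-theoretic data.

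Next I would exploit the \emph{split} and \emph{graded} structure together with the dimension constraint. The splitting gives a grading $A = \bigoplus_g A_g$ where $A_g$ should correspond to the genus-$g$ summand of~$F(\S_g)$, and the hypothesis forces $\dim A_g = n$ for every~$g$ — in particular $\dim A_0 = n$, where $A_0 = F(S^2)$. I would examine the Frobenius-type multiplication and comultiplication: the pair of pants gives a product $F(\S_g) \otimes F(\S_h) \to F(\S_{g+h})$ and the genus-adding cobordism gives an endomorphism of each $F(\S_g)$. With trivial MCG action and constant dimension, the genus-adding map and the handle (pair-of-pants composed with its reverse) operator must be, up to the nearly-Frobenius relations, compatible with all dimensions being equal; I expect this to force the structure maps to be essentially those of $n$ copies of the trivial algebra $\C$, i.e. $A \cong \C^{\oplus n}$ as a Frobenius-type algebra with the genus operator acting as the identity (after rescaling). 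Concretely, I would show the counit/unit pairing is nondegenerate on an $n$-dimensional space $A_0$, diagonalize, and check that the comultiplication cannot introduce any nontrivial idempotent structure beyond the $n$ orthogonal ones without violating $\dim A_g = n$.

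Having identified the J-algebra of~$F$ with that of $(F_1)^{\oplus n}$ — where $F_1$ is the trivial TQFT sending every surface to~$\C$ and every cobordism to $\Id_\C$, and direct sums are taken in the sense of Durhuus--Jonsson~\cite{DJ} — I would then apply the equivalence of Theorem~\ref{thm:2+1} in the reverse direction: two (2+1)-dimensional TQFTs with equivalent J-algebras are naturally isomorphic. This yields $F \cong (F_1)^{\oplus n}$ and completes the proof. Throughout, I would need to check that the direct-sum operation on TQFTs of~\cite{DJ} corresponds under the equivalence to the direct sum of J-algebras, which I expect to be a routine compatibility that can be verified on generators (diffeomorphisms and surgery morphisms) using Theorem~\ref{thm:TQFT}.

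The main obstacle I anticipate is the middle step: ruling out all ``exotic'' split graded involutive nearly Frobenius algebras of constant graded dimension~$n$ with trivial MCG action. The nearly Frobenius condition is weaker than Frobenius (the Frobenius relation need not hold on the nose), so the classification of such algebras is genuinely more delicate than the (1+1)-dimensional commutative Frobenius algebra case; I would need to use the dimension constraint aggressively — for instance, that the genus-raising endomorphism of $F(\S_g)$ is an isomorphism onto $F(\S_{g+1})$ precisely because both have dimension~$n$ — to pin down that the multiplication is semisimple with exactly~$n$ one-dimensional blocks and that the handle operator is a scalar on each block, which forces the scalar to be~$1$ after normalizing the Frobenius form. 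This is where the bulk of the work lies, and it is carried out in detail in the proof of Proposition~\ref{prop:n}.
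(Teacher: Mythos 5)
Your outer strategy (reduce to the J-algebra via Theorem~\ref{thm:2+1}, kill the mapping class group action via the dimension bound, then identify the algebra) matches the paper's, but the middle step --- the only place where real work happens --- is not actually proved, and the justifications you sketch for it would not succeed. You need to show that the split \gnf-algebra of~$F$ is isomorphic to $n$ copies of the algebra $\C[x]$ of Example~\ref{ex:simple}, and your two proposed mechanisms are insufficient: (i) nondegeneracy of the unit/counit pairing on $A_0 = F(S^2)$ does \emph{not} rule out non-semisimple summands, since the nilpotent Frobenius algebras $N_{A,\tau}$ of Proposition~\ref{prop:base} also have nondegenerate pairings; and (ii) no ``normalization of the Frobenius form'' can force the handle operator $\mu_{i,n-i}\circ\delta_{i,n-i}$ to be the identity --- that operator is a genuine invariant of the TQFT, not a choice of scaling. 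The fact that actually forces it is condition~\eqref{it:frobtwist} of Definition~\ref{def:repr}: with $\rho$ trivial it reads $\mu_{i,n-i}\circ\delta_{i,n-i} = \a_{n+1}\circ\omega_n = \Id_{A_n}$, and in particular (taking $n=i=0$, equation~\eqref{eqn:spec}) $A_0$ is a \emph{special} commutative Frobenius algebra, hence a direct sum of $n$ copies of the one-dimensional special algebra. Your sketch never invokes this axiom, and deferring the ``bulk of the work'' to the proof of Proposition~\ref{prop:n} is circular in a blind proof.

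The paper also routes the decomposition differently from what you propose, in a way that avoids the compatibility check you flag at the end. Rather than decomposing the J-algebra and then passing back through the equivalence (which would require verifying that Durhuus--Jonsson direct sums of TQFTs correspond to direct sums of J-algebras), it applies Sawin's theorem (Proposition~\ref{prop:decomp}) directly at the TQFT level: since $F(S^2)$ splits as $n$ copies of the trivial Frobenius algebra, $F \cong Z_1 \oplus \dots \oplus Z_n$ with each $Z_i$ based on it. Injectivity of $\omega$ (from $\a\circ\omega = \Id$) gives $\dim Z_i(\S_g) \ge 1$, and the count $\sum_i \dim Z_i(\S_g) = n$ forces $\dim Z_i(\S_g) = 1$ for all $g$; then Proposition~\ref{prop:onedim} identifies each $Z_i$ with $F_1$ by an explicit computation (again using condition~\eqref{it:frobtwist} with trivial $\rho$ to get $\d_{i,n-i} = \mu_{i,n-i}^{-1}$). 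Your observation that $\omega_g$ is an isomorphism because both sides have dimension $n$ is correct and is indeed part of the argument, but by itself it does not pin down the multiplication or coproduct; you must bring in the J-algebra axiom tying $\a\circ\omega$ to $\mu\circ\d$, and either Sawin's structure results or an equivalent algebraic classification of special commutative Frobenius algebras, to close the gap.
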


\begin{ex}
To illustrate the non-triviality of this seemingly simple statement even for $n=1$,
consider Quinn's TQFT~$Q_\a$ for some $\a \in \R$, restricted to cobordisms of surfaces~\cite{Quinn}.
This is defined as $Q_\a(\S) = \C$ for any surface~$\S$, and a cobordism $W$ from $\S_0$
to~$\S_1$ induces the map $Q_\a(W)(z) = e^{i\a \chi(W,\S_0)} z$ for any $z \in Q_\a(\S_0) = \C$.
According to Corollary~\ref{cor:n}, this is naturally isomorphic to the TQFT $F_1$.
Indeed, for a surface $\S$, consider the transformation $N_\a(\S) \colon F_1(\S) \to Q_\a(\S)$
given by $N_\a(\S)(z) = e^{i\a \chi(\S)/2}z$ for $z \in F_1(\S) = \C$. This is natural
since a cobordism $W$ from $\S_0$ to~$\S_1$ satisfies $\chi(W) = (\chi(\S_0) + \chi(\S_1))/2$,
and hence $\chi(W,\S_0) = (\chi(\S_1) - \chi(\S_0))/2$.
\end{ex}

\begin{ex}
Together with Bartlett, in a forthcoming paper, we will give a non-trivial example of a functor $F \colon \Man_2 \to \Vect_\C$
together with surgery maps, where a simple check of the relations of Theorem~\ref{thm:TQFT}
shows that this data gives rise to a (2+1)-dimensional TQFT. More concretely, let $C$ be a spherical
fusion category. For a surface $\S$, we define $F(\S)$ to be the $\C$-vector space generated by
string-nets over~$C$; these are isotopy classes of embedded $C$-labeled graphs modulo a local equivalence relation.
Given a framed sphere $\SS$ in $\S$, there is a representative of the string-net in its equivalence class disjoint from it,
and performing the surgery on $\S$ along $\SS$ naturally gives rise to a string-net on $\S(\SS)$.
\end{ex}

\subsection{Applications to Heegaard Floer homology} \label{sec:HF}

We use Theorem~\ref{thm:TQFT} to construct functorial cobordism maps induced on sutured Floer homology and link Floer homology,
and a splitting of these along $\spinc$ structures using
a $\spinc$ refinement of Theorem~\ref{thm:TQFT} combined with Kirby calculus~\cite{cobordism}.
Heegaard Floer homology will not feature in the rest of the present paper, but as it was a key motivation
for Theorem~\ref{thm:TQFT}, we discuss the relationship below. For further details, refer to~\cite{cobordism}.

Heegaard Floer homology, defined by Ozsv\'ath and Szab\'o~\cite{OSz, OSz8}, consists of 3-manifold
invariants $\HF^+$, $\HF^-$, $\HF^\infty$, and $\widehat{\HF}$, together with cobordism maps induced on each,
and they admit refinements along $\spinc$ structures.
Every flavor is a type of $(3+1)$-dimensional TQFT, with some caveats such as they are only defined for connected 3-manifolds
and for connected cobordisms between them, there is no unique way of composing $\spinc$ cobordisms, and
to obtain an interesting closed 4-manifold invariant (conjectured to coincide with
the Seiberg-Witten invariant), one has to mix the $+$, $-$, and $\infty$ flavors.
In particular, they are functors from $\Cob_3^0$ to the category of $\Z[U]$-modules.
Mrowka called such a theory a ``secondary TQFT,'' but no precise axioms for these exist to date.
Ozsv\'ath and Szab\'o~\cite{OSz10} constructed the cobordism maps in Heegaard Floer homology
via composing surgery maps, and to check this is independent of the surgery description of the cobordism,
they used Kirby calculus.

The author noticed that there was a gap in the functorial construction
of the Heegaard Floer invariants due to the lack of connection between the 3-manifold and the
Heegaard diagrams used in their definitions. Together with Dylan Thurston~\cite{naturality},
we fixed this by considering Heegaard diagrams embedded in the 3-manifold.
An unexpected consequence of this was that $\widehat{HF}$ depends on the choice of a basepoint;
see the work of Zemke~\cite{Zemke} for a precise formula describing this dependence.

In light of this, I revisited~\cite{cobordism} the construction of the cobordism maps and extended
it to sutured manifold and link cobordisms using Theorem~\ref{thm:TQFT}.
A key point is that one has to
keep track of identifications and what happens to the embedding of the Heegaard diagram
while performing the Kirby moves to make the proof of~\cite[Theorem~3.8]{OSz10}
completely rigorous. For example, see the discussion about diffeomorphisms induced by handleslides
on page~170 of the book of Gompf and Stipsicz~\cite{GS}.

To get the $\spinc$ refinement, Ozsv\'ath and Szab\'o ingeniously attach all 2-handles simultaneously
to circumvent the non-uniqueness of the composition of $\spinc$ cobordisms,
which makes the use of Kirby calculus necessary.
They essentially checked all the necessary invariance properties, modulo the above mentioned
naturality issues due to not keeping track of identifications, and the sufficiency of these properties
is only sketched in the proof of~\cite[Theorem~3.8]{OSz10}. As it turns out~\cite{cobordism, Zemke},
the cobordism maps on $\widehat{\HF}$ also depend on an arc connecting the basepoints,
justifying the extra careful approach of this work.

\subsection*{Organization}

In Section~\ref{sec:TQFTs}, we review cobordism categories and TQFTs.
We define parameterized Cerf decompositions and Morse data in Section~\ref{sec:decomp}.
Lemmas~\ref{lem:lift} and~\ref{lem:unique} imply there is an essentially unique correspondence
between the two. We define a set of moves on Morse data in Section~\ref{sec:moves} that
arise from bifurcations in generic 1-parameter families.
Furthermore, we translate these to moves on Cerf decompositions, and show in Theorem~\ref{thm:Cerf}
that any two Morse data on a cobordism can be connected by a sequence of such moves.
We prove Theorems~\ref{thm:presentation} and~\ref{thm:TQFT} using the machinery of parameterized
Cerf decompositions in Section~\ref{sec:construction}.

In Section~\ref{sec:1+1}, as a warmup,
we reprove the classification of (1+1)-dimensional TQFTs using Theorem~\ref{thm:TQFT}.
We explain how to assign a J-algebra to a (2+1)-dimensional TQFT in Section~\ref{sec:assignment}.
We define split \gnf-algebras in Section~\ref{sec:gnf} and mapping class group representations
on these in Section~\ref{sec:MCG}, and study their algebraic properties.
We prove Theorem~\ref{thm:2+1} in Section~\ref{sec:proof2+1}, and present some examples and applications
in Section~\ref{sec:applications}.

\subsection*{Acknowledgement}

I would like to thank Bruce Bartlett, Andr\'e Henriques, Oscar Randal-Williams,
Graeme Segal, Peter Teichner, and Ulrike Tillmann for helpful discussions,
and the anonymous referees for their constructive suggestions.

The author was supported by a Royal Society Research Fellowship.
This project has received funding from the European Research Council (ERC) under the European
Union's Horizon 2020 research and innovation programme (grant agreement No 674978).
I would also like to thank the Isaac Newton Institute for its hospitality.

\section{Parameterized Cerf decompositions} \label{sec:Cerf}

\subsection{Cobordism categories and TQFTs} \label{sec:TQFTs}

When talking about cobordism categories, it is important to keep the following
definition in mind, see Milnor~\cite[Definition~1.5]{Milnor}.

\begin{defn}
A \emph{cobordism} from~$M_0^n$ to $M_1^n$ is a 5-tuple $(W;V_0,V_1;h_0,h_1)$, where~$W$
is a compact $(n+1)$-manifold such that $\partial W$ is the disjoint union of~$V_0$
and~$V_1$, and $h_i \colon V_i \to M_i$ are diffeomorphisms for $i \in \{0,1\}$.

If~$M_0$ and~$M_1$ are oriented, we require that~$W$ be oriented as well, such that if~$V_0$ and~$V_1$
are given the boundary orientation, then~$h_0$ is orientation reversing, while~$h_1$ is orientation
preserving.
\end{defn}

Given cobordisms from~$M_0$ to~$M_1$ and~$M_1$ to~$M_2$, we can glue them together, but the smooth
structure on the result
is only well-defined up to diffeomorphism fixing the boundaries. Hence, to be able to
define the composition of cobordisms, we consider the following equivalence relation.

\begin{defn}
The cobordisms $(W;V_0,V_1;h_0,h_1)$ and $(W';V_0',V_1';h_0',h_1')$ from $M_0$ to~$M_1$ are
\emph{equivalent} if there is a diffeomorphism $g \colon W \to W'$ such that $g(V_i) = V_i'$
and $h_i' \circ g|_{V_i} = h_i$ for $i \in \{0,1\}$.
\end{defn}

\begin{defn} \label{def:diffcob}
We can assign a cobordism to any diffeomorphism as follows.
Suppose that $h \colon M \to M'$ is a diffeomorphism of $n$-manifolds. Then let~$c_h$
be the equivalence class of the tuple
\[
(M \times I; M \times \{0\}, M \times \{1\}; p_0, h_1),
\]
where~$p_0(x,0) = x$ and~$h_1(x,1) = h(x)$ for every $x \in M$.
\end{defn}

Recall that two diffeomorphisms $h$, $h' \colon M \to M'$ are \emph{pseudo-isotopic} if
there is a diffeomorphism $g \colon M \times I \to M' \times I$ such that $g(x,i) = (h_i(x),i)$
for~$i \in \{0,1\}$ and~$x \in M$. Note that~$g$ does not have to preserve level sets.
Then $c_{h_0} = c_{h_1}$ if and only if~$h_0$ and~$h_1$ are pseudo-isotopic; see \cite[Theorem~1.9]{Milnor}.
Furthermore, $c_{h'} \circ c_h = c_{h' \circ h}$, where we write the composition of
cobordism from right-to-left, as opposed to Milnor~\cite[Theorem~1.6]{Milnor}.
The following is based on~\cite[Definition~1.5]{Milnor}.

\begin{defn}
Let $\Cob_n$ be the category whose objects are closed oriented $n$-manifolds, and whose
morphisms are equivalence classes of oriented cobordisms. For an $n$-manifold~$M$, the identity
morphism~$i_M := c_{\Id_M}$.
%is the equivalence class of the tuple
%\[
%(M \times I; M \times \{0\}, M \times \{1\}; p_0, p_1),
%\]
%where $p_i \colon M \times \{i\} \to M$ is the map~$p_i(x,i) = x$.
\end{defn}

The description of the identity morphism highlights the role of the parameterizations~$h_i$,
as only using triads~$(W;V_0,V_1)$, we would not have any morphisms from~$M$ to itself.

\begin{defn} \label{def:tqft}
Let $\Vect$ be the category of vector spaces and linear maps over some field~$\F$.
An \emph{$(n+1)$-dimensional topological quantum field theory} is a functor
\[
F \colon \Cob_n \to \Vect
\]
such that for any two closed $n$-manifolds~$M$ and~$M'$, there are \emph{natural} isomorphisms
$\Phi_{M,M'} \colon F(M) \otimes F(M') \to F(M \sqcup M')$ and
$\Phi \colon \F \to F(\emptyset)$, which are part of the data,
that make the following diagrams commutative:
\[
\xymatrixcolsep{9pc}\xymatrix{
(F(M) \otimes F(N)) \otimes F(P) \ar[r]^-{\Phi_{M \sqcup N, P} \circ \left(\Phi_{M,N} \otimes \Id_{F(P)}\right)}
\ar[d] & F((M \sqcup N) \sqcup P) \ar[d] \\
F(M) \otimes (F(N) \otimes F(P)) \ar[r]^-{\Phi_{M, N \sqcup P} \circ \left(\Id_{F(M)} \otimes \Phi_{N,P}\right)} &
F(M \sqcup (N \sqcup P)),
}
\]
\[
\xymatrixcolsep{7pc}\xymatrix{
F(M) \otimes \F \ar[r]^-{\Phi_{M \sqcup \emptyset} \circ \left(\Id_{F(M)} \otimes \Phi \right)} \ar[d] &
F(M \sqcup \emptyset) \ar[d] \\
F(M) \ar[r]^-{=} & F(M).}
\]
In other words, $F$ \emph{preserves the monoidal structure} on $\Cob_n$ given by the disjoint union
and on $\Vect$ given by the tensor product.
Furthermore, the functor $F$ is \emph{symmetric} in the sense that
\[
\xymatrixcolsep{3pc}\xymatrix{
  F(M) \otimes F(M') \ar[r]^-{\Phi_{M,M'}} \ar[d]^-{F(c_s)} & F(M \sqcup M') \ar[d]^{r} \\
  F(M') \otimes F(M) \ar[r]^-{\Phi_{M',M}} & F(M' \sqcup M),}
\]
where $s \colon M \sqcup M' \to M' \sqcup M$ is the diffeomorphism swapping the two
factors, and $r(x \otimes y) = y \otimes x$.

More generally, $\Vect$ could be replaced by any symmetric monoidal category.
Similarly, a TQFT on the category of \emph{connected} $n$-manifolds is a functor
\[
F \colon \Cob_n^0 \to \Vect,
\]
but in this case we drop the conditions on disjoint unions.
%A TQFT on an \emph{oriented} cobordism category has to
%satisfy the additional property that there are functorial isomorphism~$F(-M) \cong F(M)^*$.
\end{defn}

Given an orientation preserving diffeomorphism~$h$, we denote the map~$F(c_h)$ by~$h_*$.
We shall see in Lemma~\ref{lem:diffeo-maps} that if~$F$ arises from a functor
$F \colon \Man_n \to \Vect$ and surgery maps~$F_{M,\SS}$ as in Theorem~\ref{thm:TQFT}, then $h_* = F(h)$.
If~$h$ and~$h'$ are pseudo-isotopic, then $c_h = c_{h'}$, hence $h_* = h'_*$.

The cobordism maps in a TQFT~$F$ satisfy the following naturality property.

\begin{lem} \label{lem:funct}
Let $\W = (W;V_0,V_1;h_0,h_1)$ be an oriented cobordism from~$M_0$ to~$M_1$,
and let $\W' = (W';V_0',V_1';h_0',h_1')$ be an oriented cobordisms from~$M_0'$ to $M_1'$.
If $d \colon W \to W'$ is an orientation preserving diffeomorphism such that $d(V_i) = V_i'$ for $i \in \{0,1\}$,
then we write
\[
d|_{M_i} := h_i' \circ d|_{V_i} \circ h_i^{-1} \colon M_i \to M_i'.
\]
If $F$ is a TQFT, then the following diagram is commutative:
\[
\xymatrix{
  F(M_0) \ar[r]^-{F(c)} \ar[d]^-{(d|_{M_0})_*} & F(M_1) \ar[d]^{(d|_{M_1})_*} \\
  F(M_0')  \ar[r]^-{F(c')} & F(M_1'),}
\]
where~$c$ is the equivalence class of~$\W$ and~$c'$ is the equivalence class of~$\W'$.
\end{lem}

\begin{proof}
As $(d|_{M_i})_* = F(c_{d|_{M_i}})$, this follows from the functoriality of~$F$,
once we observe that the cobordisms $c' \circ c_{d|_{M_0}}$ and
$c_{d|_{M_1}} \circ c$ are equivalent via~$d$.
\end{proof}

\subsection{Parameterized Cerf decompositions and Morse data} \label{sec:decomp}

To simplify the notation for cobordisms, from now on, we will suppress the diffeomorphisms~$h_0$ and~$h_1$
and identify~$V_i$ and~$M_i$. So an oriented cobordism from~$M_0$
to~$M_1$ is viewed as a compact $(n+1)$-manifold $W$ with~$\partial W = -M_0 \cup M_1$.
With this convention, two cobordisms~$W$ and~$W'$ from~$M_0$ to~$M_1$ are equivalent if there is a diffeomorphism
$d \colon W \to W'$ that fixes the boundary pointwise.
We say that $f \colon W \to [a,b]$ is a Morse function if $f^{-1}(a) = M_0$, $f^{-1}(b) = M_1$,
and~$f$ has only non-degenerate critical points, all lying in the interior of~$W$.

Recall from Definition~\ref{def:sphere} that,
given an oriented $n$-manifold~$M$, a framed $k$-sphere $\SS \subset M$ is an orientation reversing embedding of~$S^k \times D^{n-k}$ into~$M$.
%We think of~$\SS$ as the image of $S^k \times \{0\}$, together with a trivialization of its normal bundle.
We write~$W(\SS)$ for the manifold obtained by attaching the handle~$D^{k+1} \times D^{n-k}$ to~$M \times I$ along~$\SS \times \{1\}$;
this is a cobordism from~$M$ to the manifold~$M(\SS)$ obtained by surgery on~$M$ along~$\SS$.
We now recall and extend~\cite[Definition~3.10]{Milnor}.

\begin{defn}
A cobordism~$W$ from~$M_0$ to~$M_1$ is \emph{elementary} if there is a Morse function $f \colon W \to [a,b]$
such that it has at most one critical point. A framed attaching sphere~$\SS$ for~$W$ is~$\emptyset$
if~$f$ has no critical points; otherwise, it is a framed sphere in~$M_0$ such that there is a diffeomorphism
$D \colon W(\SS) \to W$ that is the identity along~$M_0$ (where we identify $M_0$ with $M_0 \times \{0\}$).
\end{defn}

It is a classical result of Morse theory~\cite[Definition~3.9 and Theorem~3.13]{Milnor}
that every elementary cobordism admits a framed attaching sphere in
the above sense.

\begin{defn} \label{def:param-Cerf}
A \emph{parameterized Cerf decomposition} of an oriented cobordism~$W$ from~$M$ to~$M'$ consists of
\begin{itemize}
\item a Cerf decomposition
\[
W = W_0 \cup_{M_1} W_1 \cup_{M_2} \dots \cup_{M_m} W_m
\]
in the sense of Gay, Wehrheim, and Woodward~\cite{GWW}; i.e., each~$W_i$ is an elementary cobordism from~$M_i$ to~$M_{i+1}$,
where $M_0 = M$ and $M_{m+1} = M'$,
\item a framed attaching sphere $\SS_i \subset M_i$ for~$W_i$ of dimension~$k_i$ for $i \in \{0,\dots,m\}$,
\item an orientation preserving diffeomorphism $d_i \colon M_i(\SS_i) \to M_{i+1}$, well-defined up to isotopy, such that there exists a
diffeomorphism $D_i \colon W(\SS_i) \to W_i$ with $D_i|_{M_i \times \{0\}} = p_0$
and~$D_i|_{M_i(\SS_i)} = d_i$, where $p_0(x,0) = x$.
\end{itemize}
\end{defn}

\begin{rem}
The existence of the diffeomorphism~$D_i$ ensures that the cobordism
\[
(W(\SS_i); M_i \times \{0\}, M_i(\SS_i); p_0, d_i)
\]
is equivalent to $(W_i; M_i, M_{i+1}; \Id_{M_i}, \Id_{M_{i+1}})$. So we are replacing
each elementary component in the Cerf decomposition of~$W$ by an equivalent handle cobordism.
In particular, the composition of these handle cobordisms is equivalent to~$(W; M, M'; \Id_M, \Id_{M'})$.
\end{rem}

%\subsection{Morse data}
%In this part, we prove Theorem~\ref{thm:Cerf}.
%Recall from \cite[Theorem~3.4]{GWW}
%that any two (unparameterized) Cerf decompositions of a connected cobordism can be connected by a sequence
%of Cerf moves defined in \cite[Definition~3.2]{GWW}. Building on the proof of this result,
%we promote the Cerf decompositions to parameterized Cerf decompositions such that consecutive
%decompositions will be related by one of the moves listed in Theorem~\ref{thm:Cerf}.
The following definition is based on~\cite[Definition~3.1]{Milnor}.

%\begin{defn} \label{defn:metric}
%Let $f$ be a Morse function on the cobordism~$W$. We say that the Riemannian metric~$g$ on~$W$
%is adapted to~$f$ if for every point $p \in \crit(f)$, there exists a local coordinate system
%$(x_1, \dots, x_n)$ centered at~$p$ in which
%\begin{equation} \label{eqn:normal}
%f = f(p) - x_1^2 \dots -x_k^2 + x_{k+1}^2 + \dots + x_n^2,
%\end{equation}
%and where~$g$ agrees with the Euclidean metric $(dx_1)^2 + \dots + (dx_n)^2$.
%\end{defn}

\begin{defn} \label{defn:grad-like}
Let $f$ be a Morse function on the oriented cobordism~$W$. We say that the vector field~$v$ on~$W$
is gradient-like for~$f$ if $v_p(f) > 0$ for every $p \in W \setminus \crit(f)$,
and for every point $p \in \crit(f)$, there exists a local positively oriented coordinate system
$(x_1, \dots, x_{n+1})$ centered at~$p$ in which
\begin{equation} \label{eqn:normal}
f = f(p) - x_1^2 - \dots -x_k^2 + x_{k+1}^2 + \dots + x_{n+1}^2,
\end{equation}
and where $v$ is the Euclidean gradient; i.e.,
\begin{equation} \label{eqn:v}
v = 2\left(-x_1 \frac{\partial}{\partial x_1} - \dots - x_k \frac{\partial}{\partial x_k} +
x_{k+1} \frac{\partial}{\partial x_{k+1}} + \dots + x_{n+1} \frac{\partial}{\partial x_{n+1}}\right).
\end{equation}
\end{defn}

The space of positive coordinate systems at a Morse critical point in which~$f$ is of the normal form~\eqref{eqn:normal}
is homotopy equivalent to $SO(k,n+1-k)$, and hence is connected for $k \in \{0, n+1\}$, and has two components
otherwise; see Cerf~\cite[p.168]{Cerf}. However, the space of gradient vector fields~$v$ induced by such coordinate
systems is connected for every~$k$. Indeed, if~$k \not\in \{0,n+1\}$ and $(x_1,\dots,x_{n+1})$ is a positive
coordinate system in which~$f$ is of the form~\eqref{eqn:normal}, then
\[
(-x_1,x_2,\dots,x_n,-x_{n+1})
\]
is also a positive coordinate system as in~\eqref{eqn:normal},
but which lies in the opposite component since it reverses the orientation of both the positive and negative
definite subspaces. In both coordinate systems~$v$ is of the same form.

 \begin{defn}
A \emph{Morse datum} (cf.~\cite[Definition~2.1]{GWW}) for the cobordism~$W$ is a triple~$(f,\ub,v)$,
where
\begin{itemize}
\item $\ub = (b_0, \dots, b_{m+1}) \in \R^{m+2}$ is an ordered tuple; i.e., $b_0 < b_1 < \dots < b_{m+1}$,
\item $f \colon W \to [b_0, b_{m+1}]$ is a Morse function such that
each~$b_i$ is a regular value of~$f$, and~$f$ has at most one critical value in each interval~$(b_{i-1},b_i)$, and
\item $v$ is a gradient-like vector field for~$f$.
\end{itemize}
\end{defn}

We now explain how to construct a parameterized Cerf decomposition from a Morse datum.

\begin{defn} \label{def:CM}
%A Morse datum $(f,\ub)$ induces a Cerf decomposition $C(f,\ub)$ of~$W$
%by taking $W_i = f^{-1}([b_i,b_{i+1}])$ and $M_i = f^{-1}(b_i)$.
%As we shall now see, a triple $(f,\ub,v)$ induces a parameterized Cerf decomposition of~$W$.
Suppose that~$W$ is an elementary cobordism from~$M$ to~$M'$, together with a Morse function~$f$
and a gradient-like vector field~$v$.
If~$f$ has no critical points, then one obtains a diffeomorphism $d_v \colon M \to M'$ by flowing
along $w = v / v(f)$.
When~$f$ has one critical point~$p$ of index~$k$,
then we obtain a framed sphere $\SS \colon S^{k-1} \times D^{n-k+1} \to M$,
and a diffeomorphism $d_v \colon M(\SS) \to M'$, well-defined up to isotopy, as follows.
(Note that Milnor~\cite[Definition~3.9]{Milnor} calls~$\SS$ the \emph{characteristic embedding}.
We review his construction to be able to define the map~$d_v$.)

Let~$W^s(p)$ be the stable manifold of~$p$.
We define the attaching sphere~$a(\SS)$ to be $W^s(p) \cap M$, with the following framing.
As in Milnor~\cite[p.~16]{Milnor2}, choose a positive coordinate system
\[
(x_1,\dots,x_{n+1}) \colon U \to \R^{n+1}
\]
centered at~$p$ in which~$f$ is of the form~\eqref{eqn:normal}, and let~$\eps$ be so small that the image of
$(x_1,\dots,x_{n+1})$ contains a ball of radius $\sqrt{2\eps}$ centered at the origin.
Let~$c = f(p)$,  $x_- = (x_1,\dots,x_k)$, and $x_+ = (x_{k+1}, \dots, x_{n+1})$.
%Consider the level sets $f^{-1}(c-\eps)$ and $f^{-1}(c+\eps)$.
Define the cell~$e$ to be the subset of~$U$ where $|x_-|^2 \le \eps$ and $x_+ = 0$.
Furthermore, let $E$ be a regular neighborhood of~$e$ of width $\sqrt{\eps/2}$,
extending all the way to~$f^{-1}(c-\eps)$; i.e.,
\[
E = \{\, |x_+|^2 \le \eps/2 \,\} \cap \{\, c - |x_-|^2 + |x_+|^2 \ge c - \eps \,\}.
\]
This is diffeomorphic to the $k$-handle $D^k \times D^{n-k+1}$ via the map
\[
e(x_-,x_+) = \left(\frac{x_-}{\sqrt{|x_+|^2 + \eps}}, \sqrt{\frac{2}{\eps}} x_+ \right),
\]
which identifies $E \cap f^{-1}(c-\eps)$ with $S^{k-1} \times D^{n-k+1}$.
%The framed sphere $\SS \colon S^{k-1} \times D^{n-k+1} \to M$ is given as follows:
For $s \in S^{k-1} \times D^{n-k+1}$, flow from $e^{-1}(s) \in E \cap f^{-1}(c-\eps)$ along~$-w$ to~$M$
to obtain $\SS(s)$.

It is straightforward to check that~$v$ is transverse to~$\partial E \setminus f^{-1}(c-\eps)$.
The diffeomorphism~$d_v$ is defined by flowing from~$M \setminus \text{Im}(\SS)$ along~$w$
to~$f^{-1}(c-\eps) \setminus E$, and identifying the part $D^k \times S^{n-k}$ of $M(\SS)$
with $E \setminus f^{-1}(c-\eps)$ via~$e^{-1}$,
then flowing again along~$w$ to~$M'$ (as we are not flowing from
a level set, for different points, we need to flow for a different amount of time to reach~$M'$).
Note that $d_v|_{M \setminus \text{Im}(\SS)}$ is simply given by the flow of~$v$.
It is easy to see that $d_v$ extends to a diffeomorphism from $W(\SS)$ to $W$ that is the identity on~$M$;
see \cite[Theorem~3.13]{Milnor}.
\end{defn}

\begin{rem} \label{rem:dependence}
The above construction depends on the choice of~$\eps$ and local coordinate system as follows.
The attaching sphere~$a(\SS)$ is unique, and different choices give isotopic framings.
Furthermore, if $\SS_i$ and $d_{v,i} \colon M(\SS_i) \to M'$ for $i \in \{1,2\}$
arise from different coordinate systems and $\eps_i$,
then there is an isotopically unique diffeomorphism $\psi \colon M(\SS_1) \to M(\SS_2)$
such that $d_{v,1}$ is isotopic to $d_{v,2} \circ \psi$.
This ambiguity will not cause any problems in the proof of Theorem~\ref{thm:presentation}
due to relation~\eqref{it:isot} of Definition~\ref{def:relations}.

The framed sphere $\SS$ and the diffeomorphism~$d_v$ depend on~$v$ only up to isotopy,
since the space of gradient-like vector fields~$v$
compatible with a given Morse function~$f$ is connected. The only caveat is that when~$k \not\in \{0,n+1\}$, the space of
coordinate systems is homotopy equivalent to $SO(k,n+1-k)$, which has two components. The two components correspond
to non-isotopic framed spheres. If~$\SS$ is one, then $\ol{\SS}$ represents the other isotopy class;
cf.~relation~\eqref{it:0-sphere} in Definition~\ref{def:relations}.
\end{rem}

\begin{defn}
Let $W$ be an oriented cobordism from~$M$ to~$M'$.
We say that the Morse datum~$(f,\ub, v)$ \emph{induces}
the parameterized Cerf decomposition~$\CC$ of~$W$ if $M_i = f^{-1}(b_i)$ and~$W_i = f^{-1}([b_i,b_{i+1}])$.
%$C(f,\ub)$ is the Cerf decomposition underlying~$\CC$,
Furthermore, for each elementary cobordism~$W_i$, the framed attaching sphere~$\SS_i$
and the diffeomorphism $d_i \colon M_i(\SS_i) \to M_{i+1}$
are obtained from $f|_{W_i}$ and $v|_{W_i}$ as in Definition~\ref{def:CM} for some choice of compatible
local coordinate systems and radii~$\eps_i$ at the critical points.
\end{defn}

%Note that in the above definition, the induced Cerf decomposition depends on the choice of local coordinate
%systems at the critical points, but this dependence is only up to isotopy according to the following result.

%\begin{lem} \label{lem:unique-param}
%Let~$(f,\ub)$ be a Morse datum for the cobordism~$W$. If~$\CC$ and~$\CC'$ are parameterized Cerf
%decompositions induced by the triples~$(f,\ub, v)$ and $(f,\ub,v')$, respectively,
%then they are related by isotopies of the attaching spheres~$\SS_i$ and of the diffeomorphisms~$d_i$.
%\end{lem}
%
%\begin{proof}
%This is a direct consequence of Remark~\ref{rem:dependence}.
%%A parameterized Cerf decomposition compatible with~$(f,\ub)$ depends on the choice of gradient-like vector field~$v$,
%%the local coordinate systems at the critical points of~$f$, and the radii~$\eps$ of the balls at these critical points.
%%By Remark~\ref{rem:dependence}, the corresponding framed spheres~$\SS_i$ and diffeomorphisms~$d_i$ are all isotopic
%%for different choices.
%\end{proof}

Hence, the Morse datum~$(f,\ub,v)$ gives rise to a well-defined parameterized Cerf decomposition
that we denote by~$\CC(f,\ub,v)$, up to possibly replacing a framed sphere~$\SS$ with~$\ol{\SS}$,
and up to the ambiguity explained in Remark~\ref{rem:dependence}.
The following result, which is a slight extension of \cite[Theorem~3.12]{Milnor} to include
the parametrization, states that this assignment is surjective.

\begin{lem} \label{lem:lift}
Let~$\CC$ be a parameterized Cerf decomposition of the oriented cobordism~$W$.
Then there exists a Morse datum~$(f,\ub, v)$ inducing~$\CC$.
\end{lem}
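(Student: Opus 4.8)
The plan is to build the Morse datum $(f,\ub,v)$ component by component and then glue. For each elementary cobordism $W_i$ in the Cerf decomposition underlying $\CC$, the parameterization gives us a diffeomorphism $D_i \colon W(\SS_i) \to W_i$ with $D_i|_{M_i \times \{0\}} = p_0$ and $D_i|_{M_i(\SS_i)} = d_i$. On the standard handle cobordism $W(\SS_i) = (M_i \times I) \cup_{\SS_i} (D^{k_i+1} \times D^{n-k_i})$ there is an obvious proper Morse function $f_i$ with a single critical point of index $k_i$ at the center of the handle (or no critical point when $\SS_i = \emptyset$), together with a gradient-like vector field $v_i$ given by the Euclidean gradient in the standard handle coordinates and extended by a product vector field on the collar $M_i \times I$. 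One checks directly that the construction of Section~\ref{sec:CM} applied to $(f_i, v_i)$ on $W(\SS_i)$ recovers the attaching sphere $\SS_i$ and the identity diffeomorphism $\Id_{M_i(\SS_i)}$ (up to the caveat of replacing $\SS_i$ by $\ol{\SS_i}$, which is permitted). Pushing $f_i$ and $v_i$ forward along $D_i$ produces a Morse function and gradient-like vector field on $W_i$ for which the induced attaching sphere is $\SS_i$ and the induced diffeomorphism $M_i(\SS_i) \to M_{i+1}$ is exactly $d_i$.

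Next I would arrange the levels. Choose real numbers $b_0 < b_1 < \dots < b_{m+1}$ and rescale each $f_i$ so that it maps $W_i$ properly onto $[b_i, b_{i+1}]$ with $f_i^{-1}(b_i) = M_i$ and $f_i^{-1}(b_{i+1}) = M_{i+1}$, and with the unique critical value (if any) lying strictly inside $(b_i, b_{i+1})$; rescale $v_i$ correspondingly so that it remains gradient-like, adjusting it near the two boundary levels so that it is the standard outward/inward collar field there. Since the cobordisms $W_i$ are glued along the $M_{i+1}$ and the functions and vector fields all agree near those gluing hypersurfaces (they are collar-standard), the $f_i$ patch together to a single proper Morse function $f \colon W \to [b_0, b_{m+1}]$, smooth across the $M_{i+1}$, and the $v_i$ patch to a gradient-like vector field $v$ for $f$. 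Each $b_i$ is then a regular value, and each interval $(b_{i-1}, b_i)$ contains at most one critical value, so $\ub = (b_0,\dots,b_{m+1})$ makes $(f,\ub)$ a Morse datum in the sense of Gay et al.; by construction $C(f,\ub)$ is the underlying Cerf decomposition of $\CC$ and $(f,\ub,v)$ induces $\CC$.

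The main obstacle is the smooth gluing at the interfaces $M_{i+1}$: a priori $W_i$ and $W_{i+1}$ are only identified along $M_{i+1}$ as abstract manifolds, and the Morse functions and vector fields built on the two sides must be made to agree to all orders in a collar neighborhood so that the glued $f$ is genuinely smooth and $v$ is genuinely a smooth gradient-like field (in particular still of the normal form~\eqref{eqn:normal} near every critical point, which is unaffected since the critical points lie in the interiors of the $W_i$, away from the interfaces). This is handled by the standard collar-neighborhood technology: both sides can be normalized so that $f$ is linear in the collar parameter and $v = \partial/\partial(\text{collar})$ there, after which gluing is automatic. One should also verify that the pushed-forward vector field $(D_i)_* v_i$ is still gradient-like for $(D_i)_* f_i$, i.e.\ admits local normal-form coordinates at the critical point — but $D_i$ is a diffeomorphism of oriented cobordisms, so it carries the normal-form coordinate chart on $W(\SS_i)$ to one on $W_i$, and this is immediate. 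The remaining points (orientation compatibility of the coordinate systems, properness, and the fact that rescaling a gradient-like field keeps it gradient-like) are routine.
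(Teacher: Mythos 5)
Your proposal is correct and follows essentially the same route as the paper: build a standard Morse function and Euclidean-gradient-like field on each model handle cobordism $W(\SS_i)$ inducing $\SS_i$ and the identity, transport it through the parameterizing diffeomorphism $D_i$, rescale affinely onto $[b_i,b_{i+1}]$, and patch with collar normalizations (the paper invokes~\cite[Lemma~2.6]{GWW} for this step, after which the induced diffeomorphism is isotopic to $d_i$, which suffices since $d_i$ is only defined up to isotopy). The only difference is level of detail: the paper writes the model $(f_i',v_i')$ explicitly in handle coordinates to verify it induces the identity parametrization, where you assert this as a direct check.
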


\begin{proof}
By definition, each diffeomorphism $d_i \colon M_i(\SS_i) \to M_{i+1}$ extends to a diffeomorphism
$D_i \colon W(\SS_i) \to W_i$. We claim that there is a Morse function $f_i' \colon W(\SS_i) \to \R$
and a gradient-like vector field~$v_i'$ on $W(\SS_i)$ such that~$f_i'$ has a single critical point in the handle
if~$\SS_i \neq \emptyset$, and the diffeomorphism
$d_{v_i'}$ induced by $f_i'$ and $v_i'$ on $W(\SS_i)$ as in Definition~\ref{def:CM} is~$\Id_{M_i(\SS_i)}$.
If $\SS_i = \emptyset$, then we take~$f_i'$ to be the projection $p_2 \colon M_i \times I \to I$ and $v_i'$ to be~$\partial/\partial t$.

If $\SS_i \neq \emptyset$ is a framed $(k-1)$-sphere, then consider the functions
\[
s(x_1,\dots,x_{n+1}) = 1/2 - x_1^2 - \dots - x_k^2 + x_{k+1}^2 + \dots + x_{n+1}^2 \text{ and }
\]
\[
u(x_1,\dots,x_{n+1}) = \sqrt{(x_1^2 + \dots + x_k^2)(x_{k+1}^2 + \dots + x_{n+1}^2)}
\]
on~$\R^{n+1}$. Let
\[
H = \{\, \ux \in \R^{n+1} \colon 0 \le s(\ux) \le 1 \text{, } u(\ux) \le 1 \,\}.
\]
Consider $\SS_i \colon S^{k-1} \times D^{n-k+1} \to M_i$, then
\[
G = \left( \im(\SS_i) \times I \right) \cup \left( D^k \times D^{n-k+1} \right) \subset W(\SS_i)
\]
is diffeomorphic to~$H$ if we smooth the corners after attaching the handle.
We choose a diffeomorphism $\phi \colon G \to H$ such that it maps $\im(\SS_i) \times \{0\}$
to $H \cap \{s = 0\}$ and $D^k \times S^{n-k}$ to $H \cap \{s = 1\}$.
Furthermore, there is a small $\nu  \in \R_+$ such that for any $\ux \in H$
with $s(\ux) \in (0,1)$ and $u(\ux) \in [1-\nu, 1]$,
we have $\phi^{-1}(\ux) \in M_i \times \{s(\ux)\}$. For $y \in (M_i \times I) \setminus G$, we let $f_i'(y) = p_2(y)$,
where $p_2(x,t) = t$,
while for $y \in G$, let $f_i'(y) = s(\phi(y))$. This is a smooth function by construction.
The gradient-like vector field~$v_i'$ on $W(\SS_i)$ is defined on~$G$ by pulling back the Euclidean gradient of~$s$ on~$H$ via~$\phi$.
We extend this to $(M_i \times I) \setminus G$ via~$\partial/\partial t$.
%We extend $v_i'|_{N(\SS_i) \times \{0\}}$ to an arbitrary metric~$g_0$ on $M_i \times \{0\}$.
%Note that $g_i'$ is spherical on the fibers of $N(\SS_i) \times \{t\}$ of constant volume~$v(t)$ for every $t \in [0,1]$.
%Then on $\left(M \setminus N(\SS_i)\right) \times \{t\}$ we let $g_i' = (v(t)/v(0) g_0) \otimes dt$.
%In fact, we can choose any extension for which $\partial/\partial t$ is perpendicular to $M_i \times\{t\}$.
It is now straightforward to check that the function~$f_i'$ and the gradient-like vector field~$v_i'$ induce the identity diffeomorphism
from~$M_i(\SS_i)$ to itself if we apply the construction in Definition~\ref{def:CM} with
the radius $\eps = 1$.

Let $a_i \colon I \to [b_{i-1},b_i]$ be the affine equivalence $a_i(t) = (1-t)b_{i-1} + t b_i$,
and we set $f_i := a_i \circ f_i' \circ D_i^{-1}$.
By~\cite[Lemma~2.6]{GWW}, we can modify the~$f_i$ by an ambient isotopy on a collar neighborhood of~$M_i$
such that they patch together to a Morse function~$f$.
If~$v_i = D_i^*(v_i')$, possibly modified on a collar of $M_i$ so that for different~$i$ they fit together to
a smooth vector field~$v$, then the induced diffeomorphism from~$M(\SS_i)$ to~$M_{i+1}$ will be isotopic to~$d_i$.
\end{proof}

%We also have the following extension of \cite[Theorem~3.13]{Milnor}.

\begin{lem} \label{lem:unique}
Let $\CC$ be a parameterized Cerf decomposition of the cobordism~$W$. Suppose that the Morse data $(f,\ub,v)$
and $(f',\ub',v')$ both induce~$\CC$, in the sense that, for given local coordinate systems
about the critical points and radii, the framings of the attaching spheres and the
diffeomorphisms~$d_i$ coincide. Then there exist orientation preserving diffeomorphisms $D \colon W \to W$
and $\phi \colon \R \to \R$ such that
\begin{enumerate}
\item $\ub' = \phi(\ub)$,
\item $f' = \phi \circ f \circ D^{-1}$,
\item $\nu \cdot v' = D_*(v)$ for some positive function $\nu \in C^\infty(W,\R_+)$, and
\item $D|_{M_i} = \Id_{M_i}$.
\end{enumerate}
\end{lem}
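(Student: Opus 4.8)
The plan is to build $\phi$ and $D$ essentially by hand, reducing everything to the behaviour of a single elementary piece. First I would dispose of the real-line diffeomorphism. Since $\ub$ and $\ub'$ are increasing tuples and each open interval $(b_{i-1},b_i)$ contains at most one critical value $c_i$ of $f$ (and at most one critical value $c_i'$ of $f'$), there is an orientation preserving diffeomorphism $\phi\colon\R\to\R$ with $\phi(\ub)=\ub'$ and $\phi(c_i)=c_i'$ for every $i$. Replacing $f$ by $\phi\circ f$ and $v$ by a gradient-like vector field $\mu\cdot v$ for $\phi\circ f$, where $\mu$ is a positive function (the integral curves are unchanged, and near a critical point the factor $\phi'$ coming out of the normal form is absorbed into the coordinates), one reduces to the case $\phi=\Id$, $\ub=\ub'$, and coinciding critical values. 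As condition~(3) only asks for agreement up to a positive function, this replacement costs nothing.

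Next I would argue piece by piece on the elementary cobordisms $W_i=f^{-1}([b_i,b_{i+1}])=(f')^{-1}([b_i,b_{i+1}])$, constructing a diffeomorphism $D_i\colon W_i\to W_i$ with $f'|_{W_i}=f|_{W_i}\circ D_i^{-1}$, with $(D_i)_*(v|_{W_i})=\nu_i\cdot v'|_{W_i}$ for a positive $\nu_i$, and with $D_i=\Id$ near $M_i$ and near $M_{i+1}$; the last property, arranged after a collar adjustment as in~\cite[Lemma~2.6]{GWW}, lets the $D_i$ patch to a smooth self-diffeomorphism $D$ of $W$ and the $\nu_i$ to a smooth positive $\nu$. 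When $W_i$ has no critical point, $(f|_{W_i},v|_{W_i})$ and $(f'|_{W_i},v'|_{W_i})$ give trivialisations $\Phi,\Phi'\colon W_i\to M_i\times[b_i,b_{i+1}]$ by recording the value of the Morse function and the endpoint of the backward flow line through a point. Both send $M_i$ to $M_i\times\{b_i\}$ by the identity and $M_{i+1}$ to $M_i\times\{b_{i+1}\}$ by $d_i^{-1}$, because $d_v=d_{v'}=d_i$ by hypothesis; hence $D_i:=(\Phi')^{-1}\circ\Phi$ fixes $M_i$ and $M_{i+1}$ pointwise, satisfies $f'|_{W_i}=f|_{W_i}\circ D_i^{-1}$, and carries $v$ to a positive multiple of $v'$ since each $\Phi$ conjugates $v/v(f)$ to $\partial/\partial t$.

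The main case, and the one I expect to be the obstacle, is when $W_i$ has a single critical point. Here I would use the explicit geometry of Section~\ref{sec:CM}: the chosen normal-form coordinates at the critical point of $f$ cut out a handle region $\widehat{E}\subset W_i$ together with an identification of $W_i\setminus\widehat{E}$ with $(M_i\setminus N(\SS_i))\times I$ via the flow of $v/v(f)$, and likewise for $f'$. On the handle region both Morse data are the standard quadratic normal form in the respective coordinates, so a fibrewise linear diffeomorphism of handles carries one to the other; on the complement one uses the flow comparison of the previous paragraph. The two halves glue to a well-defined $D_i$ precisely because the hypothesis forces the induced framed spheres $\SS_i$ (with their framings) and the diffeomorphisms $d_i$ to coincide: this makes the handle map and the flow-box map agree along $\widehat{E}\cap(W_i\setminus\widehat{E})$, forces $D_i$ to be the identity on $M_i$, and forces it to be $d_i\circ d_i^{-1}=\Id$ on $M_{i+1}$. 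After a further harmless adjustment of the radii, using Remark~\ref{rem:dependence}, one may assume the auxiliary choices for $f$ and $f'$ match literally; equivalently, pulling everything back through the diffeomorphism $W(\SS_i)\to W_i$ extending $d_i$ reduces this step to the statement that two Morse data on the standard handle cobordism $W(\SS_i)$ inducing $(\SS_i,\text{framing},\Id_{M_i(\SS_i)})$ are equivalent, which one checks in the handle model. Collecting the $D_i$ and $\nu_i$ and undoing the first reduction yields $\phi$, $D$, and $\nu$ with properties~(1)--(4); all the diffeomorphisms involved are orientation preserving, being built from positive coordinate systems and forward flows.
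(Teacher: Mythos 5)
Your overall route is the paper's own: straighten the critical values with $\phi$, work one elementary piece at a time, define the comparison diffeomorphism by matching flow lines off the handle and by the normal-form coordinate identification on the handle, and use the hypothesis that the framings of the $\SS_i$ and the diffeomorphisms $d_i$ coincide to make the two descriptions fit together. However, two of your steps fail as written. First, you cannot arrange $D_i = \Id$ on collars of $M_i$ and $M_{i+1}$: together with the exact relation $f'|_{W_i} = f|_{W_i} \circ D_i^{-1}$ this would force $f = f'$ near those levels, which is not part of the hypotheses, and the collar adjustment of \cite[Lemma~2.6]{GWW} is of no help here because it modifies the Morse functions, which in this lemma are fixed data. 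The patching must instead be done as in the paper: the flow-comparison maps on adjacent pieces send $v$-flow lines to $v'$-flow lines reparameterized so that $f' \circ D = f$, and they agree along $M_{i+1}$ precisely because both Morse data induce the same $d_i$; the glued map is then smooth with no boundary normalization needed.

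Second, your description of an elementary piece is inaccurate in a way that hides the real work: $W_i \setminus \E$ is \emph{not} identified with $(M_i \setminus N(\SS_i)) \times I$ by the flow. The flow-saturation of $M_i \setminus N(\SS_i)$ misses the entire saturated neighborhood of $W^s(p) \cup W^u(p)$ --- the flow lines that enter $\E$ through $\partial_-\E$ from $N(\SS_i)$ and leave through $\partial_+\E$ toward $M_{i+1}$, as well as the portions of the stable and unstable manifolds outside $\E$. Consequently, ``gluing the two halves along $\E \cap (W_i \setminus \E)$'' does not by itself yield a well-defined smooth map. The substantive content of the lemma is exactly (i) that the flow-comparison map, defined a priori only off $W^s(p) \cup W^u(p)$, coincides on $\E$ with the chart change $(\ux')^{-1} \circ \ux$, and (ii) that it extends smoothly along the unstable manifold above the handle. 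Point (i) is where the hypotheses enter concretely: flowing $\partial_- E$ down along $v$ and along $v'$ hits $M_i$ in the same framed sphere because the framings agree, and flowing the belt sphere up along $v$ and $v'$ hits $M_{i+1}$ with the same parameterization because the induced diffeomorphisms agree --- this is the boundary-matching computation \eqref{eqn:D}--\eqref{eqn:boundaryD} in the paper. Your sketch asserts this agreement but does not carry it out, and without it the smoothness of $D_i$ across $W^s(p) \cup W^u(p)$ (the heart of the proof) is unestablished.
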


\begin{proof}
First, suppose that~$W$ is an elementary cobordism, $\ub = \ub'$, and $|\ub| = |\ub'| = 2$.
For an illustration, see Figure~\ref{fig:gradients}.
Let the critical points of~$f$ and~$f'$ be~$p$ and~$p'$ with values~$c$ and~$c'$, respectively.
Choose coordinate charts $\ux \colon U \to \R^{n+1}$ and $\ux' \colon U' \to \R^{n+1}$ about~$p$ and~$p'$,
respectively, such that their images coincide with the disk~$D(\underline{0},\sqrt{2\eps})$,
and in which~$f$ and~$f'$ have the normal form of equation~\eqref{eqn:normal},
while~$v$ and~$v'$ have the normal form~\eqref{eqn:v}. Furthermore, we write $K_p = W^s(p) \cup W^u(p)$
and $K_{p'} = W^s(p') \cup W^u(p')$, where the stable and unstable manifolds for~$p$ are always
with respect to~$v$, while for~$p'$ they are with respect to~$v'$.

Let $\phi_0 \colon [b_0,b_1] \to [b_0,b_1]$ be a diffeomorphism such that $\phi_0(b_i) = b_i$ for $i \in \{0,1\}$,
and such that $\phi_0(t) = c'- c + t$ for $t \in [c - 2\eps, c + 2\eps]$. Then~$v$ is also a gradient-like
vector field for $\phi_0 \circ f$; moreover, $\phi_0 \circ f(p) = f(p')$, and the Morse datum $(\phi_0 \circ f, \ub, v)$
induces the same parameterized Cerf decomposition~$\CC$.
Hence, we can assume that $f(p) = f(p') = c$.

%Then we define $D|_U \colon U \to U'$ via the formula $(\ux')^{-1} \circ \ux$.

Let $\g \colon Z \to W$ and
$\g' \colon Z' \to W$ for $Z$, $Z' \subset W \times \R$ be the flows of~$v$ and~$v'$, respectively.
For $x \in W$, the set $I_x := (\{x\} \times \R) \cap Z$ is a closed interval  $\{x\} \times [-\a(x),\omega(x)]$
when $x \not \in K_p$, a half-interval $\{x\} \times [-\a(x),\infty)$ when $x \in W^s(p)$,
and a half-interval $\{x\} \times (-\infty,\omega(x)]$ for $x \in W^u(p)$.
Using~$Z'$, we obtain the interval~$I_x'$ and the functions $\alpha'$ and $\omega'$ in an analogous way.

Let $D(p) = p'$. We define the diffeomorphism~$D$ on $W \setminus \{p\}$ as follows.
First, note that $W^s(p) \cap M = W^s(p') \cap M = a(\SS)$ and $W^u(p) \cap M' = W^u(p') \cap M' = d(b(\SS))$,
where $\SS \colon S^{k-1} \times D^{n-k+1} \to M$ is the framed sphere and $d \colon M(\SS) \to M'$
is the diffeomorphism in the Cerf decomposition $\CC$ induced by both $(f,\ub,v)$ and $(f',\ub',c')$.
If $x \in M \cup d(b(\SS))$ and $t \in I_x$, then
there is the unique parameter value $t'(x,t) \in I_x'$ for which
\[
f'(\g'(x,t'(x,t))) = f(\g(x,t)).
\]
Indeed, $f$ is monotonically increasing along the flow-line $\g(x,s)$ for $s \in I_x$,
and $f'$ is monotonically increasing along~$\g'(x,s')$ for $s' \in I_x'$.
Furthermore, $f(\g(x,\inf(I_x))) = f'(\g'(x,\inf(I_x')))$
and $f(\g(x,\sup(I_x))) = f'(\g'(x,\sup(I_x')))$ as
$b_0 = b_0'$, $b_1 = b_1'$, and $c = c'$.
If $x \in M \cup d(b(\SS))$ and $t \in I_x$, then let
\[
D(\g(x,t)) = \g'\left(x, t'(x,t) \right).
\]

It is clear that~$D$ restricts to a diffeomorphism
\[
W \setminus W^u(p) \to W \setminus W^u(p')
\]
that fixes $\partial W \setminus W^u(p) = \partial W \setminus W^u(p')$ pointwise.
Indeed, for $x \in M \setminus a(\SS)$, we have $\g(x,\omega(x)) = \g'(x, \omega'(x))$
since the Morse data $(f,\ub,v)$ and $(f',\ub',v')$ induce the same
diffeomorphism $d \colon M(\SS) \to M'$ in~$\CC$.

\begin{figure}
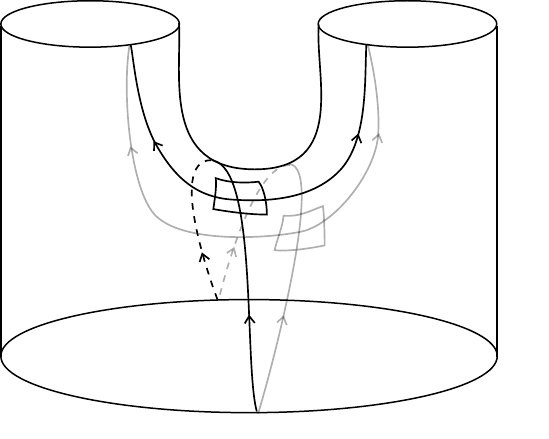
\caption{An elementary cobordism $W$ with two different Morse data that induce the same framed sphere
$\SS$ and diffeomorphism $d \colon M \to M(\SS)$.}
\label{fig:gradients}
\end{figure}

Let~$E$ be the subset of $\R^{n+1}$ constructed in Definition~\ref{def:CM};
it is diffeomorphic to the $k$-handle $D^k \times D^{n-k+1}$.
We denote by~$\partial_- E$ the part of~$\partial E$ corresponding to~$S^{k-1} \times D^{n-k+1}$,
and by $\partial_+ E$ the part corresponding to $D^k \times S^{n-k}$.
Let~$F$ be the smallest subset of~$W$ that contains~$\E = \ux^{-1}(E)$ and is saturated under the flow of~$v$,
and we define~$F'$ containing $\E' = (\ux')^{-1}(E)$ analogously. Note that $F$ is a regular neighborhood
of $K_p$ and $F'$ is a regular neighborhood of $K_{p'}$. Furthermore, let $\partial_\pm \E = \ux^{-1}(\partial_\pm E)$,
and $\partial_\pm \E' = (\ux')^{-1}(\partial_\pm E)$.

Since $(f,\ub,v)$ induces~$\CC$, by definition,
the flow of~$v$ from
\[
\E \cap f^{-1}(c - \eps) = \partial_- \E \approx S^{k-1} \times D^{n-k+1}
\]
gives~$\SS$. Similarly, the flow of~$v'$ from $\E' \cap (f')^{-1}(c - \eps) = \partial_- \E'$ gives~$\SS'$
as $(f',\ub',v')$ also induces~$\CC$. If~$H$ denotes the handle part of $M(\SS)$, which is diffeomorphic to $D^k \times S^{n-k}$,
then $d \colon M(\SS) \to M'$ restricts to a map $d|_H$ that gives a framing of $W^u(p) \cap M' = W^u(p') \cap M'$
that is given by either flowing from $\partial_+ \E$ along~$v$ to~$M'$, or from $\partial_+ \E'$ along~$v'$ to~$M'$.
%For $x \in N(\SS)$, let $T(x)$ and $T'(x)$ be defined by $f(\g(x,T(x))) = c- \eps$ and $f(\g'(x,T'(x))) = c-\eps$, respectively.

We claim that
\begin{equation} \label{eqn:D}
D|_{\E} = (\ux')^{-1} \circ \ux \,\colon\, \E \to \E'.
\end{equation}
To see this, it suffices to show that, for any point $e \in \partial \E$,
we have
\begin{equation} \label{eqn:boundaryD}
\ux'(D(e)) = \ux(e) \in \partial E.
\end{equation}
Indeed, if $e \in \E \setminus W^u(p)$, then
there is a unique $t \in \R_{\le 0}$ for which $\g(e,t) \in \partial_- \E$; we write  $e_- = \g(e,t)$.
By definition, $D(e)$ is given by flowing back to~$M$ along~$v$, and then forward along~$v'$ until
the value of~$f'$ agrees with~$f(e)$. We obtain the same point by flowing back along~$v$ to $e_- \in \partial_- \E$,
then forward along~$v'$ from $D(e_-) = (\ux')^{-1} \circ \ux(e_-)$ until~$f'$ becomes~$f(e)$.
Since $(\ux')^{-1} \circ \ux$ takes~$v$ to~$v'$ and~$f$ to~$f'$ as they are in normal form in~$\ux$ and~$\ux'$, respectively,
we see that~$D(e) = (\ux')^{-1} \circ \ux(e)$. If $e \in W^u(p) \setminus \{p\}$, then
there is a unique~$t \in \R_{\ge 0}$ for which $\g(e,t) \in \partial_+ E$; let $e_+ = \g(e,t)$.
In this case, we get~$D(e)$ by flowing forward to~$M'$ along~$v$, then back along~$v'$ until the value of~$f'$
becomes~$f(e)$. We get the same point by flowing back from~$D(e_+) = (\ux')^{-1} \circ \ux(e_+)$.
Just like in the previous case, it follows that $D(e) = (\ux')^{-1} \circ \ux(e)$.

We now prove equation~\eqref{eqn:boundaryD}. Let $r \in \partial_- E$. Since~$v$ and~$v'$ both give the same framed sphere~$\SS$,
we get the same point~$m \in M$ if we flow back along~$v$ from~$\ux^{-1}(r) \in \partial_- \E$
or if we flow back along~$v'$ from $(\ux')^{-1}(r)$.
But $f(\ux^{-1}(r)) = f((\ux')^{-1}(r)) = c -\eps$, hence $D(\ux^{-1}(r)) = (\ux')^{-1}(r)$.
Now let
\[
r \in S^{n-k} := \partial_+ E \cap \{\,x_1 = \dots = x_k = 0\,\}.
\]
Flowing forward along~$v$ from $\ux(S^{n-k})$ to~$M'$, or along~$v'$ from $\ux'(S^{n-k})$ to $M'$ give the same
parametrization of $W^u(p) \cap M' = W^u(p') \cap M'$. Indeed, they induce the same map $M(\SS) \to M'$,
and the handle part of~$M(\SS)$ is identified with~$\partial_+ E$.
So if we flow forward from~$\ux(r)$ to~$M'$ along~$v$ and then back along~$v'$ to $\partial_+ \E'$, then we get~$\ux'(r)$.
However, $f(\ux(r)) = f'(\ux'(r))$, hence $D(\ux(r)) = \ux'(r)$. This concludes the proof of equation~\eqref{eqn:boundaryD},
and by the previous paragraph, the proof of equation~\eqref{eqn:D}.

It follows that~$D$ is smooth in~$\E$. To see that it is smooth along~$W^u(p)$, note that if $x \in W$ and there is
a $t \in \R_{\le 0}$ for which $\g(x,t) \in \partial_+ \E$, then~$D(x)$ can also be obtained by
flowing forward from $D(\g(x,t))$ along~$v'$ until the value of~$f'$ becomes~$f(x)$.
Together with equation~\eqref{eqn:D}, which implies that~$D$ smoothly maps $\partial_+ \E$ to $\partial_+ \E'$,
and the fact that~$D$ maps flow-lines of~$v$ to flow-lines of~$v'$, we obtain that
$D$ is also smooth along~$W^u(p)$.

That $D|_M = \Id_M$ and $D|_{W^u(p) \cap M'} = \Id_{W^u(p) \cap M'}$
follow from the definition of~$D$. To see that $D|_{M' \setminus W^u(p)} = \Id_{M' \setminus W^u(p)}$,
note that~$v$ and~$v'$ induce the same diffeomorphisms $M(\SS) \to M'$. Hence, for every $x \in M \setminus a(\SS)$,
the flow-lines of~$v$ and~$v'$ starting at~$x$ end at the same point of~$M'$.
%Furthermore, for every $r \in \partial_+ E$, the flow-line of~$v$ starting at $\ux^{-1}(r)$ and the flow-line of $v'$
%starting at~$(\ux')^{-1}(r)$ end at the same point of~$M'$.
This concludes the proof when the cobordism is elementary and~$\ub = \ub'$.

We now consider the case of a general Cerf decomposition~$\CC$.
Choose an orientation preserving diffeomorphism $\phi \colon \R \to \R$ such that $\phi(\ub) = \ub'$
and such that~$\phi$ is linear in a neighborhood of each critical value of~$f$
(the latter is to ensure that~$v$ is also gradient-like at the critical points of~$\phi \circ f$).
We can then apply the previous argument to each elementary piece $W_i$
with Morse data~$(\phi \circ f|_{W_i}, (b_{i-1}',b_i'), v|_{W_i})$
and $(f'|_{W_i}, (b_{i-1}', b_i'), v'|_{W_i})$ to obtain diffeomorphisms
$D_i \colon W_i \to W_i$ that piece together to a diffeomorphism $D \colon W \to W$
with the required properties.
\end{proof}

\subsection{Moves on Morse data and parameterized Cerf decompositions} \label{sec:moves}
Next, we define some moves on Morse data.
We show that any two Morse data on the same cobordism can be connected by a sequence of such moves,
and describe what happens to the induced parameterized Cerf decompositions.
In the following, let $\MM = (f,\ub,v)$ and $\MM' = (f',\ub',v')$ be Morse data
on the oriented cobordism~$W$, and let $\CC = \CC(\MM)$ and $\CC' = \CC(\MM')$
be the induced parameterized Cerf decompositions. Furthermore, we denote by~$p_i$ the critical
point of~$f$ in~$W_i$, assuming~$W_i$ is not cylindrical.

We say that~$\MM$ and~$\MM'$ are related by a \emph{critical point cancelation}
(cf.~the analogous move of~\cite[Definition~2.8]{GWW}) if
there exists a one-parameter family
\[
\{\, (f_t,\ub_t,v_t) \,\colon t\, \in [-1,1] \,\}
\]
of triples such that
\begin{itemize}
\item $(f_{-1}, \ub_{-1}, v_{-1}) = \MM$ and $(f_1 ,\ub_1, v_1) = \MM'$,
\item $f_t$ is a family of smooth functions and $v_t$ is a family of smooth vector fields,
\item $(f_t,\ub_t,v_t)$ is a Morse datum for every $t \in [-1,1] \setminus \{0\}$,
\item $\ub_t$ is a constant $\ub = (b_0,\dots,b_{m+1})$ for $t \in [-1,0)$,
and there is a $j \in \{1,\dots,m\}$ such that $\ub_t = \ub \setminus \{b_j\}$ for $t \in (0,1]$,
\item the critical points $p_{j-1}(t) \in f_t^{-1}([b_{j-1},b_j])$ and $p_j(t) \in f_t^{-1}([b_j,b_{j+1}])$
of~$f_t$ for $t < 0$ cancel at $t = 0$, and $f_t$ has no critical values in $[b_{j-1},b_{j+1}]$ for~$t > 0$,
\item $W^u(p_{j-1}(t))$ and $W^s(p_j(t))$ are transverse and intersect in a single flow-line for every $t \in [-1,0)$,
\item $\{f_t \,\colon\, t \in [-1,1] \,\}$ is a ``chemin \'el\'ementaire de mort'' supported in a
small neighborhood~$U$ of
\[
\left(W^u(p_{j-1}(t)) \cup W^s(p_j(t))\right) \cap f^{-1}[b_{j-1}(t),b_{j+1}(t)];
\]
see Cerf~\cite[Section 2.3, p.71]{Cerf}.
Inside~$U$, the path~$f_t$ is of normal form, while outside~$U$, both $f_t$ and $v_t$ are constant.
\end{itemize}
Cerf \cite[Chapter II.2]{Cerf} proved that, given a pair of ascending and descending manifolds
for a pair of consecutive critical points that intersect in a single flow-line, the space of standard
neighborhoods is connected, and hence any two ``chemin \'el\'ementaire de mort'' starting at~$f$ compatible with
this stable and unstable manifold are homotopic through such families.
A \emph{critical point creation} is the reverse of a critical point cancelation.

We now define the diffeomorphism~$\varphi$ appearing in relation~\eqref{it:birth} of Definition~\ref{def:relations}.
Hatcher proved that $\Diff(D^3,\partial D^3)$ is contractible, hence every diffeomorphism of a $3$-manifold
supported in a ball is isotopic to the identity. So, when $n \le 3$, the diffeomorphism~$\varphi$
is uniquely characterized up to isotopy by the property that it fixes $M \cap M(\SS)(\SS')$.
However, in higher dimensions, $\Diff(D^n,\partial D^n)$ might be disconnected.
The reader only interested in the $n \le 3$ case, which covers all the applications
in this paper, can safely skip the following definition.

\begin{defn} \label{def:phi}
Suppose that $\SS' \subset M(\SS)$ is a framed sphere such that $a(\SS')$ intersects $b(\SS)$ transversely in one point.
Let~$W$ be the cobordism obtained by attaching a handle~$h$ to~$M \times I$ along~$\SS \times \{1\}$,
followed by a handle~$h'$ attached along $\SS'$. Consider
\[
B = \im(\SS) \cup (\im(\SS') \cap M)
\]
with its corners smoothed. This is diffeomorphic to a disk since $|a(\SS') \cap b(\SS)| = 1$.
Furthermore, let
\[
H = (B \times I) \cup h \cup h';
\]
this is diffeomorphic to $B \times I$.
Let $F \colon M \times I \to W$ be a diffeomorphism such that $F(x,0) = (x,0)$ for every~$x \in M$
and~$F(x,t) = (x,t)$ for every $x \in M \setminus B$ and $t \in I$. Then let $\varphi = F|_{M \times \{1\}}$.
To define~$F$, one only needs to choose a diffeomorphism from $B \times I$ to $H$ that is
the identity along~$(B \times \{0\}) \cup (\partial B \times I)$.
If~$F'$ is another such map, then the induced~$\varphi'$ differs from~$\varphi$ by a pseudo-isotopy supported in
the disk~$H \cap M(\SS)(\SS')$. By Cerf~\cite{Cerf}, for $n \ge 5$, any diffeomorphism of~$D^n$ that
fixes~$\partial D^n$ and is pseudo-isotopic to the identity is actually isotopic to the identity, as~$D^n$
is simply-connected. The only case when we do not know whether~$\varphi$ is well-defined up to isotopy is when~$n=4$.

The following construction works in all dimensions.
Now let~$W$ be the cobordism obtained by composing $W(\SS)$ and $W(\SS')$.
%the traces of the surgeries along $\SS$ and $\SS'$, respectively.
By Lemma~\ref{lem:lift}, there is a Morse function~$f$ on~$W$ and a gradient-like vector field~$v$
that are compatible with the natural parameterized Cerf decomposition of~$W$
with diffeomorphisms~$\Id_{M(\SS)}$ and~$\Id_{M(\SS)(\SS')}$.
In particular, $f$ has exactly two critical points~$p$ and~$p'$ at the centers of~$h$ and~$h'$, respectively.
Furthermore, the stable manifold $W^s(p)$ is the core of~$h$ union~$\SS \times I$, the unstable manifold
$W^u(p) \cap W(\SS)$ is the co-core of~$h$, and similarly, $W^s(p') \cap W(\SS')$ is the core of~$h'$ union~$\SS' \times I$,
while~$W^u(p')$ is the co-core of~$h'$.
There is a homotopically unique 1-parameter family $\{\,f_t \,\colon\, t \in [-1,1] \,\}$
of smooth functions $(W,\partial W) \to (I,\partial I)$ such that $f_{-1} = f$, it has a single death bifurcation at $t = 0$, and
the stable manifold of the larger critical point and the unstable manifold of the smaller critical point
remain transverse for~$t \in [-1,0)$.
In the terminology of Cerf~\cite[Proposition~2, Chapitre~III]{Cerf}, there is a ``chemin \'el\'ementaire;''
i.e., an elementary path canceling the two critical points that can be described in a local model in a neighborhood~$U$
of $W^u(p) \cup W^s(p')$. Outside~$U$, the family~$f_t$ is constant.
In particular, $f_1$ has no critical points, and
according to Cerf~\cite{Cerf}, the space of such paths is connected.
Hence, if~$f_t$ and~$f_t'$ are two different paths, then~$f_1$ and~$f_1'$ are homotopic through smooth functions
with no critical points. The gradient flows of~$f_1$ and~$f_1'$
give rise to isotopic diffeomorphisms from~$M$ to~$M(\SS)(\SS')$, and changing the metric also preserves
the isotopy class.
\end{defn}

It is important to note that keeping the ascending and descending manifolds of the canceling critical points transverse throughout
(or equivalently, the pair of spheres obtained by intersecting them with~$M(\SS)$) is what ensures the uniqueness.
The space of ascending and descending manifolds intersecting in a single flow-line might have several components,
each of which might result in different cancelations.
Also see the First Cancelation Theorem of Morse in the book of Milnor~\cite[Theorem~5.4]{Milnor}.

\begin{rem}
In relation~\eqref{it:isot} of Definition~\ref{def:relations},
to prove Theorem~\ref{thm:TQFT}, it would suffice to assume that $d \sim \Id_M$ whenever~$d$
is isotopic to the identity \emph{and supported in a ball}. However, according to the classical
result of Palis and Smale~\cite{PS}, such diffeomorphisms generate~$\Diff_0(M)$.
\end{rem}

\begin{lem} \label{lem:cancelation}
Suppose that the Morse data $\MM = (f,\ub,v)$ and $\MM' = (f',\ub',v')$ are related by a
critical point cancelation $\{\, (f_t,\ub_t,v_t) \,\colon t\, \in [-1,1] \,\}$.
Then the corresponding parameterized Cerf decompositions
$\CC = \CC(\MM)$ and $\CC' = \CC(\MM')$ are related as follows.

The attaching sphere $a(\SS_{j+1})$ intersects $d_j(b(\SS_j))$
in a single point, where $b(\SS_j) \subset M_j(\SS_j)$ is the belt sphere of the handle in~$W_j(\SS_j)$.
The cobordism $W_j \cup W_{j+1}$ is cylindrical. We obtain~$\CC'$ from~$\CC$ by removing $M_{j+1}$, more precisely,
\[
    M_i' =
\begin{cases}
    M_i & \text{if } i < j+1, \\
    M_{i+1} &  \text{otherwise.}
\end{cases}
\]
We obtain the framed attaching spheres~$\SS_i'$ and the diffeomorphisms~$d_i'$ for $i \neq j$
analogously. We have $\SS_j' = \emptyset$, and let $S_{j+1} = d_j^{-1} \circ \SS_{j+1} \subset M_j(\SS_j)$. To determine
\[
d_j' \colon M_j'(\SS_j') = M_j \to M_{j+1}' =  M_{j+2},
\]
note that there is a diffeomorphism
\[
\varphi \colon M_j \to M_j(\SS_j)(S_{j+1})
\]
defined as in Definition~\ref{def:phi}.
Furthermore, $d_j$ induces a diffeomorphism
\[
d_j^{S_{j+1}} \colon M_j(\SS_j)(S_{j+1}) \to M_{j+1}(\SS_{j+1}).
\]
Then
\begin{equation} \label{eqn:cancelation}
d_j' \approx d_{j+1} \circ d_j^{S_{j+1}} \circ \varphi,
\end{equation}
where `$\approx$' means `isotopic to.'
\end{lem}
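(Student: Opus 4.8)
The plan is to analyze what happens locally near the canceling critical points $p_{j-1}$ and $p_j$ under the critical point cancelation, and to match the resulting parameterized Cerf decomposition with the one produced abstractly in Section~\ref{sec:phi}. First I would observe that the ``chemin \'el\'ementaire de mort'' is, by the hypotheses of the critical point cancelation, supported in a neighborhood $U$ of the single flow-line connecting $p_{j-1}$ to $p_j$, and that the descending manifold of $p_j$ and the ascending manifold of $p_{j-1}$ meet transversally in that flow-line throughout $t \in [-1,0]$. Intersecting the ascending manifold $W^u(p_{j-1})$ with the level set $M_j = f^{-1}(b_j)$ gives the belt sphere of the handle attached along $\SS_j$ inside $W_j(\SS_j)$, namely $d_j(\{0\} \times S^{n-k_j})$; intersecting the descending manifold $W^s(p_j)$ with $M_j$ gives the attaching sphere $\SS_{j+1}$. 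Transversality of the two manifolds in a single flow-line is exactly transversality of these two spheres in a single point inside $M_j$, which is the first assertion. Pulling back along $d_j$, the sphere $S_{j+1} = d_j^{-1}(\SS_{j+1})$ intersects the belt sphere $\{0\} \times S^{n-k_j}$ of the handle in $W_j(\SS_j)$ in one point, which is precisely the hypothesis of relation~\eqref{it:birth} and of the construction of $\varphi$ in Section~\ref{sec:phi}.

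Next I would address the claim that $W_j \cup W_{j+1}$ is cylindrical: this is the classical First Cancelation Theorem (Milnor~\cite[Theorem~5.4]{Milnor}), since $f_1$ restricted to this piece has no critical points. Consequently $f_1$ (or rather $a \circ f_1 \circ D^{-1}$ for an appropriate reparameterization, using Lemma~\ref{lem:unique}) is a Morse function on $W$ inducing the parameterized Cerf decomposition $\CC'$ with $\SS_j' = \emptyset$ and the index set shifted as in the displayed formula; for $i \neq j$ the elementary pieces $W_i$ are untouched away from a collar, so $\SS_i'$ and $d_i'$ agree with $\SS_i$ and $d_i$ after the reindexing. The only nontrivial identification is the diffeomorphism $d_j' \colon M_j \to M_{j+2}$ attached to the now-cylindrical piece $W_j \cup W_{j+1}$.

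To pin down $d_j'$, I would use the ``all dimensions'' construction of $\varphi$ given at the end of Section~\ref{sec:phi}: there it is shown that the composite trace $W(\SS_j) \cup W(S_{j+1})$ carries a Morse function $f$ and gradient-like vector field $v$ compatible with the natural parameterized Cerf decomposition with diffeomorphisms $\Id$, and that the homotopically unique elementary death path on it keeps the relevant ascending and descending manifolds transverse throughout, producing a well-defined diffeomorphism $\varphi \colon M_j \to M_j(\SS_j)(S_{j+1})$. By Lemma~\ref{lem:unique}, the restriction of the cancelation path $\{f_t\}$ to $W_j \cup W_{j+1}$ agrees, up to the ambient diffeomorphism $D$ and reparameterization $\phi$ of $\R$ furnished by that lemma, with this model path; hence the diffeomorphism induced on $M_j$ by the gradient flow of $f_1$ in the cylindrical piece $W_j \cup W_{j+1}$ is isotopic to $\varphi$ followed by the identifications that express $W_j \cup W_{j+1}$ as the composite of the traces, i.e.\ followed by $d_j^{S_{j+1}}$ to reach $M_{j+1}(\SS_{j+1}) \cong M_{j+2}$ and then by $d_{j+1}$. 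Tracking these identifications gives $d_j' \approx d_{j+1} \circ d_j^{S_{j+1}} \circ \varphi$, as claimed.

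The main obstacle I expect is the bookkeeping in the last step: showing that the diffeomorphism read off from the gradient flow of $f_1$ is \emph{isotopic} (not merely pseudo-isotopic) to the stated composite. This is exactly the subtle point flagged in Section~\ref{sec:phi} — that different elementary death paths give isotopic, not just pseudo-isotopic, results, and that this relies on keeping the ascending and descending manifolds transverse throughout the cancelation. I would lean heavily on Cerf's theorem~\cite{Cerf} that the space of such ``chemins \'el\'ementaires de mort'' compatible with a fixed transverse pair of stable/unstable manifolds is connected, together with Lemma~\ref{lem:unique} to reduce to a single standard model, so that the isotopy class of $d_j'$ is well-defined and can be computed in that model. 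The rest — identifying the pieces of the handle decomposition, matching framings, and applying Milnor's cancelation theorem — is routine once the setup is in place.
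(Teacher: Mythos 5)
Your proposal is correct and follows essentially the same route as the paper's proof: both transfer the cancelation path to the model cobordism $W(\SS_j) \cup W(S_{j+1})$ with its canonical parameterized Cerf decomposition via Lemmas~\ref{lem:lift} and~\ref{lem:unique}, invoke Cerf's connectedness of elementary death paths to identify the resulting diffeomorphism with $\varphi$ up to isotopy, and then conjugate by the gluing identification whose boundary restrictions are $\Id_{M_j}$ and $d_{j+1} \circ d_j^{S_{j+1}}$ to obtain equation~\eqref{eqn:cancelation}. The only difference is one of explicitness: the paper writes down the gluing diffeomorphism $G = G_0 \circ G_1$ built from extensions $D_j$, $D_{j+1}$, and $D_j^{S_{j+1}}$ and checks that the pulled-back Morse datum induces the decomposition with identity diffeomorphisms (the hypothesis needed for Lemma~\ref{lem:unique}), whereas you leave this identification implicit.
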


\begin{proof}
We prove equation~\eqref{eqn:cancelation}, the rest of the statement is straightforward.
Let~$\oW$ be the cobordism obtained by gluing $W(\SS_j)$ and~$W(S_{j+1})$ along~$M(\SS_j)$.
This carries a parameterized Cerf decomposition $\ol{\CC}$, with diffeomorphisms $\Id_{M(\SS_j)}$
and $\Id_{M(\SS_j)(S_{j+1})}$. According to Lemma~\ref{lem:lift}, there exists a Morse
datum $(\ol{f},\ol{\ub},\ol{v})$ inducing~$\ol{\CC}$.

Next, we construct a diffeomorphism $G \colon \oW \to W_j \cup W_{j+1}$.
Choose an extension $D_i \colon W_i(\SS_i) \to W_i$ of $d_i$ for $i \in \{j, j+1\}$.
Then~$D_j$ and~$D_{j+1}$ glue together to a diffeomorphism
\[
G_0 \colon W(\SS_j) \cup_{d_j} W(\SS_{j+1}) \to W_j \cup W_{j+1}.
\]
Furthermore, we can glue together~$\Id_{W(\SS_j)}$ and the diffeomorphism
\[
D_j^{S_{j+1}} \colon W(S_{j+1}) \to W(\SS_{j+1})
\]
extending $d_j^{S_{j+1}}$
to a diffeomorphism $G_1 \colon \oW \to W(\SS_j) \cup_{d_j} W(\SS_{j+1})$.
Then we set $G = G_0 \circ G_1$.

The Morse datum $(f \circ\, G, (b_{j-1}, b_j, b_{j+1}), G^*(v))$ on $\oW$ also induces the parameterized Cerf
decomposition~$\ol{\CC}$. Hence, by Lemma~\ref{lem:unique}, there exists a diffeomorphism $D \colon \oW \to \oW$
that fixes $M_j$, $M(\SS_j)$, and $M(\SS_j)(S_{j+1})$ pointwise, and such that $f \circ G \circ D = \ol{f}$
and $(G \circ D)^*(v) = \nu \cdot \ol{v}$ for some $\nu \in C^\infty(\oW,\R_+)$. In particular, $f_t \circ G \circ D$ for $t \in [-1,1]$
is a ``chemin \'el\'ementaire de mort'' starting from $\ol{f}$ and ending at a function $f_1 \circ G \circ D$
with no critical points that induces the diffeomorphism $\varphi \colon M_j \to M_j(\SS_j)(S_{j+1})$, up to isotopy.
Indeed, by Cerf~\cite[Chapter 2.3]{Cerf}, the space of ``chemin \'el\'ementaire'' starting at a given Morse function
that cancel two consecutive critical points with a single flow-line between them, and which is supported in
a neighborhood of their stable and unstable manifolds where it is in normal form is connected,
and so their endpoints can be connected through Morse functions with no critical points. So, for any choice of
gradient-like vector fields, the endpoints induce isotopic diffeomorphisms.
Hence~$f_1$ on $W_j \cup W_{j+1}$ induces a diffeomorphism $d_j' \colon M_j \to M_{j+2}$ that is conjugate
to~$\varphi$ along~$G$. As $G|_{M} = \Id_M$ and $G|_{M(\SS_j)(S_{j+1})} = d_{j+1} \circ d_j^{S_{j+1}}$,
we obtain equation~\eqref{eqn:cancelation}.
%As in the introduction, consider a `chemin \'el\'ementaire' $\ol{f}_t$ such that $\ol{f}_{-1} = \ol{f}$ and where $\ol{f}_1$
%has no critical points.
\end{proof}

%$\SS_{j+1}$ intersects $d_j(\{0\} \times S^{n-k_j-1})$
%in a single point, where $\{0\} \times S^{n-k_j-1} \subset D^{k_j} \times D^{n-k_j}$ is the belt circle of the handle in~$W_j(\SS_j)$.
%In this case, $W_j \cup W_{j+1}$ is cylindrical, and we remove~$M_{j+1}$.
%More precisely, we define~$M_i'$ to be~$M_i$ for~$i < j+1$ and $M_i' = M_{i+1}$
%for $i \ge j+1$. We define the attaching spheres~$\SS_i'$ and the diffeomorphisms~$d_i'$ for $i \neq j$
%analogously. Let $\SS_j' = \emptyset$ and $S_{j+1} = d_j^{-1}(\SS_{j+1}) \subset M_j(\SS_j)$. To define
%\[
%d_j' \colon M_j'(\SS_j') = M_j \to M_{j+1}' =  M_{j+2},
%\]
%notice that $M_j(\SS_j)(S_{j+1}) \setminus M_j$ and
%$M_j \setminus M_j(\SS_j)(S_{j+1})$ are both diffeomorphic to~$D^n$,
%note that there is a diffeomorphism
%\[
%\varphi \colon M_j \to M_j(\SS_j)(S_{j+1})
%\]
%defined as in property~\eqref{it:birth} of Theorem~\ref{thm:TQFT}.
%supported in a disk. (Just take~$\varphi$ to be a a diffeomorphism that is the identity on
%$M_j \cap M_j(\SS_j)(S_{j+1})$; for any two such maps~$\varphi$ and
%$\varphi'$, the difference $\varphi^{-1} \circ \varphi'$ is also supported in a disk.)
%Furthermore, $d_j$ induces a diffeomorphism
%\[
%d_j^{S_{j+1}} \colon M_j(\SS_j)(S_{j+1}) \to M_{j+1}(\SS_{j+1}).
%\]
%Then let $d_j' = d_{j+1} \circ d_j^{S_{j+1}} \circ \varphi$.

We say that the Morse data~$\MM$ and~$\MM'$ are related by a \emph{critical value crossing} if
there exists a one-parameter family
\[
\{\, (f_t,\ub_t,v_t) \,\colon\, t \in [-1,1] \,\}
\]
of triples such that
\begin{itemize}
\item $(f_{-1}, \ub_{-1}, v_{-1}) = \MM$ and $(f_1 ,\ub_1, v_1) = \MM'$,
\item $f_t$ is a family of Morse functions with critical set $\text{Crit}(f_t) = \{p_0,\dots,p_m\}$
independent of~$t$, and $v_t$ is a family of smooth vector fields,
\item $(f_t,\ub_t,v_t)$ is a Morse datum for every $t \in [-1,1] \setminus \{0\}$,
\item there is a $j \in \{0,\dots,m\}$ such that $b_i(t) = b_i$ is independent of~$t$ for $i \neq j+1$,
where $\ub_t = (b_0(t), \dots, b_{m+1}(t))$,
\item two critical values cross each other; i.e., $f_t(p_j) < f_t(p_{j+1})$ for~$t < 0$ and
$f_t(p_j) > f_t(p_{j+1})$ for $t > 0$, with equality for $t = 0$,
\item $W^u(p_j) \cap W^s(p_{j+1}) = \emptyset$ for every $t \in [-1,1]$,
\item $\{f_t \,\colon\, t \in [-1,1] \,\}$ is a ``chemin \'el\'ementaire
de croisement ascendant or descendant'' with support in a
small neighborhood~$U$ of
\begin{align*}
&W^s(p_j) \cap f^{-1}[b_j,b_{j+2}] \text{ or} \\
&W^s(p_{j+1}) \cap f^{-1}[b_j,b_{j+2}];
\end{align*}
see Cerf~\cite[Chapter~II, p.40]{Cerf}.
Inside~$U$, the path~$f_t$ is of normal form, while outside~$U$ both $f_t$ and $v_t$ are constant.
\end{itemize}

\begin{lem} \label{lem:switch}
Suppose that the Morse data~$\M = (f,\ub,v)$ and $\M' = (f',\ub',v')$ are related by a critical value crossing
$\{\, (f_t,\ub_t,v_t) \,\colon\, t \in [-1,1] \,\}$,
and consider  the induced parameterized Cerf decompositions~$\CC$ and~$\CC'$. Then these
satisfy the following properties:
\begin{enumerate}
\item \label{it:cross} in $\CC$, the part $W_j \cup_{M_{j+1}} W_{j+1}$ is replaced by $W_j' \cup_{M_{j+1}'} W_{j+1}'$,
the rest of the decomposition is unchanged,
\item \label{it:empty} $\text{Im}(\SS_j) \cap \text{Im}(\SS_j') = \emptyset$ and
$\im(\SS_{j+1}) \cap d_j(D^{k_j} \times S^{n-k_j}) = \emptyset$,
\item \label{it:spheres} $d_j' \circ \SS_j = \SS_{j+1}'$ (where we consider $\SS_j$ in $M_j(\SS_j')$)
and $d_j \circ \SS_j' = \SS_{j+1}$ (where we consider $\SS_j'$ in $M_j(\SS_j)$), and

%Let~$N_{j+1}$ be a regular neighborhood of~$\SS_{j+1}$ given by the embedding of~$S^{k_{j+1}} \times D^{k_{j+1}}_{1/2}$
%via the framing. Let
%\[
%H_{j+1} = \left( N_{j+1} \times I \right) \cup \left( D^{k_{j+1}}_{1/2} \times D^{n-k_{j+1}} \right) \subset W(\SS_{j+1}),
%\]
%then we define $M_{j+1}'$ as the symmetric difference
%\[
%D_j(M_j \times \{1\}) \bigtriangleup D_{j+1}(\partial H_{j+1})
%\]
%for some extensions~$D_j$ and~$D_{j+1}$ of~$d_j$ and~$d_{j+1}$, respectively.
%Furthermore, let $\SS_j' = d_j^{-1}(\SS_{j+1}) \subset M_j \cap M_j(\SS_j)$
%and~$\SS_{j+1}' = D_j(\SS_j \times \{1\})$.
%The map~$D_j' \colon W(\SS_j') \to W_j'$ is defined as~$D_j$
%on $M_j \times I$ and as~$D_{j+1}$ on~$H_{j+1}$, and we let $d_j' = D_j'|_{M(\SS_j')}$.
%We define~$d_{j+1}'$ such that it makes the following diagram commutative:

\item \label{it:commute} the following diagram is commutative up to isotopy:
\[
\xymatrix{
  M_j(\SS_j,\SS_j') \ar[r]^-{(d_j)^{\SS_j'}} \ar[d]^-{(d_j')^{\SS_j}} & M_{j+1}(\SS_{j+1}) \ar[d]^{d_{j+1}} \\
  M_{j+1}'(\SS_{j+1}')  \ar[r]^-{d_{j+1}'} & M_{j+2}.}
\]
%It is straightforward to see that $d_{j+1}'$ extends to a map $D_{j+1}' \colon W(\SS_{j+1}') \to W_{j+1}$
%that is the identity on~$M_{j+1}'$.
\end{enumerate}
\end{lem}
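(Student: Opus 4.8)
The strategy is to reduce everything to the local model of a ``chemin \'el\'ementaire de croisement'' in the sense of Cerf and to exploit the fact that, since $W^u(p_j) \cap W^s(p_{j+1}) = \emptyset$ throughout the family, the two critical points never interact: the handles attached along $\SS_j$ and $\SS_{j+1}$ are essentially independent, only their levels are swapped. First I would observe that outside the small neighborhood $U$ where the path is of normal form, both $f_t$ and $v_t$ are constant, so the stable and unstable manifolds of $p_j$ and $p_{j+1}$ (hence the attaching spheres and their framings obtained by the construction of Section~\ref{sec:CM}) evolve only by an ambient isotopy of $W$. This immediately gives~\eqref{it:cross}: the pieces $W_i$ for $i \notin \{j,j+1\}$, together with their parameterizations $d_i$, are unaffected up to the isotopies permitted in Definition~\ref{def:param-Cerf}. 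For~\eqref{it:empty}, I would note that $\SS_{j+1} \subset M_{j+1}$ is (after identification via $d_j$) the descending sphere $W^s(p_{j+1}) \cap M_{j+1}$, while $d_j(D^{k_j} \times S^{n-k_j})$ is the belt sphere of the handle attached along $\SS_j$, i.e.\ the trace of $W^u(p_j) \cap M_{j+1}$; the hypothesis $W^u(p_j) \cap W^s(p_{j+1}) = \emptyset$ forces these to be disjoint in $M_{j+1}$, and after a small isotopy one can assume $\SS_{j+1}$ itself is disjoint from the belt region. Dually this lets us push $\SS_{j+1}$ down through the level $b_{j+1}$ to a framed sphere $\SS_{j+1}' \subset M_j'$, and push $\SS_j$ up to $\SS_j'$-shaped data; making these identifications precise and checking the framings match is the content of~\eqref{it:spheres}: $d_j'(\SS_j) = \SS_{j+1}'$ and $d_j(\SS_j') = \SS_{j+1}$.

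For part~\eqref{it:commute}, the cleanest route is the same device used in Lemma~\ref{lem:cancelation}: build a model cobordism $\oW$ obtained by gluing the two traces $W(\SS_j) \cup_{M(\SS_j)} W(\SS_j'')$ in one order and $W(\SS_{j+1}') \cup W(\SS_{j+1})$ in the other, equip each with its natural parameterized Cerf decomposition (identity diffeomorphisms), lift to Morse data $(\ol f, \ol{\ub}, \ol v)$ via Lemma~\ref{lem:lift}, and then use the disjointness of the two handles to write down an explicit diffeomorphism between the two orderings --- this is precisely relation~\eqref{it:commut} of Definition~\ref{def:relations} applied to the disjoint spheres $\SS_j$ and $\SS_j'$ in $M_j$. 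Transporting the Morse data of $W_j \cup W_{j+1}$ (both before and after the switch) onto this model via extensions $D_i \colon W_i(\SS_i) \to W_i$ and invoking Lemma~\ref{lem:unique} shows that the composite parameterizations on the two sides agree up to isotopy and up to a diffeomorphism of $\oW$ fixing all the slice boundaries pointwise. Reading off the boundary behaviour of that diffeomorphism --- as in the last paragraph of the proof of Lemma~\ref{lem:cancelation}, where $G|_M = \Id_M$ and $G$ on the top is the composite of the two parameterizations --- yields the commutative square, with the two routes $d_{j+1} \circ (d_j)^{\SS_j'}$ and $d_{j+1}' \circ (d_j')^{\SS_j}$ both equal to the parameterization of the cylindrical-plus-two-handles cobordism, hence isotopic to each other.

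The main obstacle I anticipate is bookkeeping the framings and the identifications in~\eqref{it:spheres} and~\eqref{it:commute}: moving a framed sphere across a level set is harmless topologically, but one must verify that the framing produced by the Section~\ref{sec:CM} recipe (flowing a regular neighborhood of the cell $E$ backward along the rescaled gradient) is carried to the expected framing under the ambient isotopy coming from the Cerf path, and that the induced diffeomorphisms $(d_j)^{\SS_j'}$, $d_j^{S}$, etc.\ are the ones obtained by the functorial construction of relation~\eqref{it:d-F}. There is also the subtlety, already flagged in Remark~\ref{rem:dependence}, that when $k_j \notin \{0, n+1\}$ the attaching sphere is only well-defined up to replacing $\SS$ by $\ol\SS$; since relation~\eqref{it:0-sphere} identifies these, I would simply note that all statements are to be read modulo this ambiguity. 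Beyond that, every step is an application of Lemmas~\ref{lem:lift} and~\ref{lem:unique} together with Cerf's connectedness statement for ``chemins \'el\'ementaires de croisement'' \cite[Chapter~II, p.40]{Cerf}, so no genuinely new analytic input is required.
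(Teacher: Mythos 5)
Your handling of parts~\eqref{it:cross}--\eqref{it:spheres} is essentially the paper's argument (support of the elementary path plus $W^u(p_j) \cap W^s(p_{j+1}) = \emptyset$), though for part~\eqref{it:spheres} you defer the key point to ``bookkeeping'': the actual argument is that along a ``chemin \'el\'ementaire de croisement'' the local normal form of $f_t$ near each critical point changes only by a constant shift, so $W^s(p_j)$ meets $M_j$ in $\SS_j$ \emph{with the same framing} and meets $M_{j+1}'$ in $\SS_{j+1}'$, and then flowing along $v_1$ (resp.\ $v_{-1}$) gives $d_j'(\SS_j) = \SS_{j+1}'$ (resp.\ $d_j(\SS_j') = \SS_{j+1}$); this should be stated, not flagged as a concern.

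The genuine gap is in part~\eqref{it:commute}. You propose to transport both Morse data onto a model cobordism $\oW$ and ``invoke Lemma~\ref{lem:unique}'', in analogy with Lemma~\ref{lem:cancelation}. But Lemma~\ref{lem:unique} applies only to two Morse data inducing the \emph{same} parameterized Cerf decomposition, and here the transported data induce different ones: before the switch the lower handle is attached along $\SS_j$ with intermediate slice $M_j(\SS_j)$, after the switch it is attached along $\SS_j'$ with slice $M_j(\SS_j')$. Applying the lemma separately to each ordering against its own standard lift still leaves open precisely the assertion to be proved, namely that the two resulting identifications of $W_j \cup W_{j+1}$ with the (canonically agreeing) models restrict to isotopic parameterizations of $M_{j+2}$; a self-diffeomorphism of $\oW$ fixing the bottom boundary pointwise need not restrict to something isotopic to the identity on the top, so nothing forces $d_{j+1} \circ (d_j)^{\SS_j'} \approx d_{j+1}' \circ (d_j')^{\SS_j}$ without further input. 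That input is the structure of the crossing path itself, which your plan never uses at this stage (and note that relation~\eqref{it:commut} of Definition~\ref{def:relations} is a formal relation in the quotient category, not a geometric tool; what is available is only the triviality that disjoint handle attachments to $M_j \times I$ commute, which does not control the diffeomorphisms produced by the two gradient flows). The paper proves~\eqref{it:commute} by a direct comparison of flows: since $(f_t,v_t)$ is constant outside a neighborhood of $W^s(p_j)$ (say), the two composites agree on $M_j \setminus (\SS_j \cup \SS_j')$ and on the handle corresponding to $\SS_j'$, because both are given by the $v$-flow up to $M_{j+2}$; and on the handle corresponding to $\SS_j$ they are isotopic because near $p_j$ the elementary path changes $f$ only by a constant while $v_t$ remains the Euclidean gradient. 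Some argument of this kind, using the support and normal-form properties of the path, is indispensable and is missing from your proposal.
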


\begin{proof}
Without loss of generality, suppose we are dealing with an ascending path; i.e., the critical value
$f_t(p_j)$ increases until it gets above $f_t(p_{j+1}) = f(p_{j+1})$. The deformation of $(f_t,v_t)$ is supported in a
saturated neighborhood~$U$ of~$W^s(p_j) \cap f_t^{-1}([b_j,\infty))$.
To see~\eqref{it:cross}, note that if $i \not \in \{j,j+1\}$, then on $W_i$ the function and the vector field remain
unchanged, and so do the regular values $b_i$ and $b_{i+1}$. The deformation is supported inside $W_j \cup W_{j+1}$,
and $b_{j+1}(t)$ stays between the critical values $f_t(p_j)$ and $f_t(p_{j+1})$ for every $t \in [-1,1]$.
Part~\eqref{it:empty} follows from the facts that $W^s(p_j) \cap W^s(p_{j+1}) = \emptyset$ and
\[
W^u(p_j) \cap W^s(p_{j+1}) \cap M_{j+1} = \emptyset.
\]

To prove~\eqref{it:spheres}, recall from Definition~\ref{def:CM}
that~$\SS_j$ is given by $W^s(p_j) \cap M_j$, with framing coming from
a local normal form of~$f$ about~$p_j$. Along an elementary path, this local form remains the same except for
a constant shift. In particular, $W^s(p_j)$ intersects $M_j$ in $a(\SS_j)$ with the same framing,
and $M_{j+1}'$ in $a(\SS_{j+1}')$. Hence, if we flow from $\im(\SS_j)$ along $v_1$ to $M_{j+1}'$,
we obtain $d_j' \circ \SS_j = \SS_{j+1}'$ as $\text{Im}(\SS_j) \cap \text{Im}(\SS_j') = \emptyset$.
Similarly, $W^s(p_{j+1})$ intersects~$M_j$ in~$a(\SS_j')$ and~$M_{j+1}$ in~$a(\SS_{j+1})$, so flowing along $v = v_{-1}$,
we see that $d_j \circ \SS_j' = \SS_{j+1}$.

Finally, we show part~\eqref{it:commute}; i.e., that
\[
d_{j+1} \circ d_j^{\SS_j'}(x) = d_{j+1}' \circ (d_j')^{\SS_j}(x)
\]
for every $x \in M_j(\SS_j,\SS_j')$. Since the deformation $(f_t,v_t)$ is supported in a
neighborhood of $W^s(p_j)$, for every $x \in M_j \setminus (a(\SS_j) \cup a(\SS_j'))$ this is clear since
both compositions are  induced by flowing along~$v$ from~$M_j$ to~$M_{j+2}$.
When $x$ is in the handle part of $M_j(\SS_j,\SS_j')$ corresponding to~$\SS_j'$, both compositions
are obtained by flowing along~$v$ from the corresponding point of a standard neighborhood of~$p_{j+1}$
to~$M_{j+2}$. When $x$ is in the handle part corresponding to~$\SS_j$, flowing up to~$M_{j+2}$ along~$v$ or~$v'$
give isotopic diffeomorphisms since for an elementary deformation
$f_t - f$ is constant near~$p_j$ and $v_t$ is the Euclidean gradient.
\end{proof}

%Let $g \colon W(\SS_{j+1}') \to W(\SS_{j+1}) \setminus H_{j+1}$ be a diffeomorphism, then $D_{j+1}' := D_{j+1} \circ g$.
%Finally, $d_i' = D_i'|_{M(\SS_i')}$ for $i \in \{j,j+1\}$.

We say that~$\M$ and~$\M'$ are related by \emph{an isotopy of the gradient} if $f = f'$ and~$\ub = \ub'$.
Given a parameterized Cerf decomposition~$\CC$, an \emph{isotopy of a framed attaching sphere} is a move described as follows.
Let~$\varphi_t \colon M_j \to M_j$ for~$t \in I$ be an isotopy,
and let~$\SS_j' = \varphi_1 \circ \SS_j$. There is an induced map
\[
\varphi_1' = (\varphi_1)^{\SS_j} \colon M_j(\SS_j) \to M_j(\SS_j'),
\]
and we let $d_j' := d_j \circ (\varphi_1')^{-1}$. It is easy to see that~$d_j'$ extends to
a diffeomorphism $D_j' \colon W(\SS_j') \to W_j$ via the formula
\[
D_j'(x,t) = \left(D_j \circ \varphi_t^{-1}(x),t \right)
\]
for $(x,t) \in M_j \times I$, and extending to the handle in the natural way.

\begin{lem} \label{lem:unique-param}
Let~$(f,\ub)$ be a Morse datum for the cobordism~$W$. If~$\CC$ and~$\CC'$ are parameterized Cerf
decompositions induced by the triples~$(f,\ub, v)$ and $(f,\ub,v')$, respectively,
then they are related by isotopies of the framed attaching spheres~$\SS_i$ and of the diffeomorphisms~$d_i$,
and possibly by reversing framed spheres.
\end{lem}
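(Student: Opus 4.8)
The plan is to reduce to an elementary cobordism and then connect $v$ to $v'$ through a path of gradient-like vector fields, tracking how the attaching sphere and the gluing diffeomorphism deform. Since $\CC$ and $\CC'$ share the underlying Cerf decomposition $C(f,\ub)$, with pieces $W_i=f^{-1}([b_i,b_{i+1}])$ and $M_i=f^{-1}(b_i)$, and since the attaching sphere $\SS_i$ and diffeomorphism $d_i$ of $\CC$ (respectively $\SS_i'$, $d_i'$ of $\CC'$) are produced from $v|_{W_i}$ (respectively $v'|_{W_i}$) by the purely local construction of Section~\ref{sec:CM}, while the allowed moves (isotopy of an attaching sphere, isotopy of a diffeomorphism $d_i$, and replacing $\SS_i$ by $\ol{\SS_i}$) are each supported in a single $W_i$, it suffices to treat one elementary piece at a time. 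So assume $W$ is elementary from $M$ to $M'$ with Morse function $f$, and $v$, $v'$ are gradient-like for $f$. If $f$ has no critical point the claim is immediate: a smooth path $v_t$ of gradient-like vector fields gives the isotopy $t\mapsto d_{v_t}$ from $d_v$ to $d_{v'}$. So assume $f$ has a single critical point $p$ of index $k$.

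By the discussion following Definition~\ref{defn:grad-like}, the space of gradient-like vector fields for the fixed Morse function $f$ is connected, so I would choose a smooth path $\{v_t:t\in[0,1]\}$ of such fields with $v_0=v$ and $v_1=v'$. For each $t$, applying the construction of Section~\ref{sec:CM} with a radius $\eps>0$ independent of $t$ (possible by compactness of $[0,1]$) yields a framed attaching sphere $\SS^{(t)}=W^s_{v_t}(p)\cap M$ and a diffeomorphism $d^{(t)}\colon M(\SS^{(t)})\to M'$, normalized so that $\SS^{(0)}=\SS_0$, $d^{(0)}=d_0$, while $\SS^{(1)}$, $d^{(1)}$ agree with $\SS_0'$, $d_0'$ up to isotopy and reversal. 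The stable manifold $W^s_{v_t}(p)$ depends smoothly on $t$ by the parametrized stable manifold theorem applied near the hyperbolic zero $p$ of $v_t$, extended by the smoothly $t$-dependent flow; intersecting transversally with $M=f^{-1}(b_0)$ gives a smooth family of embedded $k$-spheres. The framing requires, for each $t$, a positive coordinate chart at $p$ in normal form for $(f,v_t)$; the space of such charts has two components when $0<k<n+1$ (corresponding to $\SS^{(t)}$ and $\ol{\SS^{(t)}}$), but these components form a trivial $\Z/2$-bundle over the contractible interval, so a continuous choice exists, making $\SS^{(t)}$ a smooth family of framed embeddings $S^k\times D^{n-k}\hookrightarrow M$, and then $d^{(t)}$ depends smoothly on $t$ as well. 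Hence $\SS^{(1)}$ is isotopic as a framed sphere to $\SS^{(0)}=\SS_0$ up to at most one reversal (none needed if $k\in\{0,n+1\}$), and so, up to reversal, to $\SS_0'$.

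To finish, the isotopy extension theorem upgrades the smooth family $\SS^{(t)}$ to a smooth ambient isotopy $\varphi_t\colon M\to M$ with $\varphi_0=\Id_M$ and $\varphi_t\circ\SS^{(0)}=\SS^{(t)}$; at $t=1$ this is precisely an isotopy of the attaching sphere (followed, if necessary, by a reversal to match $\SS_0'$). Writing $\varphi_t'=(\varphi_t)^{\SS^{(0)}}\colon M(\SS^{(0)})\to M(\SS^{(t)})$ for the induced diffeomorphism, the assignment $t\mapsto d^{(t)}\circ(\varphi_t')^{-1}$ is a smooth path, hence an isotopy, of diffeomorphisms $M(\SS^{(0)})\to M'$ from $d_0$ to $d^{(1)}\circ(\varphi_1')^{-1}$. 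Thus, after the isotopy-of-attaching-sphere move determined by $\varphi_1$, the diffeomorphism of $\CC$ becomes $d_0\circ(\varphi_1')^{-1}$, which is isotopic to $d^{(1)}$ and hence, possibly after a single reversal, to $d_0'$. Reassembling over the pieces $W_i$ via the first step completes the proof.

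The main obstacle is the smooth-dependence bookkeeping in the second paragraph: establishing that the stable manifold and, crucially, its framing vary smoothly with $t$, and that the two-fold ambiguity in the normal-form chart can be trivialized over $[0,1]$, so that at most one reversal $\SS_i\rightsquigarrow\ol{\SS_i}$ is ever forced per elementary piece. The remaining ingredients — isotopy extension and the continuity of the flow-based construction of $d^{(t)}$ — are routine.
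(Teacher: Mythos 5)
Your proof is correct and takes essentially the same route as the paper, which disposes of this lemma in one line by citing Remark~\ref{rem:dependence}: connectedness of the space of gradient-like vector fields for the fixed Morse function, together with the two-component ambiguity of normal-form charts (source of the possible reversal $\SS\rightsquigarrow\ol{\SS}$), and your argument is a detailed unwinding of exactly that remark, piece by piece of the Cerf decomposition. The only cosmetic slip is in the final path of diffeomorphisms, which should be $t\mapsto d^{(t)}\circ\varphi_t'$ on $M(\SS^{(0)})$ (not $d^{(t)}\circ(\varphi_t')^{-1}$); it yields precisely your stated conclusion $d_0\circ(\varphi_1')^{-1}\simeq d^{(1)}$.
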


\begin{proof}
This is a direct consequence of Remark~\ref{rem:dependence}.
%A parameterized Cerf decomposition compatible with~$(f,\ub)$ depends on the choice of gradient-like vector field~$v$,
%the local coordinate systems at the critical points of~$f$, and the radii~$\eps$ of the balls at these critical points.
%By Remark~\ref{rem:dependence}, the corresponding framed spheres~$\SS_i$ and diffeomorphisms~$d_i$ are all isotopic
%for different choices.
\end{proof}

The Morse data~$\M$ and~$\M'$ are related by \emph{adding or removing a regular value} if
$|\ub \bigtriangleup \ub'| = 1$, where $\bigtriangleup$ denotes the symmetric difference.
In this case, there is an~$i \in \N$ for which either $[b_i,b_{i+1}]$
contains no critical value of~$f$, or $[b_i',b_{i+1}']$ contains no critical value of~$f'$.
Then the corresponding parameterized Cerf decompositions are related by \emph{merging or splitting a product:}
Suppose that one of~$W_j$ and~$W_{j+1}$ is cylindrical; i.e.,
$\SS_j$ or~$\SS_{j+1}$ is empty. We describe the case when~$\SS_j = \emptyset$, the other case is analogous.
Then we remove~$M_{j+1}$ and merge~$W_j$ and~$W_{j+1}$. We set~$\SS_j' = d_j^{-1} \circ \SS_{j+1}$
and
\[
d_j' = d_{j+1} \circ (d_j)^{\SS_j'} \colon M_j(\SS_j') \to M_{j+2},
\]
where $(d_j)^{\SS_j'} \colon M_j(\SS_j') \to M_{j+1}(\SS_{j+1})$
is the diffeomorphism induced by~$d_j \colon M_j \to M_{j+1}$.
Splitting a product is the reverse of the above move.
In general, we have the following result for changing~$\ub$.

\begin{lem} \label{lem:changing-b}
Suppose that $(f,\ub,v)$ and $(f,\ub',v)$ are Morse data for the cobordism~$W$, and let~$\CC$ and~$\CC'$
be the corresponding parameterized Cerf decompositions. Then $(f,\ub \cup \ub',v)$ is also a Morse datum
for~$W$, and $\CC'' = \CC(f,\ub \cup \ub',v)$ can be obtained
from both~$\CC$ and~$\CC'$ by splitting products. In particular, one can get from~$\CC$ to~$\CC'$ by splitting
then merging products.
\end{lem}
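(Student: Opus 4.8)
The plan is to reduce the statement to a direct inspection of the \emph{merge/split} move already analyzed in the discussion preceding the lemma, by exploiting the fact that $\ub \cup \ub'$ is obtained from $\ub$ by finitely many insertions of a single regular value, and likewise from $\ub'$. First I would observe that $(f,\ub\cup\ub',v)$ is indeed a Morse datum: since $\ub$ and $\ub'$ are each ordered tuples of regular values of $f$ with at most one critical value of $f$ between consecutive entries, every element of $\ub\cup\ub'$ is a regular value, and inserting extra regular values can only \emph{subdivide} intervals, never create an interval containing two critical values. So the Morse-datum axioms are preserved. This is the easy part.

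Next I would set up the inductive step. Write $\ub\cup\ub' = \{c_0 < c_1 < \dots < c_N\}$ and let $\ub = \{c_{i_0} < \dots < c_{i_\ell}\}$ be the sub-tuple; then $\ub\cup\ub'$ is reached from $\ub$ by adjoining the missing values $c_k$ one at a time, in any order, and at each stage we still have a Morse datum for $W$ (by the observation above). Adjoining a single regular value $c_k$ to the current tuple is precisely the move ``adding a regular value'' of the lemma's preamble, hence on the level of parameterized Cerf decompositions it is exactly a \emph{splitting a product}: the elementary piece $f^{-1}([c_j,c_{j'}])$ that previously contained $c_k$ is split into $f^{-1}([c_j,c_k])$ and $f^{-1}([c_k,c_{j'}])$, one of which is cylindrical (since the original piece had at most one critical value, at most one of the two new pieces contains it), and the attaching spheres and diffeomorphisms transform according to the recipe given there. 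Iterating, $\CC'' = \CC(f,\ub\cup\ub',v)$ is obtained from $\CC = \CC(f,\ub,v)$ by a finite sequence of splittings; symmetrically, $\CC'' $ is obtained from $\CC' = \CC(f,\ub',v)$ by splittings. Composing the first sequence with the reverse of the second gives a passage from $\CC$ to $\CC'$ consisting of splittings followed by merges, which is the final assertion.

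The one point requiring care — and the place I expect the only real friction — is checking that the parameterized Cerf decomposition $\CC(f,\ub\cup\ub',v)$ built directly from the enlarged Morse datum genuinely coincides (not merely up to the moves, but as the output of the construction in Section~\ref{sec:CM}) with the decomposition obtained by performing the formal splitting moves on $\CC$. Here I would invoke the naturality of the construction in Section~\ref{sec:CM}: subdividing an interval by a regular value $c_k$ replaces the flow of $v/v(f)$ from one level set to another by the \emph{concatenation} of the two flows through the intermediate level set $f^{-1}(c_k)$, so the induced diffeomorphism factors exactly as $d_{j+1}\circ (d_j)^{\SS_j'}$ with $\SS_j' = d_j^{-1}(\SS_{j+1})$ in the cylindrical-first case (and the mirror formula otherwise), which is the definition of the splitting move; the attaching sphere and its framing in the non-cylindrical piece are unchanged because they depend only on the local normal form of $(f,v)$ near the critical point, which is untouched. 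With this identification in hand the lemma follows, and no further computation is needed.
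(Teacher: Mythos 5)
Your argument is correct: the paper states Lemma~\ref{lem:changing-b} without proof, treating it as immediate from the definition of the ``adding or removing a regular value'' move, and your write-up supplies exactly the intended justification — refining $\ub$ to $\ub\cup\ub'$ one regular value at a time, noting that each insertion subdivides an interval containing at most one critical value (so one of the two new pieces is cylindrical and the move is a product splitting), and observing that the Section~\ref{sec:CM} construction is compatible with this because the framed sphere depends only on the local normal form at the critical point while the flow-induced diffeomorphism factors through the intermediate level set, reproducing the splitting formula up to isotopy (which is all that Definition~\ref{def:param-Cerf} requires). The concluding step, going from $\CC$ to $\CC''$ by splittings and then to $\CC'$ by the reverse merges, is exactly the final assertion of the lemma, so nothing is missing.
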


Finally, $\M$ and $\M'$ are related by a \emph{left-right equivalence} if
there are diffeomorphisms $\Phi \colon W \to W$ and $\varphi \colon \R \to \R$
such that $f' = \varphi \circ f \circ \Phi^{-1}$, $\ub' = \varphi(\ub)$, $v' = \Phi_*(v)$,
$\Phi|_M \colon M \to M$ is isotopic to~$\Id_M$,
and $\Phi|_{M'} \colon M' \to M'$ is isotopic to~$\Id_{M'}$.
%A \emph{diffeomorphism equivalence}: Let $\Phi \colon W \to W$ be a diffeomorphism
%such that $\Phi|_M \colon M \to M$ is isotopic to~$\Id_M$
%and $\Phi|_{M'} \colon M' \to M'$ is isotopic to~$\Id_{M'}$.
Then we obtain $\CC(\M')$ from $\CC(\M)$ by a \emph{diffeomorphism equivalence}; i.e.,
setting $W_i' = \Phi(W_i)$, $\SS_i' = \Phi \circ \SS_i$, and
\[
d_i' = \Phi_{i+1} \circ d_i \circ \left(\Phi_i^{\SS_i} \right)^{-1},
\]
where $\Phi_i = \Phi|_{M_i}$.

The content of the following lemma is that an isotopy of one of the~$d_j$
can be written in terms of the above moves on parameterized Cerf decompositions.

\begin{lem} \label{lem:isot-d}
Suppose that the parameterized Cerf decomposition~$\CC'$ is obtained from~$\CC$ by replacing one
of the diffeomorphisms~$d_j$ by a diffeomorphism~$d_j' = \phi \circ d_j$,
where $\phi \colon M_{j+1} \to M_{j+1}$ is isotopic to~$\Id_{M_{j+1}}$.
If we extend~$\phi$ to a diffeomorphism $\Phi \colon W \to W$ isotopic to~$\Id_W$
and supported in a collar neighborhood of~$M_{j+1}$, then $\CC'$ can also be obtained
from~$\CC$ by performing the diffeomorphism equivalence corresponding to~$\Phi$,
and then isotoping $\phi \circ \SS_{j+1}$ back to~$\SS_{j+1}$.
\end{lem}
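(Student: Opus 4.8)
I want to show that modifying a single diffeomorphism $d_j$ in a parameterized Cerf decomposition by post-composition with $\phi \simeq \Id$ amounts to a diffeomorphism equivalence (in the sense defined just above Lemma~\ref{lem:isot-d}) followed by an isotopy of an attaching sphere. The key observation is that a diffeomorphism equivalence is governed by a \emph{global} diffeomorphism $\Phi \colon W \to W$ whose restrictions to the two ends of $W$ are isotopic to the identity, and such a $\Phi$ acts on \emph{every} piece of the decomposition simultaneously; the content of the lemma is that if we choose $\Phi$ supported in a collar of $M_{j+1}$, its effect is trivial on all pieces except the two adjacent to $M_{j+1}$, and on those two its effect is exactly the prescribed change of $d_j$ together with a bookkeeping change of $\SS_{j+1}$ that can be undone by an isotopy of an attaching sphere.

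\textbf{Step 1: Set up $\Phi$.} Since $\phi \colon M_{j+1} \to M_{j+1}$ is isotopic to $\Id_{M_{j+1}}$, pick an isotopy $\phi_s$ from $\Id$ to $\phi$ and use it to build $\Phi \colon W \to W$ supported in a small collar $M_{j+1} \times (-\eps,\eps) \subset W$ (straddling the splitting level $M_{j+1}$), with $\Phi|_{M_{j+1}} = \phi$ and $\Phi$ isotopic to $\Id_W$ through diffeomorphisms each supported in that collar. In particular $\Phi|_{M_i}$ is literally the identity for $i \ne j+1$, and $\Phi$ fixes $M$ and $M'$ pointwise, so the left-right equivalence hypotheses are met and a diffeomorphism equivalence of $\CC$ along $\Phi$ is defined.

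\textbf{Step 2: Compute the diffeomorphism equivalence.} Feed $\Phi$ into the formula $W_i' = \Phi(W_i)$, $\SS_i' = \Phi(\SS_i)$, $d_i' = \Phi_{i+1} \circ d_i \circ (\Phi_i^{\SS_i})^{-1}$ from the definition preceding the lemma. For $i \notin \{j, j+1\}$ the collar misses $W_i$, so $W_i' = W_i$, $\SS_i' = \SS_i$, $d_i' = d_i$; nothing changes. For $i = j$: $\Phi_j = \Id_{M_j}$, so $\SS_j' = \SS_j$ and $(\Phi_j^{\SS_j})^{-1} = \Id$, while $\Phi_{j+1} = \phi$; hence the new diffeomorphism out of $W_j$ is $\phi \circ d_j = d_j'$, exactly as wanted. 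For $i = j+1$: $\Phi_{j+1} = \phi$ so $(\Phi_{j+1}^{\SS_{j+1}})^{-1}$ is the map $M_{j+1}(\phi(\SS_{j+1})) \to M_{j+1}(\SS_{j+1})$ induced by $\phi^{-1}$; the attaching sphere becomes $\SS_{j+1}' = \phi(\SS_{j+1}) = \Phi(\SS_{j+1})$, and $\Phi_{j+2} = \Id$, so the new diffeomorphism is $d_{j+1} \circ (\phi^{-1})^{\phi(\SS_{j+1})}$ from $M_{j+1}(\phi(\SS_{j+1}))$ to $M_{j+2}$. (One should also note $W_{j+1}' = \Phi(W_{j+1})$ is canonically identified with $W_{j+1}$ since $\Phi \simeq \Id_W$ rel endpoints.)

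\textbf{Step 3: Undo the change of $\SS_{j+1}$ by an isotopy of an attaching sphere.} After Step~2 the only discrepancy between the result and the desired $\CC'$ is in the $(j+1)$-st piece: we have attaching sphere $\phi(\SS_{j+1})$ instead of $\SS_{j+1}$, and correspondingly $d_{j+1}$ is pre-composed with the induced map of $\phi^{-1}$. Apply the \emph{isotopy of an attaching sphere} move (defined just before Lemma~\ref{lem:unique-param}) to the isotopy $\phi_s^{-1}$ carrying $\phi(\SS_{j+1})$ back to $\SS_{j+1}$: by construction this move replaces $\SS_{j+1}' = \phi(\SS_{j+1})$ by $\SS_{j+1}$ and replaces $d_{j+1}'$ by $d_{j+1}' \circ ((\phi^{-1})^{\phi(\SS_{j+1})})^{-1} = d_{j+1} \circ (\phi^{-1})^{\phi(\SS_{j+1})} \circ (\phi^{-1})^{\phi(\SS_{j+1})})^{-1} = d_{j+1}$, recovering $\CC$'s data in slot $j+1$ except with the already-modified $d_j' = \phi \circ d_j$ feeding into it. That is precisely $\CC'$, completing the proof.

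\textbf{Main obstacle.} The only genuinely delicate point is bookkeeping the induced diffeomorphisms on surgered manifolds: one must check that $(\Phi_i^{\SS_i})$ is well-defined (which requires $\Phi_i$ to carry $\SS_i$ to $\SS_i'$, true by naturality of surgery) and that the composition in Step~3 cancels on the nose up to the isotopy implicit in the "$\approx$" of these moves — i.e., that the isotopy of the attaching sphere really is the \emph{inverse} move to the change induced by $\Phi$ on the $(j+1)$-st slot, not merely a move of the same type. Once one observes that both are built from the single isotopy $\phi_s$ (one forward, one backward), this is immediate, so there is no real analytic or topological difficulty; the lemma is essentially a compatibility check between two previously-defined moves.
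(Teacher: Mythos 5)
Your proposal is correct and takes essentially the same approach as the paper: compute the diffeomorphism equivalence using $\Phi_i = \Id_{M_i}$ for $i \neq j+1$, so that $d_j \mapsto \phi \circ d_j$ and $d_{j+1} \mapsto d_{j+1} \circ \left(\phi^{\SS_{j+1}}\right)^{-1}$, and then observe that isotoping $\phi(\SS_{j+1})$ back to $\SS_{j+1}$ cancels the extra factor and restores $d_{j+1}$. The only blemishes are cosmetic (an unbalanced parenthesis in the final composition in Step~3, and Step~1 re-constructs the $\Phi$ that the statement already supplies).
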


\begin{proof}
It is clear that $W_i = W_i'$, $M_i = M_i'$, and $\SS_i = \SS_i'$ for any~$i \in \{0,\dots,m+1\}$.
What we do need to check is that $d_j = d_j'$ and $d_{j+1} = d_{j+1}'$. If we use the notation
$\Phi_i = \Phi|_{M_i}$, then $\Phi_i = \Id_{M_i}$ unless~$i = j+1$.
Hence, the diffeomorphism equivalence replaces $d_j$ by $\Phi_{j+1} \circ d_j = \phi \circ d_j$ and
$d_{j+1}$ by $d_{j+1} \circ \left(\Phi_{j+1}^{\SS_{j+1}} \right)^{-1} = d_{j+1} \circ \left(\phi^{\SS_{j+1}} \right)^{-1}$.
Then isotoping~$\phi \circ \SS_{j+1}$ back to $\SS_{j+1}$ replaces $d_{j+1} \circ \left(\phi^{\SS_{j+1}} \right)^{-1}$
by
\[
d_{j+1} \circ \left(\phi^{\SS_{j+1}} \right)^{-1} \circ \phi^{\SS_{j+1}} = d_{j+1}. \qedhere
\]
\end{proof}

\begin{thm} \label{thm:Cerf}
Let $\M = (f,\ub,v)$ and $\M' = (f',\ub',v')$ be Morse data on the oriented cobordism~$W$. Then they can be connected by a
sequence of critical point creations and cancelations, critical value crossings,
isotopies of the gradient, adding or removing regular values, and left-right equivalences.

Furthermore, if the ends of each component of the cobordism~$W$ are non-empty,
then we can avoid index~$0$ and~$n+1$ critical points throughout.
If, in addition, we assume that~$n \ge 2$, and the cobordism~$W$ and each level
set~$f^{-1}(b_i)$ and~$(f')^{-1}(b_j')$ are connected,
then we can choose the above sequence such that in the corresponding parameterized Cerf decompositions all level sets
are connected. In particular, there are no index~$0$ or~$n+1$ critical points throughout, and no index~$n$
critical points with separating attaching spheres.
\end{thm}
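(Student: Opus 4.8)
The plan is to reduce the statement to the classical Cerf theory of one-parameter families of functions, upgraded to keep track of the gradient-like vector fields and the marked regular values $\ub$. First I would connect $\M$ and $\M'$ at the level of the underlying Morse functions: by Cerf's theorem, any two proper Morse functions $f, f' \colon W \to \R$ (with the prescribed behaviour on $\partial W$) can be joined by a generic path $\{f_t\}$ of smooth functions whose only non-Morse members are finitely many births, deaths, and critical-value crossings, together with the ``trivial'' moves coming from left-right equivalences (precomposition by ambient isotopies of $W$ and postcomposition by diffeomorphisms of $\R$). Along such a path, choose a generic family of gradient-like vector fields $v_t$: genericity makes the ascending/descending manifolds of consecutive critical points transverse away from the bifurcation times, and at a death (resp.\ birth) time they meet in a single flow-line, which is exactly the condition built into the definitions of critical point cancelation and switch above. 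At each bifurcation time, localize the path to a standard neighborhood of the relevant stable/unstable manifold using Cerf's ``chemin \'el\'ementaire'' normal forms (Cerf~\cite[Chapitres~II--III]{Cerf}); outside this neighborhood $f_t$ and $v_t$ can be taken constant, so the move has the precise local form demanded in our definitions. The marked values $\ub$ are handled separately: whenever a critical value needs to pass one of the $b_i$, or whenever the number of critical values between two consecutive $b_i$ changes, interpolate by adding or removing a regular value (and, if necessary, sliding the $b_i$ by a diffeomorphism of $\R$, i.e.\ a left-right equivalence). Subdividing the path at its finitely many bifurcation times and interleaving these bookkeeping moves yields the desired finite sequence.

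For the second paragraph of the statement, when every component of $W$ has non-empty incoming and outgoing boundary, I would invoke the standard fact (going back to Morse and Cerf, cf.\ Milnor~\cite[\S8]{Milnor}) that one can perturb both $f$ and $f'$, and the connecting path $\{f_t\}$, to eliminate all critical points of index $0$ and $n+1$: an index-$0$ critical point can be cancelled against an adjacent index-$1$ critical point (and dually for index $n+1$ against index $n$), since the relevant ascending/descending manifolds can be made to intersect in one point once each component of $W$ touches both ends. Doing this simultaneously for $f$, $f'$, and the whole family keeps us inside the class of paths already analyzed. For the third paragraph, assuming $n \ge 2$ and $W$, all $f^{-1}(b_i)$, $(f')^{-1}(b_j')$ connected, I would additionally arrange the path so that every intermediate level set stays connected. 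Connectivity of a generic level set fails exactly when it crosses an index-$0$, index-$(n+1)$, or index-$n$ critical point whose descending sphere separates (or dually an index-$1$ critical point whose ascending sphere separates). The first two are already excluded; for the separating index-$n$ (equivalently index-$1$) critical points, one orders the critical points of each $f_t$ so that, reading from the bottom, index-$1$ critical points with non-separating ascending sphere are introduced only after the level set has been made connected by the $0$-handles, and symmetrically from the top — this is the usual ``rearrangement'' argument (Milnor~\cite[\S4, \S8]{Milnor}), and it can be run in families because connectivity is an open condition that only changes at the codimension-one bifurcations we control.

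The main obstacle will be the family version of the handle-rearrangement and cancellation arguments: for a single function the cancellation of low- and high-index critical points and the reordering to preserve connectivity are classical, but here one must perform them coherently along the entire one-parameter family $\{f_t, v_t\}$ without creating new, unwanted bifurcations, and while keeping the support of every modification inside the standard neighborhoods so that each elementary step genuinely matches one of the five moves listed in the theorem. Concretely, one has to check that the births/deaths used to remove index-$0$ and index-$(n+1)$ critical points, and the switches used to reorder them, can themselves be taken in Cerf normal form with transverse (single flow-line) intersections of the relevant ascending and descending manifolds — this is where the hypotheses on the boundary and on connectivity of $W$ and the level sets are used — and that these auxiliary moves do not interfere with the bifurcations already present in the original generic path. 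Granting the structure theorems of Cerf~\cite{Cerf} for such one-parameter families, together with the parameterized Cerf-decomposition dictionary established in Lemmas~\ref{lem:cancelation}, \ref{lem:switch}, \ref{lem:unique-param}, \ref{lem:changing-b}, and~\ref{lem:isot-d}, the remaining work is to assemble these pieces into a single finite sequence, which is routine once the normal-form localizations are in place.
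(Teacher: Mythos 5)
Your proposal follows essentially the same route as the paper: connect $f$ and $f'$ by a generic one-parameter family whose only degenerations are birth--death points and critical-value crossings, put each degeneration into Cerf's ``chemin \'el\'ementaire'' normal form supported near the relevant stable/unstable manifolds so that it matches a critical point cancelation/creation or switch, and absorb the bookkeeping of $\ub$ and the vector fields into adding/removing regular values, isotopies of the gradient, and left-right equivalences. The ``main obstacle'' you flag (running cancellation and rearrangement coherently in families to kill index $0$, $n+1$, and separating index-$n$ critical points) is exactly where the paper defers to the literature, citing Kirby's Cerf-theoretic argument and \cite[Theorem~3.6]{GWW} rather than redoing the family rearrangement, so your sketch is consistent with, and no less complete than, the paper's own treatment.
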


\begin{proof}
Connect~$f$ and~$f'$ by a generic one-parameter family~$\{\, f_s \,\colon\, s \in [0,1] \,\}$ of smooth functions.
This family fails to be a Morse function at the parameter values $c_1, \dots, c_l$, where either
we have a birth-death singularity, or two critical points have the same value.
%By Cerf theory, we can assume~$f_s$ has no index~$0$ or~$n$ critical points for any $s \in [0,1]$.
We also choose parameter values $s_0, \dots, s_{2l+1}$ such that
\[
0 = s_0 < s_1 < c_1 < s_2 < s_3 < c_2 <  \dots < s_{2l-2} < s_{2l-1} < c_l < s_{2l} < s_{2l+1} = 1,
\]
and $s_{2i-1}$ and $s_{2i}$ are close to~$c_i$ in a sense to be specified below. For every $i \in \{0, \dots, 2l+1\}$,
let $v_i$ be a gradient-like vector field for $f_i = f_{s_i}$.
Furthermore, for every $i \in \{0, \dots, l\}$, choose the ordered tuples $\ub_{2i}$ and $\ub_{2i+1}$
such that $\M_{2i} = (f_{2i},\ub_{2i},v_{2i})$ and $\M_{2i+1} = (f_{2i+1},\ub_{2i+1},v_{2i+1})$ are Morse data,
and such that they can be connected by a continuous path of tuples $\ub(s)$ consisting of regular values of~$f_s$
for $s \in [s_{2i}, s_{2i+1}]$.
Then, by \cite[Lemma~3.1]{GWW}, the Morse data $\M_{2i}$ and $\M_{2i+1}$ are related by
a left-right equivalence and an isotopy of the gradient.
Furthermore, by Lemma~\ref{lem:changing-b}, different choices of~$\ub$ give decompositions
related by adding and removing regular values.

It remains to prove that~$\M_{2i-1}$ and~$\M_{2i}$ are related by one of the moves listed in the statement
for a fixed $i \in \{0,\dots,l\}$.
To simplify the notation, let $\M_- = \M_{2i-1}$, $\M_+ = \M_{2i}$, $s_- = s_{2i-1}$, $s_+ = s_{2i}$, $f_\pm = f_{s_\pm}$,
$v_\pm = v_{s_\pm}$, and $c = c_i$.
Choose an ordered tuple~$\ub$ such that there is exactly one element of~$\ub$ between any two consecutive critical points of~$f_c$.

First, suppose that the function~$f_c$ has a death singularity at~$p \in W$ with $f_c(p) \in (b_j, b_{j+1})$.
According to Cerf~\cite[p.~71, Proposition~2]{Cerf}, we can modifying the family~$f_s$ such that it
becomes a ``chemin \'el\'ementaire de mort'' for~$s$ in a neighborhood of~$c$.
(Note that this modification does change the function at the endpoint of the family,
but it is left-right equivalent and hence isotopic to the original endpoint via a two-sided isotopy.
Our modified family is the juxtaposition of the chemin \'el\'ementaire de mort and this
two-sided isotopy, and we take $s_-$ and $s_+$ to be the start and endpoint of the chemin \'el\'ementaire de mort.)
In particular, $f_s$ is constant in~$s$ outside a ball~$B \subset f_c^{-1}([b_j,b_{j+1}])$
containing~$p$ for $s \in [s_-,s_+]$, if~$s_\pm$ are very close to~$c$. Furthermore, there is a coordinate system
about~$p$ in which
\[
f_s(\ux) = f_c(p) + x_1^3 + s x_1 - x_2^2 - \dots - x_k^2 + x_{k+1}^2 + \dots + x_{n+1}^2.
\]
We choose~$v_-$ and~$v_+$ to be gradient-like vector fields for~$f_-$ and~$f_+$, respectively, that coincide outside~$B$.
Notice that~$f_c(p)$ lies between the values of the two critical points that cancel for~$s < 0$,
hence $(f_-,\ub_-)$ is a Morse datum for~$\ub_- = \ub \cup \{f_c(p)\}$.
Then $(f_-,\ub_-, v_-)$ and $(f_+,\ub,v_+)$ are Morse data for~$W$.
%and we can assume they induce the parameterized Cerf decompositions~$\CC_-$ and $\CC_+$.
It follows from the above construction that in~$\M_-$ the attaching sphere and the belt sphere of the canceling
pair of critical points intersect in a single point. So $\M_-$ and $\M_+$ are  related by a critical point cancelation.
%Furthermore, $\CC_+$ is obtained from~$\CC_-$ by removing the level set $(f_-)^{-1}(f_c(p))$.
%The diffeomorphism $d_j'$ from $(f_+)^{-1}(b_j)$ to $(f_+)^{-1}(b_{j+1})$ is of the
%form $d_j' = d_{j+1} \circ d_j^{S_{j+1}} \circ \varphi$, since~$v_-$ and~$v_+$ coincide outside the ball~$B$.

%\begin{rem}
%It is important to note that, given the Morse function~$f_-$ with a canceling pair of critical points,
%the fact that~$f_+$ is obtained via a family with a single death bifurcation does not necessarily define
%the diffeomorphism induced by the gradient flow of~$v_+$ uniquely up to isotopy when~$n = 4$. This is because
%Cerf only proved his ``Lemme d'unicit\'e des morts'' when $n > 4$. When this holds, given a family~$\{,f_s \colon s \in I \,\}$
%from~$f_-$ to~$f_+$ such that $f_s = f_-$ for $s \in [0,\varepsilon]$ and~$\{\, f_s' \colon s \in I \,\}$ from~$f_-$ to~$f_+'$
%such that $f_s' = f_-$ for $s \in [0,\varepsilon]$, in the family $\{\, g_s \colon s \in [-1,1] \,\}$
%defined by $g_s = f_{-s}'$ for $s \in [-1,0]$ and by $g_s = f_s$ for $s \in [0,1]$, we can apply the Cerf move
%to eliminate the ``saucer'' from the Cerf graphic, and get a family of smooth functions with no critical points.
%Hence the gradient-like vector fields~$v_+$ and~$v_+'$ for~$f_+$ and~$f_+'$, respectively, are homotopic through
%nowhere zero vector fields, and flowing along them induce isotopic diffeomorphisms.
%When~$n = 4$, we can still obtain the diffeomorphism up to a pseudo-isotopy supported in a disk.
%\end{rem}

Now consider the case when~$f_c$ has two critical points at~$p$ and~$q$ such that
\[
f_c(p) = f_c(q) \in [b_j,b_{j+1}].
\]
Then we can modify the family $f_s$ in the interval $[s_-,s_+]$ such that it becomes a ``chemin \'el\'ementaire de 1-croisement''
in a neighborhood of~$c$; this is possible by Cerf~\cite[p.~49, Proposition~2]{Cerf}.
In particular, $f_s$ is independent of~$s$ outside a neighborhood~$N$ of either~$W^s(p)$ or~$W^s(q)$,
and the points~$p$ and~$q$ remain critical throughout. Furthermore, for $s \in [s_-, c)$, we have $f_s(p) < f_s(q)$,
while for $s \in (c, s_+]$, we have $f_s(q) < f_s(p)$. In fact, we can arrange that a fixed vector field~$v$ on~$W$
remains gradient-like for every~$f_s$. If we set $\ub_\pm = \ub \cup \{(f_{s_\pm}(p) + f_{s_\pm}(q))/2\}$, then
$(f_-,\ub_-, v)$ and $(f_+,\ub_+, v)$ are Morse data. %they induce the parameterized Cerf decompositions~$\CC_-$ and~$\CC_+$.
Then we can get from~$\M_-$ to~$\M_+$ by a critical value crossing
%a left-right equivalence (to match~$(f_+)^{-1}(f_c(p))$ with the middle level set given by the switch),
and isotopies of the gradient. %~$d_i$ and attaching spheres.

If each component of the cobordism~$W$ has non-empty ends, then we can avoid index~$0$ and~$n+1$ critical
points using Cerf theory as in the work of Kirby~\cite{Kirby}.
The statement on connected Cerf decompositions follows from~\cite[Theorem~3.6]{GWW}.
\end{proof}

%Let~$C$ and~$C'$ be the underlying Cerf decompositions, and consider a sequence $C = C_0, C_1, \dots, C_s = C'$
%of Cerf decompositions connecting them such that~$C_i$ and~$C_{i+1}$ are related by a Cerf move.
%This sequence arises by choosing Morse functions~$f$ and~$f'$ compatible with~$C$ and~$C'$, respectively,
%and connecting them by a generic one-parameter family of smooth functions.

\section{The presentation of $\Cob_n$ and constructing TQFTs}
\label{sec:construction}

In this section, we describe how Theorem~\ref{thm:Cerf}, together with the lemmas of the
previous section translating moves on Morse data to moves on parameterized Cerf decompositions,
imply Theorem~\ref{thm:presentation}. Then we show how Theorem~\ref{thm:TQFT} follows from
Theorem~\ref{thm:presentation}. We now restate Theorem~\ref{thm:presentation} for the reader's convenience.

\begin{thm*}
The functor $c \colon \cF(\G_n) \to \Cob_n$ descends to a functor
\[
\cF(\G_n)/\cR \to \Cob_n
\]
that is an isomorphism of symmetric monoidal categories.

By slight abuse of notation, we will also denote the functor $\cF(\G_n)/\cR \to \Cob_n$ by~$c$.
Then $c$ restricted to $F(\G_n')/\cR$ is an isomorphism onto $\Cob_n'$
and $c$ restricted to $F(\G_n^0)/\cR$ is an isomorphism onto $\Cob_n^0$.
Finally, $c^s \colon \cF(\G^s) \to \BSut'$ descends to a functor $\cF(\G^s)/\cR^s$
that is an isomorphism of symmetric monoidal categories.
\end{thm*}

\begin{proof}[Proof of Theorem~\ref{thm:presentation}]

To show that $c$ descends to $\cF(\G_n)/\cR$, it suffices to check that if we apply~$c$ to
any relation in $\cR$, then the resulting relation holds in the cobordism category~$\Cob_n$.
Applying~$c$ to relation~\eqref{it:isot} gives $c_{d \circ d'} = c_d \circ c_{d'}$.
Furthermore, if $d \in \Diff_0(M)$, then $c_d = i_M$.
Both of these hold by the discussion following Definition~\ref{def:diffcob}.
If we apply~$c$ to relation~\eqref{it:d-F}, then we obtain
\[
c_{d^\SS} \circ W(\SS) = W(\SS') \circ c_d,
\]
where $d \colon M \to M'$ is a diffeomorphism, $\SS$ is a framed sphere in~$M$, and $\SS' = d \circ \SS$,
which is straightforward.
Relation~\eqref{it:commut} amounts to
\[
W(M(\SS),\SS') \circ W(M,\SS) = W(M(\SS'),\SS) \circ W(M,\SS')
\]
for disjoint framed spheres $\SS$ and $\SS'$ in $M$ (here we also specify the manifold we are performing
surgery on in the notation of the trace to avoid ambiguity). This is standard in handle theory;
see the proof of Lemma~\ref{lem:switch}.
Relation~\eqref{it:birth}, cancelation, amounts to
\[
W(\SS') \circ W(\SS) = c_\varphi,
\]
and follows from the discussion in Definition~\ref{def:phi}.
Finally, relation~\eqref{it:0-sphere}, holds as $W(\SS) = W(\ol{\SS})$.

We now show that $c \colon \cF(\G_n)/\cR \to \Cob_n$ is an isomorphism of symmetric mo\-no\-idal categories.
Suppose that~$W$ is an oriented cobordism from~$M$ to~$M'$.
Choose a Morse datum $(f,\ub,v)$ for $W$.
By Definition~\ref{def:CM}, this induces a parameterized Cerf
decomposition~$\CC$ of~$W$, consisting of a decomposition
\[
W = W_0 \cup_{M_1} W_1 \cup_{M_2} \dots \cup_{M_m} W_m,
\]
together with framed attaching spheres~$\SS_i$ and diffeomorphisms~$d_i \colon M_i(\SS_i) \to M_{i+1}$.
When $n \ge 2$ and~$W$, $M$, and~$M'$ are all connected, we can assume that each~$M_i$ is connected
as well by~\cite[Lemma~2.5]{GWW}.
As explained in the introduction after the statement of Theorem~\ref{thm:presentation},
the decomposition~$\CC$ corresponds to the morphism
\[
f_\CC := \prod_{i=0}^{m} d_i \circ e_{M_i,\SS_i}
\]
in the category $\cF(\G_n)$. Then $W = c(f_\CC)$,
showing that the functor $c \colon \cF(\G_n)/\cR \to \Cob_n$ is surjective onto the morphisms of $\Cob_n$.

Suppose that $W$ and $W'$ are equivalent cobordisms from~$M$ to~$M'$,
with equivalence given by the diffeomorphism~$h \colon W \to W'$ fixing~$M$ and~$M'$ pointwise.
Let~$\CC$ be a parameterized Cerf decomposition of~$W$, as above.
Then~$h$ induces a parameterized Cerf decomposition~$\CC'$ of~$W'$ by setting $W_i' = h(W_i)$, $\SS_i' = d \circ \SS_i$,
and
\[
d_i' = h_{i+1} \circ d_i \circ \left( h_i^{\SS_i} \right)^{-1} \colon M_i'(\SS_i') \to M_{i+1}',
\]
where $h_i = h|_{M_i}$ for $i \in \{0,\dots,m\}$.
We claim that
\[
f_\CC \sim f_{\CC'},
\]
where $f_{\CC'}$ is the morphism in $\cF(\G_n)$ arising from~$\CC'$.
Indeed, consider the diagram
\[
\xymatrixcolsep{3pc}\xymatrix{
  M_i \ar[r]^-{e_{M_i,\SS_i}} \ar[d]^-{h_i} & M_i(\SS_i) \ar[d]^{h_i^{\SS_i}} \ar[r]^{d_i} & M_{i+1} \ar[d]^{h_{i+1}} \\
  M_i'  \ar[r]^-{e_{M_i',\SS_i'}} & M_i'(\SS_i') \ar[r]^{d_i'} & M_{i+1}'.}
\]
The rectangle on the left is commutative because of relation~\eqref{it:d-F} of Definition~\ref{def:relations},
while the rectangle on the right commutes by the above definition of~$d_i'$ and relation~\eqref{it:isot}.
Putting the above rectangles together for $i \in \{0, \dots, m\}$, and using the property
that $h_0 = \Id_M$ and $h_{m+1} = \Id_{M'}$, the claim follows.

As we shall see, the content of Theorem~\ref{thm:Cerf} is that,
for any two parameterized Cerf decompositions~$\CC$ and $\CC'$ of a cobordism~$W$,
we can get from $f_\CC$ to $f_{\CC'}$ via relations in~$\cR$. Together with
the previous paragraph, this implies that~$c$ is injective on~$\cF(\G_n)/\cR$.

By Lemma~\ref{lem:lift}, there exist Morse data $\M = (f,\ub,v)$
and $\M' = (f',\ub',v')$ inducing~$\CC$ and~$\CC'$, respectively.
It suffices to prove that $f_\CC \sim f_{\CC'}$
when~$\M'$ is obtained from~$\M$ by one of the moves listed in Theorem~\ref{thm:Cerf}, since
any two Morse data can be connected by a sequence of such moves.

First, suppose that~$\M'$ is obtained from~$\M$ by a critical point cancelation.
Then what we need to show is that
\begin{equation} \label{eqn:canc}
d_{j+1} \circ e_{M_{j+1},\SS_{j+1}} \circ d_j \circ e_{M_j,\SS_j} \sim d_j'.
\end{equation}
By Lemma~\ref{lem:cancelation}, $d_j' \approx d_{j+1} \circ d_j^{S_{j+1}} \circ \varphi$, where $S_{j+1} = d_j^{-1} \circ \SS_{j+1}$.
Hence, using relation~\eqref{it:isot}, equation~\eqref{eqn:canc} reduces to
\[
e_{M_{j+1},\SS_{j+1}} \circ d_j \circ e_{M_j,\SS_j} \sim d_j^{S_{j+1}} \circ \varphi.
\]
By relation~\eqref{it:birth} of Definition~\ref{def:relations}, we have
\[
\varphi \sim e_{M_j(\SS_j),S_{j+1}} \circ e_{M_j,\SS_j}.
\]
Now, according to relation~\eqref{it:d-F},
\[
d_j^{S_{j+1}} \circ e_{M_j(\SS_j),S_{j+1}} \sim e_{M_{j+1},\SS_{j+1}} \circ d_j,
\]
and the result follows. The case of a critical point creation follows by reversing the roles of~$\M$ and~$\M'$.

Now assume that~$\M$ and~$\M'$ are related by a critical value crossing.
Then we will show that
\begin{equation} \label{eqn:switch}
d_{j+1} \circ e_{M_{j+1},\SS_{j+1}} \circ d_j \circ e_{M_j,\SS_j} \sim
d_{j+1}' \circ e_{M_{j+1}',\SS_{j+1}'} \circ d_j' \circ e_{M_j,\SS_j'}.
\end{equation}
Using relation~\eqref{it:d-F} and part~\eqref{it:spheres} of Lemma~\ref{lem:switch},
\[
e_{M_{j+1},\SS_{j+1}} \circ d_j \sim (d_j)^{\SS_j'} \circ e_{M_j(\SS_j),\SS_j'},
\]
and similarly,
\[
e_{M_{j+1}',\SS_{j+1}'} \circ d_j' \sim (d_j')^{\SS_j} \circ e_{M_j'(\SS_j'),\SS_j}.
\]
Substitute these into equation~\eqref{eqn:switch}, and notice that, by relation~\eqref{it:commut}
and part~\eqref{it:empty} of Lemma~\ref{lem:switch}, we have
\[
e_{M_j(\SS_j),\SS_j'} \circ e_{M_j,\SS_j} \sim e_{M_j'(\SS_j'),\SS_j} \circ  e_{M_j,\SS_j'},
\]
so it suffices to prove that
\[
d_{j+1} \circ (d_j)^{\SS_j'} \sim d_{j+1}' \circ (d_j')^{\SS_j}.
\]
But this follows from part~\eqref{it:commute} of Lemma~\ref{lem:switch} and relation~\eqref{it:isot}.

Assume now that~$\M'$ is obtained from~$\M$ via an isotopy of the gradient~$v$.
By Lemma~\ref{lem:unique-param}, the induced parameterized Cerf decompositions
$\CC$ and $\CC'$ are related by a sequence of isotopies of the framed attaching
spheres~$\SS_i$ and of the diffeomorphisms~$d_i$, and reversing framed $0$-spheres.
First suppose that~$\CC$ and~$\CC'$ are related by an isotopy of~$\SS_j$.
More precisely, let~$\varphi_t$ be an ambient isotopy of the framed attaching sphere~$\SS_j$.
Recall that $d_j' = d_j \circ (\varphi_1')^{-1}$, where $\varphi_1' = (\varphi_1)^{\SS_j}$, everything else remains the same.
By relation~\eqref{it:d-F},
\[
e_{M_j,\SS_j'} \circ \varphi_1 \sim \varphi_1' \circ e_{M_j,\SS_j}.
\]
However, $\varphi_1$ is isotopic to the identity, hence~$\varphi_1 \sim \Id_{F(M_j)}$. Using relation~\eqref{it:isot},
\[
d_j' \circ e_{M_j,\SS_j'} \sim d_j \circ (\varphi_1')^{-1} \circ e_{M_j,\SS_j'} \sim
d_j \circ e_{M_j,\SS_j},
\]
hence $f_\CC \sim f_{\CC'}$.
If~$\CC$ and~$\CC'$ are related by an isotopy of one of the diffeomorphisms~$d_j$,
then invariance follows from relation~\eqref{it:isot} of Definition~\ref{def:relations}.
The map is also unchanged by reversing a framed sphere by relation~\eqref{it:0-sphere}.
%then by Lemma~\ref{lem:isot-d}, this can be achieved by a diffeomorphism equivalence and
%an isotopy of the attaching spheres. We have already seen that~$F$ is invariant
%under isotoping the attaching spheres, and diffeomorphism equivalence invariance will be shown later.

Now consider the case when $\M'$ is obtained from $\M$ by adding or removing a regular value.
Then~$\CC'$ is obtained from~$\CC$ by merging or splitting a product.
Without loss of generality, suppose we are merging the cylindrical~$W_j$ to~$W_{j+1}$.
The cases when~$W_{j+1}$ is cylindrical and when we are splitting a product are analogous.
Recall that $\SS_j' = d_j^{-1} \circ \SS_{j+1}$ and $d_j' = d_{j+1} \circ (d_j)^{\SS_j'}$.
Then
\[
d_j' \circ e_{M_j,\SS_j'} \sim d_{j+1} \circ (d_j)^{\SS_j'} \circ e_{M_j,\SS_j'}.
\]
According to relation~\eqref{it:d-F}, applied to $d_j \colon (M_j,\SS_j') \to (M_{j+1}, \SS_{j+1})$, we have
\[
(d_j)^{\SS_j'} \circ e_{M_j,\SS_j'} \sim e_{M_{j+1},\SS_{j+1}} \circ d_j.
\]
Hence, as $e_{M_j,\emptyset} \sim \Id_{M_j}$,
\[
d_j' \circ e_{M_j,\SS_j'} \sim d_{j+1} \circ e_{M_{j+1},\SS_{j+1}} \circ d_j \circ e_{M_j,\emptyset},
\]
and the result follows for merging a product.

Finally, suppose that $\M'$ is obtained from~$\M$ by a left-right equivalence.
In this case, $\CC$ and $\CC'$ are related by a diffeomorphism equivalence $\Phi \colon W \to W$.
Then, by the definition of $d_i'$,
\[
f_{\CC'} = \prod_{i=0}^{m} \left( \Phi_{i+1} \circ d_i \circ \left(\Phi_i^{\SS_i} \right)^{-1} \circ e_{M_i',\SS_i'} \right).
\]
If we apply relation~\eqref{it:d-F} to the diffeomorphism $\Phi_i \colon (M_i,\SS_i) \to (M_i',\SS_i')$,
we obtain that
\[
\left(\Phi_i^{\SS_i} \right)^{-1} \circ e_{M_i',\SS_i'} \sim e_{M_i,\SS_i} \circ (\Phi_i)^{-1}.
\]
Substituting this into the previous formula, and using the fact that $\Phi_0 \sim \Id_{M}$
and $\Phi_m \sim \Id_{M'}$, we obtain that $f_\CC \sim f_{\CC'}$.
This concludes the proof of Theorem~\ref{thm:presentation} in the case of $\Cob_n$.

For $\Cob_n'$ and $\Cob_n^0$, we apply the second paragraph of Theorem~\ref{thm:Cerf}.
In the case of~$\BSut'$, our objects are 3-manifolds with boundary, but the cobordisms
are products along the boundary, hence we only need to consider handles attached along the
interior, and the proof is completely analogous to the case of $\Cob_3'$.
\end{proof}

We now restate Theorem~\ref{thm:TQFT}, spelled out in more detail.

\begin{thm*}
Let $C$ be a category.
Suppose that we are given a functor
\[
F \colon \Man_n \to C,
\]
and for every oriented $n$-manifold~$M$ and framed sphere~$\SS \subset M$,
a morphism $F_{M,\SS} \colon F(M) \to F(M(\SS))$
that satisfy relations~\eqref{it:isot}--\eqref{it:0-sphere}:
\begin{enumerate}
\item \label{it:1}
  We have $F_{M,\emptyset} = \Id_{F(M)}$, and if $d \in \Diff_0(M)$, then $F(d) = \Id_{F(M)}$.
\item \label{it:2} Given an orientation preserving diffeomorphism $d \colon M \to M'$
  between $n$-manifolds and a framed sphere $\SS \subset M$,
  let $\SS' = d \circ \SS$, and let $d^\SS \colon M(\SS) \to M'(\SS')$ be the induced diffeomorphism. Then the following
  diagram is commutative:
  \[
    \xymatrix{
    F(M) \ar[r]^-{F_{M,\SS}} \ar[d]^-{F(d)} & F(M(\SS)) \ar[d]^{F(d^\SS)} \\
    F(M')  \ar[r]^-{F_{M',\SS'}} & F(M'(\SS')).}
  \]
\item \label{it:3} If $M$ is an oriented $n$-manifold and $\SS$ and $\SS'$
  are \emph{disjoint} framed spheres in~$M$, then the following diagram is commutative:
  \[
  \xymatrixcolsep{3pc}\xymatrix{
    F(M) \ar[r]^-{F_{M,\SS}} \ar[d]^-{F_{M,\SS'}} & F(M(\SS)) \ar[d]^{F_{M(\SS),\SS'}} \\
    F(M(\SS'))  \ar[r]^-{F_{M(\SS'),\SS}} & F(M(\SS,\SS')).}
  \]
\item \label{it:4} If $\SS' \subset M(\SS)$ and $a(\SS')$ intersects $b(\SS)$ once transversely,
  then there is a diffeomorphism $\varphi \colon M \to M(\SS)(\SS')$ (see Definition~\ref{def:phi}),
  for which
  \[
    F_{M(\SS),\SS'} \circ F_{M,\SS} = F(\varphi).
  \]
\item \label{it:5} $F_{M,\SS} = F_{M,\ol{\SS}}$.
\end{enumerate}
For a parameterized Cerf decomposition~$\CC$ of an oriented cobordism~$W$, let
\begin{equation} \label{eqn:comparison1}
F(W,\CC) = \prod_{i=0}^{m} \left( F(d_i) \circ F_{M_i,\SS_i} \right) \colon F(M) \to F(M').
\end{equation}
Then $F(W,\CC)$ is independent of the choice of~$\CC$; we denote it by~$F(W)$.
Furthermore, $F \colon \Cob_n \to C$ is a functor that satisfies $F(d) = F(c_d)$
(see Definition~\ref{def:diffcob}) and $F(W(\SS)) = F_{M,\SS}$.

In the opposite direction, every functor $F \colon \Cob_n \to C$ arises in this way.
More precisely, if we let $F_{M,\SS} = F(W(\SS))$ and $F(d) = F(c_d)$,
then these morphisms satisfy relations~\eqref{it:1}--\eqref{it:5},
and for any oriented cobordism~$W$, the morphism
$F(W)$ is given by equation~\eqref{eqn:comparison1}.

Now suppose that $(C,\otimes,I_C)$ is a symmetric monoidal category.
Then the functor~$F$ is a TQFT if and only if $F \colon \Man_n \to C$ is symmetric and monoidal;
furthermore, given $n$-manifolds~$M$ and~$N$, and a framed sphere~$\SS$ in~$M$, the diagram
\begin{equation} \label{eqn:monoidal1}
\xymatrixcolsep{3pc}\xymatrix{
  F(M) \otimes F(N) \ar[r]^-{\Phi_{M,N}} \ar[d]_-{F_{M,\SS} \otimes \text{Id}_{F(N)}} & F(M \sqcup N) \ar[d]^{F_{M \sqcup N, \SS}} \\
  F(M(\SS)) \otimes F(N) \ar[r]^-{\Phi_{M(\SS),N}} & F(M(\SS) \sqcup N).
  }
\end{equation}
is commutative, where $\Phi_{A,B} \colon F(A) \otimes F(B) \to F(A \sqcup B)$ are the comparison morphisms for~$F$.

An analogous result holds for $\Cob_n'$, and we can avoid $\SS = 0$ and framed $n$-spheres.
In the case of $\Cob_n^0$ for $n \ge 2$, we need to avoid $\SS = 0$ and $n$-spheres,
together with separating $(n-1)$-spheres. Finally, for~$\BSut'$, we have a similar result,
and we can avoid $\SS = 0$ and framed $3$-spheres.
\end{thm*}

\begin{proof}[Proof of Theorem~\ref{thm:TQFT}]
Suppose that $C$ is a category, $F \colon \Man_n \to C$ is a functor, and we are given morphisms~$F_{M,\SS}$
that satisfy the relations~\eqref{it:isot}--\eqref{it:0-sphere}
listed in Definition~\ref{def:relations}. Then~$F$ extends to a functor $F \colon \cF(\G_n)/\cR \to C$
such that $F(e_{M,\SS}) = F_{M,\SS}$ and $F(e_d) = F(d)$.
By Theorem~\ref{thm:presentation}, the map $c \colon \cF(\G_n)/\cR \to \Cob_n$
is an isomorphism of categories, and $F \circ c^{-1} \colon \Cob_n \to C$ is the desired functor.

We now show that if~$W$ is an oriented cobordism from~$M$ to~$M'$, then $F \circ c^{-1}([W])$
is given by equation~\eqref{eqn:comparison1}, where $[W]$ is the equivalence class of~$W$.
Choose a parameterized Cerf
decomposition~$\CC$ of~$W$, consisting of a decomposition
\[
W = W_0 \cup_{M_1} W_1 \cup_{M_2} \dots \cup_{M_m} W_m,
\]
together with framed attaching spheres~$\SS_i$ and diffeomorphisms~$d_i \colon M_i(\SS_i) \to M_{i+1}$.
When $n \ge 2$ and~$W$, $M$, and~$M'$ are all connected, we can assume that each~$M_i$ is connected
as well by~\cite[Lemma~2.5]{GWW}.
Then
\[
c^{-1}([W]) = f_\CC = \prod_{i=0}^{m} \left( d_i \circ e_{M_i,\SS_i} \right) \colon M \to M',
\]
and so $F \circ c^{-1}([W]) = F(W,\CC)$, as required.

%Given oriented cobordisms~$W$ from $M$ to~$M'$ and~$W'$ from~$M'$ to~$M''$,
%and parameterized Cerf decompositions~$\CC$ of~$W$ and~$\CC'$ of~$W'$,
%we get a Cerf decomposition~$\CC' \circ \CC$ of the cobordism $W' \circ W$ obtained by gluing
%$W$ and $W'$ along $M'$.
%Note that the smooth structure on $W' \circ W$ is unique up to diffeomorphism fixing~$M$, $M'$, and~$M''$.
%It follows from the definition that
%\[
%F(W',\CC') \circ F(W,\CC) = F(W' \circ W, \CC' \circ \CC),
%\]
%hence $F(W') \circ F(W) = F(W' \circ W)$.

In the opposite direction, if we are given a functor $F \colon \Cob_n \to C$, then
\[
F \circ c \colon \cF(\G_n)/\cR \to C
\]
is also a functor.
Hence, if we let $F(h) = F(c_h)$ for a diffeomorphism $h \colon M \to M'$,
and, given a framed sphere~$\SS$ in~$M$, we define
$F_{M,\SS} \colon F(M) \to F(M(\SS))$ to be $F(W(\SS))$, then these maps satisfy
relations~\eqref{it:isot}--\eqref{it:0-sphere}. The correspondence is one-to-one by
the following.

\begin{lem} \label{lem:diffeo-maps}
Suppose that $F$ arises from a functor $F \colon \Man_n \to C$ and surgery morphisms~$F_{M,\SS}$ as in Theorem~\ref{thm:TQFT}.
Then, for any diffeomorphism $h \colon M \to M'$, we have
\[
F(h) = h_*.
\]
\end{lem}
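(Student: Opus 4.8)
The plan is to observe that $h_* = F(c_h)$ is, by construction, obtained by pushing the single generating edge $e_h$ through the functor, so the lemma is essentially a matter of unwinding definitions. Recall from the proof of Theorem~\ref{thm:TQFT} that the extension $F \colon \Cob_n \to C$ is the composite $\bar F \circ c^{-1}$, where $\bar F \colon \cF(\G_n)/\cR \to C$ is the unique functor with $\bar F(e_{M,\SS}) = F_{M,\SS}$ whose restriction to the diffeomorphism edges is the given $F \colon \Man_n \to C$ (its existence being exactly the hypothesis that the $F_{M,\SS}$ satisfy relations~\eqref{it:isot}--\eqref{it:0-sphere}); and recall that $h_*$ is, by definition, $F(c_h)$ for this extension.

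First I would note that the graph morphism $c$ sends the edge $e_h$ to the cylinder $c_h$ by Definition~\ref{def:diffcob}, so that $c^{-1}([c_h]) = [e_h]$ in $\cF(\G_n)/\cR$ using the isomorphism of Theorem~\ref{thm:presentation}. Then
\[
h_* = F(c_h) = \bar F\bigl(c^{-1}([c_h])\bigr) = \bar F(e_h) = F(h),
\]
the last step holding because $\bar F$ restricts to $F$ on diffeomorphism edges.

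Alternatively, and more in the spirit of equation~\eqref{eqn:comparison}, I would exhibit a parameterized Cerf decomposition of $c_h$ directly: the projection $M \times I \to I$ is a Morse function without critical points, so $c_h$ is elementary and admits the one-piece decomposition with empty attaching sphere $\SS_0 = \emptyset$ and diffeomorphism $d_0 = h$; the extension $D_0$ required in Definition~\ref{def:param-Cerf} can be taken to be $\Id_{M\times I}$, which restricts to $p_0$ on the incoming end and, after identifying the outgoing end of $c_h$ with $M'$ via $h$, to $h$ on the outgoing end. Then \eqref{eqn:comparison} gives $F(c_h) = F(d_0) \circ F_{M_0,\SS_0} = F(h) \circ F_{M,\emptyset}$, and $F_{M,\emptyset} = \Id_{F(M)}$ by relation~\eqref{it:isot}, so again $h_* = F(h)$.

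I do not expect a genuine obstacle here; the only thing that needs care is the bookkeeping with the conventions of Definition~\ref{def:diffcob} and the suppressed-parameterization convention, to be sure that $c^{-1}([c_h])$ really is the class of the single edge $e_h$ (and not of $e_{h^{-1}}$, or of a longer word), i.e.\ that the trivial Cerf decomposition of the mapping cylinder of $h$ recovers the original functor on diffeomorphisms rather than its inverse.
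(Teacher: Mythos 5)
Your proposal is correct and, in its second variant, coincides with the paper's own proof: the paper simply observes that $c_h$ is itself a single-level parameterized Cerf decomposition with empty attaching sphere and diffeomorphism $h$, so equation~\eqref{eqn:comparison} gives $F(c_h) = F(h) \circ F_{M,\emptyset} = F(h)$. Your first, categorical unwinding via $c^{-1}([c_h]) = [e_h]$ is just a repackaging of the same facts, so there is nothing genuinely different to compare.
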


\begin{proof}
Recall that $h_*$ is defined as $F(c_h)$, where~$c_h$ is the cylindrical cobordism
\[
(M \times I; M \times \{0\}, M \times \{1\}; p_0, h_1).
\]
Then this is in itself a
parameterized Cerf decomposition $\CC$ of a single level,
and so $F(c_h, \CC) = F(h) \circ F_{M,\emptyset} = F(h)$.
\end{proof}

If $(C,\otimes,I_C)$ is a symmetric monoidal category and $F \colon \Man_n \to C$ is a symmetric monoidal functor,
then the extension $F \colon \Cob_n \to C$ automatically satisfies all properties of a TQFT
listed in Definition~\ref{def:tqft} (i.e., it is symmetric and monoidal) as these properties do not involve cobordisms,
except we need to check that the comparison morphisms
$\Phi_{M,N} \colon F(M) \otimes F(N) \to F(M \sqcup N)$ are natural. This follows from
the commutativity of diagram~\eqref{eqn:monoidal1}.
Indeed, naturality of the comparison morphisms amounts to the commutativity of the diagram
\begin{equation} \label{eqn:coh}
\xymatrix{
  F(M) \otimes F(N) \ar[r]^-{\Phi_{M,N}} \ar[d]_-{F(V) \otimes F(W)} & F(M \sqcup N) \ar[d]^{F(V \sqcup W)} \\
  F(M') \otimes F(N') \ar[r]^-{\Phi_{M',N'}} & F(M' \sqcup N'),
  }
\end{equation}
where $V$ is an oriented cobordism from~$M$ to~$M'$ and $W$ is an oriented cobordism from~$N$ to~$N'$.
It suffices to show this when either $V$ or $W$ is a trivial cobordism, according to the following diagram:
\[\xymatrix{
  F(M) \otimes F(N) \ar[r]^-{\Phi_{M,N}} \ar[d]_-{F(V) \otimes F(i_N)} & F(M \sqcup N) \ar[d]^{F(V \sqcup i_N)} \\
  F(M') \otimes F(N) \ar[r]^-{\Phi_{M',N}} \ar[d]_-{F(i_{M'}) \otimes F(W)} & F(M' \sqcup N) \ar[d]^-{F(i_{M'} \sqcup W)} \\
  F(M') \otimes F(N') \ar[r]^-{\Phi_{M',N'}} & F(M' \sqcup N').
  }
\]
By the symmetry of~$F$, if diagram~\eqref{eqn:coh} commutes when $W$ is trivial, then it also commutes whenever $V$ is trivial.
So, it suffices to show that the diagram
\begin{equation} \label{eqn:onesided}
\xymatrix{
  F(M) \otimes F(N) \ar[r]^-{\Phi_{M,N}} \ar[d]_-{F(V) \otimes \Id_{F(N)}} & F(M \sqcup N) \ar[d]^{F(V \sqcup i_N)} \\
  F(M') \otimes F(N) \ar[r]^-{\Phi_{M',N}} & F(M' \sqcup N)
  }
\end{equation}
is commutative for any oriented cobordism~$V$ from~$M$ to~$M'$. Let $\CC$ be a parameterized Cerf decomposition of~$V$. Then
\[
F(V) = F(V,\CC) = \prod_{i=0}^{m} \left( F(d_i) \circ F_{M_i,\SS_i} \right) \colon F(M) \to F(M').
\]
Since $F \colon \Man_n \to C$ is monoidal,
the comparison morphisms are natural with respect to diffeomorphisms, hence the diagram
\[
\xymatrixcolsep{4pc}\xymatrix{
  F(M_i(\SS_i)) \otimes F(N) \ar[r]^-{\Phi_{M_i(\SS_i),N}} \ar[d]_-{F(d_i) \otimes \Id_{F(N)}} &
  F(M_i(\SS_i) \sqcup N) \ar[d]^{F(d_i \sqcup \Id_N)} \\
  F(M_{i+1}) \otimes F(N) \ar[r]^-{\Phi_{M_{i+1},N}} & F(M_{i+1} \sqcup N)
  }
\]
is commutative. Together with the commutativity of diagram~\eqref{eqn:monoidal1}
for the framed spheres~$\SS_i$ in~$M_i$, we obtain that
diagram~\eqref{eqn:onesided} is also commutative.
Hence, we see that if $F \colon \Man_n \to C$ is symmetric and monoidal and
diagram~\eqref{eqn:monoidal1} commutes, then the extension $F \colon \Cob_n \to C$ is
also symmetric and monoidal; i.e., a TQFT.
This concludes the proof of Theorem~\ref{thm:TQFT} in case of the category~$\Cob_n$.

The results for $\Cob_n'$, $\Cob_n^0$, and $\BSut'$ follow from the respective parts
of Theorem~\ref{thm:presentation} analogously.
\end{proof}

\section{Classifying (1+1)-dimensional TQFTs} \label{sec:1+1}

Recall that a \emph{Frobenius algebra} is a finite-dimensional unital associative $\F$-algebra $A$ with
multiplication~$\mu \colon A \otimes A \to A$ and a trace functional
$\theta \colon A \to \F$ such that $\ker(\theta)$ contains no non-zero left ideal of~$A$.
Then $\sigma(a,b) = \theta(ab)$ is a non-degenerate bilinear form.
%We say that~$A$ is symmetric if~$\sigma$ is symmetric.
In particular, $\sigma$ sets up an isomorphism between~$A$ and~$A^*$. Dualizing the algebra structure,
we also get a coalgebra structure on~$A$ with counit; we denote the coproduct by
$\delta \colon A \to A \otimes A$. Note that~$\delta$ is obtained by
dualizing the product $A \otimes A \to A$, and using the fact that $(A \otimes A)^* \approx A^* \otimes A^*$
since~$A$ is finite-dimensional. The Frobenius algebra~$A$ is called \emph{commutative}
if the product~$\mu$ is commutative and the coproduct~$\delta$ is cocommutative.

In this section, we give a short proof of the following classical result on the classification of (1+1)-dimensional TQFTs
using Theorem~\ref{thm:TQFT}; see~\cite{Kock}.
This can be viewed as a warm-up for the following section, where we classify (2+1)-dimensional TQFTs.
Here all $1$-manifolds and cobordisms are assumed to be oriented.

\begin{thm} \label{thm:1+1}
There is an equivalence between the category of (1+1)-dimensional TQFTs and
the category of finite-dimensional commutative Frobenius algebras.
\end{thm}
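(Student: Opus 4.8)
The plan is to use Theorem~\ref{thm:TQFT} in dimension $n=1$, thereby reducing the construction and comparison of $(1+1)$-dimensional TQFTs to combinatorial data on closed oriented $1$-manifolds and their surgeries, which are very simple: every closed connected oriented $1$-manifold is a circle $S^1$, and the only framed spheres to worry about (after discarding $\SS = 0$ and framed $1$-spheres via the $\Cob_n'$ version of the theorem, and handling the empty manifold separately) are framed $0$-spheres in a $1$-manifold. A framed $0$-sphere in $S^1$ is either non-separating, in which case surgery merges $S^1 \sqcup S^1 \to S^1$ or connects $S^1$ to itself, or separating. Concretely the relevant surgeries are: the $0$-sphere in $S^1 \sqcup S^1$ whose surgery yields $S^1$ (the pair-of-pants viewed as surgery, giving multiplication $\mu$), the $0$-sphere in $S^1$ whose surgery yields $S^1 \sqcup S^1$ (the co-pair-of-pants, giving comultiplication $\delta$), the $\emptyset \to S^1$ birth ($0$-handle — but this is excluded, so instead we realize the unit as a surgery on $S^1 \sqcup S^1$ where one factor bounds a disk, or more cleanly build the cup/cap out of a birth followed by a merge as in relation~\eqref{it:birth}), and the mapping class group input, which in dimension $1$ is trivial since $\MCG(S^1)$ is trivial (orientation-preserving diffeomorphisms of $S^1$ are isotopic to the identity), so relation~\eqref{it:isot} forces $F$ on $\Man_1$ to be essentially determined by $F(S^1) =: A$ together with the symmetry.

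The construction direction: given a commutative Frobenius algebra $A$, I would define $F \colon \Man_1 \to \Vect$ by $F(S^1) = A$, $F(\emptyset) = \F$, extended monoidally, with all diffeomorphisms acting as the identity (legitimate since $\Diff_0(S^1) = \Diff^+(S^1)$ and the symmetry of $\sqcup$ goes to the symmetry of $\otimes$, using cocommutativity/commutativity of $A$ for consistency of the swap). Then I assign surgery maps: the merging $0$-sphere in $S^1 \sqcup S^1$ goes to $\mu \colon A \otimes A \to A$; the splitting $0$-sphere in $S^1$ goes to $\delta \colon A \to A \otimes A$; a separating $0$-sphere in $S^1$ (surgery $S^1 \to S^1 \sqcup S^1$ where one component is then capped) combined with relation~\eqref{it:birth} will force the unit $\eta$ and counit $\epsilon$ to appear; and surgeries on larger disjoint unions are the same maps tensored with identities, as dictated by diagram~\eqref{eqn:monoidal}. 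The heart of the matter is then checking relations~\eqref{it:isot}--\eqref{it:0-sphere}: \eqref{it:isot} and \eqref{it:d-F} are trivial since $\MCG(S^1)$ is trivial and $F$ is the identity on diffeomorphisms; \eqref{it:0-sphere} (reversing a framed $0$-sphere) amounts to commutativity of $\mu$ and cocommutativity of $\delta$; \eqref{it:commut} (disjoint surgeries commute) is automatic from functoriality of $\otimes$; and \eqref{it:birth} is where the genuine Frobenius axioms enter — it encodes exactly the statements that capping off one leg of a pair of pants with a disk gives the identity ($\mu \circ (\eta \otimes \Id) = \Id$, the unit axiom) and the Frobenius compatibility relation $(\mu \otimes \Id)\circ(\Id \otimes \delta) = \delta \circ \mu = (\Id \otimes \mu)\circ(\delta \otimes \Id)$, which is precisely the handle-slide/birth move expressing that a $1$-handle slid over a canceling pair is the identity on the complement. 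I would verify each of these by drawing the corresponding $2$-dimensional cobordism and identifying it with a composition of elementary surgery traces.

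The converse direction: given a $(1+1)$-dimensional TQFT $F \colon \Cob_2 \to \Vect$, set $A = F(S^1)$; by Theorem~\ref{thm:TQFT} (the ``opposite direction'' clause) $F$ is recovered from $F|_{\Man_1}$ and the surgery maps $F_{M,\SS} = F(W(\SS))$, so $\mu := F(\text{pair of pants})$, $\delta := F(\text{copair of pants})$, $\eta := F(\text{disk, in})$, $\epsilon := F(\text{disk, out})$. Associativity, unitality, commutativity, and the Frobenius relation for this data are exactly relations~\eqref{it:isot}--\eqref{it:0-sphere} read backwards, so $A$ is a commutative Frobenius algebra, and $\theta := \epsilon$, $\sigma(a,b) := \epsilon(\mu(a,b))$ is non-degenerate because the cobordism $S^1 \sqcup S^1 \to \emptyset$ (pants followed by cap) is ``dual'' to the cobordism $\emptyset \to S^1 \sqcup S^1$ (cup followed by copants) in the sense that their composites are identities, forcing $\sigma$ to be a perfect pairing (this is the standard ``zig-zag'' or snake identity). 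Finally, for the equivalence of categories, I would check that a monoidal natural transformation of TQFTs restricts to a Frobenius algebra homomorphism on $F(S^1)$ and conversely, and that the two constructions are mutually inverse up to natural isomorphism — this is formal once the objects are matched.

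The main obstacle I anticipate is not any single deep step but the bookkeeping in relation~\eqref{it:birth}: one must carefully identify which embedded framed $0$-sphere in $S^1(\SS)$ meeting the belt sphere once corresponds to which algebraic relation, and check that the diffeomorphism $\varphi \colon M \to M(\SS)(\SS')$ — here a diffeomorphism of a disjoint union of circles, hence isotopic to a permutation, hence acting trivially or by the symmetry on $A^{\otimes k}$ — matches the identity (or swap) on the algebra side, so that \eqref{it:birth} precisely yields the unit axiom and the Frobenius compatibility rather than some twisted variant. A secondary subtlety is handling the empty $1$-manifold and the $0$-/$1$-handles correctly: since we use the $\Cob_n'$ formulation to avoid $\SS = 0$ and framed $1$-spheres, the cup and cap must be produced via relation~\eqref{it:birth} (a birth followed by a merge), and one must confirm this reproduces $\eta$ and $\epsilon$ consistently. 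Neither of these is conceptually hard, but they are where a careless argument would go wrong, so the proof should spell out the relevant pictures explicitly.
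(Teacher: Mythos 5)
Your overall strategy (reduce to Theorem~\ref{thm:TQFT} in dimension $n=1$, read off $\mu$, $\delta$, unit and trace from the surgery maps, and verify relations~\eqref{it:isot}--\eqref{it:0-sphere}) is the same as the paper's, but two points in your execution plan are genuinely wrong as stated.

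First, you cannot work in the $\Cob_2'$ version and then recover the cup and cap afterwards. A $(1+1)$-dimensional TQFT in the sense of Definition~\ref{def:tqft} is a functor on all of $\Cob_2$, and the Frobenius trace $\theta$ is the map assigned to the disk $S^1 \to \emptyset$, a cobordism with empty outgoing end (and with $\emptyset$ as target object), which simply does not exist in $\Cob_2'$. In the graph $\G_2'$ the edges for $\SS=0$ and framed $1$-spheres have been deleted, so relation~\eqref{it:birth} cannot ``produce'' the unit and counit: there is nothing to compose. The paper instead works with the full $\Cob_2$ and supplies $F_{M,0}$ (tensoring with $1\in A$) and $F_{M,\SS}$ for framed $1$-spheres (applying $\theta$) as part of the surgery data, and then checks the relations involving them; your construction must do the same, otherwise the functor you build only lives on $\Cob_2'$ and the trace/counit never enter.

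Second, your accounting of which relation carries which axiom is off. You dismiss relation~\eqref{it:commut} as ``automatic from functoriality of $\otimes$'' and place the Frobenius compatibility $(\mu\otimes\Id)\circ(\Id\otimes\delta)=\delta\circ\mu=(\Id\otimes\mu)\circ(\delta\otimes\Id)$ inside relation~\eqref{it:birth}. In fact \eqref{it:commut} is only automatic when the two disjoint framed spheres meet disjoint sets of components; when they interact with a common circle (the cases the paper enumerates by the number $c$ of components met by $\SS\cup\SS'$), relation~\eqref{it:commut} is precisely where associativity, coassociativity, the Frobenius/module condition, and the compatibility of product with counit live. Relation~\eqref{it:birth} yields only the unit and counit axioms ($1$ is a two-sided unit, $\theta$ a two-sided counit), and \eqref{it:0-sphere} gives commutativity and cocommutativity. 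As written, your verification plan would skip checking the main algebraic identities (or look for them in the wrong place). A more minor point: saying ``all diffeomorphisms act as the identity'' glosses the functoriality of $F$ on disconnected $1$-manifolds, where diffeomorphisms permute components; the paper resolves this with the labeling device ($F(M)$ as $\MCG(C_k,M)$-equivariant functions into $A^{\otimes k}$), and some such device is needed to get an honest functor on $\Man_1$ rather than one defined only after choosing an ordering of components.
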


\begin{proof}
It is straightforward to see that a (1+1)-dimensional TQFT
\[
F \colon \Cob_2 \to \Vect_\F
\]
gives rise to a Frobenius algebra. Indeed, let $A := F(S^1)$. If $S$ is a pair-of-pants
cobordism from $S^1 \sqcup S^1$ to $S^1$, then the multiplication is given by
\[
F(S) \colon F(S^1 \sqcup S^1) \cong F(S^1) \otimes F(S^1) = A \otimes A \to F(S^1) = A,
\]
where the first map is the natural isomorphism coming from the monoidal structure of~$F$.
If~$\D$ denotes the cobordism from~$S^1$ to~$\emptyset$ given by a disk, then $\theta := F(\D)$.
We can also view the disk as a cobordism from~$\emptyset$ to~$S^1$ which we denote by~$\ol{\D}$ .
Then $F\left(\ol{\D}\right)(1) \in A$ is the unit.
It is now straightforward to check that these form a Frobenius algebra.
Commutativity follows from the symmetry of~$F$.

The non-trivial direction is associating a TQFT to a Frobenius algebra.
Given a Frobenius algebra~$A$, we describe the ingredients of
Theorem~\ref{thm:TQFT} needed to define a TQFT, namely, a symmetric monoidal functor $F \colon \Man_1 \to \Vect_\F$
and maps induced by framed spheres that satisfy the required relations.

Throughout this paper, for oriented manifolds~$X$, $Y$, we denote by~$\Diff(X,Y)$ the set of \emph{orientation
preserving} diffeomorphisms from~$X$ to~$Y$, and we write $\Diff(X) := \Diff(X,X)$. Furthermore,
\[
\MCG(X) = \Diff(X)/\Diff_0(X)
\]
is the \emph{oriented} mapping class group of~$X$. The group $\Diff(Y)$ acts on $\Diff(X,Y)$ by composition.
By slight abuse of notation, we write
\[
\MCG(X,Y) := \Diff(X,Y)/\Diff_0(Y),
\]
even though this is not actually a group, only an affine copy of~$\MCG(X)$ if~$X$ and~$Y$ are diffeomorphic,
and the empty set otherwise.

Let $C_k = S^1 \times \{1, \dots, k\}$; i.e., the disjoint union of~$k$ copies of~$S^1$.
Given a closed $1$-manifold~$M$ of~$k$ components, note that $MCG(C_k, M)$
is an affine copy of the symmetric group~$S_k$. An element of $\MCG(C_k, M)$ can be thought of as a labeling of
the components of~$M$ by the integers~$1, \dots, k$. Given mapping classes $\phi$, $\phi' \in \MCG(C_k, M)$,
their difference $(\phi')^{-1} \circ \phi$ is an element $\sigma(\phi,\phi')$ of $\MCG(C_k, C_k)$,
which is canonically isomorphic to~$S_k$.

For a closed $1$-manifold~$M$, let~$F(M)$ be the set of those elements~$a$ of
\[
\prod_{\phi \in \MCG(C_k, M)} A^{\otimes k}
\]
such that for any $\phi$, $\phi' \in \MCG(C_k, M)$ the coordinates $a(\phi)$ and $a(\phi')$ in $A^{\otimes k}$
differ by the permutation of factors given by $\sigma(\phi,\phi') \in S_k$.
Notice that the function~$a$ is uniquely determined by its value~$a(\phi)$ for any $\phi \in \MCG(C_k, M)$;
i.e., for any labeling of the components of~$M$ by the numbers~$1, \dots, k$.
Note that this construction is an instance of a Kan extension.

Suppose that $M$ and~$M'$ are diffeomorphic $1$-manifolds; i.e., they have the same number
of components~$k$, and let $d \in \Diff(M,M')$. Given an element $a \in F(M)$
and $\phi \in \MCG(C_k, M)$, we define
\[
(F(d)(a))([d] \circ \phi) = a(\phi),
\]
where $[d] \in \MCG(M,M')$ is the isotopy class of~$d$.

If $M$ and $N$ are 1-manifolds of $k$ and $l$ components, respectively, then
we define the natural isomorphism $\Phi_{M,N} \colon F(M) \otimes F(N) \to F(M \sqcup N)$ as follows.
Let $\phi \in \MCG(C_k,M)$ and $\psi \in \MCG(C_l,N)$. The mapping class
\[
\phi \sqcup \psi \in \MCG(C_{k+l},M \sqcup N)
\]
is defined to be $\phi$ on $S^1 \times \{1,\dots,k\}$, and on $S^1 \times \{k+1,\dots,k+l\}$ it
maps $(x,k+i)$ to $\psi(x,i)$.
If $a \in F(M)$ and $b \in F(N)$, then we let $\Phi_{M,N}(a \otimes b) = a \sqcup b \in F(M \sqcup N)$,
where $(a \sqcup b)(\phi \sqcup \psi) = a(\phi) \otimes b(\psi) \in A^{\otimes (k+l)}$.
We leave it to the reader to check that the $F \colon \Man_1 \to \Vect_\F$
defined above is a symmetric monoidal functor.

We now define the surgery maps.
A framed $0$-sphere in a closed $1$-manifold~$M$ of~$k$ components is given by an embedding
\[
\SS \colon S^0 \times D^1 = \{-1,1\} \times [-1,1] \hookrightarrow M.
\]
Since we only consider oriented cobordisms, the framing should be orientation reversing, and is hence unique
up to isotopy. So $\SS$ is completely determined by a pair of points $\SS = \{s_-,s_+\}$.

If~$s_-$ and $s_+$ lie in different components~$M_-$ and~$M_+$ of~$M$, respectively,
then we define the map
\[
F_{M,\SS} \colon F(M) \to F(M(\SS))
\]
as follows.
Let $a \in F(M)$, and let $\phi \in \MCG(C_k, M)$ correspond to a labeling of the components of~$M$
such that~$M_-$ is labeled~$k-1$ and~$M_+$ is labeled~$k$. This gives rise to a labeling~$\phi_\SS$
of the components of~$M(\SS)$, where the component arising from surgery on~$M_-$ and~$M_+$ is labeled~$k-1$,
while every other component is unchanged and retains its label. Then~$F_{M,\SS}(a)$ is the element of~$F(M(\SS))$
for which $F_{M,\SS}(a)(\phi_\SS)$ is the image of $a(\phi)$ under the map
\[
A^{\otimes (k-2)} \otimes  A \otimes A \to A^{\otimes (k-2)} \otimes A
\]
that multiplies the last two factors using the algebra product of~$A$; i.e.,
takes $a_1 \otimes \dots \otimes a_{k-2} \otimes a_{k-1} \otimes a_k$ to $a_1 \otimes \dots \otimes a_{k-2} \otimes (a_{k-1} a_k)$.
It is straightforward to see that the above definition of~$F_{M,\SS}(a)$ is independent of the choice of~$\phi$.
Indeed, if~$\phi'$ is another labeling such that~$M_-$ is labeled~$k-1$ and~$M_+$ is labeled~$k$,
then~$F_{M,\SS}(a)(\phi_\SS)$ and~$F_{M,\SS}(a)(\phi'_\SS)$ differ by the action of the permutation~$\sigma(\phi_\SS,\phi'_\SS)$
that fixes~$k-1$, and maps to $\sigma(\phi,\phi')$ under the embedding $S_{k-1} \to S_k$.
So, by definition, these two elements of $A^{\otimes (k-1)}$ define the same element~$F_{M,\SS}(a)$ of~$F(M_\SS)$.

Now suppose that~$s_-$ and~$s_+$ lie in the same component~$M_s$ of~$M$. Then~$M_\SS$ has~$k+1$ components.
The component~$M_s$ splits into a component~$M_-$ corresponding to the arc of~$M_s \setminus \SS$ going from~$s_-$ to~$s_+$,
and a component~$M_+$ corresponding to the arc of~$M_s \setminus \SS$ going from~$s_+$ to~$s_-$.
Let~$\phi$ be a labeling of the components of~$M$ such that~$M_s$ is labeled~$k$. Then we denote by~$\phi_\SS$
the labeling of the components of~$M_\SS$ where each component of~$M \setminus M_s$ retains its label,
$M_-$ is labeled~$k$, and~$M_+$ is labeled~$k+1$. Given $a \in F(M)$, we define~$F_{M,\SS}(a)(\phi_\SS) \in A^{\otimes (k+1)}$
by applying to~$a(\phi) \in A^{\otimes k}$ the map $A^{\otimes k} \to A^{\otimes (k+1)}$
that sends $a_1 \otimes \dots \otimes a_{k-1} \otimes a_k$ to $a_1 \otimes \dots \otimes a_{k-1} \otimes \delta(a_k)$,
where~$\delta$ is the coproduct of the Frobenius algebra~$A$. As in the previous case, $F_{M,\SS}(a)$ is independent of the
choice of~$\phi$.

Surgery along the framed attaching sphere of a $0$-handle results in the manifold~$M(0) = M \sqcup S^1$.
Chose an arbitrary labeling~$\phi$ of the components of~$M$ with the numbers $1, \dots, k$. We obtain the labeling $\phi_0$
of the components of~$M(0)$ by labeling the new~$S^1$ component~$k+1$.
Let $\iota_k \colon A^{\otimes k} \to A^{\otimes (k+1)}$ be the map $\iota_k(x) = x \otimes 1$,
where~$1$ is the unit of~$A$.
For~$a \in F(M)$, we define $F_{M,0}(a)(\phi_0) = \iota_k(a(\phi))$;
the map~$F_{M,0}$ is independent of the choice of~$\phi$.

Finally, a framed $1$-sphere in a $1$-manifold~$M$ of~$k$ components is simply an embedding $\SS \colon S^1 \hookrightarrow M$.
Let~$S$ be the image of~$\SS$, then $M(\SS) = M \setminus S$. Let~$\phi$ be a labeling of the
components of~$M$ such that~$S$ is given the label~$k$, and let $\phi_\SS$ be the corresponding labeling of~$M(\SS)$.
Let $t_k \colon A^{\otimes k} \to A^{\otimes (k-1)}$ be the map given by extending linearly
\[
t_k(a_1 \otimes \dots \otimes  a_{k-1} \otimes a_k) = \theta(a_k) \cdot a_1 \otimes \dots \otimes a_{k-1}.
\]
For $a \in F(M)$, let $F_{M,\SS}(a)(\phi_\SS) = t_k(a(\phi))$. Again, this gives a well-defined map~$F_{M,\SS}$ independent of the choice
of labeling~$\phi$.

Now all we need to check is that relations~\eqref{it:isot}--\eqref{it:0-sphere} of Definition~\ref{def:relations} hold
and diagram~\eqref{eqn:monoidal} commutes for the data defined above.
We only give an outline here and leave the details to the reader.
Axiom~$\eqref{it:isot}$ is straightforward, as if $d \in \Diff_0(M)$, then $[d] \circ \phi = \phi \in \MCG(M)$,
and $(F(d)(a))(\phi) = (F(d)(a))([d] \circ \phi) = a(\phi)$; i.e., $F(d) = \Id_{F(M)}$.

Now consider relation~\eqref{it:d-F}, naturality. We check this in the case where~$\SS = \{s_-,s_+\}$ is a framed $0$-sphere
with~$s_-$ and~$s_+$ lying in different components~$M_-$ and~$M_+$ of~$M$, respectively; the other cases are similar.
Choose a labeling~$\phi$ of the components of~$M$ such that~$M_-$ is labeled~$k-1$ and~$M_+$ is labeled~$k$.
For $a_1, \dots, a_k \in A$, let $a$ be the element of~$F(M)$ for which $a(\phi) = a_1 \otimes \dots \otimes a_k$.
Then, by definition,
\[
F_{M,\SS}(a)(\phi_\SS) = a_1 \otimes \dots \otimes a_{k-2} \otimes (a_{k-1}a_k).
\]
Given a diffeomorphism $d \colon M \to M'$, this induces a labeling~$[d] \circ \phi$ of~$M'$.
Then $(F(d)(a))([d] \circ \phi) = a(\phi) = a_1 \otimes \dots \otimes a_k$. Consider~$\SS' = \{d(s_-), d(s_+)\}$.
Under~$[d] \circ \phi$, the component~$M_-'$ of~$M'$ containing~$d(s_-)$ is labeled~$k-1$ and the component $M_+'$ containing~$d(s_+)$
is labeled~$k$. Hence, we can use the labeling~$[d] \circ \phi$ of~$M'$ to compute the map~$F_{M',\SS'}$. This induces the
labeling $([d] \circ \phi)_{\SS'}$, where the component obtained by taking
the connected sum of~$M_-'$ and~$M_+'$ is labeled~$k-1$ and every
other component retains its label. With this notation in place,
\[
\left[F_{M',\SS'} \circ F(d)(a) \right] \left(([d] \circ \phi)_{\SS'}\right) = a_1 \otimes \dots \otimes a_{k-2} \otimes (a_{k-1}a_k).
\]
The diffeomorphism~$d^\SS$ maps $M_- \# M_+$ to $M_-' \# M_+'$, and on the other components it acts just like~$d$.
It follows that $[d^{\SS}] \circ \phi_\SS = ([d] \circ \phi)_{\SS'}$. Furthermore,
\[
\left[F(d^\SS) \circ F_{M,\SS}(a) \right]\left([d^\SS] \circ \phi_\SS \right) =
F_{M,\SS}(a)(\phi_\SS) = a_1 \otimes \dots \otimes a_{k-2} \otimes (a_{k-1}a_k).
\]
This establishes the commutativity of the diagram in relation~\eqref{it:d-F}.

Now consider relation~\eqref{it:commut}, commutativity; i.e., that
\begin{equation} \label{eqn:SS}
F_{M(\SS),\SS'} \circ F_{M,\SS} = F_{M(\SS'),\SS} \circ F_{M,\SS'}.
\end{equation}
Here we have several cases depending on the dimensions of
the attaching spheres. This is obviously true when $\SS = \SS' = 0$.
When~$\SS$ and~$\SS'$ are framed $1$-spheres glued along distinct
components~$S$ and~$S'$ of~$M$, then let~$\phi$ be a labeling of~$M$
such that $S$ is labeled~$k$ and $S'$ is labeled~$k-1$.
As above, let~$a \in F(M)$ be such that $a(\phi) = a_1 \otimes \dots \otimes a_k$.
Then
\[
\left[F_{M(\SS),\SS'} \circ F_{M,\SS}(a)\right](\phi_{\SS,\SS'}) = \theta(a_{k-1})\theta(a_k) \cdot a_1 \otimes \dots \otimes a_{k-2}.
\]
On the other hand, let $\phi'$ be the labeling of the components of~$M$ where~$S$ is labeled~$k-1$ and~$S'$ is labeled~$k$,
otherwise it agrees with~$\phi$. The permutation $\sigma(\phi,\phi') \in S_k$ is the transposition of~$k-1$ and~$k$, and so
\[
a(\phi') = a_1 \otimes \dots \otimes a_{k-2} \otimes a_k \otimes a_{k-1}.
\]
It follows that
\[
\left[F_{M(\SS'),\SS} \circ F_{M,\SS'}(a)\right](\phi'_{\SS',\SS}) = \theta(a_k)\theta(a_{k-1}) \cdot a_1 \otimes \dots \otimes a_{k-2}.
\]
Since $\phi_{\SS,\SS'} = \phi'_{\SS',\SS}$, the result follows from the commutativity of~$\F$ in this case.

When $\SS' = 0$ and $\SS$ is a $1$-sphere in a component~$S$ of~$M$,
then choose a labeling~$\phi$ such that~$S$ is labeled~$k$.
Then
\[
\left[F_{M(\SS),0} \circ F_{M,\SS}(a)\right](\phi_{\SS,0}) = \theta(a_k) \cdot a_1 \otimes \dots \otimes a_{k-1} \otimes 1,
\]
where $\phi_{\SS,0}$ labels the components of $M \setminus S$ just like~$\phi$, and the new $S^1$-component is labeled~$k$.
To compute $F_{M(0),\SS} \circ F_{M,0}(a)$, first note that
\[
F_{M,0}(a)(\phi_0) = a_1 \otimes \dots \otimes a_k \otimes 1.
\]
If~$\tau$ is the transposition of~$k$ and~$k+1$, then
\[
F_{M,0}(a)(\tau \circ \phi_0) = a_1 \otimes \dots \otimes a_{k-1} \otimes 1 \otimes a_k.
\]
As $\tau \circ \phi_0$ labels~$S$ with~$k+1$,
\[
\left[ F_{M(0),\SS} \circ F_{M,0}(a) \right] \left( (\tau \circ \phi_0)_\SS \right) = \theta(a_k) \cdot a_1 \otimes \dots \otimes a_{k-1} \otimes 1,
\]
and $(\tau \circ \phi_0)_\SS = \phi_{\SS,0}$, which proves equation~\eqref{eqn:SS} in this case.

Now suppose that~$\SS = \{s_-,s_+\}$ is a framed $0$-sphere in~$M$. The cases when $\SS' = 0$ or when~$\SS'$ is a $1$-sphere
disjoint from~$\SS$ are similar to the previous one. When $\SS' = \{s_-',s_+'\}$ is also a $0$-sphere, we have four
cases depending on whether~$\SS \cup \SS'$ intersects~$M$ in $c = 1,2,3$, or~$4$ components.
The case~$c = 1$ splits into two subcases depending on whether~$\SS$ and~$\SS'$ are linked. When they
are linked, both sides of equation~\eqref{eqn:SS}
will be of the form $a_1 \otimes \dots \otimes a_{k-1} \otimes (\mu \circ \delta(a_k))$,
where~$\mu$ is the product and~$\delta$ is the coproduct of~$A$. When~$\SS$ and~$\SS'$ are unlinked,
then one side becomes
\[
a_1 \otimes \dots \otimes a_{k-1} \otimes (\delta \otimes \Id_A)(\delta(a_k)),
\]
while the other side is
\[
a_1 \otimes \dots \otimes a_{k-1} \otimes (\Id_A \otimes \delta)(\delta(a_k)).
\]
The two coincide by the coassociativity of the coalgebra $(A,\delta)$.
When $c = 2$ and one of $\SS$ and $\SS'$ lies in a single component~$M_s$ of~$M$, while the other one intersects~$M_s$ in one point,
then the equality boils down to the fact that~$\delta$ is a left and right $A$-module homomorphism; i.e.,
\[
(\mu \otimes \Id_A)(a_{k-1} \otimes \delta(a_k)) = (\delta \circ \mu)(a_{k-1} \otimes a_k) = (\Id_A \otimes \mu)(\delta(a_{k-1}) \otimes a_k).
\]
If $c = 2$ and $\SS$, $\SS'$ both intersect the same two components of~$M$, then both sides of equation~\eqref{eqn:SS} become
$a_1 \otimes \dots \otimes a_{k-2} \otimes (\d \circ \mu(a_{k-1},a_k))$.
When $c = 2$ and $\SS$ and $\SS'$ lie in two distinct components of~$M$, then the result is clear as we have two coproduct maps
acting on distinct components of~$M$.
When $c = 3$ and~$\SS$ and~$\SS'$ share a component, then the result follows from the associativity of the algebra~$(A,\mu)$.
When $c = 3$ and~$\SS$ occupies two components and $\SS'$ a third, then we have a non-interacting product and coproduct.
The case~$c = 4$ is also straightforward as we are dealing with two non-interacting product maps.

We now check relation~\eqref{it:birth}, cancelation.
When $\SS = 0$ and $\SS' \subset M(0)$ is a $0$-sphere that intersects the new
$S^1$ component in one point, then the result follows from the fact that~$1$ is a left and right unit of~$A$.
Now suppose that $\SS$ is a $0$-sphere and $\SS' \subset M(\SS)$ is a $1$-sphere that intersects $b(\SS)$ in one point.
Then~$\SS$ has to occupy a single component of~$M$ that splits into the components~$M_-$ and~$M_+$
when we perform surgery along~$\SS$, and $\SS'$ maps to either~$M_-$ or~$M_+$. The result follows from
the fact that~$\theta$ is a left and right counit of the the coalgebra $(A,\delta)$; i.e., that
\[
(\theta \otimes \Id_A) \circ \delta = \Id_A = (\Id_A \otimes \theta) \circ \delta.
\]

Consider relation~\eqref{it:0-sphere}. If $\SS = \{s_-,s_+\}$ and~$s_-$ and~$s_+$ lie in different components of~$M$,
then $F_{M,\SS}(a)(\phi) = a_1 \otimes \dots \otimes a_{k-2} \otimes a_{k-1}a_k$. In $\ol{\SS}$ we
reverse~$s_-$ and~$s_+$, and so
$F_{M,\ol{\SS}}(a)(\phi) = a_1 \otimes \dots \otimes a_{k-2} \otimes a_k a_{k-1}$.
These coincide as the Frobenius algebra is commutative. When~$s_-$ and~$s_+$ occupy the same component of~$M$,
then $F_{M,\SS} = F_{M,\ol{\SS}}$  follows from cocommutativity.

Finally, the commutativity of diagram~\eqref{eqn:monoidal} follows automatically from the construction
of~$F$ and the surgery maps and does not impose any additional restrictions.

As explained by Kock~\cite[p.~173]{Kock}, given a morphism from the TQFT~$F$ to the TQFT~$G$; i.e.,
a natural transformation $\eta \colon F \Rightarrow G$, the map $\eta_{S^1} \colon F(S^1) \to G(S^1)$
is a homomorphism of Frobenius algebras. Conversely, given commutative Frobenius algebras $A$ and $B$
and a homomorphism $h \colon A \to B$, we can extend this to a natural transformation~$\eta$ between the
corresponding TQFTs~$F$ and~$G$. Indeed, given a 1-manifold~$M$ of $k$ components and $a \in F(M)$,
choose a mapping class $\phi \in \MCG(C_k, M)$. Then we let $\eta_M(a)(\phi) = h^{\otimes k}(a(\phi)) \in B^{\otimes k}$,
where $h^{\otimes k} \colon A^{\otimes k} \to B^{\otimes k}$. The naturality of $\eta$ for diffeomorphisms
and surgery maps follows from the fact that $h$ is a homomorphism of Frobenius algebras, and
naturality for arbitrary cobordisms then follows via equation~\eqref{eqn:comparison} that defines
the cobordism maps.

The two functors we defined between the category of (1+1)-dimensional TQFTs and the category of commutative
Frobenius algebras are inverses of each other up to natural isomorphism, hence they are equivalences
between the two categories.
This concludes the proof of Theorem~\ref{thm:1+1}.
\end{proof}

\section{The algebra of (2+1)-dimensional TQFTs} \label{sec:algebra}

In this section, we apply Theorem~\ref{thm:TQFT} to the study of (2+1)-dimensional TQFTs.
Note that Kontsevich~\cite{Kontsevich} outlined a correspondence between (1+1+1)-dimensional TQFTs and modular functors.
As to be expected, the full (2+1)-dimensional classification leads to an algebraic structure more complicated
than in the (1+1)-dimensional and (1+1+1)-dimensional cases; cf.~Proposition~\ref{prop:nonextending}.
The additional difficulty comes
from the fact that the mapping class groups of connected 2-mani\-folds are non-trivial, unlike for connected 1-mani\-folds.
However, we can make considerable simplifications, leading to a structure just barely more involved than commutative Frobenius
algebras. We expect that the algebra presented below can be further simplified; this is the aim of future research.
%We first introduce the relevant algebraic structures.

\subsection{Canonical surfaces and framed spheres} \label{sec:surfaces}
For every $g \ge 0$, let~$\S_g$ be a fixed oriented surface of genus~$g$ obtained as the connected sum $\#^g (S^1 \times S^1)$,
where $S^1 = \{\, z \in \C \,\colon\, |z| = 1 \,\}$,
and let $\M_g = \MCG(\S_g)$. The connected sums are taken at the point~$(1,1)$ of component~$i$ and
the point $(-1,1)$ of component~$(i+1)$.

Let $l_i = (S^1 \times \{-1\})_i$ be a longitude of summand~$i$, while $m_0 = (\{-1\} \times S^1)_1$ is a meridian
of the first summand, and $m_g = (\{1\} \times S^1)_g$ is a meridian of the last summand. Furthermore, for $i \in \{1,\dots, g-1\}$,
consider the curves
\[
m_i = (\{1\} \times S^1)_i \# (\{-1\} \times S^1)_{i+1}.
\]
%The curves $m_i^\pm = (\{\pm \sqrt{-1}\} \times S^1)_i$ are meridians of the $i$-th $S^1 \times S^1$ summand of~$\S_g$.
If $j \in \{1, \dots, g\}$, we write
\[
s_j = \{\, (\exp(\varepsilon \cos \theta \sqrt{-1}),\exp(\varepsilon \sin \theta \sqrt{-1})) \,\colon\, \theta \in S^1 \,\}
\subset (S^1 \times S^1)_j;
\]
this is the connected sum curve between the $j$-th and $(j+1)$-st $S^1 \times S^1$ summands for $j < g$.
Furthermore, $s_g$ is an inessential curve in the last summand $(S^1 \times S^1)_g$.
Finally, let~$s_0$ be an inessential curve in the first summand oriented from the left.
Each $s_j$ is oriented as the boundary of the $j$-th $S^1 \times S^1$ summand; i.e., as the boundary
of the component of $\S_g \setminus s_j$ with smaller $x$-coordinates.
All the above curves are naturally parameterized by~$S^1$, and if we fix a thin regular neighborhood
of each, we can and will view them as framed spheres $S^1 \times D^1 \hookrightarrow \S_g$.
For an illustration when $g = 4$, see Figure~\ref{fig:surface}.
\begin{figure}
\includegraphics{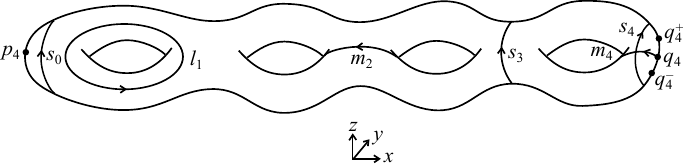}
\caption{The curves $m_i$, $l_i$, and~$s_i$, and the points
$p_4$, $q_4$, and~$p_4^\pm$ on the standard surface~$\S_4$ of genus four.}
\label{fig:surface}
\end{figure}

Let $p_g = (-1,1)_1$ and $q_g = (1,1)_g$ be points
on the first and last $S^1 \times S^1$ summands of~$\S_g$, respectively.
These have neighborhoods parameterized by $D^2$, such that restricting these
to $S^1$, we obtain the fixed parameterizations of $-s_0$ and~$-s_g$.
For $i$, $j \in \Z_{\ge 0}$, let
\[
\PP_{i,j} = \{q_i, p_j\} \in \S_i \sqcup \S_j.
\]
This is a framed $0$-sphere with the framing
$S^0 \times D^2 \hookrightarrow \S_i \sqcup \S_j$ given by the parameterizations of the disks containing $q_i$ and $p_j$.
Furthermore, for every $g \in \Z_{\ge 0}$, let $\PP_g = \{q_g^-,q_g^+\}$ be the framed sphere
given by two points very close to~$q_g$, both lying on~$(S^1 \times \{1\})_g$,
with framing obtained by translating and shrinking the normal framing of~$q_g$,
and also reflecting it in case of $q_g^+$.
%In the embedding of $\S_g$ into $\R^3$ shown in Figure~\ref{fig:surface},
%each point $p_g$, $q_g$, $q_g^+$, and $q_g^-$ has a local coordinate system
%obtained by projecting onto the $yz$-plane, and the framing is given by an
%$\varepsilon$-disk about the projection of each point.
%Equivalently, we can take the image of an $\varepsilon$-disk $D_\varepsilon \subset T_{(1,1)} (S^1 \times S^1)$
%under the exponential map
%\[
%\exp \colon T_{(1,1)} (S^1 \times S^1) \to S^1 \times S^1,
%\]
%and translating $\exp(D_\varepsilon)$ to each point $p_g$, $q_g$, $q_g^+$, and $q_g^-$ via the group structure of $S^1 \times S^1$.

From now on, we will use the following isotopically unique identifications:
The diffeomorphism $\S_g(l_g) \approx \S_{g-1}$ for $g > 0$
maps the disk obtained by performing surgery along~$l_g$ on the component of $\S_g \setminus s_{g-1}$
containing $l_g$ to the disk bounded by~$s_{g-1}$ in~$\S_{g-1}$, fixing~$s_{g-1}$ pointwise.
Furthermore, it maps the other component of $\S_g \setminus s_{g-1}$ to $\S_{g-1} \setminus s_{g-1}$ isometrically.
The diffeomorphism $\S_g(\bP_g) \approx \S_{g+1}$ maps the $D^1 \times S^1$ glued during the surgery along~$\bP_g$
to the neighborhood of $m_{g+1}$ in $\S_{g+1}$ given by its framing. Furthermore, it is the identity on
the component of $\S_g \setminus s_g$ disjoint from~$\bP_g$, and maps the result of surgery along $\bP_g$ on the disk
component of $\S_g \setminus s_g$ to the component of $\S_{g+1} \setminus s_g$ containing $m_{g+1}$ isometrically.
The identification $(\S_i \sqcup \S_j)(\bP_{i,j}) \approx \S_{i+j}$ maps the $D^1 \times S^1$ glued
during the surgery along~$\bP_{i,j}$ to the neighborhood of the circle~$s_i$ in~$\S_{i+j}$ given by its framing.
Furthermore, it maps $\S_i \setminus N(q_i)$ and $\S_j \setminus N(p_j)$ to the respective
components of $\S_{i+j} \setminus N(s_i)$ isometrically.
Finally, the diffeomorphism $\S_{i+j}(s_i) \approx \S_i \sqcup \S_j$ maps the components
of $\S_{i+j} \setminus N(s_i)$ to $(\S_i \setminus N(q_i)) \sqcup (\S_j \setminus N(p_j))$ isometrically,
and the $D^2 \times S^0$ introduced during the surgery along~$s_i$ to
$N(q_i) \subset \S_i$ and $N(p_j) \subset \S_j$.

\subsection{Assigning a J-algebra to a TQFT} \label{sec:assignment}
Suppose that the functor
\[
F \colon \Cob_2 \to \Vect_\F
\]
is a TQFT.
We associate to it a tuple
\[
J(F) = (\A,\a,\omega,\{\rho_i \,\colon\, i \in \N\})
\]
as follow.
We write
\[
A_g = F(\S_g).
\]
This vector space comes equipped with a representation
\[
\rho_g \colon \M_g \to \text{Aut}(A_g).
\]
Indeed, given $d \in \Diff(\S_g)$, let $\rho_g(d) = F(c_d) \colon F(\S_g) \to F(\S_g)$,
where $c_d$ is the cobordism associated to~$d$ as in Definition~\ref{def:diffcob}.
We define the involution
\[
*_g \colon A_g \to A_g
\]
as $*_g = \rho_g(\iota_g)$, where
$\iota_g$ is $\pi$-rotation of~$\S_g$ in~$\R^3$ with center at~$\underline{0}$ about the $z$-axis

As defined in Section~\ref{sec:surfaces},
there is a natural identification between $\S_g(l_g)$ and $\S_{g-1}$, and so we can view~$W(l_g)$, the trace
of the surgery along~$l_g$, as a cobordism from~$\S_g$ to~$\S_{g-1}$. We write
\[
\a_g := F(W(l_g)) \colon A_g \to A_{g-1}.
\]
Similarly, we can identify $\S_g(s_j)$ with $\S_j \sqcup \S_{g-j}$, and hence we obtain a map
\[
\d_{j, g-j} := F(W(s_j)) \colon A_g \to A_j \otimes A_{g-j}
\]
for every $j \in \{0, \dots, g\}$, where we map $F(\S_j \sqcup \S_{g-j})$
to $F(S_j) \otimes F(S_{g-j}) = A_j \otimes A_{g-j}$ via the monoidal structure of~$F$.
We can canonically identify $(\S_i \sqcup \S_j)(\PP_{i,j})$ with $\S_{i+j}$, hence we obtain a map
\[
\mu_{i,j} := F(W(\bP_{i,j})) \colon A_i \otimes A_j \to A_{i+j}.
\]
Again, we used the monoidal structure of~$F$
Furthermore, $\S_g(\PP_g)$ is canonically diffeomorphic to~$\S_{g+1}$, hence we obtain a map
\[
\omega_g := F(W(\PP_g)) \colon A_g \to A_{g+1}.
\]

The ball $D^3$, viewed as a cobordism from $\S_0 = S^2$ to $\emptyset$, gives rise to a map
\[
\tau \colon A_0 \to \F,
\]
while viewing $D^3$ as a cobordism from $\emptyset$ to $\S_0$ gives a map
\[
\eps \colon \F \to A_0.
\]
Finally, we set $A = \bigoplus_{i \in \N} A_i$, $\mu = \bigoplus_{i, j \in \N} \mu_{i,j}$,
$\d = \bigoplus_{i,j \in \N} \d_{i,j}$, $\a = \bigoplus_{i \in \N} \a_i$,
$\omega = \bigoplus_{i \in \N} \omega_i$,
and $\A := (A,\mu,\delta,\eps,\tau,*)$. For an illustration of the above operations,
see Figure~\ref{fig:operations}.
\begin{figure}
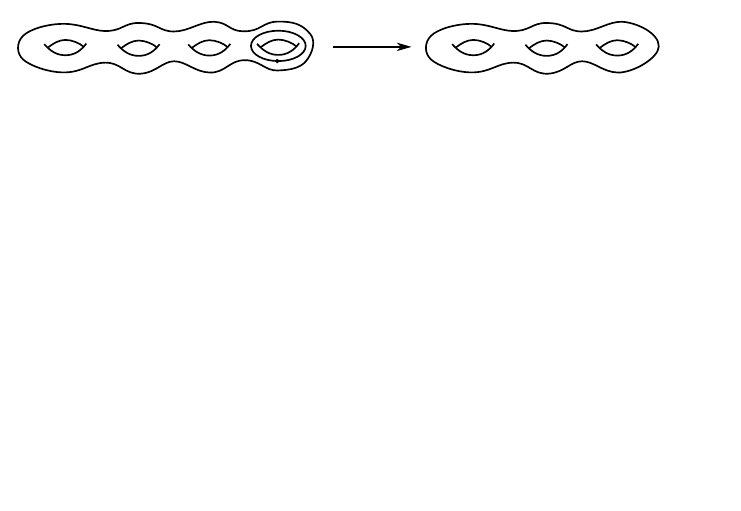
\caption{Given a (2+1)-dimensional TQFT $F$, we obtain the algebraic operations
$\alpha$, $\omega$, $\delta$, $\mu$, $\varepsilon$, and $\tau$ by applying~$F$ to the traces
of the surgeries in the figure.}
\label{fig:operations}
\end{figure}
In what follows, we synthesize the above data into a new algebraic structure called a \emph{J-algebra}.
This consists of the \emph{split \gnf-algebra} $(\A,\a,\omega)$,
together with the \emph{mapping class group representation} $\{\rho_i \colon i \in \N\}$.
We proceed to give the relevant algebraic definitions.

\subsection{Split \gnf-algebras} \label{sec:gnf}
\emph{Nearly Frobenius algebras} were introduced by Cohen and Godin~\cite{nearlyfrob}.
They are like Frobenius algebras, but
without the trace functional, and hence lack the non-degenerate bilinear pairing
that identifies the algebra with its dual. Note that a non-degenerate pairing forces
every Frobenius algebra to be finite dimensional, whereas this is not the case for
nearly Frobenius algebras. We now introduce a graded involutive version of this notion.

\begin{defn} \label{def:grad-frob}
A \emph{graded involutive nearly Frobenius algebra} (or \gnf-algebra for short) is a tuple $\A = (A,\mu,\delta,\eps,\tau,*)$, where
\[
A = \bigoplus_{i = 0}^ \infty A_i
\]
is an $\N$-graded $\F$-vector space such that each~$A_i$ is finite dimensional.
Furthermore,
\begin{enumerate}
\item $\mu \colon A \otimes A \to A$ is a graded linear map, where $A \otimes A$ is the graded
tensor product; i.e.,
\[
(A \otimes A)_n = \bigoplus_{i = 0}^n A_i \otimes A_{n-i} \le A \otimes_\F A,
\]
\item $\mu$ is associative and $\eps \colon \F \to A_0$ is a left unit for~$\mu$,
\item $\delta \colon A \to A \otimes A$ is a graded linear map that is coassociative and~$\tau \colon A_0 \to \F$
    is a partial left counit for~$\delta$ in the sense that $(\tau \otimes \Id_{A_j}) \circ \delta_{0,j} = \Id_{A_j}$,
    where $\d_{i,j} = \pi_{i,j} \circ \d$ and $\pi_{i,j} \colon A \otimes A \to A_i \otimes A_j$ is the projection,
\item \label{it:left} the following diagram is commutative (Frobenius condition):
\[
\xymatrixcolsep{5pc}
\xymatrix{
  A_i \otimes A_{j+k} \ar[r]^-{\Id_{A_i} \otimes \delta_{j,k}} \ar[d]^-{\mu_{i,j+k}} &
  A_i \otimes A_j \otimes A_k \ar[d]^{\mu_{i,j} \otimes \Id_{A_k}} \\
  A_{i+j+k}  \ar[r]^-{\delta_{i+j,k}} & A_{i+j} \otimes A_k,}
  \]
\item $* \colon A \to A$ is a grading-preserving involution that is an antiautomorphism of $(A,\mu,\d)$,
    and such that it is the identity on~$A_0$ and $A_1$. More concretely,
    \[
    \begin{split}
    * \circ \mu &= \mu \circ T,\\
    \d \circ * &= T \circ \d,
    \end{split}
    \]
    where $T = \bigoplus_{i,j = 0}^\infty T_{i,j}$, and $T_{i,j}(x \otimes y) = y^* \otimes x^*$ for $x \in A_i$ and $y \in A_j$.
\end{enumerate}
We shall write $\mu_{i,j} = \mu|_{A_i \otimes A_j}$.
\end{defn}

\begin{defn} \label{def:splitting}
A \emph{modular splitting} of the \gnf-algebra~$\A$ consists of a degree~1 endomorphism $\omega \colon A \to A$
and a degree $-1$ endomorphism $\a \colon A \to A$ such that they are both left $(A,\mu)$-module
homomorphisms, and such that
\[
\begin{split}
\d_{i,j-1} \circ \a_{i+j} &= (\Id_{A_i} \otimes \a_j) \circ \d_{i,j}, \\
\d_{i,j+1} \circ \omega_{i+j} &= (\Id_{A_i} \otimes \omega_j) \circ \d_{i,j}, \text{ and}\\
\a \circ \omega &= \Id_A,
\end{split}
\]
where $\a_i = \a|_{A_i}$ and $\omega_i = \omega|_{A_i}$. We call the triple $(\A,\a,\omega)$ a \emph{split} \gnf-algebra.
\end{defn}

\begin{defn}
Let
\[
(\A,\a,\omega) = (A,\mu,\delta,\eps,\tau,*,\a,\omega) \text{ and }
(\A',\a',\omega') = (A',\mu',\delta',\eps',\tau',*',\a',\omega')
\]
be split \gnf-algebras.
A \emph{homomorphism} from $(\A,\a,\omega)$ to $(\A',\a',\omega')$
is a graded linear map $h \colon A \to A'$ that intertwines the operations
$\mu$, $\delta$, $\eps$, $\tau$, $*$, $\a$, $\omega$ with
$\mu'$, $\delta'$, $\eps'$, $\tau'$, $*'$, $\a'$, $\omega'$, respectively.
\end{defn}

%------------------------------------------------

The following straightforward lemma
restates the definition of a split \gnf-algebra
in terms of the operations $\mu_{i,j}$, $\delta_{i,j}$, $\a_i$, and $\omega_i$.

\begin{lem} \label{lem:corresp}
Let $(\A,\a,\omega)$ be a split \gnf-algebra.
%By equations~\eqref{eqn:assoc} and~\eqref{eqn:unit},
Then the product~$\mu$ is associative with left unit~$\eps$:
\begin{equation} \label{eqn:1}
\begin{split}
\mu_{i + j,k} \circ (\mu_{i,j} \otimes \Id_{A_k}) &= \mu_{i, j+k} \circ (\Id_{A_i} \otimes \mu_{j, k}), \\
\mu_{0,j} \circ (\eps \otimes \Id_{A_j}) &= \Id_{A_j}.
\end{split}
\end{equation}
%Equations~\eqref{eqn:coassoc} and~\eqref{eqn:counit} state that
The coproduct~$\d$ is coassociative with left counit~$\tau$:
\begin{equation} \label{eqn:2}
\begin{split}
(\Id_{A_i} \otimes \delta_{j,k}) \circ \delta_{i,j+k} &= (\delta_{i,j} \otimes \Id_{A_k}) \circ \delta_{i+j,k}, \\
(\tau \otimes \Id_{A_j}) \circ \delta_{0,j} &= \Id_{A_j}.
\end{split}
\end{equation}
%Furthermore, according to equation~\eqref{eqn:d4},
The operations $\mu$ and~$\d$ satisfy the Frobenius condition
\begin{equation} \label{eqn:3}
\delta_{i+j,k} \circ \mu_{i,j+k} =  (\mu_{i,j} \otimes \Id_{A_k} ) \circ (\Id_{A_i} \otimes \delta_{j,k}).
\end{equation}
%By~\eqref{eqn:commut} and~\eqref{eqn:cocommut},
The operation~$*$ is an anti-automorphism:
\begin{equation} \label{eqn:4}
\begin{split}
\mu_{i,j}(x^* \otimes y^*) &= \mu_{j,i}(y \otimes x)^*, \\
T_{i,j} \circ \delta_{i,j}(x) &= \delta_{j,i}(x^*),
\end{split}
\end{equation}
where $T_{i,j} \colon A_i \otimes A_j \to A_j \otimes A_i$ is given by $T_{i,j}(x \otimes y) = y^* \otimes x^*$.
Furthermore, $*$ is involutive, and is the identity on~$A_0$ and~$A_1$.

%By equation~\eqref{eqn:alpha-omega},
We have
\begin{equation} \label{eqn:5}
\a_{i+1}\circ \omega_i = \Id_{A_i},
\end{equation}
and the maps~$\a_i$ and~$\omega_i$ are compatible with the
product and coproduct in the following sense:
%by equations~\eqref{eqn:mu-omega}, \eqref{eqn:d3}, \eqref{eqn:d2}, and~\eqref{eqn:alpha-delta}, respectively:
\begin{equation} \label{eqn:6}
\begin{split}
\omega_{i+j} \circ \mu_{i,j} &= \mu_{i,j+1} \circ (\Id_{A_i} \otimes \omega_j), \\
\a_{i+j} \circ \mu_{i,j} &= \mu_{i,j-1} \circ (\Id_{A_i} \otimes \a_j), \\
\delta_{i,j+1} \circ \omega_{i+j} &= (\Id_{A_i} \otimes \omega_j) \circ \delta_{i,j}, \\
\delta_{i,j-1} \circ \a_{i+j} &= (\Id_{A_i} \otimes \a_j) \circ \delta_{i,j}.
\end{split}
\end{equation}

In the opposite direction, suppose that we are given a sequence of
finite-di\-men\-sional $\F$-vector spaces $A_i$ for $i \in \N$,
together with products $\mu_{i,j} \colon A_i \otimes A_j \to A_{i+j}$, coproducts
$\delta_{i,j} \colon A_{i+j} \to A_i \otimes A_j$,
a left unit $\eps \colon \F \to A_0$, a left counit $\tau \colon A_0 \to \F$,
embeddings~$\omega_i \colon A_i \to A_{i+1}$, projections $\a_i \colon A_i \to A_{i-1}$, and
involutions $* \colon A_i \to A_i$ that satisfy equations~\eqref{eqn:1}--\eqref{eqn:6}.
If we set $A = \bigoplus_{i \in \N} A_i$, $\mu = \bigoplus_{i, j \in \N} \mu_{i,j}$,
$\d = \bigoplus_{i,j \in \N} \d_{i,j}$, $\a = \bigoplus_{i \in \N} \a_i$, and $\omega = \bigoplus_{i \in \N} \omega_i$,
then $(A,\mu,\delta,\eps,\tau,*,\a,\omega)$ is a split \gnf-algebra.
\end{lem}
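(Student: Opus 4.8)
The plan is to treat this as a bookkeeping lemma: both directions amount to unpacking the definitions one graded summand at a time. The first thing I would record is that, since $A=\bigoplus_{i\in\N}A_i$ with each $A_i$ finite dimensional and every structure map of a split \gnf-algebra is required to be graded of a prescribed degree, each such map is the direct sum of its graded components: $\mu=\bigoplus_{i,j}\mu_{i,j}$ with $\mu_{i,j}\colon A_i\otimes A_j\to A_{i+j}$; $\d=\bigoplus_{i,j}\d_{i,j}$ with $\d_{i,j}\colon A_{i+j}\to A_i\otimes A_j$; $*=\bigoplus_i *|_{A_i}$; $\omega=\bigoplus_i\omega_i$ with $\omega_i\colon A_i\to A_{i+1}$; and $\a=\bigoplus_i\a_i$ with $\a_i\colon A_i\to A_{i-1}$ (setting $\a_0=0$ and $A_{-1}=0$), while $\eps$ lands in $A_0$ and $\tau$ is defined on $A_0$.

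For the forward direction I would restrict each axiom in Definitions~\ref{def:grad-frob} and~\ref{def:splitting} to a single graded piece. Associativity of $\mu$ restricted to $A_i\otimes A_j\otimes A_k$, and the left-unit axiom restricted to $\F\otimes A_j$, give~\eqref{eqn:1}; dually, coassociativity of $\d$ and the left-counit axiom give~\eqref{eqn:2}. The Frobenius condition~\eqref{eqn:3} is literally the commutative square appearing in Definition~\ref{def:grad-frob}. Restricting $*\circ\mu=\mu\circ T$ to $A_i\otimes A_j$ and $\d\circ *=T\circ\d$ to $A_i$, and recalling that $*$ is involutive and fixes $A_0$ and $A_1$ pointwise, yields~\eqref{eqn:4}; here the only care needed is with the index swap built into $T$. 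From Definition~\ref{def:splitting}: $\a\circ\omega=\Id_A$ restricted to $A_i$ is~\eqref{eqn:5}; the left $(A,\mu)$-module homomorphism property of $\omega$ and of $\a$, restricted to $A_i\otimes A_j$, gives the first two lines of~\eqref{eqn:6}; and the two compatibilities of $\a$ and $\omega$ with $\d$ in Definition~\ref{def:splitting} give the last two lines.

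For the converse, starting from component data $A_i$, $\mu_{i,j}$, $\d_{i,j}$, $\eps$, $\tau$, $\omega_i$, $\a_i$, $*$ satisfying~\eqref{eqn:1}--\eqref{eqn:6}, I would set $A=\bigoplus_{i\in\N}A_i$ and define the global operations by the direct-sum formulas above. The degree conditions are immediate from the stated codomains; $\d$ lands in the graded tensor product $A\otimes A$ precisely because $\d_{i,j}(A_{i+j})\subseteq A_i\otimes A_j$, and no completion is needed since every homogeneous element is moved by only finitely many components. Then each axiom of Definitions~\ref{def:grad-frob} and~\ref{def:splitting} is checked by evaluating on homogeneous elements, where it collapses to the matching identity among~\eqref{eqn:1}--\eqref{eqn:6}: for example associativity of $\mu$ on $A_i\otimes A_j\otimes A_k$ is the first line of~\eqref{eqn:1}, the Frobenius square is~\eqref{eqn:3}, and the module-homomorphism property of $\omega$ on $A_i\otimes A_j$ is the first line of~\eqref{eqn:6}.

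I do not expect a genuine obstacle here; the argument is essentially mechanical. The only points requiring attention are the boundary conventions for $\a$ at degree $0$ (so that the relations involving $\a_j$ stay consistent when $j=0$) and keeping track of the factor swap in the anti-automorphism identities~\eqref{eqn:4}, where $T$ permutes the two tensor factors while simultaneously applying $*$.
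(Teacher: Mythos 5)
Your proposal is correct and takes essentially the same route as the paper: both directions are obtained by restricting the axioms of Definitions~\ref{def:grad-frob} and~\ref{def:splitting} to graded components and, conversely, reassembling the componentwise identities~\eqref{eqn:1}--\eqref{eqn:6} into the global ones. The one point the paper singles out as non-trivial, and which your phrase ``collapses to the matching identity'' slightly glosses over, is coassociativity of $\d$ in the converse direction: evaluating $(\d\otimes\Id_A)\circ\d$ and $(\Id_A\otimes\d)\circ\d$ on a homogeneous element of $A_n$ produces double sums over graded pieces rather than a single instance of~\eqref{eqn:2}, and one must match them either by projecting onto each $A_i\otimes A_j\otimes A_k$ or by the reindexing of the double summation carried out in the paper --- a step your outline accommodates but should make explicit.
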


\begin{proof}
It is clear that if $(\A,\a,\omega)$ is a split \gnf-algebra, then the operations
$\mu_{i,j}$, $\d_{i,j}$, $\eps$, $\tau$, $\a_i$, and $\omega_i$ satisfy equations~\eqref{eqn:1}--\eqref{eqn:6}.

Now consider the opposite direction.
It is also straightforward to check that $(A,\mu,\delta,\eps,*)$
satisfy the properties listed in Definitions~\ref{def:grad-frob} and~\ref{def:splitting}
required of a split \gnf-algebra.
The only non-trivial part is showing that~$\d$ is coassociative;
i.e., that $(\d \otimes \Id_A) \circ \d = (\Id_A \otimes \d) \circ \d$.
Restricted to~$A_n$, the left-hand side becomes
\[
\begin{split}
\sum_{i=0}^n \sum_{j=0}^i (\d_{j,i-j} \otimes \Id_{A_{n-i}}) \circ \d_{i,n-i} =
\sum_{i=0}^n \sum_{j=0}^i (\Id_{A_j} \otimes \d_{i-j,n-i}) \circ \d_{j,n-j} = \\
\sum_{j=0}^n \sum_{i=j}^n (\Id_{A_j} \otimes \d_{i-j,n-i}) \circ \d_{j,n-j} =
\sum_{j=0}^n \sum_{k=0}^{n-j} (\Id_{A_j} \otimes \d_{n-k-j,k}) \circ \d_{j,n-j} = \\
\sum_{l=0}^n \sum_{k=0}^l (\Id_{A_{n-l}} \otimes \d_{l-k,k}) \circ \d_{n-l,l},
\end{split}
\]
which is exactly the right-hand side restricted to~$A_n$. Here, the first equality
follows from the coassociativity~\eqref{eqn:2} of the (2+1)-algebra operations~$\d_{i,j}$, followed
by changing the order of summation, and finally setting $k = n-i$ and $l = n-j$.
%In the opposite direction, suppose we are given a split \gnf-algebra $(\A,\a,\omega)$.
%Let $\pi_{i,j} \colon A \otimes A \to A_i \otimes A_j$ be the projection.
%Then we obtain a (2+1)-algebra by setting $\mu_{i,j} = \mu|_{A_i \otimes A_j}$,
%$\d_{i,j} = \pi_{i,j} \circ \d$, $\a_i = \a|_{A_i}$, and $\omega_i = \omega|_{A_i}$.
\end{proof}

\begin{prop} \label{prop:A}
Let $F \colon \Cob_2 \to \Vect_\F$ be a TQFT, and let
\[
J(F) = (\A,\a,\omega,\{\rho_i \,\colon\, i \in \N\})
\]
be the associated tuple as in Section~\ref{sec:assignment}.
Then $(\A,\a,\omega)$ is a split \gnf-algebra.
\end{prop}

\begin{proof}
By the second part of Lemma~\ref{lem:corresp}, it suffices to check that
the data $A_i$, $\mu_{i,j}$, $\d_{i,j}$, $\eps$, $\tau$, $\a_i$, $\omega_i$
assigned to $F$ as in Section~\ref{sec:assignment} satisfy equations~\eqref{eqn:1}--\eqref{eqn:6}.
According to Theorem~\ref{thm:TQFT}, the TQFT $F$ satisfies relations~\eqref{it:isot}--\eqref{it:0-sphere} of Definition~\ref{def:relations}.
Together with the monoidality of~$F$, these imply equations~\eqref{eqn:1}--\eqref{eqn:6} of Lemma~\ref{lem:corresp} as follows.

First, consider equations~\eqref{eqn:1}. The equation
\[
\mu_{i + j,k} \circ (\mu_{i,j} \otimes \Id_{A_k}) = \mu_{i, j+k} \circ (\Id_{A_i} \otimes \mu_{j, k})
\]
follows by applying relation~\eqref{it:commut} to $\S_i \sqcup \S_j \sqcup \S_k$
with $\SS = \PP_{i,j} \subset \S_i \sqcup \S_j$ and $\SS' = \PP_{j,k} \subset \S_j \sqcup \S_k$,
together with the pentagon lemma of monoidality.
To show that
\[
\mu_{0,j} \circ (\eps \otimes \Id_{A_j}) = \Id_{A_j},
\]
we apply relation~\eqref{it:birth} to $\S_j$ with $\SS = 0$ and $\SS' = \PP_{0,j} \subset \S_0 \sqcup \S_j$.

Now consider equations~\eqref{eqn:2}. To show that
\[
(\Id_{A_i} \otimes \delta_{j,k}) \circ \delta_{i,j+k} = (\delta_{i,j} \otimes \Id_{A_k}) \circ \delta_{i+j,k},
\]
apply relation~\eqref{it:commut} to $\S_{i+j+k}$ with $\SS = s_i$ and $\SS' = s_{i+j}$.
For
\[
(\tau \otimes \Id_{A_j}) \circ \delta_{0,j} = \Id_{A_j},
\]
apply relation~\eqref{it:birth} to $\S_j$ with $\SS = s_0$ and $\SS'$ being the 2-sphere
split off by~$s_0$.

Equation~\eqref{eqn:3}, the Frobenius condition
\[
\delta_{i+j,k} \circ \mu_{i,j+k} =  (\mu_{i,j} \otimes \Id_{A_k} ) \circ (\Id_{A_i} \otimes \delta_{j,k}),
\]
follows from applying relation~\eqref{it:commut} to $\S_i \sqcup \S_{j+k}$ with $\SS = \PP_{i,j+k}$
and $\SS' = s_j \subset \S_{j+k}$.

For equations~\eqref{eqn:4},
\[
\mu_{i,j}(x^* \otimes y^*) = \mu_{j,i}(y \otimes x)^*,
\]
follows from relation~\eqref{it:d-F} by applying it to $\S_i \sqcup \S_j$ with $\SS = \PP_{i,j}$
and the diffeomorphism $d \colon \S_i \sqcup \S_j \to \S_j \sqcup \S_i$ being $\iota_i \sqcup \iota_j$,
followed by swapping the two components. Then note that $d^\SS = \iota_{i+j}$, and the result follows.
Now consider
\[
T_{i,j} \circ \delta_{i,j}(x) = \delta_{j,i}(x^*),
\]
where $T_{i,j} \colon A_i \otimes A_j \to A_j \otimes A_i$ is given by $T_{i,j}(x \otimes y) = y^* \otimes x^*$.
This also follows from relation~\eqref{it:d-F} applied to $\S_{i+j}$ with $\SS = s_i$ and $d = \iota_{i+j}$.
Furthermore, $*_i$ is involutive since $\iota_i$ is, and $*_0 = \Id_{A_0}$ and $*_1 = \Id_{A_1}$
as $\iota_0$ and $\iota_1$ are isotopic to the identity, together with relation~\eqref{it:isot}.

%By equation~\eqref{eqn:alpha-omega},
To prove equation~\eqref{eqn:5},
\[
\a_{i+1} \circ \omega_i = \Id_{A_i},
\]
we apply relation~\eqref{it:birth} to $\S_i$ with $\SS = \PP_i$ and $\SS' = l_{j+1}$ that form
a canceling pair.

The last set of equations is~\eqref{eqn:6}.
The equation
\[
\omega_{i+j} \circ \mu_{i,j} = \mu_{i,j+1} \circ (\Id_{A_i} \otimes \omega_j),
\]
follows from applying relation~\eqref{it:commut} to $\S_i \sqcup \S_j$ with $\SS = \PP_{i,j}$
and $\SS' = \PP_j \subset \S_j$.
Similarly,
\[
\a_{i+j} \circ \mu_{i,j} = \mu_{i,j-1} \circ (\Id_{A_i} \otimes \a_j)
\]
follows from relation~\eqref{it:commut} applied to $\S_i \sqcup \S_j$ with $\SS = \PP_{i,j}$
and $\SS' = l_j \subset \S_j$.
To obtain
\[
\delta_{i,j+1} \circ \omega_{i+j} = (\Id_{A_i} \otimes \omega_j) \circ \delta_{i,j},
\]
apply relation~\eqref{it:commut} to $\S_{i+j}$ with $\SS = \PP_{i+j}$ and $\SS' = s_i$.
Finally,
\[
\delta_{i,j-1} \circ \a_{i+j} = (\Id_{A_i} \otimes \a_j) \circ \delta_{i,j}
\]
follows by applying relation~\eqref{it:commut} to $\S_{i+j}$ along $\SS = l_{i+j}$
and $\SS' = s_i$.
%Hence, by the second part of Lemma~\ref{lem:corresp},
%the data $A_i$, $\mu_{i,j}$, $\d_{i,j}$, $\eps$, $\tau$, $\a_i$, $\omega_i$
%give rise to a split \gnf-algebra $(\A, \a,\omega)$.
\end{proof}

%------------------------------------------------

\begin{lem} \label{lem:right}
If~$\A$ is a \gnf-algebra, then $\eps$ is also a right unit, $\tau$ is a partial right counit, and
\begin{equation} \label{eqn:right}
\d_{k,i+j} \circ \mu_{j+k,i} = (\Id_{A_k} \otimes \mu_{j,i}) \circ (\d_{k,j} \otimes \Id_{A_i}).
\end{equation}
If $(\a,\omega)$ is a modular splitting of~$\A$, then $A = \ker(\a) \oplus \im(\omega)$, both summands are left
$(A,\mu)$-submodules, and $\omega \circ \a$ is projection onto $\im(\omega)$ along $\ker(\a)$.
\end{lem}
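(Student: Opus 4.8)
The plan is to derive the three ``right-handed'' statements of the first part from the involution~$*$ — which converts the left unit, left counit, and left Frobenius relation into their right counterparts — and to obtain the second part formally from the single identity $\a\circ\omega = \Id_A$ via the elementary theory of idempotents.

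\emph{Right unit, right counit.} Everything here rests on equation~\eqref{eqn:4}: $*$ is a grading‑preserving involutive antiautomorphism of $(A,\mu,\d)$, together with $(x^*)^*=x$, $T_{j,i}\circ T_{i,j}=\Id$, and $x^*=x$ for $x\in A_0$. For the right unit, set $u=\eps(1)\in A_0$; for $a\in A_n$,
\[
\mu_{n,0}(a\otimes u)=\bigl(\mu_{0,n}(u^*\otimes a^*)\bigr)^*=\bigl(\mu_{0,n}(u\otimes a^*)\bigr)^*=(a^*)^*=a,
\]
using \eqref{eqn:4}, then $u^*=u$, then the left‑unit law \eqref{eqn:1}. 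For the right counit, I would use \eqref{eqn:4} to rewrite $\delta_{n,0}(a)=T_{0,n}\bigl(\delta_{0,n}(a^*)\bigr)$, push $\Id_{A_n}\otimes\tau$ through the transposition (using $x^*=x$ on $A_0$ and $\F$‑linearity of $*$), and conclude with the left‑counit law \eqref{eqn:2}; this gives $(\Id_{A_n}\otimes\tau)\circ\delta_{n,0}=\Id_{A_n}$.

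\emph{Right Frobenius relation.} The same conjugation trick applies to~\eqref{eqn:right}. Fixing $x\in A_{j+k}$ and $y\in A_i$, two applications of \eqref{eqn:4} give
\[
\delta_{k,i+j}\bigl(\mu_{j+k,i}(x\otimes y)\bigr)=T_{i+j,k}\Bigl(\delta_{i+j,k}\bigl(\mu_{i,j+k}(y^*\otimes x^*)\bigr)\Bigr);
\]
applying the left Frobenius relation~\eqref{eqn:3} inside, and then the transposition $T_{i+j,k}$, rewrites this as $\sum q^*\otimes\mu_{j,i}(p^*\otimes y)$, where $\sum p\otimes q:=\delta_{j,k}(x^*)$. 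On the other side, \eqref{eqn:4} yields $\delta_{k,j}(x)=\sum q^*\otimes p^*$, so the right‑hand side of~\eqref{eqn:right}, namely $(\Id_{A_k}\otimes\mu_{j,i})\bigl(\delta_{k,j}(x)\otimes y\bigr)$, is likewise $\sum q^*\otimes\mu_{j,i}(p^*\otimes y)$, and the two sides agree. The only real work is tracking the grading subscripts through the various $T_{i,j}$'s and remembering that $*$ is trivial only on $A_0$ and $A_1$ (so it must be applied an even number of times to return to the original element); this bookkeeping is the main, and rather mild, obstacle.

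\emph{The splitting.} This part is purely formal given $\a\circ\omega=\Id_A$ (equation~\eqref{eqn:5}) and that $\a$, $\omega$ are left $(A,\mu)$‑module maps (equation~\eqref{eqn:6}). From $\a\omega=\Id$ the grading‑preserving operator $P:=\omega\circ\a$ is idempotent, since $P^2=\omega(\a\omega)\a=P$; one checks $\ker P=\ker\a$ (as $\omega$ is injective) and $\im P=\im\omega$ (as $\omega\a\omega=\omega$), so the standard decomposition $A=\im P\oplus\ker P$ attached to an idempotent reads $A=\im\omega\oplus\ker\a$ as graded vector spaces, with $P=\omega\circ\a$ the projection onto $\im\omega$ along $\ker\a$. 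Finally, both summands are left $(A,\mu)$‑submodules: if $\a x=0$ then $\a(\mu(a\otimes x))=\mu(a\otimes\a x)=0$, and if $x=\omega y$ then $\mu(a\otimes x)=\omega(\mu(a\otimes y))\in\im\omega$, both by~\eqref{eqn:6}.
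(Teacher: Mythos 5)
Your proof is correct and takes essentially the same route as the paper: the right unit, right counit, and the identity~\eqref{eqn:right} are obtained by conjugating the corresponding left-handed axioms by the involution~$*$ (the paper carries out the Frobenius step in Sweedler notation, you with explicit sums), and the splitting follows formally from the idempotent $\omega\circ\alpha$ using $\alpha\circ\omega=\Id_A$ together with the left $(A,\mu)$-module properties of $\alpha$ and $\omega$.
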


\begin{proof}
By applying $*$ to the equation $\mu(\eps(t) \otimes a) = a$ for $t \in \F$ and $a \in A$,
we obtain that $\mu(a^* \otimes \eps(t)) = a^*$, as $\eps(t) \in A_0$ on which $*$ acts as the identity,
and hence $\mu(a \otimes \eps(t)) = a$ for every $a \in A$.

Similarly, since $\d_{0,j} \circ * = T_{j,0} \circ \d_{j,0}$, we have
\[
* = (\tau \otimes \Id_{A_j}) \circ \delta_{0,j} \circ * = (\tau \otimes \Id_{A_j}) \circ T_{j,0} \circ \d_{j,0} =
(* \otimes \tau) \circ \d_{j,0}
\]
as $\tau \circ * = \tau$ since $*$ acts as the identity on~$A_0$. Applying~$*$ to both sides,
\[
(\Id_{A_j} \otimes \tau) \circ \d_{j,0} = \Id_{A_j}.
\]

To prove equation~\eqref{eqn:right}, we use the sumless Sweedler notation
\[
\d_{m,n}(x) = x_{(1)}^m \otimes x_{(2)}^n,
\]
where~$x \in A_{m+n}$.
Then condition~\eqref{it:left} of Definition~\ref{def:grad-frob} can be written as
\[
\mu_{i,j} \left(a \otimes b_{(1)}^j \right) \otimes b_{(2)}^k =
\mu_{i,j+k}(a \otimes b)_{(1)}^{i+j} \otimes \mu_{i,j+k}(a \otimes b)_{(2)}^k
\]
for every $a \in A_i$ and $b \in A_{j+k}$.
Applying~$T$ to both sides,
\[
\left(b_{(2)}^k\right)^* \otimes \mu_{i,j} \left(a \otimes b_{(1)}^j \right)^*  =  \left(\mu_{i,j+k}(a \otimes b)_{(2)}^k \right)^* \otimes \left(\mu_{i,j+k}(a \otimes b)_{(1)}^{i+j}\right)^*.
\]
Since $*$ is an $(A,\d)$-antihomomorphism, $(x^*)_{(1)}^m \otimes (x^*)_{(2)}^n =
\left(x_{(2)}^m \right)^* \otimes \left(x_{(1)}^n \right)^*$ for every~$x \in A_{m+n}$, hence
\[
\begin{split}
(b^*)_{(1)}^k \otimes \mu_{j,i} \left((b^*)_{(2)}^j \otimes a^* \right) &= \left(\mu_{i,j+k}(a \otimes b)^*\right)_{(1)}^k \otimes
\left(\mu_{i,j+k}(a \otimes b)^* \right)_{(2)}^{i+j} \\
&= \mu_{j+k,i}(b^* \otimes a^*)_{(1)}^k \otimes \mu_{j+k,i}(b^* \otimes a^*)_{(2)}^{i+j}.
\end{split}
\]
As this holds for every $b^* \in A_{j+k}$ and $a^* \in A_i$, we obtain equation~\eqref{eqn:right}.

For the last part, $\ker(\a)$ and $\im(\omega)$ are left $(A,\mu)$-submodules
since~$\a$ and~$\omega$ are left $(A,\mu)$-module homomorphisms.
Since $\a \circ \omega = \Id_A$, we see that~$\a$ is surjective and~$\omega$ is injective.
Furthermore, the endomorphism $\omega \circ \a$ is a projection since
$(\omega \circ \a) \circ (\omega \circ \a) = \omega \circ \a$. As~$\a$ is onto, $\im(\omega \circ \a) = \im(\omega)$,
and since~$\omega$ is injective, $\ker(\omega \circ \a) = \ker(\a)$. It follows that $A = \ker(\a) \oplus \im(\omega)$,
and that $\omega \circ \a$ is projection onto~$\im(\omega)$ along~$\ker(\a)$.
\end{proof}

\begin{rem} \label{rem:duality}
Since $\omega$ is not necessarily $*$-invariant, the splitting $A = \ker(\a) \oplus \im(\omega)$
is not $*$-invariant in general. If we introduce the notation $\ol{\omega}(a) = \omega(a^*)^*$,
then
\[
\mu(\ol{\omega}(a) \otimes b) = \mu(b^* \otimes \omega(a^*))^*
= (\omega \circ \mu(b^* \otimes a^*))^* = \ol{\omega} \circ \mu(a \otimes b).
\]
So, instead of~$\omega$, it is $\ol{\omega}$ that is a right $(A,\mu)$-module homomorphism,
and similarly for~$(A,\d)$.
\end{rem}

\begin{rem}
Given a split \gnf-algebra $(\A,\a,\omega)$, consider the direct system of vector spaces
\[
\omega_{i,j} := \omega_{j-1} \circ \dots \circ \omega_i \colon A_i \to A_j
\]
for $i \le j$, and let
\[
M = \underrightarrow{\lim} A_i = \coprod_{i = 0}^\infty A_i \Big/\sim,
\]
where $x_i \sim x_j$ for $x_i \in A_i$ and $x_j \in A_j$ if and only if there is some~$k \ge i$, $j$
for which $\omega_{i,k}(x_i) = \omega_{j,k}(x_j)$.
Since each~$\omega_i$ is injective, we can choose $k = \max\{i,j\}$.
Furthermore, we can canonically identify~$A_i$ with a subspace~$M_i$ of~$M$, under which~$\omega_i$
becomes the embedding $M_i \hookrightarrow M_{i+1}$. For simplicity, we also use the notation~$\omega_i$
for this embedding. Using the same identification, $\a_i$ descends to a map $\a_i \colon M_i \to M_{i-1}$,
which we also denote by~$\a_i$. Since $\a_i \circ \omega_{i-1} = \Id_{M_{i-1}}$, we have $\a_i(x) = x$ for
every $x \in M_{i-1}$; i.e., $\omega_{i-1} \circ \a_i \colon M_i \to M_i$ is a projection onto~$M_{i-1}$.

Next, we show that the~$\mu_{i,j}$ descend to a well-defined product~$\mu_i \colon A_i \otimes M \to M$.
Given $m \in M$, we define $\mu(a \otimes m)$ for $a \in A_i$ by taking an arbitrary representative $x \in A_j$
of~$m$, and we let $\mu(a \otimes m) = \mu_{i,j}(a \otimes x)$. The equivalence class of this product is
independent of the representative~$x$. Indeed, given two representative $x \sim x'$
such that $x \in A_j$, $x' \in A_k$, and $\omega_{j,k}(x) = x'$, we have
\[
\mu_{i,k}(a \otimes \omega_{j,k}(x)) = \omega_{i+j,i+k} \circ \mu_{i,j}(a \otimes x) \sim \mu_{i,j}(a \otimes x)
\]
as~$\omega$ is a left $(A,\mu)$-module homomorphism.

Similarly, the maps~$\d_{i,j}$ descend to a map $\d_i \colon M \to A_i \otimes M$
as~$\omega$ is a left $(A,\d)$-comodule homomorphism. In particular, for $m \in M$, we define~$\d_i(m)$
to be $\d_{i,n-i}(x)$ for some representative $x \in A_n$  of~$m$.
We now show this is independent of the choice of~$x$. Indeed,
\[
\d_{i,n-i}(x) \sim (\Id_{A_i} \otimes \omega_{n-i}) \circ \d_{i,n-i}(x) = \d_{i,n-i+1} \circ \omega_n(x).
\]
It follows that $M$ is a left $\A$-module.

By taking the direct limit of $A_i$ along the maps $\ol{\omega}_i$,
we get a right $\A$-module~$\ol{M}$. It follows from Remark~\ref{rem:duality} that $*$ provides an anti-isomorphism
between~$M$ and~$\ol{M}$; in particular, $\ol{M} \cong M^{\text{op}}$.
\end{rem}

Next, we present an alternate, simpler definition of a modular splitting. Let
\[
1 := \eps(1_\F) \in A_0 \setminus \{0\}
\]
be the unit of the \gnf-algebra~$\A$.

\begin{lem} \label{lem:splitting}
There is a bijection between modular splittings~$(\a,\omega)$ of the \gnf-algebra~$\A$, and pairs of
elements $(w,\l) \in A_1 \times A_1^*$ for which
\[
(\Id_{A_0} \otimes \l) \circ \d_{0,1}(w) = 1.
\]
Given $(w,\l)$, we get $(\a,\omega)$ by the formulae
\[
\begin{split}
\omega_i(x) &= \mu_{i,1}(x \otimes w), \text{ and} \\
\a_i(x) &= (\Id_{A_{i-1}} \otimes \l) \circ \d_{i-1,1}(x).
\end{split}
\]
In the opposite direction, given $(\a,\omega)$, we let $w = \omega_0(1)$ and $\l = \tau \circ \a_1$.
\end{lem}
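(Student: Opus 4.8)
The plan is to establish that both assignments are well defined and mutually inverse, handling the passage $(w,\l)\mapsto(\a,\omega)$ first. Given $(w,\l)\in A_1\times A_1^*$ with $(\Id_{A_0}\otimes\l)\circ\d_{0,1}(w)=1$, define $\omega_i$ and $\a_i$ by the stated formulae, where $\a_0=0$ is forced by the grading. That $\omega$ is a left $(A,\mu)$-module homomorphism is immediate from associativity~\eqref{eqn:1}, since $\mu_{i+j,1}(\mu_{i,j}(a\otimes x)\otimes w)=\mu_{i,j+1}(a\otimes\mu_{j,1}(x\otimes w))$; that $\a$ is one follows from the Frobenius condition~\eqref{eqn:3} with $k=1$, which rewrites $\d_{i+j-1,1}\circ\mu_{i,j}$ as $(\mu_{i,j-1}\otimes\Id_{A_1})\circ(\Id_{A_i}\otimes\d_{j-1,1})$, after which contracting the last tensor slot with $\l$ yields $\a_{i+j}\circ\mu_{i,j}=\mu_{i,j-1}\circ(\Id_{A_i}\otimes\a_j)$. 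For the two comodule identities of Definition~\ref{def:splitting}: the identity $\d_{i,j+1}\circ\omega_{i+j}=(\Id_{A_i}\otimes\omega_j)\circ\d_{i,j}$ follows from equation~\eqref{eqn:right} of Lemma~\ref{lem:right}, matching indices so that $\d_{i,j+1}\circ\mu_{i+j,1}=(\Id_{A_i}\otimes\mu_{j,1})\circ(\d_{i,j}\otimes\Id_{A_1})$; and $\d_{i,j-1}\circ\a_{i+j}=(\Id_{A_i}\otimes\a_j)\circ\d_{i,j}$ follows from coassociativity~\eqref{eqn:2} in the form $(\d_{i,j-1}\otimes\Id_{A_1})\circ\d_{i+j-1,1}=(\Id_{A_i}\otimes\d_{j-1,1})\circ\d_{i,j}$, again after pulling the scalar $\l$ out of the appropriate slot.

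The remaining point in this direction, and the one I expect to be the main obstacle, is $\a\circ\omega=\Id_A$. For $x\in A_i$ one has $\a_{i+1}(\omega_i(x))=(\Id_{A_i}\otimes\l)\circ\d_{i,1}\circ\mu_{i,1}(x\otimes w)$, and the Frobenius condition~\eqref{eqn:3} (here in the form $\d_{i,1}\circ\mu_{i,1}=(\mu_{i,0}\otimes\Id_{A_1})\circ(\Id_{A_i}\otimes\d_{0,1})$) turns this into $\mu_{i,0}\big(x\otimes(\Id_{A_0}\otimes\l)\d_{0,1}(w)\big)=\mu_{i,0}(x\otimes 1)$ by the hypothesis; since $\eps$ is a \emph{right} unit, which is the first assertion of Lemma~\ref{lem:right} and is where the involution $*$ enters, this equals $x$. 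The only delicacy is keeping track of the Sweedler bookkeeping and the degenerate indices ($\a_0$, $\mu_{i,-1}$), but no new idea is needed.

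Finally, going from a modular splitting to $(w,\l)$ and checking mutual inverseness: given $(\a,\omega)$, put $w=\omega_0(1)$ and $\l=\tau\circ\a_1$. Applying the comodule identity $\d_{0,1}\circ\omega_0=(\Id_{A_0}\otimes\omega_0)\circ\d_{0,0}$, and noting $\l\circ\omega_0=\tau\circ\a_1\circ\omega_0=\tau$ by~\eqref{eqn:5}, gives $(\Id_{A_0}\otimes\l)\d_{0,1}(w)=(\Id_{A_0}\otimes\tau)\d_{0,0}(1)=1$ since $\tau$ is a right counit (Lemma~\ref{lem:right}), so the normalization condition holds. For the round trips: starting from $(w,\l)$, one recovers $\omega_0(1)=\mu_{0,1}(\eps(1_\F)\otimes w)=w$ because $\eps$ is a left unit, and $\tau(\a_1(x))=(\tau\otimes\l)\d_{0,1}(x)=\l\big((\tau\otimes\Id_{A_1})\d_{0,1}(x)\big)=\l(x)$ because $\tau$ is a left counit; starting from $(\a,\omega)$, one recovers $\mu_{i,1}(x\otimes\omega_0(1))=\omega_i(\mu_{i,0}(x\otimes 1))=\omega_i(x)$ using that $\omega$ is a left module homomorphism and $\eps$ a right unit, and $(\Id_{A_{i-1}}\otimes(\tau\circ\a_1))\d_{i-1,1}(x)=(\Id_{A_{i-1}}\otimes\tau)\d_{i-1,0}(\a_i(x))=\a_i(x)$ using the comodule identity for $\a$ and the right counit property. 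Hence the two assignments are inverse bijections, which is the assertion of the lemma.
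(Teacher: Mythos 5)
Your proof is correct and follows essentially the same route as the paper: the same defining formulae, with $\omega$ and $\a$ checked to be left module/comodule homomorphisms via associativity, the Frobenius condition, coassociativity, and equation~\eqref{eqn:right}, and with $\a\circ\omega=\Id$ reduced to the normalization condition using the right unit from Lemma~\ref{lem:right}. The only (harmless) differences are organizational: you compute $\a_{i+1}\circ\omega_i$ directly from the Frobenius condition rather than via the equivalence with $\a_1(w)=1$, and you spell out the round-trip identities $\omega_0(1)=w$, $\tau\circ\a_1=\l$ that the paper leaves implicit.
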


\begin{proof}
Suppose we are given a modular splitting $(\a,\omega)$ of $\A$,
and let $w := \omega_0(1) \in A_1$. Then
\[
\mu_{i,1}(x \otimes w) = \mu_{i,1}(x \otimes \omega_0(1)) = \omega_i \circ \mu_{i,0}(x \otimes 1) = \omega_i(x)
\]
for every $i \in \N$ and $x \in A_i$ since $\omega$ is a left $(A,\mu)$-module homomorphism and~$1$ is a unit.
Hence, the element $w \in A_1$ completely determines~$\omega_i$ for every $i \in \N$.
Indeed, if we define~$\omega_i$ by the formula
\[
\omega_i(x) := \mu_{i,1}(x \otimes w),
\]
then it is a left $(A,\mu)$-module homomorphism by the associativity of~$\mu_{i,j}$:
\[
\omega_{i+j} \circ \mu_{i,j}(x,y) = \mu_{i+j,1}(\mu_{i,j}(x,y),w) =
\mu_{i,j+1}(x \otimes \mu_{j,1}(y,w)) = \mu_{i,j+1}(x \otimes \omega_j(y)).
\]
Furthermore, $\omega$ is a left $(A,\d)$-comodule homomorphism as $\d$ is a right $(A,\mu)$-module
homomorphism according to Lemma~\ref{lem:right}:
\[
\begin{split}
\d_{i,j+1} \circ \omega_{i+j}(x) &= \d_{i,j+1} \circ \mu_{i+j,1}(x,w) = \\
(\Id_{A_i} \otimes \mu_{j,1}) \circ (\d_{i,j} \otimes \Id_{A_1})(x \otimes w) &=
(\Id_{A_i} \otimes \omega_j) \circ \d_{i,j}(x).
\end{split}
\]
Similarly, if we are given the splitting~$(\a,\omega)$ and let $\l = \tau \circ \a_1$, then
\[
\begin{split}
(\Id_{A_{i-1}} \otimes \l) \circ \d_{i-1,1} &=
(\Id_{A_{i-1}} \otimes \tau) \circ (\Id_{A_{i-1}} \otimes \a_1) \circ \d_{i-1,1} = \\
(\Id_{A_{i-1}} \otimes \tau) \circ \d_{i-1,0} \circ \a_i &= \a_i
\end{split}
\]
as $\a$ is a left $(A,\d)$-comodule homomorphism and $\tau$ is a counit.
So~$\lambda \in A_1^*$ completely determines~$\a_i$ for every~$i \in \N$ via the formula
\[
\a_i(x) := (\Id_{A_{i-1}} \otimes \l) \circ \d_{i-1,1}.
\]
The~$\a$ defined this way is a left $(A,\mu)$-module homomorphism by the Frobenius condition~\eqref{it:left}:
\[
\begin{split}
\a_{i+j} \circ \mu_{i,j} = (\Id_{A_{i+j-1}} \otimes \l) \circ \d_{i+j-1,1} \circ \mu_{i,j} &= \\
(\Id_{A_{i+j-1}} \otimes \l) \circ (\mu_{i,j-1} \otimes \Id_{A_1}) \circ (\Id_{A_i} \otimes \d_{j-1,1})&=
\mu_{i,j-1} \circ (\Id_{A_i} \otimes \a_j).
\end{split}
\]
Similarly, $\a$ is a left $(A,\d)$-comodule homomorphism by the coassociativity of~$\d$:
\[
\begin{split}
\delta_{i,j-1} \circ \a_{i+j} = \delta_{i,j-1} \circ (\Id_{A_{i+j-1}} \otimes \l) \circ \d_{i+j-1,1} &= \\
(\Id_{A_i} \otimes \Id_{A_{j-1}}  \otimes \l) \circ (\delta_{i,j-1} \otimes \Id_{A_1}) \circ \d_{i+j-1,1} &= \\
(\Id_{A_i} \otimes \Id_{A_{j-1}}  \otimes \l) \circ (\Id_{A_i} \otimes \d_{j-1,1}) \circ \d_{i,j} =
(\Id_{A_i} \otimes \a_j) \circ \delta_{i,j}.
\end{split}
\]
Finally, consider the condition~$\a_{i+1} \circ \omega_i = \Id_{A_i}$. Since
\[
\a_{i+1} \circ \omega_i(x) = \a_{i+1} \circ \mu_{i,1}(x \otimes w) = \mu_{i,0}(x \otimes \a_1(w)),
\]
this is equivalent to having $\mu_{i,0}(x \otimes \a_1(w)) = x$ for every $i \in \N$ and $x \in A_i$.
In particular, if we set $i = 0$ and $x = 1$, we must have $\a_1(w) = 1$, and clearly this is also sufficient.
But $\a_1(w) = (\Id_{A_0} \otimes \l) \circ \d_{0,1}(w)$, so the condition $\a_{i+1} \circ \omega_i = \Id_{A_i}$ is
equivalent to
\[
(\Id_{A_0} \otimes \l) \circ \d_{0,1}(w) = 1.
\]
This concludes the proof of the lemma.
\end{proof}

\begin{rem}
From now on, we use the notation $(\a,\omega)$ and $(w,\l)$ interchangeably for a modular splitting.
Notice that the polynomial algebra $\F[w]$ is a subalgebra of~$(A,\mu)$, and $\F[\l]$ is a subalgebra
of $(A^*,\d^*)$.

We saw in Section~\ref{sec:assignment} that if the split \gnf-algebra~$(\A,\a,\omega)$
arises from a (2+1)-di\-men\-sional TQFT~$F$,
then the map $\omega_i$ geometrically corresponds to performing a surgery on a genus~$i$ surface~$\S_i$
along a framed pair of points~$\bP_i$,
while the operation $\mu_{i,1}$ amounts to connected summing $\S_i$ with~$T^2$, and $w \in F(T^2)$.
\end{rem}

\subsection{Mapping class group representations on split \gnf-algebras} \label{sec:MCG}

Let $F \colon \Cob_2 \to \Vect_\F$ be a TQFT, and let
\[
J(F) = (\A,\a,\omega,\{\rho_i \,\colon\, i \in \N\})
\]
be the associated tuple as in Section~\ref{sec:assignment}.
Then the mapping class group actions~$\rho_i$ on~$A_i$
are compatible with the \gnf-algebra structure of $(\A,\a,\omega)$
in a sense that we now formalize.

\begin{defn}
Let $\SS \colon S^k \times D^{n-k} \hookrightarrow M$ be a framed sphere in the $n$-manifold~$M$.
Then let
\[
\Diff(M,\SS) = \{\, d \in \Diff(M) \,\colon\, d \circ \SS = \SS \,\},
\]
and we set $\MCG(M,\SS) = \Diff(M,\SS)/\Diff_0(M,\SS)$.
\end{defn}

Note that there is a natural forgetful map
\[
f_\SS \colon \MCG(M,\SS) \to \MCG(M).
\]
For $d \in \Diff(M,\SS)$, the induced map $d^\SS \in \Diff(M(\SS))$
fixes the framed belt sphere
\[
\SS^* \colon D^{k+1} \times S^{n-k+1} \hookrightarrow M(\SS)
\]
of the handle attached along~$\SS$, hence
\[
d^\SS \in \Diff(M(\SS),\SS^*).
\]
As $d^{\SS,\SS^*} = d$, this correspondence gives an isomorphism
\begin{equation} \label{eqn:duality}
\MCG(M,\SS) \cong \MCG(M(\SS),\SS^*).
\end{equation}
We denote the image of $\phi \in \MCG(M,\SS)$ under this isomorphism by $\phi^\SS$.

\begin{defn} \label{def:equivar}
Let $\SS$ be a framed sphere in the manifold $M$.
Suppose we are given representations $\rho \colon \MCG(M) \to \text{Aut}(V)$
and $\rho' \colon \MCG(M(\SS)) \to \text{Aut}(V')$.
Then we say that a linear map $h \colon V \to V'$ is $\MCG(M,\SS)$-\emph{equivariant} if
\[
h \circ \rho(f_\SS(\phi)) = \rho'\left(f_{\SS^*}\left(\phi^\SS\right)\right) \circ h
\]
for every $\phi \in \MCG(M,\SS)$.
\end{defn}

\begin{defn} \label{def:repr}
Let $(\A,\a,\omega)$ be a split \gnf-algebra. Then a sequence of homomorphisms
\[
\{\,\rho_i \colon \M_i \to \text{Aut}(A_i) \,|\, i \in \N \,\}
\]
is called a \emph{mapping class group representation} on $(\A,\a,\omega)$ if it satisfies the following properties:

%The map~$\a_i$ is $\MCG(\S_i,l_i)$-equivariant for $i > 0$,
%the map $\omega_i$ is $\MCG(\S_i,\bP_i)$-equi\-va\-ri\-ant,
The map $\mu_{i,j}$ is $\MCG(\S_i \sqcup \S_j, \bP_{i,j})$-equivariant
and $\delta_{i,j}$ is $\MCG(\S_{i+j}, s_i)$-equi\-va\-ri\-ant.
Furthermore, $*|_{A_i} = \rho(\iota_i)$, and the representations~$\rho_i$ satisfy the following conditions:
\begin{enumerate}
\item \label{it:condw} $\rho_1(t_1)(w) = w$ and $\rho_1(\tau_m)(w) = w$,
\item \label{it:condl} $\l \circ \rho_1(t_1) = \l$ and $\l \circ \rho_1(\tau_l) = \l$,
%\item $\rho_{i+1}(h_{i+1,1}) \circ \omega_i = \omega_i$ for $i > 1$,
%\item $\a_i \circ \rho_i(u_{i,1}) = \a_i$ for $i > 2$,
\item \label{it:L} $\a_{i+1} \circ \rho_{i+1}(L_{i+1}) \circ \omega_i = \omega_{i-1} \circ \a_i$ for $i > 1$,
\item \label{it:frobtwist} $\a_{n+1} \circ \rho_{n+1}(\sigma_{n+1,i}) \circ \omega_n = \mu_{i,n-i} \circ \delta_{i,n-i}$
    for $n \in \N$ and $0 \le i \le n$,
\end{enumerate}
where $w = \a_1(1)$ and $\l = \tau \circ \a_1$ are as in Lemma~\ref{lem:splitting}, and
\begin{itemize}
\item $\iota_i$ is $\pi$-rotation of the standard~$\S_i$ in~$\R^3$ with center at~$\underline{0}$ about the $z$-axis,
\item $t_1$ is $\pi$-rotation of the standard torus in~$\R^3$ about the $x$-axis,
\item $\tau_m$, $\tau_l \in \Diff(T^2)$ are right-handed Dehn twists about the meridian and longitude, respectively,
%\item $h_{i+1,1} \in \Diff(\S_{i+1})$ swaps $s_1 \# m_{i+1}$ and $m_{i+1}$,
%     and $h_{i+1,1}^{s_1 \# m_{i+1}, m_{i+1}}$ is isotopic to the identity,
%\item $u_{i,1} \in \Diff(\S_i)$ swaps $s_1 \# l_i$ and $l_i$, and $u_{i,1}^{s_1 \# l_i, l_i}$ is isotopic to the identity,
\item $L_i \in \Diff(\S_i)$ swaps~$l_i$ and~$l_{i-1}$ counterclockwise, and~$L_i^{l_i, l_{i-1}} \in \Diff_0(\S_{i-2})$,
\item $\sigma_{n+1,i} = a_{n+1} \circ h_{n+1,i} \in \Diff(\S_{n+1})$,
    where $a_{n+1}$ is the identity on the component of $\S_{n+1} \setminus s_n$
    containing~$p_{n+1}$, and it satisfies $a_{n+1}(m_{n+1}) = l_{n+1}$ and $a_{n+1}(l_{n+1}) = -m_{n+1}$,
    while $h_{n+1,i} \in \Diff(\S_{n+1})$ swaps $m_{n+1}$ and
    $s_i \# m_{n+1}$ for some connected sum arc, and
    $(h_{n+1,i})^{m_{n+1}, s_i \# m_{n+1}}$ is isotopic to the identity.
    For more detail, see the proof of Proposition~\ref{prop:AF} and Figure~\ref{fig:relations}.
    %satisfies $\sigma_{n+1,i}(s_i \# m_{n+1}) = l_{n+1}$ for some connected sum arc,
    %and $\sigma_{n+1,i}^{s_i \# m_{n+1}} \in \Diff_0(\S_n)$ under the natural identifications
    %of $\S_{n+1}^{s_i \# m_{n+1}}$ and $\S_{n+1}^{l_{n+1}}$ with $\S_n$ (for more detail, see the proof of Proposition~\ref{prop:AF}).
\end{itemize}
\end{defn}

\begin{defn} \label{def:J-alg}
A \emph{J-algebra} is a four-tuple $(\A,\a,\omega,\{\rho_i \,\colon\, i \in \N\})$, where
$(\A,\a,\omega)$ is a split \gnf-algebra and $\{\rho_i \,\colon\, i \in \N\}$ is a mapping class group
representation on it.
\end{defn}

\begin{defn}
A \emph{homomorphism} between two J-algebras
is a homomorphism of the underlying split \gnf-algebras that intertwines the mapping class group representations.
The \emph{direct sum} of two J-algebras is the direct sum of the underlying split involutive
\gnf-algebras, together with the direct sum of the mapping class group representations.
Then J-algebras together with such homomorphisms form a
symmetric monoidal category that we denote $\JAlg$.
\end{defn}

\begin{prop} \label{prop:AF}
Let $F \colon \Cob_2 \to \Vect_\F$ be a TQFT. Then the tuple
\[
J(F) = (\A,\a,\omega,\{\rho_i \,\colon\, i \in \N\})
\]
defined in Section~\ref{sec:assignment} is a J-algebra.
\end{prop}

\begin{proof}
By Proposition~\ref{prop:A}, the tuple $(\A,\a,\omega)$ is a split \gnf-algebra.
We now show that $\{\rho_i \,\colon\, i \in \N\}$
is a mapping class group representation on it.
It follows from Lemma~\ref{lem:funct} that
the map~$\a_g$ is $\MCG(\S_g,l_g)$-equivariant,
$\d_{i, j}$ is $\MCG(\S_{i+j},s_i)$-equivariant,
$\mu_{i,j}$ is $\MCG(\S_i \sqcup \S_j, \PP_{i,j})$-equivariant,
and finally, $\omega_g$ is $\MCG(\S_g, \PP_g)$-equivariant (see Definition~\ref{def:equivar}).

%As in the proof of Lemma~\ref{lem:simplify},
The equation $\rho_1(t_1)(w) = w$ is equivalent to
$\rho_1(t_1) \circ \omega_0 = \omega_0$.
Let $t_0$ be $\pi$-rotation of $\S_0$ about the $x$-axis.
This satisfies $t_1 = (t_0)^{\bP_0}$, $t_0 \circ \bP_0 = \ol{\bP}_0$, and is isotopic to~$\Id_{\S_0}$.
Hence, if we apply relation~\eqref{it:d-F} of Definition~\ref{def:relations},
naturality of the surgery maps, to $t_0$ and $\bP_0$, then we obtain that
\[
\rho_1(t_1) \circ \omega_0 = F_{\S_0,\ol{\bP}_0} \circ \rho_0(t_0) = \omega_0,
\]
as $F_{\S_0,\ol{\bP}_0} = F_{\S_0,\bP_0} = \omega_0$ by relation~\eqref{it:0-sphere} of Definition~\ref{def:relations}.
%This follows from the $\MCG(\S_0, \PP_0)$-equivariance of $\omega_0$
%applied to the diffeomorphism $t_0$, which is isotopic to the identity.

Similarly, $\rho_1(\tau_m)(w) = w$ is equivalent to $\rho_1(\tau_m) \circ \omega_0 = \omega_0$.
This holds since we can apply the $\MCG(\S_0, \PP_0)$-equivariance of~$\omega_0$
to the diffeomorphism~$d$ that is a Dehn twist on $\S_0$ about a circle that separates
the two points of $\PP_0$, which is isotopic to the identity in $\MCG(\S_0)$
(but not in $\MCG(\S_0,\PP_0)$), and because $d^{\PP_0}$ is isotopic to $\tau_m$.

The equation $\l \circ \rho_1(t_1) = \l$ is equivalent to $\a_1 \circ \rho_1(t_1) = \a_1$.
This, in turn, follows from relations~\eqref{it:d-F} and~\eqref{it:0-sphere} of Definition~\ref{def:relations}
applied to $t_1$ and $l$,
%the $\MCG(\S_1,l_1)$-equivariance of $\a_1$ applied to $t_1$,
as $(t_1)^l = t_0$ is isotopic to~$\Id_{\S_0}$.
Similarly, $\l \circ \rho_1(\tau_l) = \l$ is equivalent to $\a_1 \circ \rho_1(\tau_l) = \a_1$,
which follows from the $\MCG(\S_1,l_1)$-equivariance of $\a_1$ applied to $\tau_l$.

Now consider the condition
$\a_{i+1} \circ \rho_{i+1}(L_{i+1}) \circ \omega_i = \omega_{i-1} \circ \a_i$.
Relation~\eqref{it:commut} of Definition~\ref{def:relations} for $\S_i$ with $\SS = l_i$ and $\SS' = \PP_i$ yields
\[
F_{\S_i(\SS),\SS'} \circ F_{\S_i,\SS} = F_{\S_{i+1},\SS} \circ F_{\S_i,\SS'}.
\]
Under the identification $\S_i(l_i) \approx \S_{i-1}$, the framed sphere $\PP_i$
becomes $\PP_{i-1}$, giving  $F_{\S_i(\SS),\SS'} \circ F_{\S_i,\SS} = \omega_{i-1} \circ \alpha_i$.
To compute $F_{\S_{i+1},\SS}$, we apply the naturality relation~\eqref{it:d-F} to
the diffeomorphism $L_{i+1} \colon \S_{i+1} \to \S_{i+1}$ and the framed sphere $l_i$.
As $L_{i+1}(l_i) = l_{i+1}$ and $(L_{i+1})^{l_i}$ is isotopic to $\Id_{\S_i}$
after the natural identifications $\S_{i+1}(l_i) \approx \S_i$ and $\S_{i+1}(l_{i+1}) \approx \S_i$,
we obtain that $F_{\S_{i+1},l_i} = \a_{i+1} \circ \rho_{i+1}(L_{i+1})$.
So
\[
F_{\S_{i+1},\SS} \circ F_{\S_i,\SS'} = \a_{i+1} \circ \rho_{i+1}(L_{i+1}) \circ \omega_i.
\]

\begin{figure}
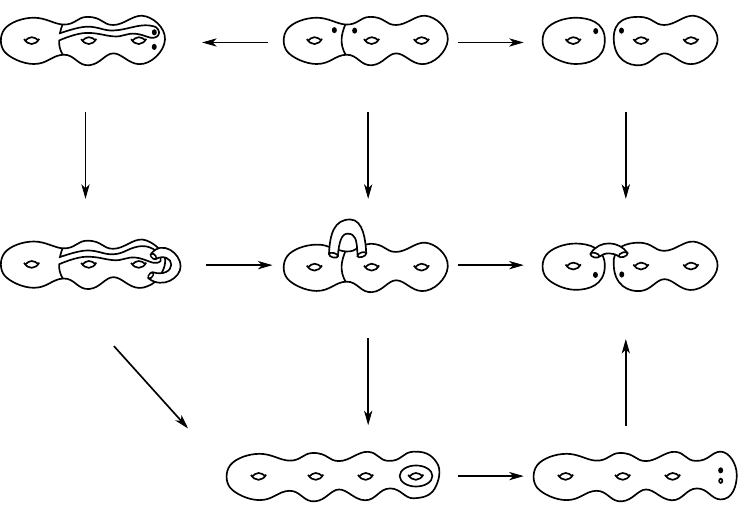
\caption{Given a TQFT, this figure illustrated why the condition
$\a_{n+1} \circ \rho_{n+1}(\sigma_{n+1,i}) \circ \omega_n = \mu_{i,n-i} \circ \delta_{i,n-i}$ holds.
Note that both $\psi$ and $\nu^{s_i}$ are isotopic to~$\Id_{\S_n}$.}
\label{fig:relations}
\end{figure}

Finally, we prove that
$\a_{n+1} \circ \rho_{n+1}(\sigma_{n+1,i}) \circ \omega_n = \mu_{i,n-i} \circ \delta_{i,n-i}$
holds for every $n \in \N$ and $0 \le i \le n$. This also follows from relation~\eqref{it:commut}, applied
to $\S_n$ with $\SS = s_i$, and~$\SS' = \bP$ being a framed pair of points in $\S_n$ such that there
is exactly one point of~$\bP_n$ on each side of~$s_i$ very close to it. Then
\[
F_{M(\SS),\SS'} \circ F_{M,\SS} = F_{\S_n(s_i),\bP} \circ F_{\S_n,s_i} = \mu_{i,n-i} \circ \delta_{i,n-i},
\]
where we identify $\S_n(s_i,\bP)$ with $\S_n$ as in Figure~\ref{fig:relations}; i.e.,
the connected sum tube along $s_i$ between the $i$-th and $(i+1)$-st $S^1 \times S^1$ summand
of $\S_n$ is mapped to the $D^1 \times S^1$ glued during the surgery along~$\bP$.

Consider now
\[
F_{M(\SS'),\SS} \circ F_{M,\SS'} = F_{\S_n(\bP),s_i} \circ F_{\S_n,\bP}.
\]
To compute $F_{\S_n,\bP}$, we need to identify the pair $(\S_n,\bP)$ with
the model framed pair of points $(\S_n,\bP_n)$.
Let~$\psi \in \Diff(\S_n)$ be such that $\psi^{-1}$ maps $\bP$ to~$\bP_n$,
and acts via a finger move on $s_i$. In particular, $\psi^{-1}$ is isotopic to $\Id_{\S_n}$.
The identification between~$\S_n(\bP_n)$ and~$\S_{n+1}$ defined in Section~\ref{sec:surfaces}
maps~$\psi^{-1}(s_i)$ to the connected sum~$s_i \# m_{n+1}$, where the connected sum depends on the choice
of finger move. If we apply relation~\eqref{it:d-F} of Definition~\ref{def:relations} to $\S_n$, $\bP$, and $\psi^{-1}$,
then we obtain that
\[
F_{\S_n,\bP} = \psi^{\bP_n}_* \circ \omega_n \circ \psi^{-1}_* = \psi^{\bP_n}_* \circ \omega_n
\]
as $\psi^{-1}_* = \Id_{F(\S_n)}$ since $\psi^{-1} \in \Diff_0(\S_n)$; see
the left-hand square in Figure~\ref{fig:relations}.

To compute $F_{\S_n(\bP),s_i}$, we identify the pair $(\S_n(\bP),s_i)$ with the model
non-separating framed circle $(\S_{n+1}, l_{n+1})$.
Consider the framed pairs of points $b(l_{n+1})$ in $\S_{n+1}(l_{n+1}) \approx \S_n$
and $b(s_i)$ in $\S_n(\bP,s_i) \approx \S_n$. By the homogeneity of $\S_n$,
there is a diffeomorphism $\nu_0 \in \Diff_0(\S_n)$ such that $\nu_0 \circ b(s_i) = b(l_{n+1})$,
and let $\nu := \nu_0^{b(s_i)}$. Then $\nu(s_i) = l_{n+1}$ and $\nu^{s_i} = \nu_0$ is isotopic to~$\Id_{\S_n}$.
Hence, by relation~\eqref{it:d-F},
\[
F_{\S_n(\bP), s_i} = (\nu^{s_i})^{-1}_* \circ \a_{n+1} \circ \nu_* = \a_{n+1} \circ \nu_*;
\]
see the bottom square in Figure~\ref{fig:relations}.
Consequently,
\[
F_{\S_n(\bP),s_i} \circ F_{\S_n,\bP} = \a_{n+1} \circ \nu_* \circ \psi^{\bP_n}_* \circ \omega_n.
\]

What remains is to show that $\sigma_{n+1,i} = \nu \circ \psi^{\bP_n}$. Note that
\[
\nu = a_{n+1} \circ \left(\psi^{\bP_n}\right)^{-1} \circ h_{b(\bP), s_i},
\]
where $h_{b(\bP), s_i}$ swaps the curves $b(\bP)$ and $s_i$, and $(h_{b(\bP), s_i})^{b(\bP), s_i}$
is isotopic to the identity of $\S_i \sqcup \S_{n-i}$. Hence
\[
\nu \circ \psi^{\bP_n} = a_{n+1} \circ \left(\psi^{\bP_n}\right)^{-1} \circ h_{b(\bP), s_i} \circ \psi^{\bP_n}.
\]
The conjugate $d := (\psi^{\bP_n})^{-1} \circ h_{b(\bP), s_i} \circ \psi^{\bP_n}$ swaps the curves
$(\psi^{\bP_n})^{-1}(b(\bP)) = m_{n+1}$ and $(\psi^{\bP_n})^{-1}(s_i) = s_i \# m_{n+1}$,
and $d^{m_{n+1}, s_i \# m_{n+1}}$ is isotopic to the identity of $\S_{n+1}(m_{n+1}, s_i \# m_{n+1})$,
hence $d = h_{n+1,i}$. It follows that
\[
\nu \circ \psi^{\bP_n} = a_{n+1} \circ h_{n+1,i} = \sigma_{n+1,i}.
\]
This concludes the proof of Proposition~\ref{prop:AF}.
\end{proof}

A mapping class group representation on a split \gnf-algebra automatically satisfies some additional relations
that we will need in the classification of (2+1)-dimensional TQFTs:

\begin{lem} \label{lem:simplify}
Let $\rho_i \colon \M_i \to \text{Aut}(A_i)$ be a mapping class group representation on
the split \gnf-algebra $(\A,\alpha,\omega)$.
Then the map~$\a_i$ is $\MCG(\S_i,l_i)$-equivariant, $\omega_i$ is $\MCG(\S_i,\bP_i)$-equivariant, and
%$\mu_{i,j}$ is $\MCG(\S_i \sqcup \S_j, \bP_{i,j})$-equivariant, and $\delta_{i,j}$
%is $\MCG(\S_{i+j}, s_i)$-equivariant.
%In addition, $*|_{A_i} = \rho(\iota_i)$, and
%the representations~$\rho_i$ satisfy the following conditions according to
%equations~\eqref{eqn:omega}, \eqref{eqn:alpha}, \eqref{eqn:d1}, \eqref{eqn:d2}, \eqref{eqn:s3}, \eqref{eqn:delta-alpha},
%\eqref{eqn:omega-invar}, and~\eqref{eqn:alpha-invar}, respectively:
\begin{enumerate}
\item \label{it:t-omega} $\rho_{i+1}(t_{i+1}) \circ \omega_i = \omega_i$,
\item \label{it:alpha-rho} $\alpha_i \circ \rho(r_i)  = \alpha_i$,
\item \label{it:omega-omega} $\rho_{i+2}(S_{i+2}) \circ \omega_{i+1} \circ \omega_i = \omega_{i+1} \circ \omega_i$ for $i \in \N$,
\item \label{it:alpha-alpha} $\a_{i-1} \circ \a_i \circ \rho_i(L_i) = \a_{i-1} \circ \a_i$ for $i > 0$,
%\a_{n+1} \circ \rho_{n+1}(\sigma_{n+1,i}) \circ \omega_n &= \mu_{i,j} \circ \delta_{i,j}, \\
\item \label{it:omega-mu} $\rho_{n+1}(h_{n+1,i}) \circ \omega_n \circ \mu_{i,j} = \omega_n \circ \mu_{i,j}$ for $0 \le i \le n$,
\item \label{it:delta-alpha} $\d_{i,j} \circ \a_n \circ \rho_n(u_{n,i}) = \d_{i,j} \circ \a_n$,
\item \label{it:alpha-omega} $\a_{2} \circ \rho_{2}(L_{2}) \circ \omega_1 = \omega_{0} \circ \a_1$,
\end{enumerate}
where $n= i+j$, and
\begin{itemize}
\item $t_i(m_i) = -m_i$, and $t_i^{m_i} \in \Diff_0(\S_{i-1})$,
\item $r_i(l_i) = -l_i$, and $(r_i)^{l_i} \in \Diff_0(\S_{i-1})$.
\item $S_{i+2} \in \Diff(\S_{i+2})$ swaps~$m_{i+1}$ and~$m_{i+2}$, and $S_{i+2}^{m_{i+1},m_{i+2}} \in \Diff_0(\S_i)$,
%\item $L_i \in \Diff(\S_i)$ swaps~$l_i$ and~$l_{i-1}$, and~$L_i^{l_i, l_{i-1}} \in \Diff_0(\S_{i-2})$,
%\item $\ell_{i+1} \in \Diff(\S_{i+1})$ is such that $\ell_{i+1}(l_i) = l_{i+1}$, and $\ell_{i+1}^{l_i} = \Diff_0(\S_i)$,
%\item $\sigma_{n+1,i} \in \Diff(\S_{n+1})$ satisfies $\sigma_{n+1,i}(s_i \# m_{n+1}) = l_{n+1}$, \\
%    and $\sigma_{n+1,i}^{s_i \# m_{n+1}} \in \Diff_0(\S_n)$,
%\item $h_{n+1,i} \in \Diff(\S_{n+1})$ swaps $s_i \# m_{n+1}$ and~$m_{n+1}$, \\
%    and $(h_{n+1,i})^{s_i \# m_{n+1}, m_{n+1}}$ is isotopic to the identity,
\item $u_{n,i} \in \Diff(\S_n)$ swaps $s_i \# l_n$ and~$l_n$,
and $(u_{n,i})^{s_i \# l_n, l_n}$ is isotopic to the identity.
%\item $g_{n+1,i+1} \in \Diff(\S_{n+1})$ satisfies $g(m_+) = m_{n+1}$ and~$g^{m_+} \in \Diff_0(\S_n)$,
\end{itemize}
\end{lem}

%A corollary of Proposition~\ref{prop:corresp} is that, given a (2+1)-representation
%\[
%\{\,\rho_i \colon \M_i \to \End(A_i) \,|\, i \in \N \,\}
%\]
%on a (2+1)-algebra~$\bA$, we can instead view it as a sequence of representations
%for the split \gnf-algebra $(\A,\a,\omega)$ corresponding to~$\bA$.
%Our next goal is to show that this is a (2+1)-representation if and
%only if it satisfies Definition~\ref{def:repr}.
%Lemma~\ref{lem:splitting} allows us to simplify and localize some of the conditions.
%In particular, let $(w,\l) \in A_1 \times A_1^*$ be the pair corresponding to
%the splitting~$(\a,\omega)$.
%
%\begin{prop}
%Let $(\A,\alpha,\omega)$ be a split \gnf-algebra.
%Then a sequence of homomorphisms $\rho_i \colon \M_i \to \text{Aut}(A_i)$
%is a (2+1)-representation if and only if it is a mapping class group representation.
%\end{prop}

\begin{proof}
%First, recall that the map~$\a_i$ is $\MCG(\S_i,l_i)$-equivariant,
%$\omega_i$ is $\MCG(\S_i,\bP_i) \cong \MCG(\S_{i+1}, m_{i+1})$-equi\-va\-ri\-ant,
%$\mu_{i,j}$ is $\MCG(\S_i \sqcup \S_j, \bP_{i,j}) \cong \MCG(\S_{i+j},s_i)$-equi\-va\-ri\-ant,
%and $\delta_{i,j}$ is $\MCG(\S_{i+j}, s_i)$-equivariant.
The $\MCG(\S_{i+1}, m_{i+1})$-equivariance of~$\omega_i$
is equivalent to
\[
\rho_1(\tau_m)(w) = w
\]
from condition~\eqref{it:condw} of Definition~\ref{def:repr},
where $\tau_m \in \Diff(\S_1)$
is a right-handed Dehn twist about the meridian~$m_1$. This follows from
the $\MCG(\S_i \sqcup \S_1, \bP_{i,1})$-equi\-va\-ri\-ance of~$\mu_{i,1}$ and the fact that
$\omega_i(x) = \mu_{i,1}(x \otimes w)$.
Indeed, for an arbitrary diffeomorphism $d \in \Diff(\S_{i+1},m_{i+1})$,
we have $d^{m_{i+1}} \in \Diff(\S_i,\bP_i)$.
The $\MCG(\S_{i+1}, m_{i+1})$-equivariance of~$\omega_i$ translates to
\begin{equation} \label{eqn:omegsimpl1}
\rho_{i+1}(d) \circ \mu_{i,1}(x \otimes w) = \mu_{i,1}(\rho_i(d^{m_{i+1}})(x) \otimes w).
\end{equation}
But $d$ fixes the isotopy class of~$s_i$, and we can isotope it in $\Diff(\S_{i+1},m_{i+1})$
such that $d(s_i) = s_i$ as framed spheres. Then~$d^{s_i}$ is isotopic to~$\tau_m^k$
in the torus component of $\S_{i+1}(s_i)$ containing~$m_{i+1}$ for some $k \in \Z$,
and is isotopic to $d^{m_{i+1}}$ in the other component.
By the $\MCG(\S_i \sqcup \S_1, \bP_{i,1})$-equivariance of~$\mu_{i,1}$, we get that
\begin{equation} \label{eqn:omegsimpl2}
\rho_{i+1}(d) \circ \mu_{i,1}(x \otimes w) = \mu_{i,1}\left(\rho_i(d^{m_{i+1}})(x) \otimes \rho_1\left(\tau_m^k\right)(w)\right).
\end{equation}
Comparing equations~\eqref{eqn:omegsimpl1} and~\eqref{eqn:omegsimpl2}, we obtain that
\[
\mu_{i,1}\left(\rho_i(d^{m_{i+1}})(x) \otimes \rho_1\left(\tau_m^k\right)(w)\right) =
\mu_{i,1}(\rho_i(d^{m_{i+1}})(x) \otimes w).
\]
If we consider this for $i = 0$, $x = 1 \in A_0$, and $d = \tau_m \in \Diff(\S_1)$,
then $k = 1$, and we obtain that $\rho_1(\tau_m)(w) = w$.
In the opposite direction, $\rho_1(\tau_m)(w) = w$ and equation~\eqref{eqn:omegsimpl2} together
imply equation~\eqref{eqn:omegsimpl1}.

Similarly, the $\MCG(\S_i,l_i)$-equivariance of~$\a_i$
is equivalent to
\[
\lambda \circ \rho_1(\tau_l) = \lambda
\]
from condition~\eqref{it:condl} of Definition~\ref{def:repr},
where $\tau_l \in \Diff(\S_1)$ is a right-handed Dehn twist about the longitude~$l_1$.
This follows from the $\MCG(\S_{i+j}, s_i)$-equi\-vari\-ance of $\delta_{i,j}$, together with
the fact that
\[
\a_i = (\Id_{A_{i-1}} \otimes \l) \circ \d_{i-1,1}.
\]

Now consider property~\eqref{it:t-omega}; i.e.,
\[
\rho_{i+1}(t_{i+1}) \circ \omega_i = \omega_i,
\]
where $t_{i+1}(m_{i+1}) = -m_{i+1}$ and $t_{i+1}^{m_{i+1}} \in \Diff_0(\S_i)$.
Since $t_{i+1}$ fixes $s_i$ and $t_{i+1}^{s_i}$ is isotopic to $\Id_{\S_i} \sqcup t_1$,
we can apply the $\MCG(\S_i \sqcup \S_1, \bP_{i,1})$-equivariance of $\mu_{i,1}$ to obtain
\[
\rho_{i+1}(t_{i+1}) \circ \omega_i(x) = \rho_{i+1}(t_{i+1}) \circ \mu_{i,1}(x \otimes w) =
\mu_{i,1}(x \otimes \rho_1(t_1)(w)).
\]
In particular, property~\eqref{it:t-omega} is equivalent to
\[
\mu_{i,1}(x \otimes \rho_1(t_1)(w)) = \mu_{i,1}(x \otimes w)
\]
for every $i \in \N$ and $x \in A_i$. In particular, if we take~$i = 0$ and~$x = 1$,
we obtain
\begin{equation} \label{eqn:t}
\rho_1(t_1)(w) = w
\end{equation}
from condition~\eqref{it:condw} of Definition~\ref{def:repr},
and this is clearly also sufficient, hence equivalent to property~\eqref{it:t-omega}.

Now look at property~\eqref{it:alpha-rho}; i.e,
\[
\alpha_i \circ \rho(r_i)  = \alpha_i,
\]
where $r_i(l_i) = -l_i$ and $r_i^{l_i} \in \Diff_0(\S_{i-1})$.
By Lemma~\ref{lem:splitting}, this is equivalent to
\[
(\Id_{A_{i-1}} \otimes \l) \circ \d_{i-1,1} \circ \rho_i(r_i) = (\Id_{A_{i-1}} \otimes \l) \circ \d_{i-1,1}.
\]
Using the $\MCG(\S_i,s_{i-1})$-equivariance of $\d_{i-1,1}$ and that $r_i^{s_{i-1}} \approx \Id_{\S_{i-1}} \sqcup r_1$,
this is further equivalent to
\[
(\Id_{A_{i-1}} \otimes (\l \circ \rho_1(r_1))) \circ \d_{i-1,1} = (\Id_{A_{i-1}} \otimes \l) \circ \d_{i-1,1}.
\]
Notice that $r_1 = t_1$. If we set  $i=1$ and apply $\tau \otimes \Id_{A_1}$ to both sides,
we obtain the necessary and sufficient condition
\begin{equation} \label{eqn:r}
\l \circ \rho_1(t_1) = \l
\end{equation}
from condition~\eqref{it:condl} of Definition~\ref{def:repr}.

Next, consider property~\eqref{it:omega-omega}; i.e.,
\[
\rho_{i+2}(S_{i+2}) \circ \omega_{i+1} \circ \omega_i = \omega_{i+1} \circ \omega_i,
\]
where $S_{i+2}$ swaps $m_{i+1}$ and $m_{i+2}$, and $S_{i+2}^{m_{i+1},m_{i+2}} \in \Diff_0(\S_i)$.
By Lemma~\ref{lem:splitting} and the associativity of~$\mu$, this is equivalent to
\[
\begin{split}
\rho_{i+2}(S_{i+2}) \circ \mu_{i+1,1}(\mu_{i,1}(x \otimes w) \otimes w) &=
\rho_{i+2}(S_{i+2}) \circ \mu_{i,2}(x \otimes \mu_{1,1}(w \otimes w)) \\
&= \mu_{i,2}(x \otimes \mu_{1,1}(w \otimes w))
\end{split}
\]
for every $x \in A_i$. Since $\mu_{i,2}$ is $\MCG(\S_i \sqcup \S_2, \bP_{i,2})$-equivariant and $S_{i+2}$
fixes~$s_i$ as a framed sphere, in fact, $S_{i+2}^{s_i} = \Id_{\S_i} \sqcup S_2$, this condition can be expressed as
\[
\mu_{i,2}(x \otimes \rho_2(S_2) \circ \mu_{1,1}(w \otimes w)) = \mu_{i,2}(x \otimes \mu_{1,1}(w \otimes w))
\]
for every $i \in \N$ and $x \in A_i$. In particular, if we set $i = 0$ and $x = 1$,
we obtain the necessary and sufficient condition
%it is necessary to have
\[
\rho_2(S_2) \circ \mu_{1,1}(w \otimes w) = \mu_{1,1}(w \otimes w).
\]
%but this is also clearly sufficient.
Now consider the diffeomorphism $d := \iota_2 \circ S_2 \circ \iota_2$.
This swaps the meridians~$m_0$ and~$m_1$ of $\S_2$, but fixes~$m_2$, hence lies in $\Diff(\S_2,m_2)$.
Furthermore, $d^{m_2}$ is isotopic to the automorphism~$t_1$ of the torus, and we have $\rho_1(t_1)(w) = w$
by condition~\eqref{it:condw} of Definition~\ref{def:repr}. Hence,
\[
\rho_2(d) \circ \omega_1(w) = \omega_1(\rho_1(t_1)(w)) = \omega_1(w) = \mu_{1,1}(w \otimes w).
\]
On the other hand, $\rho_2(d) = *_2 \circ \rho_2(S_2) \circ *_2$, hence the left-hand side
of the above equation is
$
*_2 \circ \rho_2(S_2) \circ *_2 \circ \omega_1(w).
$
But $*_1 = \Id_{A_1}$ since $\iota_1$ is isotopic to~$\Id_{T^2}$, hence
\[
*_2 \circ \omega_1(w) = *_2 \circ \mu_{1,1}(w \otimes w) = \mu_{1,1}(w^* \otimes w^*) = \mu_{1,1}(w \otimes w).
\]
It follows that
\[
\rho_2(S_2) \circ \mu_{1,1}(w \otimes w) = *_2 \circ \mu_{1,1}(w \otimes w) = \mu_{1,1}(w \otimes w),
\]
establishing property~\eqref{it:omega-omega}.

Similarly, we can prove property~\eqref{it:alpha-alpha}; i.e.,
\[
\a_{i-1} \circ \a_i \circ \rho_i(L_i) = \a_{i-1} \circ \a_i,
\]
where $L_i$ swaps $l_{i-1}$ and $l_i$, and $L_i^{l_{i-1},l_i} \in \Diff_0(\S_{i-2})$.
By the coassociativity of~$\d$, and since $\d_{i-2,2}$ is $\MCG(\S_i,s_{i-2})$-equivariant and $L_i^{s_{i-2}} = \Id_{\S_{i-2}} \sqcup L_2$,
the left-hand side is
\[
\begin{split}
(\Id_{A_{i-2}} \otimes \l) \circ \d_{i-2,1} \circ (\Id_{A_{i-1}} \otimes \l) \circ \d_{i-1,1} \circ \rho_i(L_i) &= \\
(\Id_{A_{i-2}} \otimes \l) \circ (\Id_{A_{i-2}} \otimes \Id_{A_1} \otimes \l) \circ (\d_{i-2,1} \otimes \Id_{A_1}) \circ \d_{i-1,1} \circ \rho_i(L_i) &= \\
(\Id_{A_{i-2}} \otimes \l \otimes \l) \circ (\Id_{A_{i-2}} \otimes \d_{1,1}) \circ \d_{i-2,2} \circ \rho_i(L_i) &= \\
(\Id_{A_{i-2}} \otimes \l \otimes \l) \circ (\Id_{A_{i-2}} \otimes \d_{1,1}) \circ (\Id_{A_{i-2}} \otimes \rho_2(L_2)) \circ \d_{i-2,2}.
\end{split}
\]
Since $L_2$ is isotopic to $\iota_2$, we have $\rho_2(L_2) = *_2$, and property~\eqref{it:alpha-alpha} is equivalent to
\[
\begin{split}
(\Id_{A_{i-2}} \otimes \l \otimes \l) \circ (\Id_{A_{i-2}} \otimes \d_{1,1}) \circ (\Id_{A_{i-2}} \otimes *_2) \circ \d_{i-2,2} &= \\
(\Id_{A_{i-2}} \otimes \l \otimes \l) \circ (\Id_{A_{i-2}} \otimes \d_{1,1}) \circ \d_{i-2,2}.
\end{split}
\]
In particular, if we set $i=2$ and apply $\tau \otimes \Id_{A_2}$ to both sides,
we get the necessary and sufficient condition
\[
(\l \otimes \l) \circ \d_{1,1} \circ *_2 = (\l \otimes \l) \circ \d_{1,1}.
\]
However, since $\d_{1,1} \circ *_2 = T \circ \d_{1,1}$, and because $*_1 = \Id_{A_1}$ as $\iota_1$ is isotopic to
the identity, the above equation automatically follows from the \gnf-algebra axioms, and from the
$\MCG(\S_i,s_{i-2})$-equivariance of~$\d_{i-2,2}$.

We now prove property~\eqref{it:omega-mu}; i.e,
\[
\rho_{n+1}(h_{n+1,i}) \circ \omega_n \circ \mu_{i,j}(x \otimes y) = \omega_n \circ \mu_{i,j}(x \otimes y),
\]
where $h_{n+1,i}$ swaps $s_i \# m_{n+1}$ with $m_{n+1}$, and
$h^{s_i \# m_{n+1}, m_{n+1}}$ is isotopic to the identity.
Using our formula for $\omega_n$, the above equation becomes equivalent to
\[
\rho_{n+1}(h_{n+1,i}) \circ \mu_{n,1}(\mu_{i,j}(x \otimes y) \otimes w) = \mu_{n,1}(\mu_{i,j}(x \otimes y) \otimes w).
\]
As $h_{n+1,i}$ fixes~$s_i$ and $h_{n+1,i}^{s_i} = \Id_{\S_i} \sqcup h_{j+1,0}$,
using the associativity of~$\mu$ and the $\MCG(\S_{n+1},s_i)$-equivariance
of~$\mu_{i,j+1}$, this is further equivalent to
\[
\mu_{i,j+1}(x \otimes \rho_{j+1}(h_{j+1,0}) \circ \mu_{j,1}(y \otimes w)) = \mu_{i,j+1}(x \otimes \mu_{j,1}(y \otimes w)).
\]
As $s_0 \# m_{j+1}$ is isotopic to $m_{j+1}$, the diffeomorphism $h_{j+1,0}$ is isotopic to the
identity, so property~\eqref{it:omega-mu} follows.
%In particular, if we set $i = 0$ and $x = 1$, we obtain the necessary and sufficient condition
%\[
%\rho_{j+1}(h_{j+1,0}) \circ \mu_{j,1}(y,w) = \mu_{j,1}(y,w),
%\]
%or equivalently,
%\begin{equation} \label{eqn:h}
%\rho_{j+1}(h_{j+1,0}) \circ \omega_j = \omega_j
%\end{equation}
%for every $j \in \N$.
%Note that this automatically holds on $\im(\mu_{1,j-1}) \subset A_j$. Indeed, if $y = \mu_{1,j-1}(a,b)$, then
%\[
%\begin{split}
%\rho_{j+1}(h_{j+1,1}) \circ \mu_{j,1}(y,w) &= \rho_{j+1}(h_{j+1,1}) \circ \mu_{1,j}(a,\mu_{j-1,1}(b,w)) = \\
%\mu_{1,j}(a, \rho_i(h_{j,0}) \circ \mu_{j-1,1}(b,w)) &= \mu_{j,1}(y,w)
%\end{split}
%\]
%by the associativity of~$\mu$, together with the fact that~$h_{j+1,1}$ fixes the curve~$s_1$ and
%$h_{j+1,1}^{s_1} = \Id_{\S_1} \sqcup h_{j,0}$, where $h_{j,0}$ is isotopic to~$\Id_{\S_j}$,
%and since $\mu_{1,j}$ is $\MCG(\S_{j+1},s_1)$-equivariant.
%As~$\A$ is unital, $\im(\mu_{1,0}) = A_1$, and condition~\eqref{eqn:h} automatically holds for~$j = 1$.

Property~\eqref{it:delta-alpha} is dual to property~\eqref{it:omega-mu}. It states that
\[
\d_{i,j} \circ \a_n \circ \rho_n(u_{n,i}) = \d_{i,j} \circ \a_n,
\]
where $u_{n,i} \in \Diff(\S_n)$ swaps $s_i \# l_n$ and~$l_n$,
and $u^{s_i \# l_n, l_n}$ is isotopic to the identity.
Using the coassociativity of~$\d$, the left-hand side becomes
\[
\begin{split}
\d_{i,j} \circ (\Id_{A_{i+j}} \otimes \l) \circ \d_{i+j,1} \circ \rho_n(u_{n,i}) &= \\
(\Id_{A_i} \otimes \Id_{A_j} \otimes \l) \circ (\d_{i,j} \otimes \Id_{A_1}) \circ \d_{i+j,1} \circ \rho_n(u_{n,i}) &= \\
(\Id_{A_i} \otimes \Id_{A_j} \otimes \l) \circ (\Id_{A_i} \otimes \d_{j,1}) \circ \d_{i,j+1} \circ \rho_n(u_{n,i}).
\end{split}
\]
Note that $u_{n, i}$ fixes~$s_i$ and $u_{n,i}^{s_i} = \Id_{\S_i} \sqcup u_{j+1,0}$. Hence,
by the $\MCG(\S_{n+1},s_i)$-equivariance of $\d_{i,j+1}$, the left-hand side of equation~\eqref{it:delta-alpha} further equals
\[
\begin{split}
(\Id_{A_i} \otimes \Id_{A_j} \otimes \l) \circ (\Id_{A_i} \otimes \d_{j,1} \circ \rho_{j+1}(u_{j+1,0})) \circ \d_{i,j+1} &= \\
(\Id_{A_i} \otimes [\a_{j+1} \circ \rho_{j+1}(u_{j+1,0}))]) \circ \d_{i,j+1}.&
\end{split}
\]
But $s_0 \# l_n$ is isotopic to~$l_n$, hence $u_{j+1,0}$ is isotopic to the identity.
Analogously, the right-hand side of equation~\eqref{it:delta-alpha} is
\[
(\Id_{A_i} \otimes \a_{j+1}) \circ \d_{i,j+1},
\]
and so equation~\eqref{it:delta-alpha} follows.
%If we set $i = 0$, and apply $\tau \otimes \Id_{A_j}$ to both sides, we obtain the necessary and
%sufficient condition $\a_{j+1} \circ \rho_{j+1}(u_{j+1,1}) = \a_{j+1}$, or equivalently,
%\begin{equation}
%\a_i \circ \rho_i(u_{i,1}) = \a_i
%\end{equation}
%for $i > 1$. However, the $i = 2$ case holds automatically. Indeed, as~$u_{2,1}$ fixes~$s_1$
%and~$u_{2,1}^{s_1} \approx \Id_{\S_1} \sqcup \Id_{\S_1}$, the $\MCG(\S_2,s_1)$-equivariance of~$\d_{1,1}$ implies that
%\[
%(\Id_{A_1} \otimes \l) \circ \d_{1,1} \circ \rho_2(u_{2,1}) = (\Id_{A_1} \otimes \l) \circ \d_{1,1}.
%\]

Finally, consider property~\eqref{it:alpha-omega}. More generally, consider
\[
\a_{i+1} \circ \rho_{i+1}(L_{i+1}) \circ \omega_i = \omega_{i-1} \circ \a_i.
\]
We first remark that if we apply $\a_i$ to both sides, the resulting equation follows from
property~\eqref{it:alpha-alpha} and the split \gnf-algebra axioms.
Secondly, we prove that this automatically holds on~$\im(\mu_{i-1,1})$,
and hence for~$i = 1$. Indeed, suppose that $x = \mu_{i-1,1}(a \otimes b)$. Then
\[
\begin{split}
\a_{i+1} \circ \rho_{i+1}(L_{i+1}) \circ \omega_i(x) = \a_{i+1} \circ \mu_{i-1,2}(a \otimes  \rho_2(L_2) \circ \mu_{1,1}(b \otimes w)) &= \\
\a_{i+1} \circ \mu_{i-1,2}(a \otimes \mu_{1,1}(w \otimes  b)) &= \\
(\Id_{A_i} \otimes \l) \circ \d_{i,1} \circ \mu_{i-1,2}(a \otimes \mu_{1,1}(w \otimes b)) &= \\
(\Id_{A_i} \otimes \l) \circ (\mu_{i-1,1} \otimes \Id_{A_1}) \circ (\Id_{A_{i-1}} \otimes \d_{1,1})(a \otimes \mu_{1,1}(w \otimes b)) &= \\
(\mu_{i-1,1} \otimes \l) \circ (a \otimes [\d_{1,1} \circ \mu_{1,1}(w \otimes b)]) &= \\
(\mu_{i-1,1} \otimes \l) \circ (a \otimes [(\mu_{1,0} \otimes \Id_{A_1}) \circ (\Id_{A_1} \otimes \d_{0,1})(w \otimes b)]) &= \\
(\mu_{i-1,1} \otimes \l)(a \otimes \mu_{1,0}(w \otimes b_{(1)}) \otimes b_{(2)}) &= \\
\l(b_{(2)})(a \cdot w \cdot b_{(1)}).
\end{split}
\]
Here we used that~$\mu_{i-1,2}$ is $\MCG(\S_{i-1} \sqcup \S_2, \bP_{i-1,2})$-equivariant,
$L_{i+1}^{s_{i-1}} = \Id_{\S_{i-1}} \sqcup L_2$,
that~$\rho_2(L_2) = *_2$, and the Frobenius condition twice.
Furthermore, $\d_{0,1}(b) = b_{(1)} \otimes b_{(2)}$ in sumless Sweedler notation,
and $\cdot$ stands for the algebra multiplication~$\mu$.
On the other hand, the right-hand side of equation~\eqref{it:alpha-omega} becomes
\[
\begin{split}
\omega_{i-1} \circ \a_i \circ \mu_{i-1,1}(a \otimes b) &= \\
\omega_{i-1} \circ (\Id_{A_{i-1}} \otimes \l) \circ \d_{i-1,1} \circ \mu_{i-1,1}(a \otimes b) &= \\
\omega_{i-1} \circ (\Id_{A_{i-1}} \otimes \l) \circ (\mu_{i-1,0} \otimes \Id_{A_1}) \circ (\Id_{A_{i-1}} \otimes \d_{0,1})(a \otimes b) &= \\
\omega_{i-1} \circ (\mu_{i-1,0} \otimes \l)(a \otimes \d_{0,1}(b)) &= \\
[(\mu_{i-1,0} \otimes \l)(a \otimes b_{(1)} \otimes b_{(2)})] \cdot w &= \\
\l(b_{(2)})(a \cdot b_{(1)} \cdot w).
\end{split}
\]
The claim follows once we observe that $b_{(1)} \cdot w \in A_1$, hence
$ b_{(1)} \cdot w =(b_{(1)} \cdot w)^* = w^* \cdot b_{(1)}^* = w \cdot b_{(1)}$
since $*_0 = \Id_{A_0}$ and $*_1 = \Id_{A_1}$.
\end{proof}

\section{The classification of (2+1)-dimensional TQFTs} \label{sec:proof2+1}

Both (2+1)-dimensional TQFTs and J-algebras form symmetric monoidal
categories, with morphisms the monoidal natural transformations.
Durhuus and Jonsson~\cite{DJ} defined the notion of direct sum of TQFTs. Given $(n+1)$-dimensional
TQFTs~$F_1$ and~$F_2$, they let
\[
(F_1 \oplus F_2)(M) = F_1(M) \oplus F_2(M)
\]
for every \emph{connected} $n$-manifold~$M$,
while in general $(F_1 \oplus F_2)(M)$ is the tensor product of the vector spaces assigned to the components of~$M$.
To a connected cobordism~$W$, they assign the direct sum $F_1(W) \oplus F_2(W)$, and to a disconnected cobordism
the tensor product of the values of the components.
%We discuss the monoidal structure in the following subsection.

In this section, we shall prove the following
classification of (2+1)-dimensional TQFTs,
which is Theorem~\ref{thm:2+1} from the introduction.

\begin{thm*}
There is an equivalence between the
symmetric monoidal category of (2+1)-dimensional TQFTs and $\JAlg$.
\end{thm*}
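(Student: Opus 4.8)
The plan is to invoke Theorem~\ref{thm:TQFT} in dimension $n=2$, which identifies a $(2+1)$-dimensional TQFT with the data of a symmetric monoidal functor $F\colon\Man_2\to\Vect$ together with surgery maps $F_{M,\SS}$ obeying relations~\eqref{it:isot}--\eqref{it:0-sphere} and the compatibility~\eqref{eqn:monoidal}. The whole content of the theorem is then a dictionary between this data --- which ranges over \emph{all} surfaces and \emph{all} framed spheres --- and the finite list of operations and axioms of a J-algebra supported on the standard surfaces $\S_g$, together with the matching of morphisms and of the symmetric monoidal structures.

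\textbf{From a TQFT to a J-algebra.} Given a TQFT $F$, I would set $A_i=F(\S_i)$, $A=\bigoplus_i A_i$, and let $\rho_i\colon\M_i\to\text{Aut}(A_i)$ be the representation obtained by applying $F$ to cylindrical cobordisms of diffeomorphisms, so that $\rho_i(d)=F(d)$ by Lemma~\ref{lem:diffeo-maps}. Using the fixed identifications $\S_i(\bP_i)\approx\S_{i+1}$, $\S_i(l_i)\approx\S_{i-1}$, $(\S_i\sqcup\S_j)(\bP_{i,j})\approx\S_{i+j}$, $\S_{i+j}(s_i)\approx\S_i\sqcup\S_j$, $\emptyset(0)=S^2=\S_0$, and $S^2(S^2)=\emptyset$, together with the comparison isomorphisms $\Phi$ of $F$, define
\[
\omega_i=F_{\S_i,\bP_i},\quad \a_i=F_{\S_i,l_i},\quad \mu_{i,j}=F_{\S_i\sqcup\S_j,\bP_{i,j}}\circ\Phi,\quad \delta_{i,j}=\Phi^{-1}\circ F_{\S_{i+j},s_i},
\]
with $\eps=F_{\emptyset,0}$, $\tau=F_{S^2,S^2}$ (using $F(\emptyset)\cong\F$), and $*=\bigoplus_i\rho_i(\iota_i)$. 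The claim is that these satisfy Definitions~\ref{def:grad-frob}, \ref{def:splitting}, and~\ref{def:repr}. Each axiom is the image under $F$ of an identity among composites of elementary surface cobordisms: associativity, coassociativity and the Frobenius condition come from the standard relations among connected sums and pair-of-pants cobordisms; the unit and counit laws come from cancelling a $0$- (resp.\ $3$-) handle against a $1$-handle, i.e.\ from relation~\eqref{it:birth}; the $*$-identities follow from relation~\eqref{it:0-sphere} together with the fact that $\iota_i$ reverses the relevant spheres; $\a_{i+1}\circ\omega_i=\Id$ is again relation~\eqref{it:birth} (the $\bP_i$-handle cancels the $l_{i+1}$-handle); and the equivariance of $\mu_{i,j}$ and $\delta_{i,j}$ is relation~\eqref{it:d-F} for diffeomorphisms supported away from the spheres, read through the duality~\eqref{eqn:duality}. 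The nonstandard axioms~\eqref{it:L} and~\eqref{it:frobtwist} encode exactly two handle moves: sliding a $\bP_i$-handle past an $l_{i+1}$-handle, and the fact that a non-separating surgery curve obtained by banding the separating curve $s_i$ with the meridian $m_{n+1}$ realizes $\mu_{i,n-i}\circ\delta_{i,n-i}$.

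\textbf{From a J-algebra to a TQFT.} This is the harder direction, and I would follow the template of the $(1+1)$-dimensional proof. For a connected surface $M$ of genus $g$, let $F(M)$ be the space of tuples $a\in\prod_{\phi\in\MCG(\S_g,M)}A_g$ with $a(\phi')=\rho_g\big((\phi')^{-1}\circ\phi\big)\big(a(\phi)\big)$; this makes $F$ canonically functorial in diffeomorphisms. Extend to disconnected surfaces by the analogous construction over labellings of the components by their genera, which supplies the symmetric monoidal structure and the comparison isomorphisms. The surgery maps $F_{M,\SS}$ are defined by choosing $\phi$ carrying $\SS$ to one of the standard framed spheres $\bP_g$, $l_g$, $\bP_{i,j}$, $s_i$, $0$, or $S^2$, applying the corresponding J-algebra operation ($\omega$, $\a$, $\mu$, $\delta$, $\eps\otimes-$, or $\tau$ on the relevant factor), and transporting back; independence of $\phi$ is precisely the $\MCG(M,\SS)$-equivariance of that operation, which for $\omega$ and $\a$ is the opening content of Lemma~\ref{lem:simplify}. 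Then I would verify relations~\eqref{it:isot}--\eqref{it:0-sphere} and~\eqref{eqn:monoidal}: \eqref{it:isot} and~\eqref{eqn:monoidal} are immediate, \eqref{it:d-F} is equivariance, \eqref{it:0-sphere} follows from the $*$-axioms, \eqref{it:birth} reduces after the change-of-coordinates principle for curves on surfaces to the unit/counit laws, $\a\circ\omega=\Id$, and~\eqref{it:L}--\eqref{it:frobtwist}, and \eqref{it:commut} splits into a finite case analysis according to the topological types of the two disjoint framed spheres, where --- when they cannot be simultaneously put in standard position --- one moves one of them by a mapping class and absorbs the resulting element via the derived relations~\eqref{it:omega-omega}, \eqref{it:alpha-alpha}, \eqref{it:omega-mu}, \eqref{it:delta-alpha} of Lemma~\ref{lem:simplify}. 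By Theorem~\ref{thm:TQFT} this data assembles into a TQFT. Finally, the two constructions are mutually inverse up to natural isomorphism: from a J-algebra the operations read off the associated TQFT are the original ones on the nose, and from a TQFT $F$ the tuple model of $F(M)$ is naturally isomorphic to $F(M)$ by evaluation at any $\phi$, naturally in $M$ and compatibly with surgeries by Theorem~\ref{thm:TQFT}; a monoidal natural transformation of TQFTs restricts to a J-algebra homomorphism on $\bigoplus_i F(\S_i)$, and a J-algebra homomorphism $h$ extends via $\eta_M(a)(\phi)=h^{\otimes}(a(\phi))$, the two being inverse. Since both functors respect direct sums, the equivalence is symmetric monoidal.

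\textbf{Main obstacle.} The principal difficulty lies in the backward direction, specifically in relations~\eqref{it:birth} and~\eqref{it:commut}: one must put an arbitrary pair of framed spheres on an arbitrary surface into standard position and control the mapping class relating the two standard positions. The axioms of Definition~\ref{def:repr} and their consequences in Lemma~\ref{lem:simplify} are precisely engineered so that every such reduction goes through, and the technical heart of the proof is the lengthy but mechanical case-by-case check that they suffice.
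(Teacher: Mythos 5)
Your proposal is correct and follows essentially the same route as the paper: read off the J-algebra operations from the standard surgeries on the model surfaces and verify the axioms via relations~\eqref{it:isot}--\eqref{it:0-sphere} (Proposition~\ref{prop:AF}), rebuild a TQFT from a J-algebra through the parametrization-indexed tuple model and Theorem~\ref{thm:TQFT}, with the case analysis for~\eqref{it:commut} and~\eqref{it:birth} absorbed by the derived identities of Lemma~\ref{lem:simplify} (Proposition~\ref{prop:FA}), and then check the two assignments are mutually inverse monoidal functors. The only (cosmetic) slip is that in the paper the conditions~\eqref{it:L} and~\eqref{it:frobtwist} of Definition~\ref{def:repr} arise from relation~\eqref{it:commut} (a framed $0$-sphere interacting with a $1$-sphere), not from relation~\eqref{it:birth}.
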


In Section~\ref{sec:assignment}, we saw how to assign a J-algebra $J(F)$ to a TQFT~$F$.
Now suppose that we are given a J-algebra
$\AA = (\A,\a,\omega,\{\rho_i \,\colon\, i \in \N\})$. Then we associate to it a TQFT $F := T(\AA)$ as follows.
%As we shall now see, the $\F[\M_g]$-modules $A_g$, together with the operations $\a_g$, $\omega_g$,
%$\delta_{j,g-j}$, $\mu_{i,j}$, $\tau$, and~$\eps$ completely determine the functor~$F$, up to
%natural isomorphism.
By Theorem~\ref{thm:TQFT}, it suffices to construct a symmetric monoidal functor $F \colon \Man_2 \to \Vect$
and maps~$F_{M,\SS}$ for any framed sphere~$\SS$ in a surface~$M$. The following constructions are
all determined by the naturality of the TQFT under diffeomorphisms.
After constructing the groups~$F(M)$ and the surgery maps~$F_{M,\SS}$, we check what algebraic properties
relations~\eqref{it:isot}--\eqref{it:0-sphere} of Definition~\ref{def:relations} translate to.

First, we construct $F(M)$ for a surface~$M$ with $k$ components of genera $g_1 > \dots > g_r$
with multiplicities $n_1, \dots, n_r$, respectively. In particular, $n_1 + \dots + n_r = k$, and
we denote the vector
\[
(\underbrace{g_1, \dots, g_1}_{n_1}, \dots, \underbrace{g_r, \dots, g_r}_{n_r})
\]
of genera by~$\ug$.
Let
\[
\S_{\ug} = \coprod_{i = 1}^r \coprod_{j = 1}^{n_i} \S_{g_i}.
\]
We follow the same scheme of Kan extension as one dimension lower in Section~\ref{sec:1+1}. In particular, let
\[
A_{\ug} = A_{g_1}^{\otimes n_1} \otimes \dots \otimes A_{g_r}^{\otimes n_r},
\]
and $F(M)$ is defined to be the set of those elements~$v$ of
\[
\prod_{\phi \in \Diff(\S_{\ug},M)} A_{\ug}
\]
for which
\[
v(\phi') = ((\phi')^{-1} \circ \phi)\cdot v(\phi)
\]
for every $\phi$, $\phi' \in \Diff(\S, M)$.
Note that here $(\phi')^{-1} \circ \phi \in \Diff(\S_{\ug})$, which acts on~$A_{\ug}$
via the representations~$\rho_i$ and permuting the factors with the same genus. More precisely,
the action of $\Diff(\S_{\ug})$ on~$A_{\ug}$ factors through the action of
%if multiplicities in the sequence $g_1 \ge \dots \ge g_k$ gives the partition $n_1 +  \dots + n_r = k$
%and $G_i = g_{n_1 + \dots + n_i}$, then
\[
\MCG(\S_{\ug}) \cong \prod_{i = 1}^r \M_{g_i} \text{ Wr}_{\{1,\dots,n_i\}} \text{ } S_{n_i},
\]
where $\text{Wr}$ denotes the unrestricted wreath product,
the group~$\M_{g_i}$ acts on~$A_{g_i}$ via~$\rho_{g_i}$,
while $S_{n_i}$ permutes the factors of~$A_{g_i}^{\otimes n_i}$.

Suppose that $M$ and~$M'$ are diffeomorphic surfaces; i.e., they have the same number
of components~$k$ with genera $g_i = g_i'$ and multiplicities $n_i = n_i'$ for every $i \in \{1,\dots,r\}$,
and let $d \in \Diff(M,M')$.
Given an element $v \in F(M)$ and $\phi \in \Diff(\S_{\ug}, M)$, we let
\begin{equation} \label{eqn:diffaction}
[F(d)(v)](d \circ \phi) = v(\phi).
\end{equation}

If $M$ and $N$ are surfaces of diffeomorphism types $\S_{\ug}$ and $\S_{\uh}$, respectively, then
we define the natural isomorphism
\[
\Phi_{M,N} \colon F(M) \otimes F(N) \to F(M \sqcup N)
\]
as follows.
Let $\phi \in \MCG(\S_{\ug}, M)$ and $\psi \in \MCG(\S_{\uh},N)$.
We let $\ug \sqcup \uh$ be the vector obtained by putting the coordinates of
$\ug$ and $\uh$ in nonincreasing order. Then $\S_{\ug} \sqcup \S_{\uh}$ is of diffeomorphism type $\S_{\ug \sqcup \uh}$.
The diffeomorphism $\phi \Box \psi \in \MCG(\S_{\ug \sqcup \uh}, M \sqcup N)$
is defined as follows. If $g$ is a coordinate of $\ug$ of multiplicity~$m$ and of $\uh$ of multiplicity~$n$,
then for $(x,i) \in \S_g \times \{1,\dots,m\}$ we let $(\phi \Box \psi)(x,i) = \phi(x,i)$,
and for $(x,j) \in \S_g \times \{m+1,\dots,m+l\}$ we have $(\phi \Box \psi)(x,j) = \psi(x,j-m)$.
There is an analogous isomorphism $i_{\ug,\uh} \colon A_{\ug} \otimes A_{\uh} \to A_{\ug \sqcup \uh}$.
If $a \in F(M)$ and $b \in F(N)$, then we let $\Phi_{M,N}(a \otimes b) = a \sqcup b \in F(M \sqcup N)$
where
\[
(a \sqcup b)(\phi \Box \psi) = i_{\ug,\uh}(a(\phi) \otimes b(\psi)) \in A_{\ug \sqcup \uh}.
\]
We leave it to the reader to check that the assignment $F \colon \Man_2 \to \Vect_\F$
defined above is a symmetric monoidal functor.

We now define the surgery maps~$F_{M,\SS}$ for a surface~$M$ of diffeomorphism type~$\S_{\ug}$,
equipped with a framed sphere~$\SS \subset M$.

First, suppose that~$\SS = 0$; then $M(\SS) = M \sqcup S^2$.
If $M$ is of diffeomorphism type~$\S_{\ug}$, then $M(\SS)$ is of type $\S_{(\ug,0)}$,
where $(\ug, 0)$ is $\ug$ with an extra~$0$ at the end.
Let $i_{\ug, 0} \colon \S_{(\ug, 0)} \to \S_{\ug} \sqcup S^2$ be the
natural identification that maps the last component of $\S_{(\ug, 0)}$ to $S^2$.
Given $\phi \in \Diff(\S_{\ug}, M)$, let
\[
\phi_0 =  (\phi \sqcup \Id_{S^2}) \circ i_{\ug,0} \in \Diff\left(\S_{(\ug, 0)}, M(\SS)\right).
\]
For $v \in F(M)$, we let
\[
F_{M,0}(v)(\phi_0) = v(\phi) \otimes 1 \in A_{\ug} \otimes A_0,
\]
where $1 \in A_0$ is the image of~$1 \in \F$ under the map~$\eps$. The element $F_{M,0}(v)$ is independent of
the choice of~$\phi$.

Now suppose that $\SS \colon S^2 \hookrightarrow M$ is a framed $2$-sphere with image~$S \subset M$.
Then $M(\SS) = M \setminus S$. Choose a parametrization $\phi \in \Diff(\S_{\ug}, M)$ such that
$\phi|_{\S_{g_r} \times \{n_r\}} = \SS$, and let $\phi_\SS = \phi|_{\S_{\ug'}}$, where $\ug' = \ug \setminus \{(g_r, n_r)\}$.
Consider the map
\[
t_{\ug} \colon A_{\ug} \to A_{\ug'}
\]
defined on monomials by
\[
t_{\ug}(v_1 \otimes \dots \otimes v_k) = \tau(v_k) \cdot v_1 \otimes \dots \otimes v_{k-1},
\]
and extend it linearly.
For $v \in F(M)$, let
\[
F_{M,\SS}(v)(\phi_\SS) = t_{\ug}(v(\phi)).
\]
Again, this is well-defined; i.e., independent of the choice of~$\phi$.

Assume that~$\SS = \{s_-,s_+\}$ is a framed $0$-sphere. If~$s_-$ and~$s_+$ lie
in different components~$M_-$ and~$M_+$ of~$M$ of genera~$g_a$ and~$g_b$, respectively,
then let
\[
\begin{split}
q_- = (q_{g_a}, n_a) \in \S_- &:= \S_{g_a} \times \{n_a\}, \text{ and} \\
p_+ = (p_{g_b}, n_b) \in \S_+ &:= \S_{g_b} \times \{n_b\}. \\
\end{split}
\]
Choose a parametrization $\phi \in \Diff(\S_{\ug}, M)$ such that $\phi(q_-) = s_-$ and $\phi(p_+) = s_+$,
and such that $\phi$ preserves the framings.
Let $\S_{\ug}(q_-,p_+)$ be the result of surgery along the $0$-sphere $\{q_-,p_+\}$.
If~$n_{a,b}$ is the multiplicity of~$g_a + g_b$ in~$\ug$,
then we can identify $\S_{\ug}(q_-,p_+)$ with the canonical surface~$\S_{\ug'}$ for
\[
\ug' = \ug \setminus \{(g_a,n_a),(g_b,n_b)\} \cup \{(g_a + g_b, n_{a,b} + 1)\}.
\]
%obtained by removing $\S_{i_-}$ and $\S_{i_+}$, and inserting $\S_{i_-} \# \S_{i_+}$
%as the last component of genus $g = g_{i_-} + g_{i_+}$, this is at position $n_1 + \dots + n_g + 1$.
There is an induced parametrization
$\phi_\SS \colon \S_{\ug}(q_-,p_+) = \S_{\ug'} \to M(\SS)$ that is the connected sum
$(\phi|_{\S_-}) \# (\phi|_{\S_+})$ on $\S_- \# \S_+$,
and agrees with~$\phi$ on all the other components.
If~$v \in F(M)$ is an element such that $v(\phi)$ is a monomial
\[
\otimes_{i = 1}^r \otimes_{j = 1}^{n_i} v_{(i,j)},
\]
the integer $n_i'$ is the multiplicity of~$g_i$ in~$\ug'$ for $i \in \{1,\dots,r'\}$, and~$c$ is such that $g_c' = g_a + g_b$, then we
define $F_{M,\SS}(v)(\phi_\SS)$ as
\[
\left( \otimes_{i = 1}^{c - 1} \otimes_{j = 1}^{n_i'} v_{(i,j)} \right) \otimes
\left( \otimes_{j = 1}^{n_c} v_{(c,j)} \otimes \mu_{g_a, g_b}(v_{(a,n_a)}, v_{(b,n_b)}) \right) \otimes
\left( \otimes_{i = c + 1}^{r'} \otimes_{j = 1}^{n_i'} v_{(i,j)} \right).
\]
In other words, we omit~$v_{(a,n_a)}$ and~$v_{(b,n_b)}$ from~$v(\phi)$,
and insert their $\mu_{g_a, g_b}$-product in position $n_1' + \dots + n_c'$.
The element $F_{M,\SS}(v)$ defined above is independent of the choice of~$\phi$
since~$\mu_{g_a,g_b}$ is $\MCG(\S_{g_a} \sqcup \S_{g_b}, \PP_{g_a,g_b})$-equivariant.

If~$s_-$ and~$s_+$ lie in the same component~$M_s$ of~$M$, then let $g_a = g(M_s)$.
Consider the framed $0$-sphere~$\bP = \bP_{g_a} \times \{n_a\} \subset \S_{g_a} \times \{n_a\}$,
and choose a parametrization $\phi \in \Diff(\S_{\ug}, M)$ such that~$\phi \circ \bP = \SS$.
The surgered manifold~$M(\SS)$ is diffeomorphic to $\S_{\ug}(\bP)$, which in turn can be canonically
identified with $\S_{\ug'}$ for $\ug'$ obtained from~$\ug$ by removing a copy of~$g_a$ and inserting~$g_a + 1$.
The identification of $\S_{\ug}(\bP) \approx \S_{\ug'}$ is obtained by applying the identification
$\S_{g_\a}(\bP_{g_\a}) \approx \S_{g_\a+1}$ defined in Section~\ref{sec:surfaces}
to the $n_\a$-th component of $\S_{\ug}$ and the identity to all the other components.
By surgery, we obtain the parametrization
\[
\phi_\SS := \phi^\PP \,\colon\, \S_{\ug'} \approx \S_{\ug}(\bP)  \to M(\SS).
\]
Given an element $v \in F(M)$ such that $v(\phi) = \otimes_{i = 1}^r \otimes_{j = 1}^{n_i} v_{(i,j)}$, the element
$F_{M,\SS}(v)(\phi_\SS)$ is obtained by applying $\omega_{g_a}$ to~$v_{g_a, n_a}$.
The element $F_{M,\SS}(v)$ is independent of the choice of~$\phi$ since $\omega_{g_a}$
is $\MCG(\S_{g_a}, \PP_{g_a})$-equivariant by Lemma~\ref{lem:simplify}.

Now suppose that~$\SS$ is a framed $1$-sphere in~$M$, lying in a component~$M_s$ of genus~$g_a \in \ug$.
If~$\SS$ is non-separating, consider the curve $l = l_{g_a} \times \{n_a\} \subset \S_{\ug}$.
Then there is a diffeomorphism $\phi \colon \S_{\ug} \to M$
such that $\phi \circ l = \SS$. This is possible since any two non-separating simple closed curves
on a connected surface are ambient diffeomorphic (indeed, both $\S_{g_a} \setminus l_{g_a}$ and
$\S_s \setminus \SS$ are connected, twice punctured, genus $g_a - 1$ surfaces, hence they are diffeomorphic).
We obtain $\ug'$ by removing a copy of~$g_a$ and
replacing it by~$g_a - 1$. The surgered manifold~$M(\SS)$ is diffeomorphic to~$\S_{\ug}(l)$,
which is canonically identified with~$\S_{\ug'}$ by applying the identification $\S_{g_\a}(l_{g_\a}) \approx \S_{g_\a-1}$
defined in Section~\ref{sec:surfaces} to the $n_\a$-th component of $\S_{\ug}$ and the identity to all the other components.
Then let
\[
\phi_\SS := \phi^l \,\colon\, \S_{\ug'} \approx \S_{\ug}(l) \to M(\SS).
\]
If~$v \in F(M)$ is such that~$v(\phi)$ is of the form $\otimes_{i = 1}^r \otimes_{j = 1}^{n_i} v_{(i,j)}$,
then we obtain $F_{M,\SS}(v)(\phi_\SS)$ by applying $\alpha_{g_a}$ to the factor~$v_{g_a, n_a}$.
The map~$F_{M,\SS}$ is independent of the choice of~$\phi$ since $\a_{g_a}$ is $\MCG(\S_{g_a},l_{g_a})$-equivariant
by Lemma~\ref{lem:simplify}.

Finally, suppose that~$\SS$ separates $M_s$ into pieces of genera~$g_-$ on the negative side and~$g_+$ on the
positive side (in particular, $g_a = g_- + g_+$). Consider the framed circle
$c = s_{g_-} \times \{n_a\} \subset \S_{\ug} \times \{n_a\}$.
Then there is a diffeomorphism $\phi \colon \S_{\ug} \to M$ such that $\phi \circ c = \SS$.
Let~$\ug'$ be the vector obtained from~$\ug$ by removing~$g_a$ and inserting~$g_-$ and~$g_+$ to keep the sequence
of coordinates decreasing. There is a canonical diffeomorphism~$d_c \colon \S_{\ug}(c) \to \S_{\ug'}$ that
maps the components of $(\S_{g_a} \times \{n_a\})(c)$ to the last components of~$\S_{\ug}$ of genera~$g_-$
and~$g_+$, respectively. If $g_- = g_+$, then we map the part coming from the negative side of~$c$ as the second to last
such component, and the part coming from the positive side of~$c$ as the last component of the appropriate genus.
We define the map
\[
\phi_\SS := \phi^c \circ (d_c)^{-1} \colon \S_{\ug'} \to M(\SS).
\]
If $v(\phi)$ is of the form $\otimes_{i = 1}^r \otimes_{j = 1}^{n_i} v_{(i,j)}$,
then $F_{M,\SS}(v)(\phi_\SS)$ is obtained by applying the map~$\d_{g_-,g_+}$ to~$v_{g_a, n_a}$,
and then permuting the factors according to the diffeomorphism~$d_c$.
In this case, $F_{M,\SS}(v)$ is independent of the choice of~$\phi$ since~$\d_{g_-,g_+}$
is $\MCG(\S_{g_a}, s_{g_-})$-equivariant.

This concludes the construction of the vector spaces~$F(M)$ and maps~$F_{M,\SS}$.
By Theorem~\ref{thm:TQFT}, these completely determine the (2+1)-dimensional TQFT~$F$,
assuming they satisfy relations~\eqref{it:isot}--\eqref{it:0-sphere} of Definition~\ref{def:relations}.
We check these next.

\begin{prop} \label{prop:FA}
Let $\AA$ be a J-algebra.
Then the functor
\[
F = T(\AA) \colon \Man_2 \to \Vect
\]
and the maps $F_{M,\SS}$ constructed above
satisfy relations~\eqref{it:isot}--\eqref{it:0-sphere} of Definition~\ref{def:relations}
and diagram~\eqref{eqn:monoidal}.
\end{prop}

\begin{proof}
Relation~\eqref{it:isot} follows analogously to the (1+1)-dimensional case
and the fact that the $\Diff(\S_g)$-action on~$A_g$ factors through a~$\MCG(\S_g)$-action,
and it does not impose any additional algebraic restrictions.

Relation~\eqref{it:d-F} also follows analogously to the (1+1)-dimensional case, and requires no additional assumptions.
As an illustration, we check relation~\eqref{it:d-F} when~$M$ is a connected surface of genus~$g$,
and~$\SS$ is a non-separating $1$-sphere. In particular, $\ug = (g)$.
Choose a parametrization $\phi \in \Diff(\S_g, M)$ for which $\phi \circ l_g = \SS$,
and let $\phi_\SS \in \Diff(\S_{g-1}, M(\SS))$ be the induced parametrization.
Let $d \colon M \to M'$ be a diffeomorphism, $\SS' = d \circ \SS$, and choose an
element $v \in F(M)$. Then $v(\phi) \in A_g$, and, by equation~\eqref{eqn:diffaction} defining~$F(d^\SS)$, we have
\[
[F(d^\SS) \circ F_{M,\SS}(v)]\left(d^\SS \circ \phi_\SS \right) =  F_{M,\SS}(v)(\phi_\SS) = \a_g(v(\phi)) \in A_{g-1}.
\]
On the other hand,
\[
[F_{M',\SS'} \circ F(d)(v)]\left((d \circ \phi)^{\SS'} \right) = \a_g \left([F(d)(v)](d \circ \phi)\right) = \a_g(v(\phi)).
\]
The result follows once we observe that $d^\SS \circ \phi_\SS = (d \circ \phi)^{\SS'}$.

Now consider relation~\eqref{it:commut}. In particular, let~$\SS$ and~$\SS'$ be disjoint framed spheres in the surface~$M$.
The roles of~$\SS$ and~$\SS'$ are symmetric, and -- as in the (1+1)-dimensional case --
it is straightforward to check the relation when~$\SS = 0$ or $\SS$ is a framed $2$-sphere.
This leaves us with three cases depending on the dimensions of the two spheres.

First, suppose that both~$\SS$ and~$\SS'$ are framed $0$-spheres. Relation~\eqref{it:commut} is true if they
occupy distinct components of~$M$. There are four remaining subcases:
\begin{enumerate}
\item \label{it:s1} $\SS$ and~$\SS'$ occupy the same component~$M_s$ of~$M$,
\item \label{it:s2} $\SS$ intersects both $M_s$ and another component~$M_s'$, and $\SS'$ lies in~$M_s'$,
\item \label{it:s3} both~$\SS$ and~$\SS'$ intersect two components that coincide, namely~$M_s$ and~$M_s'$,
\item  \label{it:s4} $\SS$ intersects two components~$M_s$ and~$M_s'$, while~$\SS'$ intersects~$M_s'$ and~$M_s''$.
\end{enumerate}

Consider case~\eqref{it:s1}. Without loss of generality, we can assume that~$M$ is connected, as we can deal with multiple
components similarly to the (1+1)-dimensional case. Let~$C = b(\SS)$ and~$C' = b(\SS')$.
Choose parameterizations $\phi$, $\phi' \in \Diff(\S_{g+2}, M(\SS,\SS'))$ such that $\phi(m_{g+1}) = C$, $\phi(m_{g+2}) = C'$,
$\phi'(m_{g+1}) = C'$, $\phi'(m_{g+2}) = C$, and such that $\psi := \phi^{m_{g+1},m_{g+2}}$ and $\psi' := (\phi')^{m_{g+1},m_{g+2}}$ are isotopic
in $\Diff(\S_g, M)$. Furthermore, let $v \in F(M)$. Note that $\psi_{\SS,\SS'} = \phi$, hence
\[
F_{M(\SS), \SS'} \circ F_{M,\SS}(v)(\phi) = \omega_{g+1} \circ F_{M,\SS}(v)(\psi_\SS) =
\omega_{g+1} \circ \omega_g(v(\psi)).
\]
Similarly, $(\psi')_{\SS',\SS} = \phi'$, hence
\[
F_{M(\SS'), \SS} \circ F_{M,\SS'}(v)(\phi') = \omega_{g+1} \circ \omega_g(v(\psi')).
\]
Since~$\psi$ and~$\psi'$ are isotopic, $v(\psi) = v(\psi')$. Finally,
\[
F_{M(\SS'), \SS} \circ F_{M,\SS'}(v)(\phi') = \rho_{g+2}((\phi')^{-1} \circ \phi) \circ F_{M(\SS'), \SS} \circ F_{M,\SS'}(v)(\phi).
\]
As~$v$ is an arbitrary element of~$F(M)$, it follows that~$v(\phi)$ is an arbitrary element of~$A_g$.
Furthermore, $d = (\phi')^{-1} \circ \phi$ is an automorphism of~$\S_{g+2}$ that swaps~$m_{g+1}$ and~$m_{g+2}$,
and for which $d^{m_{g+1}, m_{g+2}}$ is isotopic to~$\Id_{\S_g}$.
Hence, relation~\eqref{it:commut} holds in case~\eqref{it:s1} if and only if for some diffeomorphism $d \in \Diff(\S_{g+2})$ that swaps~$m_{g+1}$
and~$m_{g+2}$, and for which $d^{m_{g+1}, m_{g+2}} \in \Diff(\S_g)$ is isotopic to~$\Id_{\S_g}$,
the automorphism~$\rho_{g+2}(d)$ of~$A_{g+2}$ is the identity on $\text{Im}(\omega_{g+1} \circ \omega_g)$; i.e.,
\[
\rho_{g+2}(d) \circ \omega_{g+1} \circ \omega_g = \omega_{g+1} \circ \omega_g.
\]
This holds by part~\eqref{it:omega-omega} of Lemma~\ref{lem:simplify}.

%\begin{rem}
%Note that, by equation~\eqref{eqn:duality},
%$\MCG(\S_{g+1}, m_{g+1}) \cong \MCG(\S_g,\bP_g)$ and $\MCG(\S_{g+2}, m_{g+2}) \cong \MCG(\S_{g+1},\bP_{g+1})$.
%As $d^2$ fixes both~$m_{g+1}$ and~$m_{g+2}$, by the $\MCG(\S_{g+1}, m_{g+1})$-equi\-vari\-ance of~$\omega_g$
%and the $\MCG(\S_{g+2}, m_{g+2})$-equivariance of~$\omega_{g+1}$, we obtain that
%\[
%\rho_{g+2}(d^2) \circ \omega_{g+1} \circ \omega_g = \omega_{g+1} \circ \omega_g \circ \rho_g\left((d^{m_{g+2}, m_{g+1}})^2\right)
%= \omega_{g+1} \circ \omega_g.
%\]
%Hence~$\rho_{g+2}(d)^2$ is automatically the identity on the image of~$\omega_{g+1} \circ \omega_g$, the additional
%assumption is that~$\rho_{g+2}(d)$ also satisfies this property.
%\end{rem}

Now consider case~\eqref{it:s2}. Again, without loss of generality, assume that $M$ has only two components,
namely~$M_s$ of genus~$g$ and~$M_s'$ of genus~$g'$. Furthermore, by relation~\eqref{it:0-sphere}, which we will check later,
we can replace~$\SS$ by $\ol{\SS}$ if necessary to ensure that $\SS(-1,0) \in M_s$ and $\SS(1,0) \in M_s'$.
Similarly to the previous case, one can deduce that
commutativity of the two surgery maps holds if and only if
\[
\mu_{g,g'+1} \circ (\Id_{A_g} \otimes \omega_{g'}) = \omega_{g+g'} \circ \mu_{g,g'},
%\mu_{g',g+1} \circ (\Id_{A_{g'}} \otimes \omega_g) &= \omega_{g+g'} \circ \mu_{g',g}.
\]
which is true by equation~\eqref{eqn:6} of Lemma~\ref{lem:corresp}.

Case~\eqref{it:s3} is similar to case~\eqref{it:s1}. Without loss of generality, we can assume that~$M$
consists of only two components of genera~$g$ and~$g'$, respectively.
Let~$s$ be an arbitrary curve on~$\S_{g+g'+1}$ that becomes isotopic to~$s_g$
after doing surgery along~$m:= m_{g+g'+1}$; we can obtain~$s$ by taking the connected sum~$s_g \# m_{g+g'+1}$
along any path.
Let~$C = b(\SS)$ and~$C' = b(\SS')$. Then there is a diffeomorphism $\phi \in \Diff(\Sigma_{g+2},M(\SS,\SS'))$
such that $\phi(s) = C$ and $\phi(m) = C'$. As~$s$ is isotopic to~$s_g$ in~$M(m)$, we can canonically identify
$M(m,s)$ with $\S_g \sqcup \S_{g'}$, and we let
\[
\psi := \phi^{m,s} \in \Diff(\S_g \sqcup \S_{g'}, M).
\]
By construction, $\psi(\bP_{g,g'}) = \SS$ and $\phi^m(\bP_{g+g'}) = \SS'$, hence $\psi_{\SS,\SS'} = \phi$.
There exists a diffeomorphism $h \in \Diff(\S_{g+g'+1})$ such that $h(s) = m$ and $h(m) = s$, and
such that $h^{m,s}$ is isotopic to the identity. Then we set $\phi' := \phi \circ h^{-1}$;
this satisfies $\phi'(s) = C'$ and $\phi'(m) = C$. Again, if $\psi' = (\phi')^{m,s}$, then
$\phi' = (\psi')_{\SS',\SS}$. For any $v \in F(M)$, we have
\[
\begin{split}
F_{M(\SS), \SS'} \circ F_{M,\SS}(v)(\phi) &= \omega_{g+g'} \circ F_{M,\SS}(v)(\psi_\SS) =
\omega_{g+1} \circ \mu_{g+g'}(v(\psi)), \text{ and} \\
F_{M(\SS'), \SS} \circ F_{M,\SS'}(v)(\phi') &= \omega_{g+g'} \circ \mu_{g+g'}(v(\psi')).
\end{split}
\]
Since~$h^{m,s}$ is isotopic to the identity, $\psi$ and~$\psi'$ are isotopic, hence $v(\psi) = v(\psi')$. Furthermore,
\[
F_{M(\SS'), \SS} \circ F_{M,\SS'}(v)(\phi') = \rho_{g+g'+1}((\phi')^{-1} \circ \phi) \circ F_{M(\SS'), \SS} \circ F_{M,\SS'}(v)(\phi),
\]
and $(\phi')^{-1} \circ \phi = h$.
Hence, in this case, relation~\eqref{it:commut} translates to
\begin{equation} \label{eqn:s3}
\rho_{g+g'+1}(h) \circ \omega_{g+g'} \circ \mu_{g,g'} = \omega_{g+g'} \circ \mu_{g,g'}
\end{equation}
for some diffeomorphism $h \in \Diff(\S_{g+g'+1})$ that swaps $s_g \# m_{g+g'+1}$ and ~$m_{g+g'+1}$,
and such that $h^{m_{g+g'+1},s_g \# m_{g+g'+1}}$ is isotopic to the identity.
This holds by part~\eqref{it:omega-mu} of Lemma~\ref{lem:simplify}.

Finally, in case~\eqref{it:s4}, we obtain the associativity relation
\[
\mu_{g + g',g''} \circ (\mu_{g,g'} \otimes \Id_{A_{g''}}) = \mu_{g, g' + g''} \circ (\Id_{A_g} \otimes \mu_{g', g''}),
\]
which follows from equation~\eqref{eqn:1} of Lemma~\ref{lem:corresp}.

We now study relation~\eqref{it:commut} when both~$\SS$ and~$\SS'$ are framed $1$-spheres.
This is straightforward if~$\SS$ and~$\SS'$ occupy different components of~$M$.
Hence, without loss of generality, we can assume that~$M$ is connected of genus~$g$.
Then we have the following three cases:
\begin{enumerate}
\item \label{it:m1} Both~$\SS$ and~$\SS'$ are non-separating. There are two subcases depending on whether
    $\SS \cup \SS'$ is separating or not.
\item \label{it:m2} $\SS$ separates $M$ into components of genera~$j$ and $g-j$, and~$\SS'$ is non-separating.
    By relation~\eqref{it:0-sphere}, we can assume that~$\SS'$ lies on the positive side of $\SS$.
\item \label{it:m3} Both $\SS$ and $\SS'$ are separating.
By relation~\eqref{it:0-sphere}, we can assume that~$\SS'$ lies on the positive side of~$\SS$,
and that $\SS$ is on the negative side of~$\SS'$.
They divide~$M$ into pieces of genera~$i$, $j$, and~$k$.
\end{enumerate}

First, consider case~\eqref{it:m1}, and suppose that $\SS \cup \SS'$ is non-separating.
Then we can choose parameterizations~$\phi$, $\phi' \in \Diff(\S_g, M)$ for which $\phi(l_g) = \SS$,
$\phi(l_{g-1}) = \SS'$, $\phi'(l_g) = \SS'$, and $\phi'(l_{g-1}) = \SS$, and such that $\phi^{l_g,l_{g-1}}$
and $(\phi')^{l_g,l_{g-1}}$ are isotopic. Furthermore, let $v \in F(M)$. Then, by definition,
\[
F_{M(\SS),\SS'} \circ F_{M,\SS}(v)(\phi_{\SS,\SS'}) = \a_{g-1} \circ \a_g(v(\phi)),
\]
and, symmetrically,
\[
F_{M(\SS'),\SS} \circ F_{M,\SS'}(v)(\phi'_{\SS',\SS}) = \a_{g-1} \circ \a_g(v(\phi')).
\]
Since~$\phi_{\SS,\SS'} = \phi^{l_g,l_{g-1}}$ and $\phi'_{\SS',\SS} = (\phi')^{l_g,l_{g-1}}$ are isotopic,
we have
\[
F_{M(\SS),\SS'} \circ F_{M,\SS}(v)(\phi_{\SS,\SS'}) = F_{M(\SS'),\SS} \circ F_{M,\SS'}(v)(\phi'_{\SS',\SS}).
\]
Furthermore, $v(\phi') = \rho_g((\phi')^{-1} \circ \phi)(v(\phi))$.
Hence relation~\eqref{it:commut} holds in this case if and only if for some diffeomorphism $d \in \Diff(\S_g)$
that swaps~$l_g$ and~$l_{g-1}$ and for which~$d^{l_g, l_{g-1}} \in \Diff_0(\S_{g-2})$, we have
\[
\a_{g-1} \circ \a_g \circ \rho_g(d) = \a_{g-1} \circ \a_g.
\]
This is precisely part~\eqref{it:alpha-alpha} of Lemma~\ref{lem:simplify}.

If, in case~\eqref{it:m1}, the union~$\SS \cup \SS'$ separates~$M$ into pieces of genera~$i$ and~$j$, respectively,
then~$g = i+j+1$. The model case is when $M = \S_g$, $\SS = s_i \# l_g$, and~$\SS' = l_g$. Similarly
to equation~\eqref{eqn:s3}, we obtain the relation
\[
\d_{i,j} \circ \a_g \circ \rho_g(u) = \d_{i,j} \circ \a_g,
\]
where~$u \in \Diff(\S_g)$ swaps $s_i \# l_g$ and~$l_g$,
and such that $u^{s_i \# l_g, l_g}$ is isotopic to the identity.
This follows from part~\eqref{it:delta-alpha} of Lemma~\ref{lem:simplify}.
%\[
%F_{M,\SS}(v)(\phi) = \a_g(v(\phi)).
%\]
%Let $\psi \in \Diff(\S_{g-1})$ be such that $\psi(l_{g-1}) = \phi_\SS^{-1}(\SS')$. Then
%\[
%F_{M(\SS),\SS'} \circ F_{M,\SS}(v)((\phi_\SS \circ \psi)_{\SS'}) = \a_{g-1} \circ \rho_{g-1}(\psi) \circ \a_g(v(\phi)).
%\]
%Similarly, if we reverse the roles of~$\SS$ and~$\SS'$, we obtain that
%\[
%F_{M(\SS'),\SS} \circ F_{M,\SS'}(v)((\phi_{\SS'} \circ \psi')_\SS) = \a_{g-1} \circ \rho_{g-1}(\psi') \circ \a_g(v(\phi)),
%\]
%where $\psi' \in \Diff(\S_{g-1})$ is such that $\psi'(m_{g-1}) = \phi_{\SS'}^{-1}(\SS)$. Hence, relation~\eqref{it:commut}
%holds in this case if and only if
%\[
%\a_{g-1} \circ \rho_{g-1}(\psi) \circ \a_g = \a_{g-1} \circ \rho_{g-1}(\psi') \circ \a_g.
%\]
%By the $\MCG(\S_{g-1},l_{g-1})$-equivariance of the map~$\a_{g-1}$, this is equivalent to
%\[
%\rho_{g-2}(\psi_{l_{g-1}}) \circ \a_{g-1} \circ \a_g = \rho_{g-2}(\psi'_{l_{g-1}}) \circ \a_{g-1} \circ \a_g.
%\]

Now consider case~\eqref{it:m2}. This leads to the relation
\[
\delta_{j, g-j-1} \circ \a_g = (\Id_{A_j} \otimes \a_{g-j}) \circ \delta_{j,g-j},
%\delta_{j-1, g-j} \circ \a_g &= (\a_j \otimes \Id_{A_{g-j}}) \circ \delta_{j,g-j},
\]
which is part of equation~\eqref{eqn:6} of Lemma~\ref{lem:corresp}.
%depending on whether~$\SS$ lies on the positive or negative side of~$\SS'$.
Case~\eqref{it:m3} leads to the coassocitivity relation
\[
(\Id_{A_i} \otimes \delta_{j,k}) \circ \delta_{i,j+k} = (\delta_{i,j} \otimes \Id_{A_k}) \circ \delta_{i+j,k},
\]
which holds by equation~\eqref{eqn:2} of Lemma~\ref{lem:corresp}.

Finally, we consider relation~\eqref{it:commut} when~$\SS$ is a framed $0$-sphere and~$\SS'$ is a framed $1$-sphere.
Without loss of generality, we can assume that~$\SS$ intersects the component of~$M$ that~$\SS'$ occupies.
Here we distinguish the following cases:
\begin{enumerate}
\item \label{it:d1} $\SS$~lies in a single component~$M_s$ and~$\SS' \subset M_s$ is non-separating.
\item \label{it:d2} $\SS$~lies in a single component~$M_s$ and~$\SS'$ separates~$M_s$ into
pieces of genera~$i$ and~$g-i$. There are three subcases depending on whether~$\SS$ lies completely
to the left of~$\SS'$, on both sides, or completely to the right.
\item \label{it:d3} $\SS$ occupies the components~$M_s$ and~$M_s'$, and~$\SS' \subset M_s'$ is non-separating.
\item \label{it:d4} $\SS$ occupies the components~$M_s$ and~$M_s'$, and~$\SS'$ separates~$M_s'$
into components of genera~$i$ and~$g'-i$. There are two subcases depending on whether the point of~$\SS$
in~$M_s'$ lies to the left or to the right of~$\SS'$. By relation~\eqref{it:0-sphere}, we can assume it lies
to the left.
\end{enumerate}

In case~\eqref{it:d1}, without loss of generality, we can assume that~$M$ is connected.
Furthermore, by naturality, we can assume that $M = \S_g$, $\SS = \PP_g$, and~$\SS' = l_g$
(or, more precisely, we work with a parametrization $\phi \in \Diff(\S_g, M)$ such that $\phi(\PP_g) = \SS$
and $\phi(l_g) = \SS'$).
Let $d \in \Diff(\S_{g+1})$ be such that $d(l_g) = l_{g+1}$, and
$d^{l_g} = \Id_{\S_g}$ after the natural identifications of~$\S_{g+1}(l_g)$
and~$\S_{g+1}(l_{g+1})$ with~$\S_g$. As we already know the surgery maps are natural, the following diagram
is commutative:
\[
\xymatrix{
  F(\S_{g+1}) = A_{g+1} \ar[r]^-{\a_{g+1}}  & F(\S_g) = A_g  \\
  A_{g+1}  \ar[r]^-{F_{\S_{g+1},l_g}} \ar[u]^-{\rho_{g+1}(d)} & F(\S_{g+1}(l_g)) \cong A_g. \ar[u]_{F\left(d^{l_g}\right)}}
\]
%\[
%F_{\S_{g+1},l_g} = F\left(d^{l_g}\right)^{-1} \circ \a_{g+1} \circ \rho(d) \colon F(\S_{g+1})
%= A_{g+1} \to F(\S_{g+1}, l_g) \cong A_g.
%\]
By construction, $d^{l_g}$ is isotopic to~$\Id_{\S_g}$, so $F(d^{l_g}) = \Id_{A_g}$, and
\[
F_{\S_{g+1},l_g} = \a_{g+1} \circ \rho_{g+1}(d).
\]
%Let $\phi$, $\phi' \in \Diff(\S_{g+1}, M(\SS))$ be such that $\phi(m_{g+1})$ is the belt circle~$C$ of the
%handle attached along~$\SS$, and such that $\phi|_{l_{g+1}} = \SS'$. Furthermore,
%suppose that $\phi'|_{l_{g+1}} = \SS'$ and $\phi'(m_{g+1}) = C$, and that
%$\phi^{m_{g+1}}$ and $(\phi')^{m_{g+1}}$ are isotopic.
%Note that $\phi^{m_{g+1}}$
%We write $\psi = \phi^{l_g} = \phi_{\SS'}$ and $\psi' = \phi$
%Then
%\[
%F_{M(\SS),\SS'} \circ F_{M,\SS}(v)(\phi_{\SS'}) = \a_{g-1} \circ \a_g(v(\phi)),
%\]
Hence, from relation~\eqref{it:commut}, we obtain the condition
\[
\omega_{g-1} \circ \a_g = \a_{g+1} \circ \rho_{g+1}(d) \circ \omega_g,
\]
where $d \in \Diff(\S_{g+1})$ is such that $d(l_g) = l_{g+1}$, and
$d^{l_g} = \Diff_0(\S_g)$ after the natural identifications of~$\S_{g+1}(l_g)$
and~$\S_{g+1}(l_{g+1})$ with~$\S_g$. Notice that the diffeomorphism~$d$ coincides
with~$L_{g+1}$ acting on~$\S_{g+1}$ and interchanging~$l_g$ and~$l_{g+1}$.
Hence, this holds by part~\eqref{it:alpha-omega} of Lemma~\ref{lem:simplify} for $g = 1$,
and by property~\eqref{it:L} of Definition~\ref{def:repr} for $g > 1$.

In case~\eqref{it:d2}, when $\SS$ lies to the left of~$\SS'$, we replace~$\SS'$
by~$\ol{\SS}'$ and apply relation~\eqref{it:0-sphere}. The other two
cases lead to the relations
\begin{equation} \label{eqn:d2}
\begin{split}
%\delta_{i+1,j} \circ \omega_g &= (\omega_i \otimes \Id_{A_j}) \circ \delta_{i,j} \\
\delta_{i,j+1} \circ \omega_g &= (\Id_{A_i} \otimes \omega_j) \circ \delta_{i,j}, \text{ and} \\
\a_{g+1} \circ \rho_{g+1}(\sigma_{g+1,i}) \circ \omega_g &= \mu_{i,g-i} \circ \delta_{i,g-i}.
\end{split}
\end{equation}
The first line of equation~\eqref{eqn:d2} follows from equation~\eqref{eqn:6} of Lemma~\ref{lem:corresp}.
The second equation, which is condition~\eqref{it:frobtwist} of Definition~\ref{def:repr},
can be derived by reversing the argument in the proof of Proposition~\ref{prop:AF} showing that
this holds for every $J$-algebra assigned to a TQFT.
Indeed, for the model case when $\SS = \bP$ and $\SS' = s_i$,
see the second surface on the top of Figure~\ref{fig:relations}.
%The square in the upper right of that figure commutes if and only if the large pentagon commutes.
In the proof of Proposition~\ref{prop:AF}, we derived the commutativity of the large pentagon
from the commutativity of the upper right square. We now use the reverse implication,
which follows from the commutativity of the other two small squares and the lower left triangle,
which in turn is a consequence of naturality.

In case~\eqref{it:d3}, the necessary and sufficient condition for relation~\eqref{it:commut} to hold is
\[
\a_{g+g'} \circ \mu_{g,g'} = \mu_{g,g'-1} \circ (\Id_{A_g} \otimes \a_{g'}),
\]
which follows from equation~\eqref{eqn:6} of Lemma~\ref{lem:corresp}.
There is a corresponding relation if $\SS'$ lies on the other side of~$\SS$, but that follows
from this one by relation~\eqref{it:0-sphere}.

Finally, in case~\eqref{it:d4}, we obtain
\[
%\begin{split}
%\delta_{i,g+g'-i} \circ \mu_{g,g'} &=  (\Id_{A_i} \otimes \mu_{g-i,g'}) \circ (\delta_{i,g-i} \otimes \Id_{A_{g'}}) \\
\delta_{g+i,g'-i} \circ \mu_{g,g'} =  (\mu_{g,i} \otimes \Id_{A_{g'-i}})  \circ (\Id_{A_g} \otimes \delta_{i,g'-i}),
%\end{split}
\]
which is the Frobenius condition~\eqref{eqn:3} in Lemma~\ref{lem:corresp}.

We now consider relation~\eqref{it:birth}; i.e., where $\SS' \subset M(\SS)$ intersects the belt sphere of~$\SS$ once.
If~$\SS = 0$ and~$\SS'$ is a $0$-sphere that has one point on the new~$S^2$ component and another point on a
component of~$M$ of genus~$g$, then we can assume $\SS'(-1,0) \in S^2$ by relation~\eqref{it:0-sphere}.
This leads to the relation
\[
\mu_{0,g} \circ (\eps \otimes \Id_{A_g}) = \Id_{A_g};
\]
i.e, that $1 = \eps(1)$ is a left unit for~$\mu$.
If~$\SS$ is a $0$-sphere, it has to lie in a single component of~$M$. Then we obtain the relation
\[
\a_{g+1}\circ \omega_g = \Id_{A_g},
\]
which is equation~\eqref{eqn:5} of Lemma~\ref{lem:corresp}.
If~$\SS$ is a $1$-sphere, then it has to be inessential, and $\SS'$ is the $2$-sphere split off by~$\SS$.
By relation~\eqref{it:0-sphere}, we can assume this $2$-sphere lies on the negative side of~$\SS$.
We obtain the relation
\[
%\begin{split}
(\tau \otimes \Id_{A_g}) \circ \delta_{0,g} = \Id_{A_g},
%(\Id_{A_g} \otimes \tau) \circ \delta_{g,0} &= \Id_{A_g};
%\end{split}
\]
which holds since~$\tau$ is a left counit for the coproduct~$\delta$.

Finally, consider relation~\eqref{it:0-sphere}. Think of~$\S_g$ as being standardly embedded in~$\R^3$ with
center lying at the origin, and such that the $x$-axis intersects it in the points~$p_g$ and~$q_g$.
Let $\iota_g \in \Diff(\S_g)$ be the involution
of~$\S_g$ that is a $\pi$-rotation about the $z$-axis
and swaps the $i$-th and $(g-i)$-th $S^1 \times S^2$ factor of~$\S_g$.
The $z$-axis passes through $s_{g/2}$ if~$g$ is even, and through the hole of the $(g+1)/2$-th $S^1 \times S^2$
summand when~$g$ is odd. This has the property that $\iota_g(s_i) = s_{g-i}$ for every $i \in \{\,0,\dots, g \,\}$.

First, suppose that~$\SS$ is a $0$-sphere that occupies two components of~$M$. Then the model scenario
is $M = \S_i \sqcup \S_j$ and~$\SS = \bP_{i,j}$. Let~$\sigma \colon \S_i \sqcup \S_j \to \S_j \sqcup \S_i$
be the diffeomorphism that swaps the two components of~$\S_i \sqcup \S_j$, then acts via $\iota_i \sqcup \iota_j$.
Then $\sigma(\ol{\bP}_{i,j}) = \bP_{j,i}$ and $\sigma^{\ol{\SS}} = \iota_{i+j}$. Hence,
using that $F_{M,\SS} = F_{M,\ol{\SS}}$ and the naturality of the surgery maps,
relation~\eqref{it:0-sphere} amounts to the relation
\[
\rho(\iota_{i+j}) \circ \mu_{i,j}(x \otimes y) = \mu_{j,i}\left(\rho_j(\iota_j)(y) \otimes \rho_i(\iota_i)(x)\right)
\]
for every $x \in A_i$ and $y \in A_j$. As $x^* = \rho(\iota_i)(x)$ for
every $i \in \Z_{\ge 0}$ and~$x \in A_i$, we can rewrite this relation as
\[
\mu_{i+j}(x,y)^* = \mu_{j,i}(y^* \otimes x^*),
\]
which is equation~\eqref{eqn:4} of Lemma~\ref{lem:corresp}.
%As $\iota_i$ is an involution, $x^{**} = x$. Furthermore, since $\iota_i \in \Diff_0(\S_i)$ for $i \in \{0,1\}$,
%we see that $x^* = x$ for $x \in A_0 \cup A_1$. Sometimes we will also use the notation~$*_i$ for~$\rho(\iota_i)$.

Now consider the case when~$\SS$ is a $0$-sphere in a single component of~$M$. Then the model case is
$M = \S_g$ and~$\SS = \bP_g$. Let $t_g \in \Diff(\S_g)$ be the diffeomorphism that is characterized
by $t_g(m_g) = -m_g$ and $t_g^{m_g} \in \Diff_0(\S_{g-1})$. Then relation~\eqref{it:0-sphere} in this
case is equivalent to the relation
\[
\rho_{g+1}(t_{g+1}) \circ \omega_g = \omega_g,
\]
which is part~\eqref{it:t-omega} of Lemma~\ref{lem:simplify}.

Applied to separating $1$-spheres, we obtain the relation
\[
T_{i,j} \circ \delta_{i,j}(x) = \delta_{j,i}(x^*),
\]
where $T_{i,j} \colon A_i \otimes A_j \to A_j \otimes A_i$ is given by $T_{i,j}(v \otimes w) = w^* \otimes v^*$.
This is part of equation~\eqref{eqn:4} of Lemma~\ref{lem:corresp}

When~$\SS$ is a non-separating $1$-sphere, we obtain that
\[
\alpha_g = \alpha_g \circ \rho(r_g),
\]
where $r_g \in \Diff(\S_g)$ is characterized by $r_g(l_g) = -l_g$ and $(r_g)^{l_g} \in \Diff_0(\S_{g-1})$.
This is precisely part~\eqref{it:alpha-rho} of Lemma~\ref{lem:simplify}.
\end{proof}

\begin{proof}[Proof of Theorem~\ref{thm:2+1}]
By Proposition~\ref{prop:AF}, for every (2+1)-dimensional TQFT~$F$, the tuple $J(F)$
defined in Section~\ref{sec:assignment} is a J-algebra.
Conversely, Proposition~\ref{prop:FA} ensures that, given a J-algebra~$\AA$,
the associated functor $T(\AA)$ is a TQFT. Both of these assignments are functorial:
Given a natural transformation $\eta \colon F \Rightarrow F'$ of TQFTs,
the maps $\eta_{\S_i} \colon F(\S_i) \to F'(\S_i)$
form a J-algebra homomorphism
\[
J(\eta) \colon J(F) \to J(F').
\]
Conversely, a J-algebra homomorphism $h \colon \AA \to \AA'$ extends to a natural isomorphism
$T(h) \colon T(\AA) \Rightarrow T(\AA')$ in a straightforward manner.
Indeed, for a surface $M$ of diffeomorphism type $\S_{\ug}$, we obtain
$T(h)_M \colon T(\AA)(M) \to T(\AA')(M)$ by mapping the factor $A_{\ug}$ of $T(\AA)(M)$ corresponding
to $\phi \in \Diff(\S_{\ug},M)$ to the factor $A'_{\ug}$ of $T(\AA')(M)$ corresponding to the same
parametrization~$\phi$ via
\[
h_{\ug} :=  h_1^{\otimes n_1} \otimes \dots \otimes h_r^{\otimes n_r},
\]
where $h_i = h|_{A_i} \colon A_i \to A_i'$.

We finally show that the functors $J \circ T$ and $T \circ J$ are naturally isomorphic to the identity.
Let $\AA = (\A,\a,\omega,\{\rho_i \,\colon\, i \in \N\})$ be a J-algebra. Then
the J-algebra $J \circ T(\AA)$ in grading $g$ is given by $T(\AA)(\S_g)$.
This is a subset of $\prod_{d \in \Diff(\S_g,\S_g)} A_g$, and projecting it onto the $\Id_{\S_g}$ factor gives a
natural isomorphism to $A_g$.

Now consider $T \circ J$. Let $F \colon \Cob_2 \to \Vect$ be a TQFT, and let $\AA = J(F)$ be
the corresponding J-algebra. We are going to construct
a monoidal natural isomorphism $\eta \colon T \circ J \Rightarrow \Id$.
In particular, we define
\[
\eta_F \colon T \circ J(F) = T(\AA) \to F,
\]
which is itself is a monoidal natural isomorphism. If $M$ is a surface,
then we need to give an isomorphism
\[
\eta_{F,M} \colon T(\AA)(M) \to F(M).
\]
Pick a parametrization $\phi \in \Diff(\S_{\ug}, M)$; then this induces an isomorphism
\[
F(\phi) \colon F(\S_{\ug}) \to F(M).
\]
The monoidal structure of $F$ gives an isomorphism
$\Phi_{\ug} \colon A_{\ug} \to F(\S_{\ug})$, as $\AA = J(F)$ and hence $A_i = F(\S_i)$ for every $i \in \N$.
If
\[
p_\phi \colon \prod_{\psi \in \Diff(\S_{\ug},M)} A_{\ug} \to A_{\ug}
\]
is the projection onto the $\phi$ factor, then it restricts to an isomorphism
\[
p_\phi|_{T(\AA)(M)} \colon T(\AA)(M) \to A_{\ug}.
\]
Finally, we set
\[
\eta_{F,M} := F(\phi) \circ \Phi_{\ug} \circ p_\phi|_{T(\AA)(M)}.
\]
We leave it to the reader to check that this is independent of the choice of~$\phi$,
that $\eta_F$ is indeed a monoidal natural isomorphism, and that $\eta$
is a monoidal natural isomorphism from $T \circ J$ to the identity.
\end{proof}

\section{Examples and applications} \label{sec:applications}

Dijkgraaf~\cite{Dij} noted that if~$F$ is an $(n+1)$-dimensional TQFT, then~$F(S^n)$ carries the structure
of a commutative Frobenius algebra that acts on $F(M)$ for every connected $n$-manifold~$M$.
We say that~$F$ is \emph{based} on the Frobenius algebra~$F(S^n)$.
Sawin~\cite[Theorem~1]{Sawin2} proved the following result about direct sum decompositions of TQFTs.

\begin{prop} \label{prop:decomp}
Suppose the TQFT~$F$ is based on a direct sum $A = A_1 \oplus A_2$ of Frobenius algebras.
Then there exist TQFTs~$F_1$ and~$F_2$, based on~$A_1$ and~$A_2$, respectively, such that
$F \cong F_1 \oplus F_2$. Conversely, if~$F$ decomposes as a direct sum of TQFTs, then the associated
Frobenius algebra decomposes as a corresponding direct sum of Frobenius algebras.
\end{prop}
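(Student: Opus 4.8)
\textbf{Proof proposal for Proposition~\ref{prop:decomp}.}

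The plan is to use the framework of Theorem~\ref{thm:TQFT} to build the summands $F_1$ and $F_2$ by restricting the data of $F$ along the idempotents of the Frobenius algebra $A = A_1 \oplus A_2$. Write $e_1, e_2 \in A = F(S^n)$ for the central idempotents with $e_1 + e_2 = 1$ and $e_i A = A_i$; these are unique since $A_1$ and $A_2$ are the only terms in this particular direct sum decomposition. The key geometric input is Dijkgraaf's observation (recalled before the statement): for every connected $n$-manifold $M$, the Frobenius algebra $F(S^n)$ acts on $F(M)$ via the cobordism obtained from $M \times I$ by an interior $0$-handle followed by tubing the resulting $S^n$ into $M$; call this action $\star$. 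I would first verify that $e_1 \star -$ and $e_2 \star -$ are complementary idempotent endomorphisms of $F(M)$, so that $F(M) = (e_1 \star F(M)) \oplus (e_2 \star F(M))$, and then \emph{define} $F_j(M) := e_j \star F(M)$ for connected $M$, and $F_j$ on disjoint unions by tensoring the values on components, exactly as in Durhuus--Jonsson's direct sum construction.

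Next I would equip each $F_j$ with the structure of Theorem~\ref{thm:TQFT}: a functor $\Man_n \to \Vect$ and surgery maps. For a diffeomorphism $h \colon M \to M'$ of connected manifolds, $F(h)$ commutes with the $F(S^n)$-action (because the gluing cobordism defining $\star$ is natural with respect to $h$), so $F(h)$ restricts to $F_j(M) \to F_j(M')$; for disconnected $M$ one takes the tensor product of the restricted maps. For a framed sphere $\SS \subset M$ with $M$ connected, the surgery map $F_{M,\SS}$ likewise intertwines the $F(S^n)$-actions on source and target provided $M(\SS)$ stays connected (one can isotope the tubing arc off $\SS$), so it restricts to $(F_j)_{M,\SS}$; the cases where $M$ or $M(\SS)$ is disconnected or where $\SS$ is a $0$-sphere or an $n$-sphere have to be handled by compatibility of $\star$ with the monoidal structure and with the Frobenius operations on $A$ itself. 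Then one checks that relations \eqref{it:isot}--\eqref{it:0-sphere} and the monoidal square \eqref{eqn:monoidal} are inherited from those of $F$ by applying the idempotent $e_j \star -$ throughout; this is the step that requires the most care but is essentially formal, since each relation is an identity of morphisms that is preserved by composing with a natural idempotent. Theorem~\ref{thm:TQFT} then produces TQFTs $F_j \colon \Cob_n \to \Vect$, and $F \cong F_1 \oplus F_2$ follows by comparing cobordism maps via equation~\eqref{eqn:comparison}: both sides are the same product of diffeomorphism maps and surgery maps, now decomposed along $e_1 + e_2 = 1$. That $F_j$ is based on $A_j$ is immediate since $F_j(S^n) = e_j \star F(S^n) = e_j A = A_j$ as a Frobenius algebra.

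For the converse direction, suppose $F \cong G_1 \oplus G_2$. By the definition of the direct sum of TQFTs, $F(S^n) = G_1(S^n) \oplus G_2(S^n)$ as vector spaces, and one checks that the Frobenius structure on $F(S^n)$ induced by the pair-of-pants--type cobordisms (which here, since $S^n$ is connected, are glued from handle cobordisms whose images under $F$ split as direct sums) is exactly the direct sum of the Frobenius structures on the $G_j(S^n)$; the unit of $F(S^n)$ is the sum of the two units, the multiplication is block-diagonal, and the trace is the sum of the traces. This gives the required corresponding decomposition $A \cong B_1 \oplus B_2$ of Frobenius algebras with $B_j = G_j(S^n)$.

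The main obstacle I anticipate is the bookkeeping around disconnected manifolds and cobordisms and around the $0$- and $n$-dimensional surgeries: the action $\star$ is only defined componentwise, and $0$-handles create new $S^n$ components while $n$-spheres can disconnect, so one must check that the idempotent $e_j \star -$ is compatible with births, deaths, and splittings of components in a way that matches the Durhuus--Jonsson prescription of ``tensor on disconnected pieces.'' Concretely, the delicate point is that when a framed $(n-1)$-sphere separates a connected $M$ into two pieces, the surgery map $F_{M,\SS}$ must send $e_j \star F(M)$ into the subspace of $F(M(\SS)) = F(M_1) \otimes F(M_2)$ spanned by tensors $(e_j \star -) \otimes (e_j \star -)$ with \emph{matching} labels $j$ on the two factors; this is exactly the statement that the comultiplication $A \to A \otimes A$ is block-diagonal with respect to $A = A_1 \oplus A_2$, which holds precisely because the $e_j$ are central idempotents. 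Once this compatibility is isolated, everything else reduces to applying a fixed natural idempotent to the already-established relations of $F$.
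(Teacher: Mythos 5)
The paper does not actually prove Proposition~\ref{prop:decomp}: it is quoted from Sawin~\cite[Theorem~1]{Sawin2} and used as a black box, so your argument is necessarily a different route. What you propose is essentially a reconstruction of Sawin's idempotent argument, recast in the surgery framework of Theorem~\ref{thm:TQFT}: decompose $F(M)$ for connected $M$ using the commuting idempotents $e_j\star-$, note that diffeomorphism and surgery maps intertwine the $F(S^n)$-action because the tube defining $\star$ can be slid through the (connected) trace of the surgery, and reduce the matching-label statements (for instance for separating framed $(n-1)$-spheres, or for $0$-spheres joining two components) to centrality and orthogonality of $e_1,e_2$. That is the correct skeleton, and the payoff of doing it this way rather than citing \cite{Sawin2} is exactly the one you identify: by equation~\eqref{eqn:comparison} the whole proposition reduces to finitely many checks on the generating diffeomorphism and surgery maps.

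Two points need repair before this is a complete proof. First, the structure maps of $F_j$ are not all literal restrictions of those of $F$: for $\SS=0$ the map $F_{M,0}$ inserts the unit $1=e_1+e_2$ into the new $S^n$ factor, so it does not carry $F_j(M)$ into $F_j(M)\otimes A_j$. You must define $(F_j)_{M,0}$ as the restriction followed by $\Id\otimes(e_j\star-)$ (equivalently, insertion of $1_{A_j}$), and then recheck the relations involving births, in particular relation~\eqref{it:birth}, where the point is that $1_{A_j}\star x=x$ for $x\in F_j(M)$; deaths, by contrast, do restrict correctly since the trace of $A$ restricts to that of $A_j$. This also affects your final step ``both sides are the same product of maps decomposed along $e_1+e_2=1$'': under the Durhuus--Jonsson conventions~\cite{DJ} a birth in $F_1\oplus F_2$ inserts $1_1+1_2$ (their value on a connected cobordism with empty incoming end is a sum, not a direct sum), and one must spell this out to see that the generator maps of $F$ and of $F_1\oplus F_2$ agree under the canonical identification of $(F_1\oplus F_2)(M)$ with $F(M)$ for disconnected $M$ as well. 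Second, a small slip: framed $n$-spheres never disconnect anything, they delete $S^n$ components; it is the framed $(n-1)$-spheres that can separate, which is in fact the case you analyse. With these adjustments the argument goes through and recovers Sawin's theorem within the present framework.
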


He also gave a classification of indecomposable commutative Frobenius algebras
over an algebraically closed field~$\F$. For each $\lambda \in \F^\times$,
let~$S_\lambda$ be the algebra~$\F$ with counit~$\tau(x) = \lambda^{-1} x$. Also, let~$A$ be a commutative
algebra spanned by the identity and at least one nilpotent, and suppose the socle,
the space of all~$x \in A$ such that $ax = 0$ for all nilpotent $a \in A$, is one-dimensional.
Let~$\tau$ be any linear functional on~$A$ which is non-zero on the socle. We denote by~$N_{A,\tau}$ the
algebra~$A$ together with the functional~$\tau$. The following is~\cite[Proposition~2]{Sawin2}.

\begin{prop} \label{prop:base}
$S_\lambda$ and $N_{A,\tau}$ are indecomposable Frobenius algebras. Further, every
commutative indecomposable Frobenius algebra is isomorphic to one of these, and
these are nonisomorphic up to algebra isomorphism.
\end{prop}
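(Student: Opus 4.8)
The plan is to identify the indecomposable commutative Frobenius algebras with the \emph{local} ones, and then run the structure theory of Artinian local rings over an algebraically closed field. First I would observe that for a commutative Frobenius algebra, indecomposability as a Frobenius algebra coincides with indecomposability as an algebra: if $A = A_1 \oplus A_2$ is a splitting into nonzero ideals, then $A_1 A_2 = 0$, so the pairing $\sigma(a,b) = \theta(ab)$ is block diagonal for this splitting, and nondegeneracy of $\sigma$ forces nondegeneracy of each block; hence $(A_i, \theta|_{A_i})$ is again a Frobenius algebra and $A$ decomposes as a direct sum of Frobenius algebras. Conversely a Frobenius direct sum is in particular an algebra direct sum. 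Since a finite-dimensional commutative algebra is a finite product of local algebras, an indecomposable commutative Frobenius algebra is exactly a \emph{local} commutative Frobenius algebra; and since $\F$ is algebraically closed, such an $A$ has residue field $A/\mathfrak m = \F$, where $\mathfrak m = \text{rad}(A)$ is the nilpotent maximal ideal, so $A = \F\cdot 1 \oplus \mathfrak m$ as $\F$-vector spaces.

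The key non-formal ingredient is the description of the Frobenius property in terms of the socle $\text{soc}(A) = \text{Ann}(\mathfrak m)$. In one direction, if $A$ is Frobenius then $\sigma$ gives an isomorphism of $A$-modules $A \cong A^*$; as $A$ is local commutative, $A^*$ has simple socle (the dual of the top $A/\mathfrak m = \F$), so $\text{soc}(A)$ is one-dimensional. For the converse I would argue directly: if $A$ is local commutative with $\dim_\F \text{soc}(A) = 1$ and $\theta \colon A \to \F$ is any linear functional with $\theta|_{\text{soc}(A)} \neq 0$, then $\sigma(a,b) = \theta(ab)$ is nondegenerate, so $(A,\theta)$ is a Frobenius algebra. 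Indeed, the radical of $\sigma$ is the ideal $\{a \in A : \theta(aA) = 0\}$; if it were nonzero it would contain a minimal ideal, and (the socle being one-dimensional) the unique minimal ideal of $A$ is $\text{soc}(A)$, so $\theta$ would vanish on $\text{soc}(A)$, a contradiction. This simultaneously shows that $N_{A,\tau}$ is a Frobenius algebra, and the $S_\lambda$ are trivially Frobenius since $\sigma(x,y) = \lambda^{-1}xy$ is nondegenerate on the one-dimensional space $\F$.

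Next I would verify exhaustiveness. Let $(A,\theta)$ be an indecomposable commutative Frobenius algebra; by the above, $A$ is local with $A = \F\cdot 1 \oplus \mathfrak m$. If $\mathfrak m = 0$ then $A \cong \F$ and nondegeneracy forces $\theta(1) \neq 0$, so setting $\lambda = \theta(1)^{-1} \in \F^\times$ gives $(A,\theta) \cong S_\lambda$. If $\mathfrak m \neq 0$ then $A$ is spanned by $1$ together with the nilpotents $\mathfrak m$, and being Frobenius it has one-dimensional socle; moreover $\theta$ cannot vanish on $\text{soc}(A)$, for otherwise, writing $\text{soc}(A) = \F s$ and using $s\cdot A \subseteq \text{soc}(A)$, we would get $\theta(sA) = 0$, contradicting nondegeneracy. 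Hence $(A,\theta) = N_{A,\theta}$ is on the list. Finally, each $S_\lambda$ and each $N_{A,\tau}$ is indecomposable, being local (a local ring has no nontrivial idempotents). Together with the previous paragraphs this gives the first two assertions.

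For the non-redundancy of the list: $\dim S_\lambda = 1$ while $\dim N_{A,\tau} \geq 2$ (as $\mathfrak m \neq 0$), so the two families are disjoint. An isomorphism of Frobenius algebras is in particular an algebra isomorphism carrying $1$ to $1$; the only such self-map of $\F$ is the identity, so $S_\lambda \cong S_{\lambda'}$ forces $\tau = \tau'$, i.e.\ $\lambda = \lambda'$, while $N_{A,\tau} \cong N_{A',\tau'}$ forces $A \cong A'$ as algebras, so modulo algebra isomorphism the list is irredundant. The main obstacle is the socle characterization of the Frobenius property, and in particular the converse direction above — that a \emph{generic} functional on a local commutative algebra with one-dimensional socle already yields a nondegenerate trace pairing; the remainder is routine commutative Artinian ring theory over an algebraically closed field.
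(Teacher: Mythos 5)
Your proof is correct. Note that the paper itself gives no argument for this proposition at all: it is quoted verbatim from Sawin \cite[Proposition~2]{Sawin2} and used as a black box, so there is nothing in the text to compare your proof against. Your route --- reduce indecomposability of a commutative Frobenius algebra to indecomposability of the underlying Artinian algebra via block-diagonality of $\sigma$, identify the indecomposables with the local algebras, and then characterize the Frobenius property by $\dim_\F \mathrm{soc}(A)=1$ together with $\theta|_{\mathrm{soc}(A)}\neq 0$ --- is the standard one and is carried out soundly; in particular the converse direction (any functional not vanishing on the one-dimensional socle gives a nondegenerate pairing, because a nonzero ideal in the radical of $\sigma$ must contain the unique minimal ideal $\mathrm{soc}(A)$) is exactly the non-formal step, and your Nakayama-style justification that every minimal ideal sits inside the socle is right.

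One small caveat of interpretation rather than of correctness: the clause ``these are nonisomorphic up to algebra isomorphism'' is a paraphrase of Sawin's more precise statement, which also records that $N_{A,\tau}$ and $N_{A,\tau'}$ with the \emph{same} underlying $A$ are isomorphic as Frobenius algebras precisely when $\tau'=\tau\circ\psi$ for an algebra automorphism $\psi$ of $A$. You establish that the two families are disjoint, that $S_\lambda$ determines $\lambda$, and that $N_{A,\tau}\cong N_{A',\tau'}$ forces $A\cong A'$, which is a fair reading of the statement as printed; if you wanted to match Sawin exactly you would add the observation about when two functionals on the same $A$ give isomorphic Frobenius structures, but nothing in the present paper's use of the proposition depends on that refinement.
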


We now turn our attention to (2+1)-dimensional TQFTs. By Proposition~\ref{prop:decomp}, it suffices to
focus on irreducible theories as every TQFT is a direct sum of these.
An important class of (2+1)-dimensional TQFTs  are ones that extend to one-manifolds, these
are called (1+1+1)-dimensional TQFTs.
Bartlett et.~al~\cite{BDSV, BDSV2} showed that (1+1+1)-dimensional TQFTs correspond to
anomaly free modular tensor categories. Given an anomaly free modular tensor category, they also
describe how to obtain $F(\S_g)$ by taking the vector space generated by string diagrams inside the
handlebody bounded by~$\S_g$ in~$\R^3$ and labeled by simple objects, modulo
equivalence relations in the category. They give the action of elementary cobordisms as well.
This is essentially the construction of Reshetikhin and Turaev~\cite{RT}.
It is a fundamental open question whether every (2+1)-dimensional TQFT~$F$ comes from a (1+1+1)-dimensional
theory. If it does and if~$F$ is irreducible, then $\dim F(S^2) = 1$, so it has to be based on
one of the Frobenius algebras~$S_\lambda$ according to Proposition~\ref{prop:base}.

Consider condition~\eqref{it:frobtwist} of Definition~\ref{def:repr} for~$n = 0$ and $i = 0$:
\begin{equation} \label{eqn:spec}
\a_1 \circ \rho_1(\sigma_{1,0}) \circ \omega_0 = \mu_{0,0} \circ \delta_{0,0}.
\end{equation}
The diffeomorphism $\sigma_{1,0} = a_1 \circ H_{1,0} \in \Diff(T^2)$ induces the $S$-matrix
\[\begin{pmatrix}
0 & -1 \\
1 & 0
\end{pmatrix}\]
on $H_1(T^2)$ in the basis~$\langle m , l \rangle$ since $h_{1,0}$ is isotopic to the identity.
If $\rho_1(\sigma_{1,0}) = \Id_{A_1}$, then the left-hand side of equation~\eqref{eqn:spec}
becomes $\a_1 \circ \omega_0 = \Id_{A_0}$. This means that $\mu_{0,0} \circ \delta_{0,0} = \Id_{A_0}$; i.e.,
that the Frobenius algebra~$A_0$ is \emph{special}. The only special Frobenius algebra among~$S_\lambda$
and~$N_{A,\tau}$ is~$S_0$. So, if the (2+1)-dimensional TQFT satisfies $\rho_1(\sigma_{1,0}) = \Id_{A_1}$,
then it is based on the direct sum of finitely many copies of~$S_0$, and is based on~$S_0$ if it is indecomposable.

\begin{ex} \label{ex:simple}
Consider the \gnf-algebra $\A = (A,\mu,\d,\eps,\tau,*)$, where $(A,\mu)$ is the polynomial algebra $\F[x]$
with grading $A_i = \F \langle x^i \rangle$, coproduct
\[
\d(x^n) = \sum_{i = 0}^n x^i \otimes x^{n-i},
\]
unit $\eps = \Id_\F \colon \F \to A_0$, partial counit $\tau = \Id_\F \colon A_0 \to \F$, and involution $* = \Id_A$.
We define the modular splitting $(\a,\omega)$ by taking $\a(x^i) = x^{i-1}$ for $i > 0$ and $\a(1) = 0$,
and $\omega$ is multiplication by~$x$. If we define each $\rho_i \colon \M_i \to \End(A_i)$ to be trivial,
then this satisfies all the properties of a mapping class group representation. Hence, this data gives rise to
a (2+1)-dimensional TQFT~$F_1$. This assigns~$\F$ to any surface, and the identity morphism to any cobordism
between two surfaces, under the identifications $\F^{\otimes k} \cong \F$.
\end{ex}

\begin{lem} \label{lem:triv}
Let $(\A,\a,\omega)$ be a split \gnf algebra with a mapping class group representation
such that $\rho_i \colon \M_i \to A_i$ is trivial for some~$i \in \N$.
Then~$\rho_j$ is also trivial for every~$j < i$.
\end{lem}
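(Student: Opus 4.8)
The statement is a downward-propagation: triviality of $\rho_i$ forces triviality of $\rho_{i-1}$ (and then inductively all $\rho_j$ for $j<i$). The natural strategy is to show $\rho_{i-1}$ is trivial using the compatibility maps $\omega_{i-1}\colon A_{i-1}\to A_i$ and $\a_i\colon A_i\to A_{i-1}$, exploiting that $\a_i\circ\omega_{i-1}=\Id_{A_{i-1}}$ (equation~\eqref{eqn:5}) so $\omega_{i-1}$ is injective and $\a_i$ is a retraction. The key equivariance facts I would invoke are from Lemma~\ref{lem:simplify}: $\omega_{i-1}$ is $\MCG(\S_{i-1},\bP_{i-1})$-equivariant and $\a_i$ is $\MCG(\S_i,l_i)$-equivariant. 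Since $\rho_i$ is trivial by hypothesis, these equivariance relations degenerate: for $d\in\Diff(\S_{i-1},\bP_{i-1})$ we get $\omega_{i-1}\circ\rho_{i-1}(f_{\bP_{i-1}}(d)) = \rho_i(\cdots)\circ\omega_{i-1} = \omega_{i-1}$, and injectivity of $\omega_{i-1}$ gives $\rho_{i-1}(f_{\bP_{i-1}}(d)) = \Id$. Dually, for $d\in\Diff(\S_i,l_i)$ the $\a_i$-equivariance with $\rho_i$ trivial yields $\rho_{i-1}(f_{l_i}(d^{l_i}))\circ\a_i = \a_i$, and surjectivity of $\a_i$ gives $\rho_{i-1}(f_{l_i}(d^{l_i})) = \Id$.

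First I would identify exactly which mapping classes of $\S_{i-1}$ lie in the images of the two forgetful maps $f_{\bP_{i-1}}\colon\MCG(\S_{i-1},\bP_{i-1})\to\M_{i-1}$ and $f_{l_i}\circ(\text{the identification})\colon\MCG(\S_i,l_i)\cong\MCG(\S_i(l_i),l_i^*)=\MCG(\S_{i-1},l_i^*)\to\M_{i-1}$. The point $\bP_{i-1}=\{q_{i-1}^-,q_{i-1}^+\}$ consists of two points in a disk, so $\Diff(\S_{i-1},\bP_{i-1})$ misses only mapping classes that must move that pair of points essentially; similarly $\Diff(\S_i,l_i)$ sees everything of $\S_{i-1}$ via the surgery identification $\S_i(l_i)\approx\S_{i-1}$ except possibly a Dehn twist about the curve $l_i^*$ (the belt sphere). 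So between the two sources one should generate all of $\M_{i-1}$: the mapping class group of a genus-$(i-1)$ surface is generated by Dehn twists about a finite collection of simple closed curves (Humphries generators, say), each of which can be realized either disjointly from the pair of points $\bP_{i-1}$ (making it hit by $f_{\bP_{i-1}}$) or disjointly from the gluing region of the $l_i$-surgery (making it hit by $f_{l_i}$).

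Thus the proof outline is: (1) record $\omega_{i-1}$ injective, $\a_i$ surjective from~\eqref{eqn:5}; (2) from Lemma~\ref{lem:simplify} plus triviality of $\rho_i$, deduce $\rho_{i-1}$ is trivial on $\operatorname{im}(f_{\bP_{i-1}})$ and on $\operatorname{im}(f_{l_i})$; (3) argue these two subsets generate $\M_{i-1}$ by a Dehn-twist generation argument, using that each generating twist curve can be isotoped off either $\bP_{i-1}$ or the $l_i$-surgery locus; (4) conclude $\rho_{i-1}$ is trivial, and iterate (or run an induction on $i$) to get $\rho_j$ trivial for all $j<i$. The base case $i\le 1$ is vacuous or handled by $*_0=\Id_{A_0}$, $*_1=\Id_{A_1}$ noted in Definition~\ref{def:repr}.

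**Main obstacle.** The delicate step is (3): verifying that $\operatorname{im}(f_{\bP_{i-1}})\cup\operatorname{im}(f_{l_i})$ really generates all of $\M_{i-1}$, i.e.\ that no mapping class of $\S_{i-1}$ is forced to act nontrivially because it cannot be represented disjointly from both the two marked points and the belt curve simultaneously. One cannot in general realize \emph{every single} twist generator while fixing $\bP_{i-1}$ pointwise, so the argument must genuinely use \emph{both} retractions and the fact that we only need each generator to be hit by \emph{one} of the two forgetful maps; making this curve-by-curve disjointness argument precise (and checking the marked points $\bP_{i-1}$ near $q_{i-1}$ and the curve $l_{i-1}$ used in the $\a_i$-picture are positioned so that a standard generating set of twists works) is where the real content lies. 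Everything else is formal manipulation of the equivariance identities already established.
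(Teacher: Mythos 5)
Your overall strategy coincides with the paper's: push triviality down one genus at a time using the injectivity of $\omega_{i-1}$ (from $\a_i\circ\omega_{i-1}=\Id_{A_{i-1}}$) together with an equivariance statement in which the genus-$i$ side degenerates because $\rho_i$ is trivial. Steps (1) and (2) of your outline are fine. The problem is step (3), which you yourself flag as the "real content" and then do not carry out — and, worse, your assessment of it is wrong. The claim that "one cannot in general realize every single twist generator while fixing $\bP_{i-1}$ pointwise" is false: any simple closed curve in $\S_{i-1}$ can be isotoped off any prescribed embedded disk, so the Dehn twist about it has a representative supported in an annulus disjoint from a disk containing both framed disks of $\bP_{i-1}$. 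More generally, every orientation-preserving diffeomorphism of a closed oriented surface is isotopic to one that is the identity on a prescribed disk, so the single forgetful map $\MCG(\S_{i-1},\bP_{i-1})\to\M_{i-1}$ is already surjective. Consequently your first prong alone — the $\MCG(\S_{i-1},\bP_{i-1})$-equivariance of $\omega_{i-1}$ from Lemma~\ref{lem:simplify}, with $\rho_i$ trivial, followed by injectivity of $\omega_{i-1}$ — kills $\rho_{i-1}$ on all of $\M_{i-1}$; the $\a_i$/$l_i$ prong and the two-source Dehn-twist generation argument are unnecessary. As written, though, the surjectivity (or generation) statement is exactly the unproven step, so the proposal has a gap precisely there; it is easily filled, but it must be filled.

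For comparison, the paper's proof makes this disk-fixing trick the whole argument and bypasses Lemma~\ref{lem:simplify}: given an arbitrary $d\in\Diff(\S_{i-1})$, isotope it to be the identity on the disk bounded by $s_{i-1}$ (which contains $\bP_{i-1}$), and extend by the identity over the extra handle to get $d_i\in\Diff(\S_i)$ with $d_i^{s_{i-1}}$ isotopic to $d\sqcup\Id_{\S_1}$. The $\MCG(\S_i,s_{i-1})$-equivariance of $\mu_{i-1,1}$, triviality of $\rho_i$, and the formula $\omega_{i-1}(x)=\mu_{i-1,1}(x\otimes w)$ of Lemma~\ref{lem:splitting} then give $\omega_{i-1}(\rho_{i-1}(d)(x))=\omega_{i-1}(x)$ for all $x$, and injectivity of $\omega_{i-1}$ yields $\rho_{i-1}(d)=\Id_{A_{i-1}}$. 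If you add the explicit isotope-to-fix-a-disk argument (equivalently, the surjectivity of the forgetful map), your version becomes a complete and essentially equivalent proof; the downward induction and the trivial base cases are as you describe.
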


\begin{proof}
It suffices to show that~$\rho_{i-1}$ is also trivial. Pick an arbitrary diffeomorphism $d \in \Diff(\S_{i-1})$.
We isotope~$d$ such that it fixes the disk bounded by the curve $s_{i-1} \subset \S_{i-1}$, and let $d_i \in \Diff(\S_i)$
be the diffeomorphism of~$\S_i$ that agrees with~$d$ to the left of the curve~$s_{i-1} \subset \S_i$, and
is the identity to the right of~$s_{i-1}$. Then, by the $\MCG(\S_{i-1} \sqcup \S_i, \bP_{i-1,1})$-equivariance of $\mu_{i-1,1}$,
and since $\rho_i$ is trivial, we have
\[
\mu_{i-1,1}(\rho_{i-1}(d)(x) \otimes w) = \rho_i(d_i)(\mu_{i-1,1}(x \otimes w)) = \mu_{i-1,1}(x \otimes w)
\]
for every $x \in A_{i-1}$. It follows that
\[
\omega_{i-1}\left((\rho_{i-1}(d) - \Id_{A_{i-1}})(x)\right) = \mu_{i-1,1}\left((\rho_{i-1}(d) - \Id_{A_{i-1}})(x) \otimes w\right) = 0
\]
for every $x \in A_{i-1}$. As $\omega_{i-1}$ is injective, this implies that $\rho_{i-1}(d) = \Id_{A_{i-1}}$.
\end{proof}

\begin{prop} \label{prop:trivmcg}
Let $(\A,\a,\omega)$ be a split \gnf-algebra  over $\C$ such that
\[
\dim A_i < 2i
\]
for some $i > 2$.
Then~$\rho_j$ is trivial for every~$j \le i$. Hence, if $\dim A_i < 2i$ for infinitely many $i \in \N$,
then every mapping class group representation on~$\A$ is trivial.
\end{prop}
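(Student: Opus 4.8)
The plan is to exploit the mapping class group of a genus~$i$ surface, which for $i > 2$ is generated by finitely many Dehn twists (a Humphries or Lickorish generating set), and to show that each such generator acts trivially on~$A_i$ under the dimension hypothesis $\dim A_i < 2i$. The key observation is that the symplectic representation $\M_i \to \mathrm{Sp}(2i,\Z)$ on $H_1(\S_i;\C) \cong \C^{2i}$ is irreducible (as a $\mathrm{Sp}(2i,\Z)$-module, $\C^{2i}$ is irreducible, and in fact the image of $\M_i$ is all of $\mathrm{Sp}(2i,\Z)$), so that the only nonzero $\M_i$-subrepresentations of $\C^{2i}$ are $\C^{2i}$ itself. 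The strategy is then: a Dehn twist $\tau_c$ about a nonseparating simple closed curve~$c$ acts on $H_1$ as a transvection, which is unipotent with $(\tau_c - \Id)$ of rank one; correspondingly one expects $\rho_i(\tau_c) - \Id_{A_i}$ to have small rank, forcing the representation to factor through a small quotient, and then the irreducibility of the symplectic representation rules out all nontrivial possibilities once $\dim A_i < 2i$.

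More concretely, first I would establish that for a Dehn twist $\tau_c$ about a nonseparating curve~$c$ that is one of the standard curves $s_j$, $l_j$, or $m_j$ (or can be isotoped to interact with one of the standard framed spheres), the equivariance properties recorded in Definition~\ref{def:repr} and Lemma~\ref{lem:simplify} force $\rho_i(\tau_c)$ to fix a codimension-small subspace of~$A_i$ pointwise. For instance, relation~\eqref{it:t-omega} of Lemma~\ref{lem:simplify} gives $\rho_{i+1}(t_{i+1}) \circ \omega_i = \omega_i$, so $\rho_{i+1}(t_{i+1})$ is the identity on $\im(\omega_i)$, which has codimension $\dim A_{i+1} - \dim A_i$ in $A_{i+1}$; iterating the splitting $A = \ker(\a) \oplus \im(\omega)$ from Lemma~\ref{lem:right}, and using the $\MCG(\S_i,s_j)$-equivariance of $\mu_{j,k}$ and $\delta_{j,k}$, one decomposes $A_i$ into pieces on which various generating twists act trivially. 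The point is to show that $\rho_i(\tau_c) - \Id$ has rank at most~$1$ (or more generally bounded independently of~$i$), mirroring the fact that the transvection $T_c$ on $H_1$ satisfies $\mathrm{rk}(T_c - \Id) = 1$.

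Next I would invoke the structure of $\M_i$ for $i > 2$: it is perfect, and any finite-dimensional complex representation in which every Dehn twist generator acts as identity-plus-rank-one is severely constrained. Here one can use that the abelianization of $\M_i$ is trivial for $i \ge 3$ (so any one-dimensional piece is automatically trivial), together with the irreducibility of the symplectic representation $\C^{2i}$: if $\rho_i$ were nontrivial, the span of all $\im(\rho_i(\tau_c) - \Id_{A_i})$ over generating twists~$c$ would be a nonzero $\M_i$-subrepresentation of $A_i$ of dimension at most the number of generators times~$1$; combining this with the representation-theoretic constraint and the bound $\dim A_i < 2i$ shows the only option is the trivial one, since the smallest nontrivial irreducible complex representation of $\mathrm{Sp}(2i,\Z)$ factoring through $\M_i$ has dimension~$\ge 2i$ (the standard representation). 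Once $\rho_i$ is trivial, Lemma~\ref{lem:triv} immediately gives that $\rho_j$ is trivial for all $j \le i$, and the final sentence of the statement follows by applying this to the infinitely many~$i$ with $\dim A_i < 2i$, so that $\rho_j$ is trivial for arbitrarily large~$j$, hence for all~$j$.

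The main obstacle I anticipate is the precise rank estimate on $\rho_i(\tau_c) - \Id_{A_i}$: extracting ``rank one'' (or any bound independent of~$i$) from the algebraic relations requires carefully tracking how each standard Dehn twist acts through the $\MCG(\S_i, s_j)$-, $\MCG(\S_i, l_j)$-, and $\MCG(\S_i, \bP_j)$-equivariance of the structure maps, and showing that the ``untouched'' part of~$A_i$ under a given twist is genuinely large (codimension bounded). A secondary subtlety is ensuring that the standard curves $m_j, l_j, s_j$ together with their images under the diffeomorphisms $L_i, S_i, \sigma_{n+1,i}$ used in Definition~\ref{def:repr} actually suffice to generate $\M_i$, or at least generate a subgroup large enough that triviality on it forces triviality on all of $\M_i$ via perfectness; this is where I would lean on a Humphries-type generating set of $2i+1$ twists and check each generator is controlled by one of the listed relations.
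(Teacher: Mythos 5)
The downward propagation is fine: once $\rho_i$ is trivial, your appeal to Lemma~\ref{lem:triv} gives triviality of $\rho_j$ for all $j \le i$, and the last sentence of the statement follows, exactly as in the paper. The gap is in the heart of the matter, namely that $\dim A_i < 2i$ with $i>2$ forces $\rho_i$ itself to be trivial. The paper gets this in one line by quoting the theorem of Franks and Handel~\cite{FH}: for $i>2$, \emph{every} homomorphism $\M_i \to \mathrm{GL}(n,\C)$ with $n<2i$ is trivial. That is a substantial group-theoretic theorem about $\M_i$; it is not recoverable from the J-algebra axioms together with perfectness of $\M_i$ and generation by Dehn twists, which is essentially what your sketch attempts.

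Two specific steps in your outline do not go through. First, the proposed bound $\mathrm{rk}\bigl(\rho_i(\tau_c)-\Id_{A_i}\bigr)\le 1$ does not follow from the listed relations: identities such as $\rho_{i+1}(t_{i+1})\circ\omega_i=\omega_i$ only say that the action is the identity on $\im(\omega_i)$, whose codimension in $A_{i+1}$ is $\dim A_{i+1}-\dim A_i$ and is not bounded; moreover Definition~\ref{def:repr} and Lemma~\ref{lem:simplify} constrain only the finitely many specific mapping classes ($t$, $\tau_m$, $\tau_l$, $L$, $S$, $\sigma_{n+1,i}$, $\iota$, \dots) entering the structure, not an arbitrary Humphries generator, and the rank-one behaviour of a transvection on $H_1(\S_i)$ gives no information about an abstract representation $\rho_i$. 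Second, you implicitly assume $\rho_i$ factors through, or is governed by, the symplectic representation: a priori the Torelli group may act nontrivially, so irreducibility of $\C^{2i}$ as an $\mathrm{Sp}(2i,\Z)$-module and lower bounds on dimensions of nontrivial $\mathrm{Sp}(2i,\Z)$-representations are not applicable (and even those lower bounds are themselves nontrivial inputs). Unless you intend to reprove the Franks--Handel theorem, the fix is simply to cite it for the triviality of $\rho_i$ and then conclude with Lemma~\ref{lem:triv} as you did.
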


\begin{proof}
Franks and Handel~\cite{FH} proved that any representation of $\M_i$ in $\text{GL}(n,\C)$ is trivial assuming that $i > 2$
and $n < 2i$. The result now follows from Lemma~\ref{lem:triv}.
\end{proof}

\begin{prop} \label{prop:onedim}
Let $F \colon \Cob_2 \to \Vect_\C$ be a TQFT such that $F(\S) \cong \C$ for every surface~$\S$.
Then there is a natural isomorphism between~$F$ and the TQFT~$F_1$ constructed in Example~\ref{ex:simple}.
\end{prop}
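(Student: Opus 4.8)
The plan is to use Theorem~\ref{thm:2+1} to reduce the statement to a purely algebraic fact about J-algebras, and then to pin down the J-algebra of $F$ using the hypothesis $\dim F(\S) = 1$ together with Proposition~\ref{prop:trivmcg}. First I would pass to the J-algebra $(\A,\a,\omega,\{\rho_i\})$ associated to $F$ by Theorem~\ref{thm:2+1}. By hypothesis $A_i = F(\S_i) \cong \C$ for every $i \in \N$, so in particular $\dim A_i = 1 < 2i$ for all $i > 2$; hence Proposition~\ref{prop:trivmcg} (via Lemma~\ref{lem:triv}) shows that $\rho_j$ is trivial for every $j \in \N$. So the entire J-algebra structure of $F$ is carried by the split \gnf-algebra $(\A,\a,\omega)$ alone, with each $A_i$ one-dimensional.

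Next I would show that, up to isomorphism of split \gnf-algebras, there is only one such structure on a graded vector space with all graded pieces one-dimensional, namely the one $\A_0 = (A,\mu,\d,\eps,\tau,*,\a,\omega)$ of Example~\ref{ex:simple} with $(A,\mu) = \F[x]$. The key points: choose a generator $1 = \eps(1_\F) \in A_0$; since $\eps$ is a two-sided unit (Lemma~\ref{lem:right}) and each $A_i$ is one-dimensional, the associative product $\mu_{i,j} \colon A_i \otimes A_j \to A_{i+j}$ is determined by a nonzero scalar once we fix basis vectors, and after rescaling the basis of each $A_i$ we may assume $(A,\mu) \cong \F[x]$ with $A_i = \F\langle x^i\rangle$ and $x^i \cdot x^j = x^{i+j}$. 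The involution $*$ is grading-preserving, is the identity on $A_0$ and $A_1$, and is an algebra antiautomorphism; since each $A_i$ is one-dimensional and spanned by $x^i = x \cdots x$, it follows that $*$ fixes $x^i$ for all $i$, so $* = \Id_A$. The modular splitting is governed by the pair $(w,\l) \in A_1 \times A_1^*$ of Lemma~\ref{lem:splitting}; write $w = c\,x$ and $\l(x) = d$. The constraint $(\Id_{A_0}\otimes \l)\circ \d_{0,1}(w) = 1$, together with the partial counit axiom $(\tau \otimes \Id_{A_1})\circ \d_{0,1} = \Id_{A_1}$, forces $\d_{0,1}(x) = 1 \otimes x$ (up to the identifications above), and then $cd = 1$; rescaling $x$ absorbs $c$, so we may take $w = x$, $\l(x) = 1$, giving $\omega_i(x^i) = x^{i+1}$ and $\a_i(x^i) = x^{i-1}$, $\a_0 = 0$, exactly as in Example~\ref{ex:simple}. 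The coproduct $\d$ is then forced by the Frobenius condition~\eqref{eqn:3}: from $\d_{0,1}$, associativity of $\mu$, and the Frobenius relation one computes $\d_{i,j}(x^{i+j}) = x^i \otimes x^j$, i.e.\ $\d(x^n) = \sum_{k=0}^n x^k \otimes x^{n-k}$, matching Example~\ref{ex:simple}. Finally $\tau = \Id_\F$ on $A_0 = \F$ is the unique partial left counit.

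Having identified the J-algebra of $F$ with that of $F_1$, the equivalence of categories in Theorem~\ref{thm:2+1} produces a natural isomorphism $F \cong F_1$: the isomorphism of split \gnf-algebras with trivial MCG representations is a J-algebra isomorphism, and under the equivalence it corresponds to a monoidal natural isomorphism of the associated TQFTs. I expect the main obstacle to be the bookkeeping in the second paragraph — keeping track of all the normalizations (the scalars in $\mu_{i,j}$, $w$, $\l$) simultaneously and checking that a single rescaling of the basis of $A$ makes $\mu$, $\d$, $\omega$, $\a$ all standard at once, rather than fixing one operation and finding the others obstructed; one has to verify that the \gnf-algebra axioms (associativity, coassociativity, Frobenius, counit) leave exactly this one-parameter rescaling freedom and no more. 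Alternatively, one could shortcut part of this by invoking Dijkgraaf's observation and Proposition~\ref{prop:base}: $F(S^2) = A_0 \cong \C$ is a one-dimensional, hence indecomposable, commutative Frobenius algebra, so it is $S_\l$ for some $\l$, and a further rescaling normalizes $\l$; this identifies the degree-zero part cleanly and the higher structure then follows as above.
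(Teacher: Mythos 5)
Your overall strategy --- pass to the J-algebra, kill the mapping class group representations with Proposition~\ref{prop:trivmcg}, identify the underlying algebra with that of Example~\ref{ex:simple}, and transport the isomorphism back through Theorem~\ref{thm:2+1} --- is the paper's, but the middle step has a genuine gap. It is not true that the split \gnf-algebra axioms (unit, counit, associativity, coassociativity, Frobenius condition, splitting) force $\d_{i,j}(x^{i+j}) = x^i \otimes x^j$ and $\tau = \Id_\C$. Take $w = \omega_0(1)$ as the generator (incidentally, you need the splitting already here: one-dimensionality and associativity alone do not exclude $\mu_{i,j} = 0$; it is the injectivity of $\omega$, i.e.\ $\a \circ \omega = \Id$, that guarantees $w^i \neq 0$). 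Then for every $c \in \C^\times$ the assignments $\mu_{i,j}(w^i \otimes w^j) = w^{i+j}$, $\d_{i,j}(w^{i+j}) = c\, w^i \otimes w^j$, $\tau(1) = c^{-1}$, $\l(w) = c^{-1}$, $* = \Id$, $\omega_i(w^i) = w^{i+1}$, $\a_i(w^i) = w^{i-1}$ satisfy every identity you invoke --- this is the construction of Example~\ref{ex:complicated} applied to the one-dimensional Frobenius algebra with counit $z \mapsto c^{-1}z$, where specialness is needed only to verify condition~\eqref{it:frobtwist}. The constraints you cite yield only $c\,\tau(1) = 1$ and $c\,\l(w) = 1$, never $c = 1$; and since a homomorphism of split \gnf-algebras intertwines $\eps$ and $\tau$, these structures are pairwise non-isomorphic. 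So your argument only places $\A$ in a one-parameter family. Your fallback via Proposition~\ref{prop:base} fails for the same reason: $S_\lambda$ and $S_{\lambda'}$ are non-isomorphic Frobenius algebras for $\lambda \neq \lambda'$, so no ``further rescaling'' normalizes the counit.

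What singles out $c = 1$ is the mapping class group axiom itself: even for the trivial representation, condition~\eqref{it:frobtwist} of Definition~\ref{def:repr} is a nontrivial constraint on the underlying algebra, giving $\mu_{i,n-i} \circ \d_{i,n-i} = \a_{n+1} \circ \omega_n = \Id_{A_n}$ (cf.\ equation~\eqref{eqn:spec}, which forces $A_0$ to be special). This is exactly what your sentence ``the entire J-algebra structure of $F$ is carried by the split \gnf-algebra alone'' discards, and it is precisely how the paper's proof pins down the coproduct, after which $\tau = \Id_\C$ follows from the counit axiom. With that ingredient restored, your argument closes and coincides with the paper's.
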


\begin{proof}
Let $(\A,\a,\omega)$ be the split \gnf-algebra associated to the TQFT~$F$. By Proposition~\ref{prop:trivmcg},
the mapping class group action is trivial. Since $\dim A_i = 1$ for every $i \in \N$, the map $\omega_i$ is a bijection
for every~$i \in \N$.
As $\omega$ is given by right-multiplication with an element $w \in A_1$, it follows that $\A \cong \C[x]$,
where the isomorphism maps $w^n \in A_n$ to $x^n$. From the formula $\a_{i+1} \circ \omega_i = \Id_{A_i}$,
we obtain that $\a_{i+1} = \omega_i^{-1}$; i.e., $\a_{i+1}(w^{i+1}) = w^i$.
Since $\mu$ is associative, $\mu_{i,j}(w^i \otimes w^j) = w^{i+j}$.
By condition~\eqref{it:frobtwist} of Definition~\ref{def:repr}, and since $\rho_{n+1}$ is trivial,
\[
\mu_{i,n-i} \circ \delta_{i,n-i} = \a_{n+1} \circ \rho_{n+1}(\sigma_{n+1,i}) \circ \omega_n  = \Id_{A_n}.
\]
It follows that $\d_{i,n-i} = (\mu_{i,n-i})^{-1} \colon A_n \to A_i \otimes A_{n-i}$; hence, $\d_{i,n-i}(w^n) = w^i \otimes w^{n-i}$.
Finally, since $(\tau \otimes \Id_{A_0}) \circ \d_{0,0}(1) = \tau(1) = 1$, we have $\tau = \Id_{\C}$.
So the \gnf-algebra $(\A,\a,\omega)$ is isomorphic to the \gnf-algebra $\C[x]$ of Example~\ref{ex:simple}.
It follows that~$F$ is isomorphic to~$F_1$.
\end{proof}

\begin{prop} \label{prop:n}
Let $F \colon \Cob_2 \to \Vect_\C$ be a TQFT, and suppose that there is a number $n \in \N$ such that
$\dim F(\S) = n$ for every connected surface~$\S$.
Then there is a natural isomorphism between~$F$ and~$(F_1)^{\oplus n}$, where~$F_1$ is the TQFT
constructed in Example~\ref{ex:simple}.
\end{prop}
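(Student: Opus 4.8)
The plan is to deduce Proposition~\ref{prop:n} from Proposition~\ref{prop:onedim} by a decomposition argument, using that $F$ is based on a Frobenius algebra of dimension $n$ and that, over $\C$, such an algebra splits into indecomposables. First I would observe that, by Dijkgraaf's result quoted above, $A_0 = F(S^2)$ is a commutative Frobenius algebra over $\C$ with $\dim A_0 = n$. Since $\C$ is algebraically closed, Proposition~\ref{prop:base} gives a decomposition $A_0 \cong \bigoplus_{k} B_k$ into indecomposable commutative Frobenius algebras $B_k \in \{S_\lambda, N_{A,\tau}\}$. By Proposition~\ref{prop:decomp}, this induces a direct sum decomposition $F \cong \bigoplus_k F_k$, where each $F_k$ is a TQFT based on $B_k$; in particular $\dim F_k(S^2) = \dim B_k$.

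Next I would argue that each summand $F_k$ assigns a space of the \emph{same} dimension to every connected surface. The key input is Proposition~\ref{prop:trivmcg}: because $\dim F(\S_g) = n$ is bounded while $\dim F_k(\S_g) \le n$ for all $g$, if some $\dim F_k(\S_g) < 2g$ for $g > 2$ then the mapping class group action on $F_k$ is trivial in all genera, and (taking $g$ large, using the boundedness) this applies to every indecomposable summand whose dimension function is bounded. In fact, since $F \cong \bigoplus_k F_k$ and $\dim F(\S_g) = n$ for all $g$, each $\dim F_k(\S_g)$ is a bounded function of $g$; choosing $g > 2$ with $2g > n$ we get $\dim F_k(\S_g) < 2g$, so by Proposition~\ref{prop:trivmcg} the mapping class group representation underlying each $F_k$ is trivial. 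Now I would examine the split \gnf-algebra of $F_k$ with trivial $\MCG$-action: the maps $\omega_i \colon (A_k)_i \hookrightarrow (A_k)_{i+1}$ are injective, and condition~\eqref{it:frobtwist} of Definition~\ref{def:repr} (as exploited in the proof of Proposition~\ref{prop:onedim}) forces $\mu_{i,j} \circ \delta_{i,j} = \Id$, hence $A_0 = (A_k)_0$ is special. Among $S_\lambda$ and $N_{A,\tau}$ the only special one is $S_0 \cong \C$, so $\dim F_k(S^2) = 1$, and since the dimension is bounded and $\omega_i$ is injective we get $\dim F_k(\S_g) = 1$ for all $g$ (an increasing bounded sequence of positive integers starting at $1$ is constant). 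Then Proposition~\ref{prop:onedim} gives $F_k \cong F_1$ naturally.

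Putting this together: $F \cong \bigoplus_k F_k \cong \bigoplus_k F_1 = (F_1)^{\oplus m}$ for $m$ equal to the number of summands; comparing dimensions on any connected surface, $n = \dim F(\S) = \sum_k \dim F_k(\S) = m$, so $m = n$ and $F \cong (F_1)^{\oplus n}$ naturally, as claimed.

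The main obstacle I anticipate is the bookkeeping around the interaction of the direct-sum decomposition of Proposition~\ref{prop:decomp} with the J-algebra structure and its mapping class group action: one must check that the decomposition $F \cong \bigoplus_k F_k$ is compatible with the structure so that ``$F_k$ has a trivial $\MCG$-action and a special $A_0$'' is a legitimate statement at the level of J-algebras, and that the isomorphism produced is a \emph{natural} isomorphism of TQFTs (i.e. monoidal and compatible with all cobordism maps), not merely a genus-by-genus vector space isomorphism. The cleanest route is probably to argue entirely on the J-algebra side via Theorem~\ref{thm:2+1}: decompose the J-algebra of $F$, show each factor is isomorphic to the J-algebra $\C[x]$ of Example~\ref{ex:simple}, and then transport back through the equivalence of categories, so that naturality is automatic.
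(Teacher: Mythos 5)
Your route uses exactly the paper's toolkit (Propositions~\ref{prop:trivmcg}, \ref{prop:decomp}, \ref{prop:base}, \ref{prop:onedim} and the specialness argument around equation~\eqref{eqn:spec}), only in a permuted order: the paper first kills the mapping class group action on~$F$ itself (since $\dim F(\S_g)=n<2g$ for all large~$g$), deduces from $\rho_1(\sigma_{1,0})=\Id$ that $A_0$ is special and hence a sum of $n$ one-dimensional factors, and only then invokes Proposition~\ref{prop:decomp}; you decompose first and apply Proposition~\ref{prop:trivmcg} summand by summand. That reordering is harmless (each $F_k(\S_g)$ has dimension at most~$n$, so the hypothesis of Proposition~\ref{prop:trivmcg} holds for infinitely many~$g$), and your worry about naturality at the end is not really an issue: Propositions~\ref{prop:decomp} and~\ref{prop:onedim} already produce isomorphisms of TQFTs, so no extra J-algebra bookkeeping is needed beyond what the paper does.

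The one genuine flaw is the step where you claim $\dim F_k(\S_g)=1$ for all~$g$ because ``an increasing bounded sequence of positive integers starting at $1$ is constant.'' That statement is false ($1,2,2,2,\dots$ is non-decreasing, bounded, starts at~$1$, and is not constant); injectivity of~$\omega$ only gives $\dim F_k(\S_g)\ge \dim F_k(S^2)=1$ and monotonicity, nothing more. Moreover your closing count $n=\sum_k\dim F_k(\S)=m$ relies on this flawed step, so as written the argument is circular. The repair is exactly the paper's counting argument, and you have all the ingredients: since each indecomposable special factor is one-dimensional, counting at $S^2$ gives $m=\dim A_0=n$ summands; then for every~$g$,
\[
n=\dim F(\S_g)=\sum_{k=1}^{n}\dim F_k(\S_g)
\]
with each term at least~$1$ (injectivity of~$\omega$), which forces $\dim F_k(\S_g)=1$ for every~$k$ and~$g$, and Proposition~\ref{prop:onedim} finishes the proof. (Also, following the paper you write $S_0$ for the special indecomposable factor; since $S_\lambda$ is only defined for $\lambda\in\F^\times$ and $\mu\circ\delta=\lambda\cdot\Id$ on $S_\lambda$, the special one is $S_1$ -- a typo inherited from the text, not an error in the argument.)
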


\begin{proof}
By Proposition~\ref{prop:trivmcg}, the mapping class group representation corresponding to~$F$ is trivial in every genus.
In particular, $\rho_1(\sigma_{1,0}) = \Id_{A_1}$, and hence, by equation~\eqref{eqn:spec}, the commutative Frobenius algebra~$A_0$ is
special, and so it is a direct sum of finitely many copies of~$S_0$. By Proposition~\ref{prop:decomp}, the TQFT $F$ splits as
a direct sum $Z_1 \oplus \dots \oplus Z_n$ of TQFTs, each based on~$S_0$. In particular, $\dim Z_i(S^2) = 1$, and so, by
the injectivity of the map~$\omega$, we have $\dim Z_i(\S_g) \ge 1$ for every $i \in \{\,1,\dots,n\,\}$.
Since
\[
\sum_{i=1}^n \dim Z(\S_g) = F(\S_g) = n,
\]
we must have~$\dim Z_i(\S_g) = 1$ for every $i \in \{\,1,\dots,n\,\}$. So Proposition~\ref{prop:onedim}
implies that $Z_i \cong F_1$ for every $i \in \{\,1,\dots,n\,\}$, hence $F \cong (F_1)^{\oplus n}$.
\end{proof}

\begin{ex} \label{ex:complicated}
This is an extension of Example~\ref{ex:simple}, and gives an explicit
description of the split \gnf-algebra associated to the TQFT $(F_1)^{\oplus n}$ appearing in Proposition~\ref{prop:n}.
Let $(A_0, \mu, \d, \eps, \tau)$ be a commutative special Frobenius algebra over a field~$\F$,
where by \emph{special} we mean that $\mu \circ \d = \Id_{A_0}$. We know from above that this is a
direct sum of copies of~$S_0$ if~$\F$ is algebraically closed.
Then we can associate to~$A_0$ a split \gnf-algebra
\[
\A = \A(A_0) = (A,\mu_{i,j},\d_{i,j},\eps,\tau,*,\a_i,\omega_i)
\]
with a trivial mapping class group action, as follows. Let $A = A_0 \otimes \F[x]$ with the grading
$A_i = A_0 \otimes \F\langle x^i \rangle$ for $i \in \N$, where we identify~$A_0$ with
$A_0 \otimes \F \langle 1 \rangle$. For elements $a$, $b \in A_0$, we define
\[
\mu_{i,j}(ax^i \otimes bx^j) = ab x^{i+j},
\]
where~$ab$ stands for~$\mu(a \otimes b)$. This product is clearly associative.

For $a \in A$, let $\d(a) = a_{(1)} \otimes a_{(2)}$
in sumless Sweedler notation. Then we define
\[
\d_{i,j}(ax^{i+j}) = a_{(1)} x^i \otimes a_{(2)}x^j.
\]
We now show $\d_{i,j}$ is coassociative. The coassociativity of~$\d$ in Sweedler notation can be written as
\[
a_{(1)} \otimes a_{(2)(1)} \otimes a_{(2)(2)} = a_{(1)(1)} \otimes a_{(1)(2)} \otimes a_{(2)}.
\]
Then we have
\begin{equation*}
\begin{split}
(\Id_{A_i} \otimes \d_{j,k}) \circ \d_{i,j+k}(ax^{i+j+k}) &= (\Id_{A_i} \otimes \d_{j,k})(a_{(1)}x^i \otimes a_{(2)}x^{j+k}) = \\
a_{(1)}x^i \otimes a_{(2)(1)} x^j \otimes a_{(2)(2)} x^k &= a_{(1)(1)} x^i \otimes a_{(1)(2)} x^j \otimes a_{(2)} x^k = \\
(\d_{i,j} \otimes \Id_{A_k})(a_{(1)} x^{i+j} \otimes a_{(2)} x^k) &= (\d_{i,j} \otimes \Id_{A_k}) \circ \d_{i+j,k} (ax^{i+j+k}).
\end{split}
\end{equation*}

The unit $\eps \colon \F \to A_0$ of~$\A$ is defined to be the unit of the Frobenius algebra~$A_0$.
Indeed, if $\eps(1) = 1 \in A_0$, then $1 \cdot x^0$ is a unit of~$\A$ as $\mu_{0,i}(1 \cdot x^0 \otimes ax^i) = ax^i$.
The counit~$\tau \colon A_0 \to \F$ of the Frobenius algebra~$A_0$ will be the partial left counit of
our \gnf-algebra~$\A$. More precisely, we set~$\tau(ax^0) = \tau(a)$. Indeed, we have
\[
\tau(a_{(1)} \otimes a_{(2)}) = \tau(a_{(1)}) a_{(2)} = a, \text{ hence}
\]
\[
(\tau \otimes \Id_{A_j}) \circ \d_{0,j}(ax^j) = (\tau \otimes \Id_{A_j})(a_{(1)}x^0 \otimes a_{(2)}x^j)
= \tau(a_{(1)}) a_{(2)} x^j = ax^j.
\]

The Frobenius condition for~$A_0$ can be written as
\[
(ab)_{(1)} \otimes (ab)_{(2)} = ab_{(1)} \otimes b_{(2)}.
\]
This implies the Frobenius condition for~$\A$, as
\begin{equation*}
\begin{split}
(\mu_{i,j} \otimes \Id_{A_k}) \circ (\Id_{A_i} \otimes \d_{j,k})(ax^i \otimes bx^{j+k}) &=
(\mu_{i,j} \otimes \Id_{A_k})(ax^i \otimes b_{(1)} x^j \otimes b_{(2)} x^k) = \\
ab_{(1)} x^{i+j} \otimes b_{(2)} x^k  &= (ab)_{(1)} x^{i+j} \otimes (ab)_{(2)} x^k = \\
\d_{i+j,k}(abx^{i+j+k}) &= \d_{i+j,k} \circ \mu_{i,j+k}(ax^i \otimes bx^{j+k}).
\end{split}
\end{equation*}

We define the involution~$*$ to be the identity; then this is an anti-automorphism since
$A_0$ is commutative. Indeed,
\[
(ax^i \cdot bx^j)^* = abx^{i+j} = bax^{j+i} = (bx^j)^* \cdot (ax^i)^*,
\]
and similarly for the coproduct.

The modular splitting is defined by the formulas $\omega(ax^i) = ax^{i+1}$ for $i \in \N$
and $\a(ax^i) = ax^{i-1}$ for $i > 0$ and $\a(ax^0) = 0$. These satisfy the necessary conditions as
\[
\d_{i,j-1} \circ \a_{i+j}(ax^{i+j}) = \d_{i,j-1}(ax^{i+j-1}) = a_{(1)}x^i \otimes a_{(2)}x^{j-1} =
(\Id_{A_i} \otimes \a_j) \circ \d_{i,j}(ax^{i+j}),
\]
and similarly,
\[
\d_{i,j+1} \circ \omega_{i+j}(ax^{i+j}) = a_{(1)}x^i \otimes a_{(2)}x^{j+1} =
(\Id_{A_i} \otimes \omega_j) \circ \d_{i,j}(ax^{i+j}).
\]

Finally, we show that the trivial mapping class group representation satisfies the required properties.
Except for the following two, all properties of a mapping class group representation trivially hold.
Condition~\eqref{it:L} of Definition~\ref{def:repr} translates to
\[
\a_{i+1} \circ \omega_i = \omega_{i-1} \circ \a_i.
\]
Indeed,
\[
\a_{i+1} \circ \omega_i(ax^i) = \a_{i+1}(ax^{i+1}) = ax^i = \omega_{i-1}(ax^{i-1}) = \omega_{i-1} \circ \a_i (ax^i).
\]
To check condition~\eqref{it:frobtwist},
observe that the left-hand side equals $\a_{n+1} \circ \omega_n = \Id_{A_n}$.
Furthermore,
\[
\mu_{i,n-i} \circ \d_{i,n-i}(ax^n) = \mu_{i,n-i}(a_{(1)}x^i \otimes a_{(2)}x^{n-i}) = a_{(1)}a_{(2)}x^n = \mu \circ \d(a) x^n = a x^n,
\]
where in the last step we used that the Frobenius algebra~$A_0$ is special.
\end{ex}

In summary, if one would like to find a (2+1)-dimensional TQFT~$F$ over~$\C$ that does not extend to a (1+1+1)-dimensional one
by constructing an irreducible TQFT that is based on one of the nilpotent Frobenius algebras~$H_{A,\tau}$,
then one must have $F(\S_g) \ge 2g$ with a non-trivial mapping class group action for each $g > 0$. Otherwise,
the commutative Frobenius algebra~$F(S^2)$ will be irreducible and special, and hence isomorphic to~$S_1$.

In contrast, as stated in Proposition~\ref{prop:nonextending},
there are $2^{2^\omega}$ pairwise non-equivalent (2+1)-dimensional
oriented \emph{lax-monoidal} TQFTs over~$\C$ that do not extend to (1+1+1)-dimensional TQFTs.
We now prove this claim.

\begin{proof}[{Proof of Proposition~\ref{prop:nonextending}}]
According to Funar~\cite[p.~410]{Funar}, a $\C$-valued homeomorphism invariant~$f$ of
oriented 3-manifolds is \emph{multiplicative} if
\[
f(M \# N) = f(M)f(N)
\]
for any pair of oriented 3-manifolds $(M,N)$, where $\#$ denotes the connected sum, $f(-M) = \ol{f(M)}$, and $f(S^3) = 1$.
By~\cite[Corollary~2.9]{Funar} (cf.~\cite[Theorem~4.4]{TuTQFT}),
any multiplicative invariant canonically extends to a (2+1)-di\-men\-sional lax monoidal TQFT.
On the other hand, Funar~\cite[Corollary~1.1]{Funar2} constructed manifolds~$N$ and~$N'$ such that,
for any modular tensor category~$C$, their Reshetikhin-Turaev invariants agree:
\[
RT_C(N) = RT_C(N').
\]
By the work of Bartlett,  Douglas, Schommer-Pries, and Vicary~\cite{BDSV2}, every (1+1+1)-dimensional TQFT
is of the form $RT_C$ for some anomaly free modular tensor category~$C$.
It follows that~$N$ and~$N'$ cannot be distinguished by (1+1+1)-dimensional TQFTs.

Let $\{M_i \,\colon\, i \in \N \}$ be an enumeration of all prime oriented 3-manifolds such that
$M_0, \dots, M_n$ are the prime components of $N \sqcup N'$ without multiplicity, and let $p_0,\dots,p_n$
be distinct prime numbers. Then we define $f(S^3) = 1$ and $f(M_i) = p_i$ for every $i \in \{0,\dots,n\}$,
and $f(M_i)$ is an arbitrary complex number for $i > n$. As 3-manifolds have unique
prime decompositions, $f$ uniquely extends to a multiplicative invariant of 3-manifolds.
Since~$N$ and~$N'$ are not homeomorphic, they have distinct prime components, and so $f(N) \neq f(N')$
as they have different prime factorizations. It follows that the TQFT arising from such an~$f$
is not (1+1+1)-dimensional.
We have $2^\omega$ different choices for $f(M_i)$ for every $i>n$, giving rise to
$2^{2^\omega}$ different multiplicative invariants~$f$, each distinguishing $N$ and $N'$.

Alternatively, by the work of Bruillard, Ng, Rowell, and Wang~\cite[Theorem~3.1]{modular}, there are only countably many
modular tensor categories up to equivalence, while there are $2^{2^\omega}$ multiplicative 3-manifold
invariants, so, with countably many exceptions, a (2+1)-dimensional lax monoidal TQFTs is not (1+1+1)-dimensional.
\end{proof}

% ----------------------------------------------------------------
\bibliographystyle{amsplain}
\bibliography{topology}
\end{document}